\newtheorem{thm}{Theorem}[section]
\newtheorem{lem}[thm]{Lemma}
\newtheorem{lemma}[thm]{Lemma}
\newtheorem{prop}[thm]{Proposition}
\newtheorem{cor}[thm]{Corollary}
\theoremstyle{definition}
\theoremstyle{remark}
\newtheorem{rem}[thm]{Remark}
\newcommand*{\thmref}[1]{Theorem~\ref{#1}}
\newcommand*{\corref}[1]{Corollary~\ref{#1}}
\newcommand*{\secref}[1]{\S\ref{#1}}
\newcommand*{\lemref}[1]{Lemma~\ref{#1}} 
\newcommand*{\propref}[1]{Proposition~\ref{#1}}
\newcommand*{\remref}[1]{Remark~\ref{#1}}
\newcommand*{\figref}[1]{Figure~\ref{#1}}
\newcommand*{\dis}{\displaystyle}
\newcommand*{\q}{\quad}
\newcommand*{\qq}{\qquad}
\newcommand*{\lpar}{ }
\newcommand*{\tx}[1]{\text{#1}}
\newcommand*{\Bigskip}{\bigskip \medskip}
\newcommand*{\lamb}{\lambda}
\newcommand{\del}{\delta}
\newcommand*{\ep}{\epsilon}
\newcommand*{\kap}{\kappa}
\newcommand*{\suchthat}{\, \middle| \,}
\newcommand*{\la}{_{\!\! a}}
\newcommand*{\lb}{_{\!\! b}}
\newcommand*{\n}{\!\!}
\newcommand*{\Pminus}{P_{-}}
\newcommand*{\myoverline}[3]{\mkern -#1mu\overline{\mkern#1mu#3\mkern#2mu}\mkern -#2mu}	
\newcommand*{\Zbar}{\myoverline{-3}{0}{\Z}}
\newcommand*{\fbar}{\bar{f}}
\newcommand*{\ubar}{\bar{u}}
\newcommand*{\Dapbar}{\myoverline{-3}{-1}{D}_\ap}
\newcommand*{\wbar}{\myoverline{0}{-0.5}{\omega}}
\newcommand*{\Mconst}{M}
\newcommand*{\half}{\frac{1}{2}}
\newcommand*{\threebytwo}{\frac{3}{2}}
\newcommand*{\fivebytwo}{\frac{5}{2}}
\newcommand*{\sevenbytwo}{\frac{7}{2}}
\newcommand*{\onebythree}{\frac{1}{3}}
\newcommand*{\twobythree}{\frac{2}{3}}
\newcommand*{\fourbythree}{\frac{4}{3}}
\newcommand*{\threebyfour}{\frac{3}{4}}
\newcommand*{\onebysix}{\frac{1}{6}}
\newcommand*{\fivebysix}{\frac{5}{6}}
\newcommand{\eightbyseven}{\frac{8}{7}}
\newcommand*{\Rsp}{\mathbb{R}}
\newcommand*{\Csp}{\mathbb{C}}
\newcommand*{\Zsp}{\mathbb{Z}}
\newcommand*{\Scalsp}{\mathcal{S}}
\newcommand*{\Dcalsp}{\mathcal{D}}
\newcommand*{\Lone}{L^1}
\newcommand*{\Ltwo}{L^2}
\newcommand*{\Lfourbythree}{L^\fourbythree}
\newcommand{\Leightbyseven}{L^\eightbyseven}
\newcommand*{\Linfty}{L^{\infty}}
\newcommand*{\Hhalf}{\dot{H}^\half}
\newcommand*{\Ccal}{\mathcal{C}}
\newcommand*{\Wcal}{\mathcal{W}}
\newcommand*{\Ltwolambhalf}{\Ltwo_{\sqrt{\lamb}}}
\newcommand*{\Linftylambhalf}{\Linfty_{\sqrt{\lamb}}}
\newcommand*{\Hhalflambhalf}{\Hhalf_{\sqrt{\lamb}}}
\newcommand*{\Ccallambhalf}{\Ccal_{\sqrt{\lamb}}}
\newcommand*{\Wcallambhalf}{\Wcal_{\sqrt{\lamb}}}
\newcommand*{\LoneDelta}{\Lone_{\Delta}}
\newcommand*{\LtwoDeltahalf}{\Ltwo_{\sqrt{\Delta}}}
\newcommand*{\LtwoDh}{\LtwoDeltahalf}
\newcommand*{\LinftyDeltahalf}{\Linfty_{\sqrt{\Delta}}}
\newcommand*{\LinftyDh}{\LinftyDeltahalf}
\newcommand*{\HhalfDeltahalf}{\Hhalf_{\sqrt{\Delta}}}
\newcommand*{\HhalfDh}{\HhalfDeltahalf}
\newcommand*{\CcalDeltahalf}{\Ccal_{\sqrt{\Delta}}}
\newcommand*{\CcalDh}{\CcalDeltahalf}
\newcommand*{\WcalDeltahalf}{\Wcal_{\sqrt{\Delta}}}
\newcommand*{\WcalDh}{\WcalDeltahalf}
\newcommand*{\LtwoDeltaonebythree}{\Ltwo_{\Delta^\onebythree}}
\newcommand*{\LinftyDeltaonebysix}{\Linfty_{\Delta^\onebysix}}
\newcommand*{\HhalfDeltaonebysix}{\Hhalf_{\Delta^\onebysix}}
\newcommand*{\approxLtwoDeltahalf}{\approx_{\LtwoDeltahalf}}
\newcommand*{\approxHhalfDeltahalf}{\approx_{\HhalfDeltahalf}}
\newcommand*{\al}{\alpha}
\newcommand*{\ap}{{\alpha'}}
\newcommand*{\bp}{{\beta'}}
\newcommand*{\xp}{{x'}}
\newcommand*{\yp}{{y'}}
\newcommand*{\zp}{{z'}}
\newcommand*{\diff}{\mathop{}\! d}
\newcommand*{\difff}{\mathop{}\!\! d}
\newcommand*{\compose}[1]{\circ{#1}}
\newcommand*{\conv}{*}
\newcommand*{\Hil}{\mathbb{H}}
\newcommand*{\Hcal}{\mathcal{H}}
\newcommand*{\Hcaltil}{\widetilde{\mathcal{H}}}
\newcommand*{\Htil}{\Hcaltil}
\newcommand*{\Pa}{\mathbb{P}_A}
\newcommand*{\Ph}{\mathbb{P}_H}
\newcommand*{\Id}{\mathbb{I}}
\newcommand*{\Imag}{\tx{Im}}
\newcommand*{\Real}{\tx{Re}}
\newcommand*{\grad}{\nabla}
\newcommand*{\Dt}{D_t}
\newcommand*{\pt}{\partial_t}
\newcommand*{\ps}{\partial_s}
\newcommand*{\px}{\partial_x}
\newcommand*{\py}{\partial_y}
\newcommand*{\pz}{\partial_z}
\newcommand*{\pxp}{\partial_\xp}
\newcommand*{\pyp}{\partial_\yp}
\newcommand*{\pzp}{\partial_\zp}
\newcommand*{\pap}{\partial_\ap}
\newcommand*{\papabs}{\abs{\pap}}
\newcommand*{\pbp}{\partial_\bp}
\newcommand*{\Dap}{D_{\ap}}
\newcommand*{\Dapabs}{\abs{D_{\ap}}}
\newcommand*{\Dapfrac}{\frac{1}{\Zap}\pap}
\newcommand*{\Dapbarfrac}{\frac{1}{\Zapbar}\pap}
\newcommand*{\Dapabsfrac}{\frac{1}{\Zapabs}\pap}
\newcommand*{\E}{E} 
\newcommand*{\Ecal}{\mathcal{E}}
\newcommand*{\Esigmazero}{E_{\sigma,0}}
\newcommand*{\Esigmaone}{E_{\sigma,1}}
\newcommand*{\Esigmatwo}{E_{\sigma,2}}
\newcommand*{\Esigmathree}{E_{\sigma,3}}
\newcommand*{\Esigmafour}{E_{\sigma,4}}
\newcommand*{\Esigma}{E_{\sigma}}
\newcommand*{\Ecalsigmaone}{\mathcal{E}_{\sigma,1}}
\newcommand*{\Ecalsigmatwo}{\mathcal{E}_{\sigma,2}}
\newcommand*{\Ecalsigma}{\mathcal{E}_{\sigma}}
\newcommand*{\Ehigh}{E_{high}}
\newcommand*{\Ecalhigh}{\mathcal{E}_{high}}
\newcommand*{\Eaux}{E_{aux}}
\newcommand*{\Ecalaux}{\mathcal{E}_{aux}}
\newcommand*{\EDelta}{E_{\Delta}}
\newcommand*{\EDeltazero}{E_{\Delta,0}}
\newcommand*{\EDeltaone}{E_{\Delta,1}}
\newcommand*{\EDeltatwo}{E_{\Delta,2}}
\newcommand*{\EDeltathree}{E_{\Delta,3}}
\newcommand*{\EDeltafour}{E_{\Delta,4}}
\newcommand*{\EcalDelta}{\mathcal{E}_{\Delta}}
\newcommand*{\EcalDeltazero}{\mathcal{E}_{\Delta,0}}
\newcommand*{\EcalDeltaone}{\mathcal{E}_{\Delta,1}}
\newcommand*{\EcalDeltatwo}{\mathcal{E}_{\Delta,2}}
\newcommand*{\Fcal}{\mathcal{F}}
\newcommand*{\Aone}{A_1}
\newcommand*{\bvar}{b}
\newcommand*{\bap}{\bvar_\ap}
\newcommand*{\bvarap}{\bap}
\newcommand*{\h}{h}
\newcommand*{\hvart}{\h_t}
\newcommand*{\hal}{\h_\al}
\newcommand*{\hinv}{\h^{-1}}
\newcommand*{\htil}{\widetilde{\h}}
\newcommand*{\htilap}{\htil_\ap}
\newcommand*{\htilbp}{\htil_\bp}
\newcommand*{\Util}{\widetilde{U}}
\newcommand*{\g}{g}
\newcommand*{\thvar}{\theta}
\newcommand*{\Th}{\Theta}
\newcommand*{\f}{f}
\newcommand*{\w}{\omega}
\newcommand*{\Psizp}{\Psi_{\zp}}
\newcommand*{\Jone}{J_1}
\newcommand*{\Jtwo}{J_2}
\newcommand*{\Rone}{R_1}
\newcommand*{\Rtwo}{R_2}
\newcommand*{\Rthree}{R_3}
\newcommand*{\Rfour}{R_4}
\newcommand{\U}{U}
\newcommand*{\z}{z}
\newcommand*{\zal}{\z_\al}
\newcommand*{\zt}{\z_t}
\newcommand*{\ztal}{\z_{t\al}}
\newcommand*{\ztt}{\z_{tt}}
\newcommand*{\zttal}{\z_{tt\al}}
\newcommand*{\Z}{Z}
\newcommand*{\Zap}{\Z_{,\ap}}
\newcommand*{\Zaphalf}{\Zap^{1/2}}
\newcommand*{\Zapbar}{\Zbar_{,\ap}}
\newcommand*{\Zbarap}{\Zapbar}
\newcommand*{\Zapabs}{\abs{\Zap}}
\newcommand*{\Zt}{\Z_t}
\newcommand*{\Ztbar}{\Zbar_t}
\newcommand*{\Ztap}{\Z_{t,\ap}}
\newcommand*{\Ztapbar}{\Zbar_{t,\ap}}
\newcommand*{\Ztbarap}{\Ztapbar}
\newcommand*{\Ztt}{\Z_{tt}}
\newcommand*{\Zttbar}{\Zbar_{tt}}
\newcommand*{\Zttap}{\Z_{tt,\ap}}
\newcommand*{\Zttapbar}{\Zbar_{tt,\ap}}
\newcommand*{\Zttbarap}{\Zttapbar}
\newcommand*{\Ztttbar}{\Zbar_{ttt}}
\newcommand*{\nobrac}[1]{ #1 }
\DeclarePairedDelimiter{\oldbrac}{\lparen}{\rparen}			
\NewDocumentCommand{\brac}{ s o m }{						
	\IfBooleanT{#1}{
  		\IfValueT{#2}{\oldbrac[#2]{#3}}
		\IfValueF{#2}{\oldbrac{#3}} 
	}
	\IfBooleanF{#1}{
  		\IfValueT{#2}{\PackageError{mypackage}{Incorrect use of brac. Insert star}{}}
		\IfValueF{#2}{\oldbrac*{#3}} 
	}		
}
\DeclarePairedDelimiter\oldcbrac{\lbrace}{\rbrace}				
\NewDocumentCommand{\cbrac}{ s o m }{					
	\IfBooleanT{#1}{
  		\IfValueT{#2}{\oldcbrac[#2]{#3}}
		\IfValueF{#2}{\oldcbrac{#3}} 
	}
	\IfBooleanF{#1}{
  		\IfValueT{#2}{\PackageError{mypackage}{Incorrect use of cbrac. Insert star}{}}
		\IfValueF{#2}{\oldcbrac*{#3}} 
	}		
}
\DeclarePairedDelimiter\oldsqbrac{\lbrack}{\rbrack}				
\NewDocumentCommand{\sqbrac}{ s o m }{					
	\IfBooleanT{#1}{
  		\IfValueT{#2}{\oldsqbrac[#2]{#3}}
		\IfValueF{#2}{\oldsqbrac{#3}} 
	}
	\IfBooleanF{#1}{
  		\IfValueT{#2}{\PackageError{mypackage}{Incorrect use of sqbrac. Insert star}{}}
		\IfValueF{#2}{\oldsqbrac*{#3}} 
	}		
}
\DeclarePairedDelimiter{\oldabs}{\lvert}{\rvert}
\NewDocumentCommand{\abs}{ s o m }{						
	\IfBooleanT{#1}{
  		\IfValueT{#2}{\oldabs[#2]{#3}}
		\IfValueF{#2}{\oldabs{#3}} 
	}
	\IfBooleanF{#1}{
  		\IfValueT{#2}{\PackageError{mypackage}{Incorrect use of abs. Insert star}{}}
		\IfValueF{#2}{\oldabs*{#3}} 
	}		
}
\DeclarePairedDelimiterX{\oldnorm}[1]{\lVert}{\rVert}{#1}
\NewDocumentCommand{\norm}{ s o o m }{					
	\IfValueT{#2} {
		\IfBooleanT{#1}{
  			\IfValueT{#3}{\oldnorm[#2]{#4}_{#3}}
			\IfValueF{#3}{\oldnorm{#4}_{#2}} 
		}
		\IfBooleanF{#1}{
  			\IfValueT{#3}{\PackageError{mypackage}{Incorrect use of norm. Insert star}{}}
			\IfValueF{#3}{\oldnorm*{#4}_{#2}} 
		}
	}
	\IfValueF{#2} {
		\IfBooleanT{#1}{\oldnorm{#4}}	
		\IfBooleanF{#1}{\oldnorm*{#4}}		
	}	
}
\def\black@#1{%
    \noalign{%
        \ifdim#1>\displaywidth
            \dimen@\prevdepth
            \nointerlineskip
            \vskip-\ht\strutbox@
            \vskip-\dp\strutbox@
            \vbox{\noindent\hbox to \displaywidth{\hbox to#1{\strut@\hfill}}}%
            \prevdepth\dimen@
        \fi
    }%
}
\renewcommand{\tocsection}[3]{%
  \indentlabel{\@ifnotempty{#2}{\bfseries\ignorespaces#1 #2\quad}}\bfseries#3}
\renewcommand{\tocsubsection}[3]{%
  \indentlabel{\@ifnotempty{#2}{\ignorespaces#1 #2\quad}}#3}
\newcommand\@dotsep{4.5}
\def\@tocline#1#2#3#4#5#6#7{\relax
  \ifnum #1>\c@tocdepth 
  \else
    \par \addpenalty\@secpenalty\addvspace{#2}%
    \begingroup \hyphenpenalty\@M
    \@ifempty{#4}{%
      \@tempdima\csname r@tocindent\number#1\endcsname\relax
    }{%
      \@tempdima#4\relax
    }%
    \parindent\z@ \leftskip#3\relax \advance\leftskip\@tempdima\relax
    \rightskip\@pnumwidth plus1em \parfillskip-\@pnumwidth
    #5\leavevmode\hskip-\@tempdima{#6}\nobreak
    \leaders\hbox{$\m@th\mkern \@dotsep mu\hbox{.}\mkern \@dotsep mu$}\hfill
    \nobreak
    \hbox to\@pnumwidth{\@tocpagenum{\ifnum#1=1\bfseries\fi#7}}\par
    \nobreak
    \endgroup
  \fi}
\renewcommand\csname r@tocindent0\endcsname{0pt}
\def\l@subsection{\@tocline{2}{0pt}{2.5pc}{5pc}{}}
 \def\@testdef #1#2#3{%
   \def\reserved@a{#3}\expandafter \ifx \csname #1@#2\endcsname
  \reserved@a  \else
 \typeout{^^Jlabel #2 changed:^^J%
 \meaning\reserved@a^^J%
 \expandafter\meaning\csname #1@#2\endcsname^^J}%
 \@tempswatrue \fi}
\newcommand*{\rom}[1]{\expandafter\@slowromancap\romannumeral #1@}
\patchcmd{\@sect}{\@addpunct.}{}{}{}
\patchcmd{\subsection}{-.5em}{1em}{}{}
\begin{document}

\title[water waves]{Angled crested like water waves with surface tension \rom{2}: Zero surface tension limit}
\author{Siddhant Agrawal}
\address{Department of Mathematics, University of Massachusetts Amherst, MA 01003}
\email{agrawal@math.umass.edu}

\begin{abstract}
This is the second paper in a series of papers analyzing angled crested like water waves with surface tension. We consider the 2D capillary gravity water wave equation and assume that the fluid is inviscid, incompressible, irrotational and the air density is zero. In the first paper \cite{Ag19} we constructed a weighted energy which generalizes the energy of Kinsey and Wu \cite{KiWu18} to the case of non-zero surface tension, and proved a local wellposedness result. In this paper we prove that under a suitable scaling regime, the zero surface tension limit of these solutions with surface tension are solutions to the gravity water wave equation which includes waves with angled crests. 
\end{abstract}

\subjclass[2010]{35Q35, 76B15, 76B45, water waves, surface tension, singular solutions}

\maketitle

\tableofcontents

\section{Introduction}

This is the second paper in a series of papers analyzing angled crested like water waves with surface tension. As in the first paper we will identify 2D vectors with complex numbers. We consider a 2D fluid which we assume to be inviscid, incompressible and irrotational and the fluid is subject to a uniform gravitational field $-i$ acting in the downward direction. The fluid domain $\Omega(t)$ and the air region are separated by an interface $\Sigma(t)$ which we assume to be homeomorphic to $\Rsp$ and which tends to the real line at infinity. We do not assume that the interface is a graph. The air and the fluid are assumed to have constant densities of 0 and 1 respectively. The fluid is below the air region and its motion is governed by the Euler equation
\begin{equation}\label{eq:Euler}
\begin{aligned}
& \mathbf{v_t + (v.\nabla)v} = -i -\nabla P  \qq\text{ on } \Omega (t) 	\\
& \tx{div } \mathbf{ v} = 0, \quad \tx{curl } \mathbf{ v }=0 \qq\text{ on } \Omega(t) 
\end{aligned}
\end{equation}
along with the boundary conditions
\begin{equation}\label{eq:Eulerboundary}
\begin{aligned}
& P = -\sigma \partial_{s}\thvar \qq\text{ on } \Sigma (t) \\
& (1,\mathbf{v}) \tx{ is tangent to the free surface } (t, \Sigma(t)) \\
& \mathbf{v} \to 0, \grad P \to -i \qq\text{ as } |(x,y)| \to \infty
\end{aligned}
\end{equation}
Here $\thvar = $ angle the interface makes with the $x -axis$, $\ps = $ arc length derivative, $\sigma = $ coefficient of surface tension $\geq0$. 

The earliest results on local well-posedness for the Cauchy problem are for small data in 2D and were obtained by Nalimov \cite{Na74}, Yoshihara \cite{Yo82,Yo83} and Craig \cite{Cr85}. In the case of zero surface tension, Wu \cite{Wu97,Wu99} obtained the proof of local well-posedness for arbitrary data in Sobolev spaces. This result has been extended in various directions, see the works in \cite{ChLi00, Li05, La05, ZhZh08, CaCoFeGaGo13, AlBuZu14, HuIfTa16, AlBuZu18, HaIfTa17, Po19, Ai17, AiIfTa19, Ai20}. In the case of non-zero surface tension, the local well-posedness of the equation in Sobolev spaces was established by Beyer and Gunther in \cite{BeGu98}. See also related works in \cite{Ig01, Am03, Sc05, CoSh07, ShZe11, AlBuZu11, CaCoFeGaGo12, Ng17}. The zero surface tension limit of the water wave equation in Sobolev spaces was proved by Ambrose and Masmoudi \cite{AmMa05, AmMa09}. See also the works in \cite{OgTa02, ShZe08, MiZh09,CaCoFeGaGo12, ShSh19}.

All of the above results are for interfaces with regularity at least $C^{1,\alpha}$ for some $\alpha >0$. In an important work Kinsey and Wu \cite{KiWu18} proved an a priori estimate for angled crested water waves in the case of zero surface tension. Using this Wu \cite{Wu19} proved a local wellposedness result which allows for angled crested interfaces as initial data. In \cite{Ag20} the author proved that these singular solutions are rigid and in particular the angle of the corner does not change with time. 

In the first part of this series of papers \cite{Ag19}, we took the first step in extending this theory of angled crested water waves to the case of non-zero surface tension. We constructed an energy functional $\Ecalsigma(t)$ which generalizes the energy of \cite{KiWu18} to the case of $\sigma \geq 0$, and proved a local wellposedness result based on this energy (see \thmref{thm:existence}). For initial data in appropriate Sobolev spaces, this existence result gives us a uniform time of existence of solutions for $0\leq \sigma \leq \sigma_0$ for arbitrary $\sigma_0>0$,  thereby recovering the uniform time of existence result of Ambrose and Masmoudi \cite{AmMa05} in this case. In addition to this, the energy $\Ecalsigma(t)$ has several interesting properties: for example if $\sigma = 0$ then it reduces to a lower order version of the energy of  \cite{KiWu18} and allows angled crested interfaces, however for $\sigma>0$ the energy $\Ecalsigma(t)$ does not allow any singularties of the interface. On the other hand the energy does allow large curvature when $\sigma>0$ and in particular the $\Linfty$ norm of the initial curvature can be as large as $\sigma^{-\onebythree + \ep}$ for any $\ep>0$ (see \corref{cor:example}).  This growth rate of $\sigma^{-\onebythree}$ is explained by the fact that the quantity $\norm*[\infty]{\sigma^\onebythree \kap}$ where $\kap$ is the curvature, is a scaling invariant quantity for the problem of zero surface tension limit (see the introduction and Remark 3.4 of \cite{Ag19} for more details). 

In this paper we continue the study of angled crested like water waves and consider the zero surface tension limit. We construct a weighted energy $\EcalDelta(t)$ for the difference of two solutions of the water wave equation, one with zero surface tension and one with surface tension $\sigma$, and prove in our main result \thmref{thm:convergence} that $\EcalDelta(t) \to 0$ as the surface tension $\sigma \to 0$. This is a generalization of the convergence result of \cite{AmMa05} where convergence is proven in Sobolev spaces. The advantage of using this energy $\EcalDelta(t)$ for the difference of the solutions, instead of the usual Sobolev norms, is that the rate of growth of the energy $\EcalDelta(t)$ does not depend on the Sobolev norms of the initial data but only on weighted norms such as $\Ecalsigma(0)$. In particular the growth rate does not depend on the $C^{1,\alpha}$ norm of the initial interface (for any $0<\alpha \leq 1$), which is assumed in all previous results on zero surface tension limit. Hence this result allows us to control the difference of the solutions independent of how close the initial interface is to an angled crested interface. 

As an application of our main result \thmref{thm:convergence}, we show that under a suitable scaling regime, smooth solutions of the water wave equation with surface tension converge to the singular solution of the water wave equation with zero surface tension. More precisely we consider an initial data $(\Z,\Zt)(0)$ with angled crested interface and consider the singular solution $(\Z,\Zt)(t)$ to the water wave equation with zero surface tension as obtained from \cite{Wu19}. We then consider smooth solutions $(\Z^{\ep,\sigma}, \Zt^{\ep,\sigma})(t)$ to the water wave equation with surface tension $\sigma$ with initial data  $(\Z^{\epsilon,\sigma}, \Zt^{\epsilon,\sigma})(0) = (\Z\conv P_\epsilon, \Zt\conv P_\epsilon)(0)$ where $P_\epsilon$ is the Poisson kernel. We show in \corref{cor:examplenew} that if $\max\cbrac*[\big]{\sigma/\ep^\threebytwo, \ep} \to 0$, then the solutions $(\Z^{\ep,\sigma}, \Zt^{\ep,\sigma})(t) \to (\Z,\Zt)(t)$ in a suitable norm. The existence of these solutions  $(\Z^{\ep,\sigma}, \Zt^{\ep,\sigma})(t)$ on a uniform time interval was already shown in \cite{Ag19} (see \corref{cor:example}) and this factor of $\sigma/\ep^\threebytwo$ already appeared there (see Section 3.1 of \cite{Ag19} for more details). The novelty of \corref{cor:examplenew} is the convergence aspect. 

The proof of \thmref{thm:convergence} is based on energy estimates. The usual strategy of directly subtracting terms in the energy $\Ecalsigma(t)$ and using that as the energy for the difference of the solutions fails in our case because of the weighted nature of the energy $\Ecalsigma(t)$. To remedy this, we introduce a coupling term $\sigma(\Ecalaux)_b(t)$ which couples a carefully constructed weighted energy $(\Ecalaux)_b(t)$ of the zero surface tension solution, with the surface tension $\sigma$ coming from the solution with surface tension.  This coupling term $\sigma(\Ecalaux)_b(t)$ is a part of the energy $\EcalDelta(t)$ and is crucial to closing the energy estimate of $\EcalDelta(t)$ at the highest order. We believe that using such coupling terms could be useful in other convergence problems more generally, especially when dealing with weighted Sobolev norms. We discuss the necessity and usefulness of this coupling term in more detail in \secref{sec:discussion}. 

The paper is organized as follows: In \secref{sec:notation} we introduce the notation, the main system of equations and recall the results from \cite{Ag19}. In \secref{sec:resultsanddiscussion} we state our main results and discuss the main ideas behind the proof. Then in \secref{sec:part1} we collect all the identities and estimates from \cite{Ag19} that we frequently use in our calculations. In \secref{sec:aprioriEcalhigh} we prove an a priori estimate for the energy $\Ecalhigh(t)$ which is a weighted energy for the zero surface tension solutions, and this is used in the a priori estimate for $\Ecalaux(t)$. In \secref{sec:aprioriEcalaux} we prove an a priori estimate for the energy $\Ecalaux(t)$ which is also a weighted energy for the zero surface tension solution. As explained before this energy is crucial in proving \thmref{thm:convergence}. In \secref{sec:aprioriEcalDelta} we prove an a priori estimate for our main energy $\EcalDelta(t)$. Finally  in \secref{sec:proof} we complete the proofs of our results \thmref{thm:convergence} and \corref{cor:examplenew}. In appendix A \secref{sec:appendixA} we collect all the identities and estimates from the appendix of \cite{Ag19} which are needed for this paper and in appendix B \secref{sec:appendixB} we prove some new estimates that we use throughout the paper. 
\bigskip

\noindent \textbf{Acknowledgment}: This work was part of the author's Ph.D. thesis and he is very grateful to his advisor Prof. Sijue Wu for proposing the problem and for her guidance during this project. The author would also like to thank Prof. Jeffrey Rauch for many helpful discussions. The author was supported in part by NSF Grants DMS-1101434, DMS-1361791 through his advisor.


\section{Notation and previous work from part \rom{1}}\label{sec:notation}
\bigskip

\subsection{Notation}

In this section we recall the notation used in \cite{Ag19} and explain the main results proved there.  The Fourier transform is defined as
\[
\hat{f}(\xi) = \frac{1}{\sqrt{2\pi}}\int e^{-ix\xi}f(x) \diff x
\] 
We will denote by $\Scalsp(\Rsp)$ the Schwartz space of rapidly decreasing functions and $\Scalsp'(\Rsp)$ is the space of tempered distributions. A Fourier multiplier with symbol $a(\xi)$ is the operator $T_a$ defined formally by the relation $\dis \widehat{T_a{f}} = a(\xi)\hat{f}(\xi)$. The operators $\papabs^s $ for $s\in\Rsp$ are defined as the Fourier multipliers with symbol $\abs{\xi}^s$. 
The Sobolev space $H^s(\Rsp)$ for $s\geq 0$  is the space of functions with  $\norm[H^s]{f} = \norm*[\Ltwo(\diff \xi)]{(1+\abs{\xi}^2)^{\frac{s}{2}}\hat{f}(\xi)} < \infty$. The homogenous Sobolev space $\Hhalf(\Rsp)$ is the space of functions modulo constants with  $\norm[\Hhalf]{f} = \norm*[\Ltwo(\diff \xi)]{\abs{\xi}^\half \hat{f}(\xi)} < \infty$.  The Poisson kernel is given by
\begin{align}\label{eq:Poissonkernel}
K_\ep(x) = \frac{\ep}{\pi(\ep^2 + x^2)} \qquad \tx{ for } \ep>0
\end{align}

From now on compositions of functions will always be in the spatial variables. We write $f = f(\cdot,t), g = g(\cdot,t), f \compose g(\cdot,t) :=  f(g(\cdot,t),t)$. Define the operator $U_g$ as given by $U_g f = f\compose g$. Observe that $U_f U_g = U_{g\compose f}$.  Let $[A,B] := AB - BA$ be the commutator of the operators $A$ and $B$. If $A$ is an operator and $f$ is a function, then $(A + f)$ will represent the addition of the operators A and the multiplication operator $T_f$ where $T_f (g) = fg$.  We denote the convolution of $f$ and $g$ by $f \conv g$. We will denote the spacial coordinates in $\Omega(t) $ with $z = x+iy$, whereas $\zp = \xp + i\yp$ will denote the coordinates in the lower half plane $\Pminus = \cbrac{(x,y) \in \Rsp^2 \suchthat y<0}$. As we will frequently work with holomorphic functions, we will use the holomorphic derivatives $\pz = \half(\px-i\py)$ and $\pzp = \half(\pxp-i\pyp)$. In this paper all norms will be taken in the spacial coordinates unless otherwise specified. For example for a function $f:\Rsp\times [0,T] \to \Csp$ we write $\norm[2]{f} = \norm[2]{f(\cdot,t)} = \norm[\Ltwo(\Rsp,\diff \ap)]{f(\cdot,t)}$. Also for a function $f:\Pminus \to \Csp$ we write $\sup_{\yp<0}\norm[\Ltwo(\Rsp,\diff\xp)]{f} = \sup_{\yp<0}\norm[\Ltwo(\Rsp,\diff\xp)]{f(\cdot,\yp)}$. 

We write $a \lesssim b$ if there exists a universal constant $C>0$ so that $a\leq Cb$. This notation may be changed in different sections to simplify calculations in those sections. For functions $f_1,f_2,f_3:\Rsp \to \Csp$ we define the function $\sqbrac{f_1,f_2 ; f_3}:\Rsp \to \Csp$ as 
\begin{align}\label{eq:foneftwofthree}
\sqbrac{f_1, f_2;  f_3}(\ap) = \frac{1}{i\pi} \int \brac{\frac{f_1(\ap) - f_1(\bp)}{\ap - \bp}}\brac{\frac{f_2(\ap) - f_2(\bp)}{\ap-\bp}} f_3(\bp) \diff \bp
\end{align}
and if $g:\Rsp \to \Rsp$ is a homeomorphism then we define $\sqbrac{f_1,f_2 ; f_3}_{g}:\Rsp \to \Csp$ as 
\begin{align}\label{eq:foneftwofthreeg}
\sqbrac{f_1, f_2;  f_3}_{g}(\ap) = \frac{1}{i\pi} \int \brac{\frac{f_1(\ap) - f_1(\bp)}{g(\ap) - g(\bp)}}\brac{\frac{f_2(\ap) - f_2(\bp)}{g(\ap) - g(\bp)}} f_3(\bp) \diff \bp
\end{align}

Let the interface be parametrized in Lagrangian coordinates by $\z(\cdot,t): \Rsp \to \Sigma(t)$ satisfying $\z_{\al}(\al,t) \neq 0 $ for all  $\al \in \Rsp$. Hence $\zt(\al,t) = \mathbf{v}(\z(\al,t),t)$ is the velocity of the fluid on the interface and $\ztt(\al,t) = (\mathbf{v_t + (v.\nabla)v})(\z(\al,t),t)$ is the acceleration. 

Let $\Psi(\cdot,t): \Pminus \to  \Omega(t)$ be conformal maps satisfying $\lim_{\z\to \infty} \Psi_\z(\z,t) =1$  and $\lim_{\z\to \infty} \Psi_t(\z,t) =0$.  With this, the only ambiguity left in the definition of $\Psi$ is that of the choice of translation of the conformal map at $t=0$, which does not play any role in the analysis. Let $\Phi(\cdot,t):\Omega(t) \to \Pminus $ be the inverse of the map $\Psi(\cdot,t)$ and define $\h(\cdot,t):\Rsp \to \Rsp$ as
\begin{align*}
\h(\al,t) = \Phi(\z(\al,t),t)
\end{align*}
hence $\h(\cdot,t)$ is a homeomorphism. As we use both Lagrangian and conformal parameterizations, we will denote the Lagrangian parameter by $\al$ and the conformal parameter by $\ap$. Let $\hinv(\cdot,t)$ be its spacial inverse i.e.
\[
\h(\hinv(\ap,t),t) = \ap
\] 
From now on, we will fix our Lagrangian parametrization at $t=0$ by imposing
\begin{align*}
h(\al,0)= \al \quad  \tx { for all } \al \in \Rsp
\end{align*}
Hence the Lagrangian parametrization is the same as conformal parametrization at $t=0$. Define the variables
\[
\begin{array}{l l l}
 \Z(\ap,t) = \z\compose \hinv (\ap,t)  & \Zap(\ap,t) = \pap \Z(\ap,t) &  \quad  \tx{ Hence } \quad  \brac*[]{\dfrac{\zal}{\hal}} \compose \hinv = \Zap \\
  \Zt(\ap,t) = \zt\compose \hinv (\ap,t)  & \Ztap(\ap,t) = \pap \Zt(\ap,t) &  \quad   \tx{ Hence } \quad  \brac*[]{\dfrac{\ztal}{\hal}} \compose \hinv = \Ztap \\
 \Ztt(\ap,t) = \ztt\compose \hinv (\ap,t)  & \Zttap(\ap,t) = \pap \Ztt(\ap,t) &  \quad   \tx{ Hence } \quad  \brac*[]{\dfrac{\zttal}{\hal}} \compose \hinv = \Zttap \\
\end{array}
\]
Hence $\Z(\ap,t), \Zt(\ap,t)$ and $\Ztt(\ap,t)$ are the parameterizations of the boundary, the velocity and the acceleration in conformal coordinates and in particular $\Z(\cdot,t)$ is the boundary value of the conformal map $\Psi(\cdot,t)$. Note that as $\Z(\ap,t) = \z(\hinv(\ap,t),t)$ we see that  $\pt \Z \neq \Zt$. Similarly $\pt \Zt \neq \Ztt$.  The substitute for the time derivative is the material derivative. Define the operators
\begingroup
\allowdisplaybreaks
\begin{fleqn}
\begin{align}\label{eq:mainoperators}
\begin{split}
\qquad & \Dt =  \tx{material derivative} = \pt + \bvar\pap  \qquad \tx{ where } \bvar = \hvart \compose \hinv \\
 & \Dap = \Dapfrac \qquad \Dapbar = \Dapbarfrac \qquad \Dapabs = \Dapabsfrac \\
& \Hil  =   \text{Hilbert transform  = Fourier multiplier with symbol } -sgn(\xi) \\
& \qquad  \Hil f(\alpha ' ) =  \frac{1}{i\pi} p.v. \int \frac{1}{\alpha ' - \beta'}f(\beta') \diff\beta' \\
& \Ph = \text{Holomorphic projection} = \frac{\Id + \Hil}{2} \\
& \Pa = \text{Antiholomorphic projection} = \frac{\Id - \Hil}{2} \\ 
& \papabs   = \ i\Hil \partial_{\alpha'} = \sqrt{-\Delta} = \text{ Fourier multiplier with symbol } |\xi| \ \\
& \papabs^{1/2}  = \text{ Fourier multiplier with symbol } \abs{\xi}^{1/2} 
\end{split}
\end{align}
\end{fleqn}
\endgroup
Now we have $\Dt \Z = \Zt$ and $\Dt \Zt = \Ztt$ and more generally $\Dt (f(\cdot,t)\compose \hinv) = (\pt f(\cdot,t)) \compose \hinv$ or equivalently $\pt (F(\cdot,t)\compose \h) = (\Dt F(\cdot,t)) \compose \h$. This means that $\Dt = U_h^{-1}\pt U_h$ i.e. $\Dt$ is the material derivative in conformal coordinates. We also define a few more variables related to the interface:
\begin{fleqn}
\begin{align}\label{eq:mainvariables}
\begin{split}
& \g =\Imag(\log(\Zap)) \qquad  \tx{ Hence }\quad \Dapabs \g = -i\Dap \frac{\Zap}{\Zapabs}  \\
& \Th = (\Id+\Hil)\Dapabs \g = -i(\Id+\Hil) \Dap \frac{\Zap}{\Zapabs} \\
& \w = e^{i\g} = \frac{\Zap}{\Zapabs} \qquad  \tx{ Hence } \quad \Dapabs \w = i \w\Real \Th
\end{split}
\end{align} 
\end{fleqn} 
Observe that $g$ is the angle the interface makes with the x-axis in conformal coordinates, i.e. $g = \thvar\compose\hinv$ where $\frac{\zal}{\abs{\zal}} = e^{i\thvar}$. Hence $\Real \Th = \kap\compose\hinv$ where $\kap$ is the curvature of the interface.

\medskip
\subsection{The system}

To solve the system \eqref{eq:Euler}, \eqref{eq:Eulerboundary} in \cite{Ag19} we obtained a  system for the variables $(\Zap,\Zt)$ which we then solve. The system is as follows:
\begin{align}\label{eq:systemone}
\begin{split}
\bvar & = \Real(\Id - \Hil)\brac{\frac{\Zt}{\Zap}} \\
\Aone & = 1 - \Imag \sqbrac{\Zt,\Hil}\Ztapbar \\
(\pt + \bvar\pap)\Zap &= \Ztap - \bap\Zap  \\
(\pt + \bvar\pap)\Ztbar & = i -i\frac{\Aone}{\Zap} + \frac{\sigma}{\Zap}\pap(\Id + \Hil)\cbrac{\Imag\brac{\frac{1}{\Zap}\pap \frac{\Zap}{\Zapabs} }}
\end{split}
\end{align}
along with the condition that their harmonic extensions, namely $\Psizp(\cdot + iy) = K_{-y}\conv \Zap$ and $\U(\cdot + iy) = K_{-y}\conv \Ztbar$ for all $y<0$, \footnote{here $K_{-y}$ is the Poisson kernel \eqref{eq:Poissonkernel}} are holomorphic functions on $\Pminus$ and satisfy \footnote{We observe that for such a $\Psizp$  we can uniquely define $\log(\Psizp) : \Pminus \to \Csp$ such that $\log(\Psizp)$ is a continuous function with $\Psizp = \exp\cbrac{\log(\Psizp)}$ and $(\log(\Psizp))(\zp) \to 0$ as $\zp \to \infty$. }
\begin{align*}
\lim_{c \to \infty} \sup_{\abs{\zp}\geq c}\cbrac{\abs{\Psizp(\zp) - 1} + \abs{\U(\zp)}}  = 0 
\qquad \tx{ and } \quad \Psizp(\zp) \neq 0 \quad \tx{ for all } \zp \in \Pminus 
\end{align*}
After solving the above system one can obtain $\Z(\cdot,t)$ by the formula 
\begin{align*}
\Z(\ap,t) = \Z(\ap,0) +  \int_0^t \cbrac{\Zt(\ap,s) - \bvar(\ap,s)\Zap(\ap,s)} \diff s
\end{align*}
and hence $(\pt + \bvar\pap)\Z = \Dt\Z = \Zt$. Hence one can view the system being in variables $(\Z,\Zt)$ instead of the variables $(\Zap,\Zt)$,

We observe that the above system allows self intersecting interfaces. However if the interface is self-intersecting then it becomes nonphysical and so its relation to the Euler equation  \eqref{eq:Euler}, \eqref{eq:Eulerboundary} is lost. See \cite{Ag19} for more details. 

From the calculation in \cite{KiWu18} we have $\Aone \geq 1$. Now to get the function $\h(\al,t)$, we solve the ODE
\begin{align}\label{eq:h}
\begin{split}
\frac{\diff h}{\diff t} &= \bvar(\h, t) \\
h(\al,0) &= \al
\end{split}
\end{align}
Observe that as long as $\sup_{[0,T]} \norm[\infty]{\bvarap}(t) <\infty$ we can solve this ODE uniquely and for any $t\in [0,T]$ we have that $h(\cdot,t)$ is a homeomorphism. Hence it makes sense to talk about the functions $z = \Z\compose \h, \zt = \Zt \compose \h$ which are Lagrangian parametrizations of the interface and the velocity on the boundary. We also note that the last  equation in \eqref{eq:systemone} can be written as 
\begin{align}  \label{form:Zttbar}
\Zttbar -i = -i \frac{\Aone}{\Zap} +\sigma \Dap \Th
\end{align}

\subsection{Previous result}\label{sec:previousresult}

Let us now describe the main result of \cite{Ag19}. For $\sigma\geq 0$ define the energy
\begingroup
\allowdisplaybreaks
\begin{align*}
\Ecalsigmaone & =  \norm[2]{\pap\frac{1}{\Zap}}^2 + \norm*[\bigg][\Hhalf]{\frac{1}{\Zap}\pap\frac{1}{\Zap}}^2  +  \norm[\Hhalf]{\sigma\pap\Th}^2 + \norm*[\Bigg][2]{\sigma^\onebysix\Zap^\half\pap\frac{1}{\Zap}}^6 + \norm*[\Bigg][\infty]{\sigma^\half\Zap^\half\pap\frac{1}{\Zap}}^2  \\*
& \quad  +  \norm*[\Bigg][2]{\frac{\sigma^\half}{\Zap^\half}\pap^2\frac{1}{\Zap}}^2  + \norm*[\Bigg][\Hhalf]{\frac{\sigma^\half}{\Zap^\threebytwo}\pap^2\frac{1}{\Zap}}^2     + \norm[2]{\frac{\sigma}{\Zap}\pap^3\frac{1}{\Zap}}^2 + \norm[\Hhalf]{\frac{\sigma}{\Zap^2}\pap^3\frac{1}{\Zap}}^2 \\
\Ecalsigmatwo &= \norm[2]{\Ztapbar}^2 + \norm*[\Bigg][2]{\frac{1}{\Zap^2}\pap\Ztapbar}^2 + \norm*[\Bigg][2]{\frac{\sigma^\half}{\Zap^\half}\pap\Ztapbar}^2 + \norm*[\Bigg][2]{\frac{\sigma^\half}{\Zap^\fivebytwo}\pap^2\Ztapbar}^2 \\
\Ecalsigma & = \Ecalsigmaone + \Ecalsigmatwo
\end{align*}
\endgroup

\begin{thm}[\cite{Ag19}]\label{thm:existence}
Let $\sigma >0$ and assume the initial data $(\Z,\Zt)(0)$ satisfies $(\Zap-1,\frac{1}{\Zap} - 1, \Zt)(0) \in H^{3.5}(\Rsp)\times H^{3.5}(\Rsp)\times H^3(\Rsp)$. Then $ \Ecalsigma(0) < \infty$ and there exists $T,C_1 > 0$ depending only on $\Ecalsigma(0)$ such that the initial value problem to  \eqref{eq:systemone} has a unique solution $(\Z,\Zt)(t)$ in the time interval $[0,T]$ satisfying  
$(\Zap-1,\frac{1}{\Zap} - 1, \Zt) \in C^l([0,T], H^{3.5 - \threebytwo l}(\Rsp)\times H^{3.5 - \threebytwo l}(\Rsp)\times H^{3 - \threebytwo l}(\Rsp))$ 
for $l = 0,1$ and $\dis \sup_{t\in [0,T]} \Ecalsigma (t)\leq C_1 <\infty$
\end{thm}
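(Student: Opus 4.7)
I would follow the standard quasilinear strategy adapted to the weighted energy $\Ecalsigma$: construct regularized solutions, obtain uniform a priori bounds in $\Ecalsigma$, pass to the limit, and prove uniqueness by a lower-order energy estimate.

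First, I would produce approximate solutions $(\Zap^\del,\Zt^\del)$ by inserting a Friedrichs-type mollifier $J_\del$ into the nonlinear right-hand sides of \eqref{eq:systemone}, chosen so as to preserve the key algebraic structure ($\Ph+\Pa=\Id$, self-adjointness of $\papabs^{1/2}$, cancellations in commutators, etc.). For fixed $\del>0$ the mollified system becomes an ODE on a ball in $H^{3.5}(\Rsp)\times H^{3.5}(\Rsp)\times H^3(\Rsp)$ and is solved by Picard iteration; the entire point of the regularization is that the energy estimate in the next step holds with constants independent of $\del$.

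The core of the argument is the differential inequality
\[
\frac{d}{dt}\Ecalsigma(t)\ \le\ P\!\brac{\Ecalsigma(t)}
\]
for some polynomial $P$ independent of $\sigma$, from which Gronwall yields the claimed time $T=T(\Ecalsigma(0))>0$. To derive it one differentiates each of the weighted pieces of $\Ecalsigmaone$ and $\Ecalsigmatwo$ using $\Dt=\pt+\bvar\pap$, the evolution $\Dt\Zap=\Ztap-\bap\Zap$, and the acceleration formula \eqref{form:Zttbar}. The key analytic tools are: the commutator identities $[\Dt,\pap]=-\bap\pap$ and $[\Dt,\Hil]=[\bvar,\Hil]\pap$ (and their $\papabs^{1/2}$ analogues); skew-adjointness of $\Hil$ and $\papabs^{1/2}$; the Rayleigh--Taylor sign $\Aone\ge 1$ from \cite{KiWu18}, which handles the gravity term $-i\Aone/\Zap$ in $\Dt\Ztbar$; and the skew structure of the capillary term $\sigma\Dap\Th$, which after integration by parts against the correctly weighted highest-order piece produces a favourable sign that absorbs the apparent loss of a derivative. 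Together with Cauchy--Schwarz, Sobolev embedding, and the product and singular-integral estimates collected in the appendix of \cite{Ag19}, each resulting term is then bounded by a polynomial in $\Ecalsigma$.

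The main obstacle is the bookkeeping at the highest order with the weights $\sigma^{k/6}/\Zap^{j/2}$: one has to verify that the worst terms --- three $\pap$'s on $1/\Zap$ and two $\pap$'s on $\Ztap$, each carrying the prescribed power of $\sigma$ --- cancel across the different pieces of the energy rather than produce a genuine derivative loss. This is precisely where the particular weighting in $\Ecalsigmaone$ between $\sigma^{1/6}$, $\sigma^{1/2}$ and $\sigma$ terms is forced, so that each surface tension commutator either comes with a good sign or is absorbed into a lower-order piece of $\Ecalsigma$. Once the uniform bound $\sup_{[0,T]}\Ecalsigma^\del(t)\le C_1$ is in hand, compactness and interpolation yield a limit solution $(\Zap,\Zt)$ with the stated $C^0 H^{3.5}$ (resp.\ $C^0 H^3$) regularity, and the $l=1$ part of the statement is read off the equations directly. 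Uniqueness follows from the same type of energy estimate applied at the $\Ltwo\times\Ltwo$ level to the difference of two solutions, again using $\Aone\ge 1$ and the positive contribution of the surface tension term.
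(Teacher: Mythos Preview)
This theorem is not proved in the present paper; it is quoted from \cite{Ag19} (note the attribution in the theorem header), and the paper only recalls the relevant machinery in \secref{sec:part1}. So there is no ``paper's own proof'' to compare against here, only the structure of the argument from \cite{Ag19} as summarized in \secref{sec:equations}--\secref{sec:aprioriEcalsigma}.

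That said, your sketch is broadly in the right spirit but diverges from the actual architecture in \cite{Ag19} in a way worth flagging. You propose to differentiate the pieces of $\Ecalsigma$ directly. What \cite{Ag19} does instead (as recounted in \secref{sec:aprioriEcalsigma}) is to work with a different, equivalent energy $\Esigma=\Esigmazero+\cdots+\Esigmafour$ built so that each block $\Esigma_{,i}$ matches a specific quasilinear equation of the form $\bigl(\Dt^2+i\frac{\Aone}{\Zapabs^2}\pap-i\sigma\Dapabs^3\bigr)f=\text{l.o.t.}$, namely \eqref{eq:ZtZap}, \eqref{eq:Ztbarap}, \eqref{eq:Th}, \eqref{eq:DapbarZtbar} for $f=\Ztbar$, $\Ztapbar$, $\Th$, $\Dapbar\Ztbar$. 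The surface tension operator $-i\sigma\Dapabs^3$ then contributes the $\sigma^{1/2}$-weighted top-order pieces of the energy with the correct sign by construction, rather than through an ad hoc cancellation across the $\sigma^{1/6},\sigma^{1/2},\sigma$ terms of $\Ecalsigma$ as you describe. The equivalence $\Esigma\leftrightarrow\Ecalsigma$ is established separately (\propref{prop:equivEsigmaEcalsigma}). Your description of the highest-order bookkeeping as ``cancellations across different pieces of $\Ecalsigma$'' is therefore not quite how the argument runs; the cancellation is built into the choice of $\Esigma$ and the quasilinear form, and $\Ecalsigma$ is only a more explicit repackaging after the fact.
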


We would like to emphasize that the above result does not allow angled crested interfaces when $\sigma >0$. The main feature of this existence result is that the time of existence depends only on $\Ecalsigma(0)$ and not on the $C^{1,\alpha}$ norm of the initial data (for any $0<\alpha \leq 1$). The energy $\Ecalsigma(t)$ can be viewed as a weighted Sobolev norms of $(\Z,\Zt)$ with the weight being powers of $\frac{1}{\Zap}$ along with appropriate powers of $\sigma$. The energy $\Ecalsigma(t)$ has several interesting properties such as:
\begin{enumerate}
\item For $\sigma=0$ the energy $\Ecalsigma(t)$ reduces to a lower order version of the energy of Kinsey and Wu \cite{KiWu18}. In particular it allows singular interfaces such as interfaces with angled crests and cusps (see \cite{Ag20}). 
\item For $\sigma>0$ the energy $\Ecalsigma(t)$ does not allow any singularities in the interface. In particular it does not allow angled crested interfaces. 
\item For $\sigma>0$ even though the energy $\Ecalsigma(t)$ does not allow singularities in the interface, it does allow interfaces with large curvature. It allows the $\Linfty$ norm of the curvature of the initial interface to be as large as $\sigma^{-\onebythree + \ep}$ for any $\ep>0$ (see \corref{cor:example} below). In particular for $\sigma$ small, the energy allows interfaces with very large curvature. 
\item Note that in the statement of the theorem, there are no assumptions on the Taylor sign condition. Also the energy $\Ecalsigma(t)$ is an increasing function of $\sigma$ and hence for initial data in appropriate Sobolev spaces, this result implies a uniform time of existence of solutions for $0\leq \sigma \leq \sigma_0$ for arbitrary $\sigma_0>0$, thereby recovering the uniform time of existence result of Ambrose and Masmoudi \cite{AmMa05} in this case. 
\end{enumerate}

These properties are discussed in more detail in \cite{Ag19}. One of the more interesting consequences of this theorem is that it can used to prove the existence of solutions to \eqref{eq:systemone} with the initial interface close to being angled crested. Let us now explain this result.  

As before let $(\Psi,\U)(\cdot,0):\Pminus \to \Csp$ be holomorphic maps with $\Psi_\zp \neq 0$ for all $z\in P_{-}$ and with their boundary values being the initial data namely $(\Z,\Ztbar)(\ap,0) = (\Psi,\U)(\ap,0)$ for all $\ap\in\Rsp$. 
Recall the notation namely that for $\zp \in \Pminus$ we write $\zp = \xp + i\yp$.   At $t=0$ define the quantity
\begin{align}\label{eq:M}
\begin{split}
\hspace*{-2mm}M & =   \sup_{\yp<0}\norm[\Leightbyseven(\Rsp, \diff \xp)]{\Psi_\zp^\threebyfour\partial_\zp \brac{\frac{1}{\Psi_\zp}}} + \sup_{\yp<0}\norm[\Lfourbythree(\Rsp, \diff \xp)]{\Psi_\zp^\half\partial_\zp \brac{\frac{1}{\Psi_\zp}}} + \sup_{\yp<0}\norm[\Ltwo(\Rsp, \diff \xp)]{\partial_\zp \brac{\frac{1}{\Psi_\zp}}}   \\
& \quad  + \sup_{\yp<0}\norm[\Linfty(\Rsp, \diff \xp)]{\frac{1}{\Psi_\zp}\partial_\zp \brac{\frac{1}{\Psi_\zp}}}  + \sup_{\yp<0}\norm[\Lone(\Rsp, \diff \xp)]{\frac{1}{\Psi_\zp}\partial_{\zp}^2 \brac{\frac{1}{\Psi_\zp}}} + \sup_{\yp<0}\norm[\Ltwo(\Rsp, \diff \xp)]{\frac{1}{\Psi_{\zp}^2}\partial_{\zp}^2 \brac{\frac{1}{\Psi_\zp}}}  \\
& \quad + \sup_{\yp<0}\norm[\Lone(\Rsp, \diff \xp)]{\frac{1}{\Psi_{\zp}^3}\partial_{\zp}^3 \brac{\frac{1}{\Psi_{\zp}}}} + \sup_{\yp<0}\norm[\Ltwo(\Rsp,\diff \xp)]{\frac{1}{\Psi_\zp} - 1} + \sup_{\yp<0}\norm[H^{3.5}(\Rsp,\diff \xp)]{\U}
\end{split}
\end{align}
It is easy to check that if $M<\infty$, then the initial data satisfies the hypothesis of Theorem 3.9 of \cite{Wu19} and we get a unique solution $(\Z,\Zt)(t)$ to \eqref{eq:systemone} for $\sigma=0$. Also by exactly the same argument as in section 5 of \cite{Ag20}, $M<\infty$ allows interfaces with angled crests of angles $\nu\pi$ with $0<\nu<\half$ and also allows certain cusps (see \cite{Ag20} for more details). With this we can now state the main corollary of this theorem.

\begin{cor}[\cite{Ag19}]\label{cor:example}
Consider an initial data $(\Z,\Zt)(0)$ with $M<\infty$. Let $(\Z,\Zt)(t)$ be the unique solution of equation \eqref{eq:systemone} for  $\sigma = 0$ with initial data $(\Z,\Zt)(0)$  as obtained in \cite{Wu19}. For $0<\epsilon \leq 1$ and $ \sigma \geq 0$ denote by $(\Z^{\epsilon,\sigma}, \Zt^{\epsilon,\sigma})(t)$ the unique solution to the equation \eqref{eq:systemone}  with surface tension $\sigma $ and with initial data  $(\Z^{\epsilon,\sigma}, \Zt^{\epsilon,\sigma})(0) = (\Z\conv P_\epsilon, \Zt\conv P_\epsilon)(0)$ where $P_\epsilon$ is the Poisson kernel. Then we have the following
\begin{enumerate}
\item For any $c>0$, there exists $T,C_1>0$ depending only on $c$ and $M$ such that for all $\sigma\geq 0$ and $0<\ep\leq 1$ satisfying $\dis \frac{\sigma}{\ep^\threebytwo} \leq c$, the solutions $(\Z^{\epsilon,\sigma}, \Zt^{\epsilon,\sigma})(t)$ exist in the time interval $[0,T]$ with $\sup_{t\in[0,T]} \Ecalsigma(\Z^{\ep,\sigma},\Zt^{\ep,\sigma})(t) \leq C_1 <\infty$.
\item If the initial interface $\Z(\cdot,0)$ has only one angled crest of angle $\nu\pi$ with $0<\nu<\half$, then the $\Linfty$ norm of the curvature $\kap^{\ep,\sigma}$ of the initial interface $\Z^{\ep,\sigma}(\cdot,0)$ satisfies $\norm[\infty]{\kap^{\ep,\sigma}} \sim \ep^{-\nu}$ as $\ep\to0$. In particular for any $0<\del<\onebythree$ arbitrarily small, choosing $\nu = \half - \threebytwo\del$ and $\sigma = \ep^\threebytwo$ we obtain $\norm[\infty]{\kap^{\ep,\sigma}} \sim \sigma^{-\onebythree + \del}$ as $\sigma \to 0$. Hence \thmref{thm:existence} allows initial interfaces with large curvature when $\sigma$ is small.  
\end{enumerate}
\end{cor}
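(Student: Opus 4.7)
For part (1), the plan is to verify that the smoothed initial data satisfies $\Ecalsigma(\Z^{\ep,\sigma},\Zt^{\ep,\sigma})(0) \leq C(c, M)$ uniformly in $\ep, \sigma$, and then invoke \thmref{thm:existence} (using monotonicity in $\sigma$ to cover $\sigma=0$ as well). Since $P_\ep = K_\ep$ and $\Psi_\zp(\cdot + iy) = K_{|y|} \conv \Zap(\cdot, 0)$ realizes the holomorphic extension of $\Zap(\cdot,0)$ to $\Pminus$, one has $\Zap^{\ep,\sigma}(\ap,0) = \Psi_\zp(\ap - i\ep, 0)$ and likewise $\Ztbar^{\ep,\sigma}(\ap, 0) = \U(\ap - i\ep, 0)$. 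Hence every quantity in $\Ecalsigma$ at $t=0$ becomes an $L^p(\Rsp,\diff\xp)$ norm of a holomorphic function on $\Pminus$ restricted to the line $\yp=-\ep$, and is therefore dominated by the corresponding $\sup_{\yp<0}$ norm appearing in $M$. The $\sigma$-free pieces of $\Ecalsigma$ reduce directly to quantities controlled by $M$ (with the $\Hhalf$ pieces handled by $\norm[\Hhalf]{f}^2 \lesssim \norm[2]{f}\norm[2]{\pap f}$), giving a uniform bound in $\ep$. For each $\sigma$-weighted piece, additional $\pap$ derivatives on the smoothed function are identified with $\partial_\zp$ and bounded by quantities in $M$ at the cost of at most one factor of $\ep^{-1}$ per derivative (via Cauchy's formula on a disc of radius $\ep/2$ inside $\Pminus$); the terms are engineered so that the worst loss is of the form $\sigma\ep^{-\threebytwo}$, which is $\leq c$ by hypothesis.

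For part (2), the plan is to use the local behavior of the Riemann map $\Psi:\Pminus \to \Omega(0)$ near an angled crest of angle $\nu\pi$, $0<\nu<\half$. Placing the crest at the origin and using the standard conformal mapping of the half plane to a wedge of opening $\nu\pi$, one has $\Psi(\zp) = c_0 \zp^\nu(1 + o(1))$ as $\zp \to 0$ in $\Pminus$, so $\Psi_\zp(\zp) = c_0 \nu\, \zp^{\nu-1}(1+o(1))$. Setting $W(\xp) = \Psi_\zp(\xp - i\ep)$, the curvature on the smoothed interface is $\kap^{\ep,\sigma}(\xp,0) = (1/|W|)\pap \arg W$; a direct substitution with $W = c_0\nu(\xp - i\ep)^{\nu-1}$ yields $|\kap^{\ep,\sigma}(\xp,0)| \sim \ep(\xp^2 + \ep^2)^{-(1+\nu)/2}$, whose maximum $\sim \ep^{-\nu}$ is attained at $\xp=0$ (the location of the former crest). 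Hence $\norm[\infty]{\kap^{\ep,\sigma}} \sim \ep^{-\nu}$. The stated scaling then follows by arithmetic: with $\sigma = \ep^\threebytwo$ one has $\ep = \sigma^\twobythree$, and with $\nu = \half - \threebytwo\delta$, $\ep^{-\nu} = \sigma^{-\onebythree + \delta}$.

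The main obstacle is the per-term bookkeeping in (1). The energy $\Ecalsigma$ contains nine weighted quantities combining $\Ltwo$, $\Linfty$, and $\Hhalf$ norms with powers of $\Zap$ and $\sigma$, and each must be matched against the specific list of $L^1$, $L^{8/7}$, $L^{4/3}$, $L^2$, and $L^\infty$ estimates packaged into $M$. For the higher-derivative weighted pieces such as $\norm[2]{\sigma\Zap^{-1}\pap^3(1/\Zap)}^2$, one needs the full strength of the $\sigma/\ep^\threebytwo \leq c$ balance: this is the scaling-critical condition for the problem of zero surface tension limit, corresponding to keeping $\norm[\infty]{\sigma^\onebythree \kap}$ bounded, and it is exactly the balance made precise by the curvature computation in part (2). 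Once this balance is verified for each of the nine terms, the application of \thmref{thm:existence} is immediate and yields the required $T, C_1$ depending only on $c$ and $M$.
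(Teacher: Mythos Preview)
This corollary is quoted from \cite{Ag19} and is not proved in the present paper; the paper only invokes it (and its proof in \cite{Ag19}) when establishing \corref{cor:examplenew}. Your sketch matches the approach that the paper attributes to \cite{Ag19}: showing $\Ecalsigma(\Z^{\ep,\sigma},\Zt^{\ep,\sigma})(0)\le C(c,M)$ via the identification $\Zap^{\ep,\sigma}(\ap,0)=\Psi_\zp(\ap-i\ep,0)$ and then applying \thmref{thm:existence}, and in part (2) reading off the curvature from the local model $\Psi(\zp)\sim \zp^\nu$. The paper's later reference (in the proof of \corref{cor:examplenew}) that ``all the terms involving $\sigma$ in $\Ecalsigma(\Z^{\ep,\sigma},\Zt^{\ep,\sigma})(0)$ are bounded above by $\tau=\sigma/\ep^{3/2}$'' confirms that the term-by-term bookkeeping you describe is exactly what is done in \cite{Ag19}.

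One point to flag: your description ``at most one factor of $\ep^{-1}$ per derivative'' via Cauchy's estimate is too crude to land on the sharp $\sigma/\ep^{3/2}$ scaling for every term. The actual argument in \cite{Ag19} (as can be inferred from the list of norms in $M$) uses the specific mixed-$L^p$ control on $\Psi_\zp^\beta\partial_\zp(1/\Psi_\zp)$ for $\beta=3/4,1/2,0$ together with \lemref{lem:conv}, which gives the correct fractional powers of $\ep$ rather than integer ones. For instance, the $\sigma^{1/6}$ and $\sigma^{1/2}$ terms in $\Ecalsigmaone$ require $\ep^{-1/4}$ and $\ep^{-3/4}$ losses respectively, not $\ep^{-1}$; these come from interpolating via the $L^{8/7}$ and $L^{4/3}$ norms in $M$. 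Your overall strategy is right, but the per-term analysis is more delicate than a uniform Cauchy estimate.
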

See \figref{fig:main} for a comparison between the interfaces $\Z^{\epsilon,\sigma}(\cdot,t)$ and $\Z(\cdot,t)$. The interesting thing about this result is that under the assumption of $\sigma\lesssim \ep^\threebytwo$, it proves the existence of the solutions $(\Z^{\epsilon,\sigma}, \Zt^{\epsilon,\sigma})(t)$ on a uniform time interval $[0,T]$. Generally if one uses standard energy estimates (as compared to working with the energy $\Ecalsigma(t)$), one gets an upper bound on the time of existence as  $T \lesssim \norm[\infty]{\kap}^{-1}$ where $\kap$ is the initial curvature. As one can see, the initial interface of $\Z^{\ep,\sigma}(\cdot,0)$ has a very large curvature for $\ep$ small, and so one cannot obtain a result such as the above corollary by standard energy estimates and one has to use weighted energy estimates as done in \thmref{thm:existence}.  The scaling factor $\sigma/\ep^\threebytwo$ comes from the scaling of the equation and we refer to the introduction, Remark 3.4 along with its proceeding paragraph and Section 3.2  of \cite{Ag19} for more details.

\bigskip
\section{Main results and discussion}\label{sec:resultsanddiscussion}

\subsection{Results}\label{sec:results}

We now explain our results about convergence. Let $(\Z,\Zt)_a$ and $(\Z,\Zt)_b$ be two solutions of the water wave equation \eqref{eq:systemone} with surface tensions $\sigma_a$ and $\sigma_b$ respectively. We denote the two solutions as $A$ and $B$ respectively for simplicity. We will denote the terms and operators for each solution by their subscript $a$ or $b$. For example $(\Ztap)_a$ and $(\Ztap)_b$ denotes the spacial derivative of the velocity for the respective solutions. Similarly we also have the operators
\begin{align*}
(\Dapabs)_a = \frac{1}{\Zapabs_a}\pap \quad (\Dapabs)_b = \frac{1}{\Zapabs_b}\pap \quad \tx{ etc. }
\end{align*} 
Let $h_a, h_b$ be the homeomorphisms from \eqref{eq:h} for the respective solutions and let the material derivatives by given by $(\Dt)_a = U_{h_a}^{-1}\pt U_{h_a}$ and $(\Dt)_b = U_{h_b}^{-1}\pt U_{h_b}$. We define
\begin{align*}
\htil = h_b \compose h_a^{-1} \quad \tx{ and } \quad \Util = U_{\htil} = U_{h_a}^{-1}U_{h_b}
\end{align*}
While taking the difference of the two solutions, we will subtract in Lagrangian coordinates and then bring it to the conformal coordinate system of $A$. The reason we want to subtract in the Lagrangian coordinate system is because in our proof of the energy estimate we mainly use the material derivative, and in the Lagrangian coordinate system the material derivative for both the solutions is given by the same operator $\pt$ and subtracting in Lagrangian coordinates helps us avoid a loss of derivatives. The operator $\Util$ takes a function in the conformal coordinate system of $B$ to the conformal coordinate system of $A$. We define 
\begin{align*}
\Delta (f) = f_a - \Util(f_b)
\end{align*}
For example $\Delta (\Ztapbar) = (\Ztapbar)_a - \Util(\Ztapbar)_b$, where we have written $\Util(f)_b$ instead of $\Util(f_b)$ for easier readability for the term  $\Util(\Ztapbar)_b$. This notation allows us subtract the corresponding quantities of the two solutions in the correct manner, while still using conformal coordinates.

To describe our main result \thmref{thm:convergence} on the zero surface tension limit, we let $(\Z,\Zt)_a$ be the solution with surface tension $\sigma$ and let $(\Z,\Zt)_b$ denote the solution with zero surface tension.  We want to show that 
\begin{align*}
(\Z,\Zt)_a \to (\Z,\Zt)_b \qq \tx{ as } \quad \sigma \to 0
\end{align*}
in a suitable norm.  Note that with this notation equation \eqref{form:Zttbar} becomes
\begin{align*}
(\Zttbar)_a -i = -i \brac{\frac{\Aone}{\Zap}}\la +\sigma (\Dap \Th)_a \quad \tx{ where as } \quad (\Zttbar)_b -i = -i \brac{\frac{\Aone}{\Zap}}\lb
\end{align*}
and we have
\begin{align*}
\Delta (\Zttbar) = -i \cbrac{\brac{\frac{\Aone}{\Zap}}_{\n a} - \Util\brac{\frac{\Aone}{\Zap}}_{\n b} } +\sigma (\Dap \Th)_a = -i\Delta\brac{\frac{\Aone}{\Zap}} +\sigma (\Dap \Th)_a
\end{align*}

To state our convergence result, we first need to define a few more norms. We define the higher order energy for zero surface tension solutions as
\begin{align}\label{def:Ecalhigh}
\Ecalhigh  = \norm[2]{\pap\frac{1}{\Zap}}^2  + \norm[2]{\frac{1}{\Zap^2}\pap^2\frac{1}{\Zap}}^2 + \norm[2]{\Ztapbar}^2 + \norm*[\Bigg][2]{\frac{1}{\Zap^2}\pap\Ztapbar}^2 + \norm*[\Bigg][\Hhalf]{\frac{1}{\Zap^3}\pap\Ztapbar}^2
\end{align}
The energy $\Ecalhigh $ is used in Kinsey-Wu \cite{KiWu18} and Wu \cite{Wu19} to prove local wellposedness for angled crested water waves.
\footnote{The energy used in  \cite{KiWu18} and \cite{Wu19} also has the lower order term $\norm[\infty]{\frac{1}{\Zap}}^2(t)$ or $\abs{\frac{1}{\Zap}}(0,t)$ which we don't have.}.
Note that this energy is half spacial derivative higher order than the energy $\Ecalsigma\vert_{\sigma=0}$.  We also define an auxiliary energy for the zero surface tension solution
\begin{align}\label{def:Ecalaux}
\begin{split}
\Ecalaux & = \norm[\infty]{\Zap^\half\pap\frac{1}{\Zap}}^2 + \norm*[\Bigg][2]{\frac{1}{\Zap^\half}\pap^2\frac{1}{\Zap}}^2 + \norm*[\Bigg][2]{\frac{1}{\Zap^\fivebytwo}\pap^3\frac{1}{\Zap}}^2 \\
& \quad + \norm*[\Bigg][2]{\frac{1}{\Zap^\half}\pap\Ztapbar}^2 +  \norm*[\Bigg][2]{\frac{1}{\Zap^\fivebytwo}\pap^2\Ztapbar}^2 + \norm*[\Bigg][\Hhalf]{\frac{1}{\Zap^\sevenbytwo}\pap^2\Ztapbar}^2
\end{split}
\end{align}
This energy will be key in proving the convergence result. We can now define the norm in which we prove convergence \footnote{The energy $\EcalDelta$ described here is not the full energy that we use to prove the result. The full energy is given in \eqref{def:EcalDelta} and also has additional terms called $\EcalDeltazero$. The terms of $\EcalDeltazero$ are zero at time $t=0$ if the two solutions have the same initial data but $\EcalDeltazero$ may become non-zero at a later time. We do not include the full energy here as $\EcalDeltazero$ is zero at time $t=0$ and to simplify the presentation.}. Define the energy $\EcalDelta$ as
\begingroup
\allowdisplaybreaks
\begin{align*}
\EcalDeltaone & = \norm[2]{\Delta\brac{\pap\frac{1}{\Zap}}}^2 + \norm*[\bigg][\Hhalf]{\Delta\brac{\frac{1}{\Zap}\pap\frac{1}{\Zap}}}^2 + \norm*[\Bigg][2]{\brac{\sigma^\onebysix\Zap^\half\pap\frac{1}{\Zap}}\la}^6 \\*
& \quad + \norm*[\Bigg][\infty]{\brac{\sigma^\half\Zap^\half\pap\frac{1}{\Zap}}\la}^2  +  \norm*[\Bigg][2]{\brac*[\Bigg]{\frac{\sigma^\half}{\Zap^\half}\pap^2\frac{1}{\Zap}}\la}^2 + \norm*[\Bigg][\Hhalf]{\brac*[\Bigg]{\frac{\sigma^\half}{\Zap^\threebytwo}\pap^2\frac{1}{\Zap}}\la}^2\\*
& \quad    +  \norm[\Hhalf]{\brac{\sigma\pap\Th}_a}^2   + \norm[2]{\brac{\frac{\sigma}{\Zap}\pap^3\frac{1}{\Zap}}\la}^2 + \norm[\Hhalf]{\brac{\frac{\sigma}{\Zap^2}\pap^3\frac{1}{\Zap}}\la}^2\\
\EcalDeltatwo &= \norm[2]{\Delta\brac{\Ztapbar}}^2 + \norm*[\Bigg][2]{\Delta\brac{\frac{1}{\Zap^2}\pap\Ztapbar}}^2 + \norm*[\Bigg][2]{\brac*[\Bigg]{\frac{\sigma^\half}{\Zap^\half}\pap\Ztapbar}\la}^2 + \norm*[\Bigg][2]{\brac*[\Bigg]{\frac{\sigma^\half}{\Zap^\fivebytwo}\pap^2\Ztapbar}\la}^2 \\
\EcalDelta & =   \EcalDeltaone + \EcalDeltatwo + \sigma\brac{\Ecalaux}_b
\end{align*}
\endgroup

Let us now understand this norm. Observe that the terms of $\EcalDeltaone$ and $\EcalDeltatwo$ are obtained by taking the difference of the terms of the energy $\Ecalsigma$ defined in \secref{sec:previousresult} for the solutions $A$ and $B$. The term $\sigma\brac{\Ecalaux}_b$ is a coupling term as it couples the energy $\brac{\Ecalaux}_b$ of the zero surface tension solution $B$ with the surface tension coefficient $\sigma$ of the solution $A$. This term is crucial to close the energy estimate for $\EcalDelta$ at the highest order. The reason for the need to include this term and why it works is explained in more detail in \secref{sec:discussion}.

We now state our main result on convergence. We will state the theorem here only for the special case of the two solutions having the same initial data and a more general result is stated in \secref{sec:aprioriEcalDelta}. The existence part of the result follows from earlier results: For $\sigma=0$, one can use Theorem 3.9 of \cite{Wu19}, where a local wellposedness result is proved in terms of the energy $\Ecalhigh$ \footnote{In Theorem 3.9 of \cite{Wu19} the time of existence also depends on the lower order term $\frac{1}{\Zap}$. In our case this can be avoided as we show in \thmref{thm:aprioriEhigh} and the proof of \thmref{thm:convergence}.}. For $\sigma>0$ we can use \thmref{thm:existence} for an existence result in terms of the energy $\Ecalsigma(t)$. The main result of this paper is as follows:

\begin{thm}\label{thm:convergence}
Let $\sigma>0$ and let $(\Z,\Zt)(0)$ be an initial data such that $(\Zap-1,\frac{1}{\Zap} - 1, \Zt)(0) \in H^{3.5}(\Rsp)\times H^{3.5}(\Rsp)\times H^{3.5}(\Rsp)$. Let $L>0$ be such that
\begin{align*}
\Ecalhigh(0), \mathcal{E}_{\sigma}(0) \leq L
\end{align*}
Then there exists $T,C_0, C_1, C_2> 0$ depending only on $L$ so that we have a unique solution $(\Z,\Zt)(t)$  to \eqref{eq:systemone} with zero surface tension with initial data $(\Z,\Zt)(0)$ in time interval $[0,T]$ satisfying $(\Zap-1,\frac{1}{\Zap} - 1, \Zt) \in C^l([0,T], H^{3-\half l}(\Rsp)\times H^{3 - \half l}(\Rsp)\times H^{3.5 - \half l}(\Rsp))$ for $l=0,1$ along with the estimate $\sup_{t \in [0,T]} \Ecalhigh(\Z,\Zt)(t) \leq C_2$, and we have a unique solution $(\Z^\sigma,\Zt^\sigma)(t)$  to  \eqref{eq:systemone} with surface tension $\sigma$ with the same initial data $(\Z,\Zt)(0)$ in the time interval $[0,T]$ satisfying $(\Zap^\sigma-1,\frac{1}{\Zap^\sigma} - 1, \Zt^\sigma) \in C^l([0,T], H^{3.5 - \threebytwo l}(\Rsp)\times H^{3.5 - \threebytwo l}(\Rsp)\times H^{3 - \threebytwo l}(\Rsp))$ for $l=0,1$ and $\sup_{t\in [0,T]}\Ecalsigma(\Z^\sigma,\Zt^\sigma)(t) \leq C_2$, and we have the estimate
\begin{align*}
\sup_{t\in[0,T]}\EcalDelta(\Z^\sigma,\Z)(t) \leq C_1e^{C_0T}\EcalDelta(\Z^\sigma,\Z)(0)
\end{align*}
\end{thm}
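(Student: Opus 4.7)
The plan is to prove the theorem in three main steps: first establish existence of both solutions on a uniform time interval $[0,T]$ depending only on $L$; second, derive a priori estimates for the auxiliary energies $\Ecalhigh$ and $\Ecalaux$ of the zero surface tension solution, and for $\Ecalsigma$ of the surface tension solution, on $[0,T]$; and third, close the main energy estimate for $\EcalDelta$ by Gronwall's inequality.

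For existence, the surface tension solution on a uniform time interval follows directly from \thmref{thm:existence}, which gives $T$ depending only on $\Ecalsigma(0) \leq L$, uniformly in $\sigma$. For the zero surface tension solution I would invoke the local wellposedness result of Wu \cite{Wu19}, but since its usual statement also depends on a lower order quantity such as $\abs{1/\Zap}$ at a fixed point, the plan in \secref{sec:aprioriEcalhigh} is to establish an a priori bound $\sup_{[0,T]}\Ecalhigh(t) \leq C_2$ in terms of $\Ecalhigh(0) \leq L$ only. This is achieved by differentiating each term of $\Ecalhigh$ in time, substituting the identity $\Zttbar - i = -i\Aone/\Zap$ from \eqref{form:Zttbar} with $\sigma = 0$, and controlling the resulting commutators of $\Dt$ with $\pap$, $\Dap$ and $\Hil$ using the identities collected in \secref{sec:part1}. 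Once $\Ecalhigh$ is controlled, the companion estimate for $\Ecalaux$ in \secref{sec:aprioriEcalaux} follows by analogous but more delicate weighted manipulations, with the $\Ecalhigh$ bound feeding the lower-order coefficients.

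The main step is the a priori estimate for $\EcalDelta$ in \secref{sec:aprioriEcalDelta}. The strategy is to differentiate each term of $\EcalDeltaone + \EcalDeltatwo$ in time, subtracting the corresponding evolution equations of $A$ and $B$ in Lagrangian coordinates (so that the two material derivatives coincide and no derivative is lost when subtracting the $\bvar\pap$ terms) and then transferring back to the conformal frame of $A$ via $\Util$. Writing $\Delta(\Zttbar) = -i\Delta(\Aone/\Zap) + \sigma(\Dap\Th)_a$ and expanding the analogous commutator identities, each cubic-type expression splits into a factor $\Delta(\cdot)$ lying in $\EcalDelta$, multiplied by a coefficient controlled uniformly by the $\Ecalhigh$, $\Ecalaux$ and $\Ecalsigma$ bounds, plus a residue proportional to $\sigma$ and involving derivatives of the zero surface tension solution $B$. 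The delicate obstacle is the highest-order contribution: the $\sigma$-residue involves spatial derivatives of $B$ of strictly higher order than what $(\Ecalsigma)_b\vert_{\sigma=0}$ controls, and so cannot be absorbed by the energy of $B$ alone. This is precisely why $\Ecalaux$ was introduced and coupled to $\sigma$ in the definition of $\EcalDelta$: differentiating $\sigma(\Ecalaux)_b$ in time produces $\sigma$-weighted high-order terms of $B$ whose leading part, after careful integration by parts, exactly pairs with the problematic residues from $\EcalDeltaone + \EcalDeltatwo$, turning them either into a sign-definite contribution or into a term already controlled inside $\EcalDelta$.

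Combining the above, I would obtain a differential inequality $\frac{d}{dt}\EcalDelta \leq C_0 \EcalDelta$ with $C_0$ depending only on $L$, and the stated estimate follows from Gronwall; for identical initial data $\EcalDelta(0)$ reduces essentially to $\sigma(\Ecalaux)_b(0)$, which is $\lesssim \sigma L$. I expect the hardest technical step to be the highest-order cancellation described in the previous paragraph: it requires carefully tracking how $\Util$ acts on each piece of the difference-quotient expressions such as $\sqbrac{f_1, f_2; f_3}_g$ that arise from the holomorphic structure, combined with sharp commutator identities between $\Dt$, $\Util$ and $\Hil$, all measured in the weighted norms dictated by $\EcalDelta$.
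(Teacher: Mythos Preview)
Your overall architecture is right: existence via \thmref{thm:existence} and \cite{Wu19}, a priori control of $\Ecalhigh$ and $\Ecalaux$ for the $\sigma=0$ solution, then a differential inequality for $\EcalDelta$ closed by Gronwall. Two points need correction.

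First, and most importantly, your description of the role of the coupling term $\sigma(\Ecalaux)_b$ is not the mechanism the paper uses, and the mechanism you describe would not work as stated. You write that differentiating $\sigma(\Ecalaux)_b$ in time produces high-order terms of $B$ which, after integration by parts, pair with and cancel the problematic residues coming from the difference terms. But the time derivative of $\sigma(\Ecalaux)_b$ involves only the solution $B$; it contains no factor of $\Delta(\cdot)$ or of solution $A$, so there is nothing for it to pair with. In the paper the two pieces are handled completely independently: the time derivative of $\sigma(\Eaux)_b$ is bounded on its own via \thmref{thm:aprioriEaux} (with $\lambda=\sigma$), giving $\frac{d}{dt}\sigma(\Eaux)_b \leq P((\Ecalhigh)_b)\,\sigma(\Eaux)_b \leq C(L)\EDelta$. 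Separately, when differentiating $\EDeltafour$ (not $\EcalDeltaone+\EcalDeltatwo$), the dangerous cross term
\[
2\sigma\Real\int \Bigl(\tfrac{1}{\Zapabs^{1/2}}\pap\Dapabs\Dapbar\Ztbar\Bigr)_{\!a}\,\Bigl(\tfrac{1}{\Zapabs_a^{1/2}}\pap\Dapabs_a\Util(\Dt\Dap\Zt)_b\Bigr)
\]
is estimated by Cauchy--Schwarz: the first factor is in $\LtwoDeltahalf$ since it is part of $\EDeltafour$, and the second factor is in $\LtwoDeltahalf$ precisely because $\sigma(\Ecalaux)_b$ controls $\bigl\|\tfrac{\sigma^{1/2}}{\Zapabs^{1/2}}\pap\Dapabs\Dt\Dap\Zt\bigr\|_2^2$ for $B$. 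So the coupling term acts as a \emph{storage norm} that makes the dangerous residue directly estimable, not as a counterterm that cancels it.

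Second, the paper does not apply the a priori estimate \thmref{thm:aprioriEDelta} directly to the solutions furnished by \thmref{thm:existence} and \cite{Wu19}: that estimate is stated for smooth solutions (in $H^s$ for all $s$), whereas the existence theorems only give finite Sobolev regularity. The paper therefore mollifies the initial data by the Poisson kernel, applies all a priori estimates to the resulting smooth solutions $(\Z^{\ep,\sigma},\Zt^{\ep,\sigma})$ and $(\Z^{\ep},\Zt^{\ep})$ on a uniform interval, and only then passes to the limit $\ep\to 0$. You should include this approximation step.
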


In the above result $\EcalDelta(\Z^\sigma,\Z)(t) $ is the energy $\EcalDelta(t)$ where $(\Z^\sigma,\Zt^\sigma)(t)$ is the solution $A$ and $(\Z,\Zt)(t)$ is the solution $B$. From the energy $\EcalDelta$ defined above we observe that if the initial data for the two solutions is the same, then $\EcalDelta(\Z^\sigma,\Z)(0) \lesssim \sigma \to 0$ as $\sigma \to 0$, and hence $\sup_{t\in[0,T]}\EcalDelta(\Z^\sigma,\Z)(t) \to 0$ as $\sigma \to 0$.  This result should be contrasted with the result of Ambrose-Masmoudi \cite{AmMa05} where the convergence is proved in Sobolev spaces. Note that the theorem above implies convergence in Sobolev spaces so we recover the result of \cite{AmMa05}. The novelty of the above convergence result is that the rate of growth of the energy $\EcalDelta$ does not depend on the $C^{1,\alpha}$ norm of the interface (as is the case in \cite{AmMa05}), but only on the weighted energies $\Ecalhigh$ and $\Ecalsigma$, as the constant $C_0$ appearing in the estimate for $\EcalDelta(t)$, namely
\begin{align*}
\sup_{t\in[0,T]}\EcalDelta(\Z^\sigma,\Z)(t) \leq C_1e^{C_0T}\EcalDelta(\Z^\sigma,\Z)(0)
\end{align*}
depends only on $L$ which in turn depends only on $\Ecalhigh(0)$, $\Ecalsigma(0)$. If the initial interface is close to being an angled crest, then $\Ecalhigh$ and $\Ecalsigma$ remain bounded \footnote{$\Ecalsigma(t)$ remains bounded provided the surface tension is small enough depending on how close it is an angled crest interface. See \corref{cor:examplenew}.} where as the $C^{1,\alpha}$ norm (for any $0<\alpha\leq1$) of the interface $\Z$ blows up as the interface gets closer to being angled crested. Hence this result allows us to control the difference of the solutions independent of how close the initial interface is to an angled crest interface. It is also worthwhile to note that in the proof we show that the energy $\EcalDelta$ is quite strong, and in particular we have the angle of the interface $\thvar^\sigma \to \thvar$ in $\Linfty$ as $\sigma \to 0$ \footnote{See the paragraph following \thmref{thm:aprioriEDelta} for more details.}. Hence the approximation between the solutions with non-zero surface tension and zero surface tension is quite strong. 

We prove this theorem in \secref{sec:proof}. To prove this theorem, we first prove an a priori energy estimate for the energy $\Ecalaux$, by proving that as long as $\Ecalhigh$ is controlled, then $\Ecalaux$ is also controlled. This is proved in \secref{sec:aprioriEcalaux}. Then using this energy estimate we prove an a priori energy estimate for $\EcalDelta$ in \secref{sec:aprioriEcalDelta}. We then use both of these energy estimates to finish the proof in \secref{sec:proof}. 

Let us now apply this result explicitly to angled crested interfaces.  For two solutions $A$ and $B$ of water waves, we define the following norm
\begin{align}\label{eq:Fcal}
\begin{split}
\Fcal_\Delta & = \norm[\Hhalf]{(\zt)_a - (\zt)_b} + \norm[\Hhalf]{(\ztt)_a - (\ztt)_b} + \norm[\Hhalf]{\brac{\frac{\hal}{\zal}}_{\n a} - \brac{\frac{\hal}{\zal}}_{\n b}} + \norm[2]{(\hal)_a - (\hal)_b} \\
& \quad + \norm[2]{\brac{\frac{\ztal}{\zal}}_{\n a} - \brac{\frac{\ztal}{\zal}}_{\n b}} + \norm[2]{\brac{\Aone\compose \h}_a - \brac{\Aone\compose \h}_b} + \norm[2]{\brac{\frac{h_{t\alpha}}{\hal}}_{\n a} - \brac{\frac{\h_{t\alpha}}{\hal}}_{\n b}}
\end{split}
\end{align}
This norm was introduced by Wu in Theorem 3.7 of \cite{Wu19} to establish uniqueness of angled crested water waves without surface tension \footnote{See \eqref{eq:Ftilcal} for an equivalent form of $\Fcal_\Delta$ written in the conformal coordinate system of solution $A$, as was written in \cite{Wu19}. Also instead of the term $\norm[2]{(\hal)_a - (\hal)_b}$ above,  in \cite{Wu19} the term which shows up is $\norm[2]{\frac{(\hal)_b}{(\hal)_a}  - 1} $. Both of these are equivalent as $\hal$ has a lower and upper bound as long as $\Ecalsigma(t)$ remains bounded. See the paragraph following \eqref{eq:h}}. Let us now consider  \corref{cor:example} and let solution $A$ be $(\Z^{\epsilon,\sigma}, \Zt^{\epsilon,\sigma})(t)$ and solution $B$ be  $(\Z, \Zt)(t)$.  We will use the above norm $\Fcal_\Delta$ to establish convergence for $(\Z^{\epsilon,\sigma}, \Zt^{\epsilon,\sigma})(t) \to (\Z, \Zt)(t)$ as $\ep,\sigma \to 0$ under suitable conditions. 

\begin{figure}[ht]
\centering
 \begin{tikzpicture}[scale=0.61]
\draw [thick, xshift=-25pt] (-4,-0.5) to [out=0,in=-89] (-2,4) to [out=-87+180,in=180] (-0,0) to [out=0,in=180+45] (6,3.5) to [out=-65,in=180] (10,0) to [out=0,in=150] (14,4) to [out=180+38,in=180] (17,-0.5);
\draw [densely dashed, xshift=-25pt, yshift=-20pt] (-4,0.2-0.4) to [out=0,in=-100] (-1.9,2-0.0) to [out=-100+180,in=180] (-0.26,0.3) to [out=0,in=180+45] (5.8,3.5) to (6,3.5) to [out=-65,in=180] (10,0.2) to [out= 0,in=90](13.2,3.8) to [out=-90,in=180] (17,0.2-0.4);
\node at (17.1,-0.3) {\ \ $Z(\cdot,t)$};
\node at (17.7,-0.8-0.3) {\!\!$Z^{\ep,\sigma}(\cdot,t)$};
\draw [thin,xshift=-25pt,  <->] (-4.3,0-0.45) to (-4.3,-0.52-0.4) ;
\node at (-5.5,-0.3-0.4) {${\ep}$};
\node at (1.8,2.6) {$\rho_{air} =0$};
\node at (4.2,0) {$\rho_{water} =1$};
\end{tikzpicture}
\caption{Waves with and without surface tension}\label{fig:main}
\end{figure}
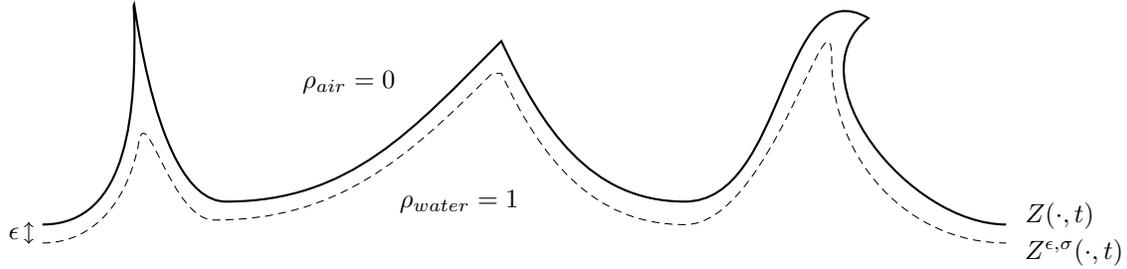

\begin{cor}\label{cor:examplenew}
Consider an initial data $(\Z,\Zt)(0)$ with $M<\infty$ where $M$ is defined in \eqref{eq:M} and fix $c>0$. Let $(\Z,\Zt)(t)$ be the unique solution of equation \eqref{eq:systemone} with zero surface tension with initial data $(\Z,\Zt)(0)$ in the time interval $[0,T_1]$, where $T_1>0$ and depends only on $M$, as obtained in Theorem 3.9 of \cite{Wu19}. For $\sigma\geq 0$, $0<\ep\leq 1$ let $(\Z^{\epsilon,\sigma}, \Zt^{\epsilon,\sigma})(t)$ be the unique solution to the equation \eqref{eq:systemone} with surface tension $\sigma $ and with initial data  $(\Z^{\epsilon,\sigma}, \Zt^{\epsilon,\sigma})(0) = (\Z\conv P_\epsilon, \Zt\conv P_\epsilon)(0)$ where $P_\epsilon$ is the Poisson kernel.  If $\dis \frac{\sigma}{\ep^\threebytwo} \leq c$, then there exists $T_2, C_0, C_1, C_2>0$ depending only on $M$ and $c$ so that the solutions $(\Z^{\epsilon,\sigma}, \Zt^{\epsilon,\sigma})(t)$ exist in the time interval $[0,T_2]$ and $\sup_{t\in[0,T_2]} \Ecalsigma(\Z^{\ep,\sigma},\Zt^{\ep,\sigma})(t) \leq C_2$ along with 
\begin{enumerate}
\item $\sup_{t\in [0,T_2]} \EcalDelta(\Z^{\epsilon,\sigma}, \Z^\epsilon)(t) \leq \frac{\sigma}{\ep^\threebytwo}(C_1e^{C_0 T_2})$
\item  $\sup_{t\in[0,T]} \Fcal_{\Delta}(\Z^{\epsilon,\sigma}, \Z)(t) \to 0$ as $\max\cbrac{\frac{\sigma}{\ep^\threebytwo}, \ep} \to 0$, where $T = \min\cbrac{T_1,T_2}>0$.
\end{enumerate}
\end{cor}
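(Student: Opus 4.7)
My plan is to combine Theorem~\ref{thm:convergence} applied to two carefully chosen solutions with a triangle inequality argument, using the uniform existence already supplied by Corollary~\ref{cor:example}.

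By Corollary~\ref{cor:example} applied with $\sigma=0$, the zero surface tension solution $(\Z^\ep,\Zt^\ep)(t)$ of \eqref{eq:systemone} with mollified initial data $(\Z\conv P_\ep,\Zt\conv P_\ep)(0)$ exists on a uniform time interval $[0,T_2]$ with $\Ecalsigma(\Z^\ep,\Zt^\ep)(t)\leq C_2$ (the $\sigma=0$ reduction of $\Ecalsigma$), where $T_2,C_2$ depend only on $M$; the same corollary handles $(\Z^{\ep,\sigma},\Zt^{\ep,\sigma})(t)$ on $[0,T_2]$ provided $\sigma/\ep^{3/2}\leq c$. I would also verify, by direct computation from \eqref{eq:M} using standard properties of the Poisson kernel, that $\Ecalhigh(\Z^\ep,\Zt^\ep)(0)$ is bounded by a constant depending only on $M$, so that the quantity $L=\max\{\Ecalhigh(0),\Ecalsigma(0)\}$ needed to feed into Theorem~\ref{thm:convergence} is uniformly controlled.

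For (1) I apply Theorem~\ref{thm:convergence} with solution $A=(\Z^{\ep,\sigma},\Zt^{\ep,\sigma})$ and solution $B=(\Z^\ep,\Zt^\ep)$; since the two share the same initial data, every $\Delta(\cdot)$-term of $\EcalDelta$ vanishes at $t=0$, and what survives is (i) the terms of $\EcalDeltaone$ and $\EcalDeltatwo$ carrying an explicit $\sigma^{1/6}$, $\sigma^{1/2}$ or $\sigma$ weight on quantities of $A$, together with (ii) the coupling term $\sigma(\Ecalaux)_b(0)$. A scaling analysis of the Poisson-mollified near-angled-crest data, along the lines of Section~3.1 of \cite{Ag19}, then shows that each of these is bounded by a constant depending only on $M$ times $\sigma/\ep^{3/2}$. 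In particular $(\Ecalaux)_b(0)\lesssim \ep^{-3/2}$, so the coupling term already yields $\sigma/\ep^{3/2}$; the $\sigma$-weighted $A$-terms involve higher powers of $\sigma$ balanced against the corresponding negative powers of $\ep$ and obey the same bound. Substituting $\EcalDelta(0)\lesssim \sigma/\ep^{3/2}$ into the conclusion of Theorem~\ref{thm:convergence} delivers (1).

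For (2) I use the triangle inequality
\begin{align*}
\sup_{[0,T]}\Fcal_\Delta(\Z^{\ep,\sigma},\Z)\leq \sup_{[0,T]}\Fcal_\Delta(\Z^{\ep,\sigma},\Z^\ep)+\sup_{[0,T]}\Fcal_\Delta(\Z^\ep,\Z),
\end{align*}
with $T=\min\{T_1,T_2\}$. For the first summand I would establish an interpolation inequality $\Fcal_\Delta\lesssim \sqrt{\EcalDelta}$ valid on $[0,T]$ under the uniform energy bounds on $A$ and $B$, and combine it with (1) to get $O(\sqrt{\sigma/\ep^{3/2}})\to 0$. For the second summand, continuous dependence of zero surface tension solutions on initial data in the norm $\Fcal_\Delta$ (which follows from the very energy estimates Wu uses to prove uniqueness in Theorem~3.7 of \cite{Wu19}) together with $(\Z\conv P_\ep,\Zt\conv P_\ep)(0)\to(\Z,\Zt)(0)$ as $\ep\to 0$ in the $M$-topology, forces this summand to tend to zero as $\ep\to 0$. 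Adding the two bounds gives convergence whenever $\max\{\sigma/\ep^{3/2},\ep\}\to 0$. The principal obstacle is the scaling estimate $\EcalDelta(0)\lesssim \sigma/\ep^{3/2}$, especially the piece $(\Ecalaux)_b(0)\lesssim \ep^{-3/2}$, which must be proved by carefully tracking how each negative power of $\Zap^\ep$ interacts with the Poisson-regularization of a near-singular interface using only the finiteness of $M$; a secondary subtlety is the interpolation $\Fcal_\Delta\lesssim \sqrt{\EcalDelta}$, which must be established under the weighted energy alone, since avoiding any $C^{1,\alpha}$ dependence is the whole point.
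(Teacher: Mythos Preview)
Your approach is essentially the same as the paper's: apply Theorem~\ref{thm:convergence} to the pair $(\Z^{\ep,\sigma},\Z^\ep)$ with common initial data, reduce Part~(1) to the scaling bound $\EcalDelta(0)\lesssim\sigma/\ep^{3/2}$, and for Part~(2) use the triangle inequality together with Wu's stability result for the $\ep\to 0$ piece. Two small corrections to keep in mind when you write it out: the interpolation is not pointwise of the form $\Fcal_\Delta\lesssim\EcalDelta^{1/2}$ with a fixed exponent $1/2$---the paper obtains $\Fcal_\Delta\lesssim\EcalDelta^{\alpha}$ for some $\alpha>0$ term by term, and the $\|(h_\al)_a-(h_\al)_b\|_2$ piece cannot be bounded instantaneously at all but requires a short Gronwall step driven by $\|\Delta(b_{\alpha'})\|_2$; second, in the scaling bound for $\sigma(\Ecalaux)_b(0)$ only two terms (the ones involving $\partial_{\alpha'}^3(1/Z_{,\alpha'})$ and the $\dot H^{1/2}$ norm with weight $Z_{,\alpha'}^{-7/2}$) are new relative to what was already done in Corollary~\ref{cor:example}, and the first of these needs an integration-by-parts trick rather than a direct kernel estimate.
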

In the above result $\EcalDelta(\Z^{\epsilon,\sigma},\Z^\ep)(t) $ is the energy $\EcalDelta(t)$  for the two solutions $(\Z^{\epsilon,\sigma}, \Zt^{\epsilon,\sigma})(t)$ and $(\Z^\ep,\Zt^\ep)(t)$  and similarly $\Fcal_\Delta(\Z^{\epsilon,\sigma},\Z)(t) $ is the energy $\Fcal_\Delta(t)$ for the two solutions $(\Z^{\epsilon,\sigma}, \Zt^{\epsilon,\sigma})(t)$ and $(\Z,\Zt)(t)$.  As explained in \secref{sec:previousresult}, $M<\infty$ allows interfaces with angled crests of angles $\nu\pi$ with $0<\nu<\half$ and also allows certain cusps, and so $(\Z,\Zt)(0)$ is allowed to be singular. As explained previously the existence part of this result has already been proved in \corref{cor:example} and the novelty here is the convergence aspect. The first part of the convergence result says that as long as $\sigma \lesssim \ep^\threebytwo$, then the difference of the solution with surface tension $(\Z^{\epsilon,\sigma}, \Zt^{\epsilon,\sigma})(t)$ to the solution without surface tension $(\Z^{\epsilon}, \Zt^{\epsilon})(t)$ does not depend on how close the initial interface is to being angled crested (i.e. independent of $\ep$). The second part shows that the smooth solutions $(\Z^{\epsilon,\sigma}, \Zt^{\epsilon,\sigma})(t)$ to \eqref{eq:systemone} with surface tension $\sigma$ converge to the singular solution $(\Z, \Zt)(t)$ of \eqref{eq:systemone} with zero surface tension, and the convergence happens in the norm $\Fcal_{\Delta}$. 

This corollary is proved in \secref{sec:proof}. The first part of the convergence result follows directly from \thmref{thm:convergence}. The second part follows from the observation that the norm $\Fcal_\Delta$ is weaker than the norm $\EcalDelta$ and hence we essentially have 
\begin{align*}
 \Fcal_{\Delta}(\Z^{\epsilon,\sigma}, \Z)(t) & \leq  \Fcal_{\Delta}(\Z^{\epsilon,\sigma}, \Z^\ep)(t) +  \Fcal_{\Delta}(\Z^{\epsilon}, \Z)(t) \\
 & \lesssim \cbrac{\Ecal_{\Delta}(\Z^{\epsilon,\sigma}, \Z^\ep)(t)}^\alpha +  \Fcal_{\Delta}(\Z^{\epsilon}, \Z)(t)
\end{align*}
for some $\alpha >0$. Now the proof follows from using the fact that $\Ecal_{\Delta}(\Z^{\epsilon,\sigma}, \Z^\ep)(t) \to 0$ as $\max\cbrac{\frac{\sigma}{\ep^\threebytwo}, \ep} \to 0$ from part (1) of the corollary, and $\Fcal_{\Delta}(\Z^{\epsilon}, \Z)(t) \to 0$ by Theorem of 3.7 of \cite{Wu19}. See the proof in \secref{sec:proof} for more details. 

The norm $\Fcal_\Delta$ being weaker than the norm $\EcalDelta$ has one consequence being that it can only show $\thvar^{\ep,\sigma} \to \thvar$ in $\Ltwo$ instead of the convergence in $\Linfty$, which is what you would obtain if one could use the stronger norm $\EcalDelta$ \footnote{To see that we indeed get convergence in $\Ltwo$, observe from \eqref{form:Dtg} that $\abs{\pt (\thvar_a - \thvar_b)} \leq  \abs{\brac{\frac{\ztal}{\zal}}_{ a} - \brac{\frac{\ztal}{\zal}}_{ b}}$ and hence we obtain convergence in $\Ltwo$ by using the fact that $\Fcal_\Delta$ controls $\norm[2]{\brac{\frac{\ztal}{\zal}}_{ a} - \brac{\frac{\ztal}{\zal}}_{ b}} $.}. However observe that one in fact cannot expect convergence of $\thvar^{\ep,\sigma} \to \thvar$ in $\Linfty$, as if the initial data is symmetric with $\Z(\cdot,0)$ having an angled crest at $\ap = 0$, then by \cite{Ag20} the function $ \thvar (\cdot,t)$ will have a discontinuity at $\al = 0$ for all $t\in [0,T]$, whereas $\thvar^{\ep,\sigma} (0,t) = 0$ whenever $\ep>0$, for all $t\in [0,T]$. In particular this implies that one cannot have $\Ecal_{\Delta}(\Z^{\epsilon,\sigma}, \Z)(t) \to 0$ as $\max\cbrac{\frac{\sigma}{\ep^\threebytwo}, \ep} \to 0$, in this situation. We also note that the factor $\sigma/\ep^\threebytwo$ comes from the scaling of the equation and is optimal for angled crested interfaces with angles close to $\pi/2$ (see the introduction and Remark 3.4 of \cite{Ag19} for more details about the scaling). This shows that the convergence result \corref{cor:examplenew} that we prove is as strong as one can expect in this scenario.


\subsection{Discussion}\label{sec:discussion} 

One of the fundamental difficulties in proving a zero surface tension limit result for angled crested like water waves, is that one should maintain the optimal scaling rate $\sigma/\ep^\threebytwo$ that one has in \corref{cor:example} in the convergence result \corref{cor:examplenew} as well. This makes the proof a lot more nontrivial. To do this we have to subtract the energy at the level of $\Ecalsigma$ as this was the energy used to prove \corref{cor:example}, but then the difference of the energy does not close because of the weighted nature of the energy (this issue does not arise when one is working with non weighted energies in Sobolev spaces as explained below). To remedy this we introduce the coupling term $\sigma(\Ecalaux)_b(t)$ to close the energy at the highest order. Let us now explain the main difficulty and exactly where the energy  $\sigma(\Ecalaux)_b(t)$ is used. 

Let us first discuss how an energy estimate for convergence in Sobolev spaces generally works. Let $u_a$ and $u_b$ be solutions to the equations
\begin{align*}
(\pt^2 + \papabs + \sigma \papabs^3)u_a = N_a \qq (\pt^2 + \papabs)u_b = N_b
\end{align*}
Here $N_a, N_b$ are nonlinear lower order terms. These are highly simplified versions of the water wave equation with surface tension $\sigma$ and with zero surface tension respectively. Now the corresponding energies for the solutions $u_a$ and $u_b$ are of the form
\begin{align*}
(E_s)_a(t) = \norm[\Hhalf]{\pt\pap^s u_a}^2 + \norm[2]{\pap^{s+1} u_a}^2 + \norm[2]{\sigma^\half \pap^{s+2}u_a}^2 \qq (E_s)_b(t) = \norm[\Hhalf]{\pt\pap^s u_b}^2 + \norm[2]{\pap^{s+1} u_b}^2
\end{align*}
Hence to prove convergence at the level of $E_s$, we first subtract the equations to get 
\begin{align*}
(\pt^2 + \papabs)(u_a - u_b) + \sigma\papabs^3 u_a = N_a - N_b
\end{align*}
From which we get the energy
\begin{align*}
E_{\Delta,s}(t) = \norm[\Hhalf]{\pt\pap^s (u_a - u_b)}^2 + \norm[2]{\pap^{s+1} (u_a - u_b)}^2 + \norm[2]{\sigma^\half \pap^{s+2}u_a}^2
\end{align*}
Differentiating this we see that
\begin{align*}
\frac{\diff }{\diff t} E_{\Delta,s}(t) & = 2\Real\cbrac{ \int (\papabs \pt \pap^s(\ubar_a - \ubar_b))(\pap^s \pt^2(u_a - u_b))  } \\
& \quad + 2\Real\cbrac{ \int (\pt\pap^{s+1}(\ubar_a - \ubar_b))(\pap^{s+1}(u_a - u_b)) } \\
& \quad + 2\Real\cbrac{ \int (\pt\pap^{s+2}\ubar_a)(\sigma\pap^{s+2}u_a) }
\end{align*}
Now in the last integral we rewrite $\pt\pap^{s+2}\ubar_a = \pt\pap^{s+2}(\ubar_a - \ubar_b) + \pt\pap^{s+2}\ubar_b$ and use the identities $\papabs = i\Hil\pap$ and $\Hil^2 = \Id$ to get
\begin{align*}
\frac{\diff }{\diff t} E_{\Delta,s}(t) & = 2\Real \cbrac{\int \brac{\papabs\pt\pap^s(\ubar_a - \ubar_b)}\pap^s\brac{\pt^2(u_a - u_b) + \papabs(u_a - u_b) + \sigma\papabs^3 u_a}} \\
& \quad + 2\Real\cbrac{ \int (\sigma^\half \pt \pap^{s+2}\ubar_b )(\sigma^\half\pap^{s+2}u_a) } 
\end{align*}
Now the first integral can be controlled by using the equation for the difference. To control the second integral, one can assume control of the energy $(E_{s+ 3/2})_b(t)$ to get control of $\norm[2]{\pt\pap^{s+2}u_b}$, and so the second integral can be estimated via
\begin{align}\label{eq:inthigher}
\abs{ 2\Real\cbrac{ \int (\sigma^\half \pt \pap^{s+2}\ubar_b )(\sigma^\half\pap^{s+2}u_a) }  } \lesssim \sigma^\half\norm[2]{\pt \pap^{s+2}u_b} \norm*[\big][2]{\sigma^\half\pap^{s+2}u_a} \to 0 \qq \tx{ as }\sigma \to 0
\end{align} 
It is crucial to note that in the above estimate we have convergence to zero fundamentally because we are using the fact that $\sigma^\half \to 0$ as $\sigma \to 0$, and the terms $\norm[2]{\pt \pap^{s+2}u_b}$ and $ \norm*[\big][2]{\sigma^\half\pap^{s+2}u_a}$ remain bounded. 

Note that this was a very basic strategy to prove convergence and one can hope to do better if one is working with smooth enough data. However as we are interested in angled crested interfaces, the quasilinear equation becomes degenerate and hence one cannot use a lot of the machinery (such as dispersive estimates) one has when one is working in a smooth enough regime (see Section 3.2 of \cite{Ag19} for more details). Hence we follow essentially the same strategy as above except that at the last step, namely the estimate \eqref{eq:inthigher}, this strategy fails. To see this, first note that the integral we are left to estimate by trying to control $\EDeltafour$ in \thmref{thm:aprioriEDelta} is an integral of the form
\begin{align*}
2  \Real  \int \Util\cbrac{\frac{\sigma^\half}{\Zapabs^\half}\pap\Dapabs(\Dt\Dap\Zt) }_{\n b} \cbrac{\frac{\sigma^\half}{\Zapabs^\half}\pap\Dapabs\Dapbar\Ztbar }\la 
\end{align*}
See \secref{sec:controlEDeltafour} for the details. Now $\brac{\frac{\sigma^\half}{\Zapabs^\half}\pap\Dapabs\Dapbar\Ztbar}_{\n a} \in \Ltwo$ as it is part of the energy $\EDeltafour$ (This is analogous to the control of $\norm*[\big][2]{\sigma^\half\pap^{s+2}u_a}$ one has from $\E_{\Delta,s}$ in \eqref{eq:inthigher}). However we cannot assume control of $\cbrac{\frac{1}{\Zapabs^\half}\pap\Dapabs(\Dt\Dap\Zt)}\lb \in \Ltwo$ in an analogous manner as was done in \eqref{eq:inthigher} (by assuming control of $\norm[2]{\pt\pap^{s+2}u_b}$), as heuristically if the zero surface tension solution $B$ has an angle crest of angle $\nu \pi$ then $\Z(\ap) \sim (\ap)^{\nu}$ near $\ap = 0$, and hence using \eqref{form:Zttbar} for $\sigma = 0$ we heuristically have near $\ap = 0$
\begin{align*}
\frac{1}{\Zapabs^\half}\pap\Dapabs(\Dt\Dap\Zt) \sim \frac{1}{\Zap^\fivebytwo}\pap^3\frac{1}{\Zap} \sim (\ap)^{-\sevenbytwo \nu + \half}
\end{align*}
which does not belong to $\Ltwo$ for $\frac{2}{7}\leq \nu <\half$. Hence assuming control of this term will force us to severely restrict the initial data allowed in our results by imposing the restriction $0<\nu < \frac{2}{7}$. Another way to try to resolve it would be to use higher order norms which allow all angles $\nu\pi$ with $0<\nu<\half$ such as an energy even higher order than $\Ecalhigh$, which corresponds to taking a weighted derivative $\frac{1}{\Zapabs^2}\pap$ on the energy $\Ecalhigh$ (see Section 3.2 in \cite{Ag19} for more details on these weighted norms). If one does this, then the term we would get control over would be $\cbrac{\frac{1}{\Zapabs^2}\pap\Dapabs(\Dt\Dap\Zt)}\lb \in \Ltwo$ which does allow all angles less than $\pi/2$. However now the scaling becomes a problem as we observe that
\begin{align*}
\norm[2]{\brac{\frac{\sigma^\half}{\Zapabs^\half}\pap\Dapabs(\Dt\Dap\Zt)}\lb} \leq \norm[\infty]{\sigma^\onebythree(\Zap)_b}^\threebytwo \norm[2]{\brac{\frac{1}{\Zapabs^2}\pap\Dapabs(\Dt\Dap\Zt)}\lb}
\end{align*}
If $\Z(\cdot,0)$ is an interface with an angle crest at $\ap = 0$ of angle $\nu\pi$, then we have $\Zap(\ap) \sim (\ap)^{\nu -1}$ near $\ap =0$, and hence for $\nu$ close to $0$ the best one can assume is that $\Zap(\cdot,0) \in \Lone$ in general. Hence for $\Z^\ep(\cdot,0) = \Z(\cdot,0) \conv P_\ep$, where $P_\ep$ is the Poisson kernel, we get the condition
\begin{align*}
\norm[\infty]{\sigma^\onebythree(\Zap)_b(\cdot,0)} = \norm[\infty]{\sigma^\onebythree \Zap^\ep(\cdot,0)} \lesssim \frac{\sigma^\onebythree}{\ep}\norm[1]{\Zap(\cdot,0)} \to 0
\end{align*} 
which forces us to assume that $\sigma/\ep^3 \to 0$, which is very far from the optimal rate of $\sigma/\ep^\threebytwo \to 0$. 

To overcome this difficulty, we abandon the approach used to prove the analogous estimate \eqref{eq:inthigher}. Instead of adding a norm to refine the zero surface tension solution, we instead add  exactly the required terms to close the energy $\EcalDelta$ to the energy $\EcalDelta$ itself, and control these additional terms seperately. We observe that to close the energy estimate for $\EcalDelta$, what we really need to control is $\cbrac{\frac{\sigma^\half}{\Zapabs^\half}\pap\Dapabs(\Dt\Dap\Zt)}\lb \in \Ltwo$ and ensure that this goes to zero as $\sigma \to 0$. So we control this term directly and the term $\sigma(\Ecalaux)_b(t)$ in the energy $\EcalDelta$ controls precisely terms of this type and ensures that it goes to zero as $\sigma \to 0$. The energy $\sigma(\Ecalaux)_b(t)$ not only allows us to prove the a priori estimate for $\EcalDelta$ in \thmref{thm:convergence} but moreover ensures that the scaling $\sigma/\ep^\threebytwo$ obtained originally in \corref{cor:example} as part of \cite{Ag19} remains the same in \corref{cor:examplenew}. This shows that the energy $\sigma(\Ecalaux)_b(t)$ scales in the correct manner and does not weaken the statement of our results in \thmref{thm:convergence} or  \corref{cor:examplenew}.

\section{Identities and equations from previous work}\label{sec:part1}

In this section we collect the identities and estimates proved in \cite{Ag19} which we use in this paper. We are collecting them essentially all in this section as this is easier than to constantly refer to the paper \cite{Ag19} when we prove our energy estimates in later sections. 

\smallskip
\subsection{Main identities}\label{sec:equations}

Here we first collect the main identities commonly used in this paper. See Section 4 of \cite{Ag19} for a proof of these identities. 

\begin{enumerate}[leftmargin =*, align=left, label=\alph*)]
\item We have
\[
\frac{\Zap}{\Zapabs}\pap\frac{1}{\Zap}   = \pap \frac{1}{\Zapabs} + \w\Dapabs \wbar
\]
Observe that $\dis \pap \frac{1}{\Zapabs}$ is real valued and $\dis \w\Dapabs \wbar$ is purely imaginary. From this we obtain 
\begin{align} \label{form:RealImagTh}
\Real\brac{\frac{\Zap}{\Zapabs}\pap\frac{1}{\Zap} } = \pap \frac{1}{\Zapabs} \quad \qquad \Imag\brac{\frac{\Zap}{\Zapabs}\pap\frac{1}{\Zap}} = i\brac{ \wbar\Dapabs \w} = -\Real \Th
\end{align}

\item We have
\begin{align}  \label{form:Dtg}
\Dt \g = - \Imag (\Dapbar \Ztbar) 
\end{align}

\item For any complex valued function $f$, we have $\Hil (\Real f) =  i\Imag(\Hil f)$ and  $\Hil(i \Imag f) = \Real (\Hil f)$. Hence we get the following identities
\begin{align}
(\Id + \Hil)(\Real f) = f -i\Imag (\Id - \Hil) f  \label{form:Real}\\
(\Id + \Hil)(i \Imag f) = f - \Real (\Id - \Hil)f  \label{form:Imag}
\end{align}

\item We have
\begin{align}\label{form:Aonenew}
\Aone = 1 + i\Zt\Ztapbar -i(\Id + \Hil)\cbrac{\Real(\Zt\Ztapbar)}
\end{align}

\item We have 
\begin{align*}
\bvar = \frac{\Zt}{\Zap} -i(\Id + \Hil)\cbrac{\Imag\brac{\frac{\Zt}{\Zap}}}
\end{align*}
and hence 
\begin{align}\label{form:bvarapnew}
\bvarap = \Dap\Zt + \Zt\pap\frac{1}{\Zap}  -i\pap(\Id + \Hil)\cbrac{\Imag\brac{\frac{\Zt}{\Zap}}}
\end{align}

\item We now record some frequently used commutator identities. 
\begin{align}\label{eq:commutator}
 [\pap, \Dt ] &= \bap \pap   \qquad &[\Dapabs, \Dt] &= \Real{(\Dap \Zt)} \Dapabs = \Real{(\Dapbar \Ztbar) \Dapabs} \\   \relax 
 [\Dap, \Dt]  &=  \brac{\Dap \Zt} \Dap   \qquad &  [\Dapbar, \Dt] &= \brac{\Dapbar \Ztbar } \Dapbar 
\end{align}
Using these we also obtain the following formulae
\begin{align}
\Dt \Zapabs & = \Dt e^{\Real\log \Zap} =  \Zapabs \cbrac{\Real(\Dap\Zt) - \bvarap} \label{form:DtZapabs} \\ 
 \Dt \frac{1}{\Zap} & = \frac{-1}{\Zap}(\Dap\Zt - \bvarap) = \frac{1}{\Zap}\cbrac{(\bvarap - \Dap\Zt - \Dapbar \Ztbar) + \Dapbar \Ztbar} \label{form:DtoneoverZap}
\end{align}
Observe that $(\bvarap - \Dap\Zt - \Dapbar \Ztbar)$ is real valued and this fact will be useful later on.
\medskip

\item We have the formula
\begin{align} \label{form:Th}
\Th = i \frac{\Zap}{\Zapabs} \pap \frac{1}{\Zap} - i \Real (\Id - \Hil) \brac{\frac{\Zap}{\Zapabs} \pap \frac{1}{\Zap}} 
\end{align}

\item We have the formula
\begin{align}  \label{form:RealDtTh}
\Real (\Dt \Th)   = - \Imag\cbrac{(\Dapabs + i \Real \Th) \Dapbar \Ztbar}
\end{align}
We also have the formula
\begin{align} \label{form:DtTh}
\Dt \Th = i\brac{\Dapabs + i \Real \Th} \Dapbar \Ztbar - i \Real (\Id - \Hil)\cbrac{(\Dapabs + i \Real \Th) \Dapbar \Ztbar}  + i \Imag \nobrac{(\Id - \Hil) \Dt \Th} 
\end{align}
along with 
\begin{align}\label{eq:IdHilTh}
(\Id - \Hil)\Dt\Th = \sqbrac{\Dt,\Hil}\Th = \sqbrac{\bvar,\Hil}\pap\Th
\end{align}

\end{enumerate}

\smallskip
\subsection{Quasilinear equations} 

We now write down the quasilinear equations obtained in \cite{Ag19} that we will need to prove our energy estimates. See Section 4 of \cite{Ag19} for a derivation of these equations. 

\begin{enumerate}
\item Define the real valued variable $\Jone$ as
\begin{align} \label{form:Jone}
\begin{split}
\Jone &=  \Dt\Aone +  \Aone\brac{\bvarap - \Dap\Zt - \Dapbar\Ztbar} + \sigma \pap \Real \brac{\Id - \Hil} \cbrac{(\Dapabs + i\Real \Th) \Dapbar \Ztbar} \\
& \quad - \sigma\pap \Imag\brac{\Id - \Hil} \Dt\Th
\end{split}
\end{align}
Using this we get
\begin{align}\label{eq:Ztbar}
\Ztttbar + i\frac{\Aone}{\Zap}\Dapbar\Ztbar -i\sigma\Dap(\Dapabs + i\Real\Th)\Dapbar \Ztbar = -\sigma(\Dap\Zt) \Dap\Th -i\frac{\Jone}{\Zap}
\end{align}
We also have
\begin{align} \label{eq:ZtZap}
\begin{split}
& \Ztttbar\Zap + i\Aone\Dapbar\Ztbar -i\sigma\pap\brac{\frac{1}{\Zapbar}\Dapabs\Ztapbar  } \\
& = i\sigma\pap\cbrac{\brac{\Dapabs \frac{1}{\Zapbar}}\Ztapbar } -\sigma(\Dap\Zt)\pap\Th -\sigma\pap\cbrac{(\Real\Th)\Dapbar\Ztbar} -i\Jone
\end{split}
\end{align}
This equation gives rise to the energy $\Esigmaone$ in the energy estimate \thmref{thm:aprioriEsigma}.

\item \begin{equation} \label{eq:DapbarZtbar}
 \brac{\Dt^2 +i\frac{\Aone}{\Zapabs^2}\pap  -i\sigma\Dapabs^3} \Dapbar\Ztbar  =  \Rone -i\brac{\Dapbar\frac{1}{\Zap}} \Jone -i\frac{1}{\Zapabs^2}\pap \Jone 
\end{equation}
where
\begin{align} \label{form:Rone}
\begin{split}  
\Rone & =  -2(\Dapbar\Ztbar)(\Dt\Dapbar\Ztbar) -2\sigma\Real(\Dap\Zt)\Dapbar\Dap\Th -\sigma(\Dapbar\Dap\Zt)\Dap\Th \\ 
& \quad  +i\sigma\brac{2i\Real \brac{\Dapabs \Th} + \brac{\Real \Th}^2} \Dapabs \Dapbar\Ztbar -\sigma \Real \brac{\Dapabs^2 \Th}\Dapbar\Ztbar  \\
& \quad   + i\sigma \brac{\Real \Th} \brac{ \Real \brac{\Dapabs \Th}}\Dapbar\Ztbar 
\end{split}
\end{align}
and $\Jone$ was defined in \eqref{form:Jone}. This equation gives rise to the energy $\Esigmafour$ in the energy estimate \thmref{thm:aprioriEsigma}.

\item Multiply the equation for $\Dapbar\Ztbar$ in \eqref{eq:DapbarZtbar} by $\Zbarap$ to get the equation
\begin{align} \label{eq:Ztbarap}
\begin{split}
& \brac{\Dt^2 +i\frac{\Aone}{\Zapabs^2}\pap  -i\sigma\Dapabs^3} \Ztbarap \\
 & =  \Rone\Zapbar -i\brac{\pap\frac{1}{\Zap}} \Jone -i\Dap \Jone - \Zapbar\sqbrac{\Dt^2 +i\frac{\Aone}{\Zapabs^2}\pap -i\sigma\Dapabs^3, \frac{1}{\Zapbar}} \Ztapbar
\end{split}
\end{align}
This equation gives rise to the energy $\Esigmatwo$ in the energy estimate \thmref{thm:aprioriEsigma}. 

\item We have

\begin{align} \label{eq:Th}
 \brac{\Dt^2 +i\frac{\Aone}{\Zapabs^2}\pap  -i\sigma\Dapabs^3} \Th = \Rtwo +i\Jtwo
\end{align}
where
\begin{align}\label{form:Rtwo}
\begin{split}
\Rtwo &=  -2i\brac{\Dapbar \Ztbar}\brac{\Dapabs \Dapbar \Ztbar} + \brac{\Real \Th }\cbrac{ \brac{\Dapbar \Ztbar}^2 +i\Dapbar\brac{\frac{\Aone}{\Zap}} +i\sigma\brac{\Real\Th} \Dapabs\Th} \\
& \quad +\sigma \Real\brac{\Dapabs\Th} \Dapabs\Th + \brac{\Dapabs\frac{\Aone}{\Zapabs}}\brac{\frac{\Zap}{\Zapabs}\pap \frac{1}{\Zap}} + \Dapabs\brac{\frac{1}{\Zapabs^2}\pap \Aone}  \\
& \quad +  (\Id + \Hil) \Imag\cbrac{\Real(\Dapbar\Ztbar)\Dapabs\Dapbar\Ztbar  -i\Real (\Dt \Th)\Dapbar\Ztbar}\\
& \quad + \frac{\Aone}{\Zapabs^2}\pap\Real (\Id - \Hil) \brac{\frac{\Zap}{\Zapabs} \pap \frac{1}{\Zap}} 
\end{split} \\
 \Jtwo&= \Imag (\Id - \Hil) \brac*[\big]{\Dt^2 \Th} - \Real (\Id - \Hil)\cbrac{(\Dapabs + i \Real \Th) \Dt\Dapbar \Ztbar} 
\end{align}
Note that the variable $\Jtwo$ is real valued. This equation gives rise to the energy $\Esigmathree$ in the energy estimate \thmref{thm:aprioriEsigma}.
\end{enumerate}

\medskip

\subsection{Previous a priori estimate}\label{sec:aprioriEcalsigma} 

We now describe the main a priori estimate proved in Section 5 of \cite{Ag19}. We will need a modification of this energy when we prove our main energy estimate \thmref{thm:aprioriEDelta}. Define
\begingroup
\allowdisplaybreaks
\begin{align*}
& \Esigmazero =  \norm[\infty]{\sigma^{\half} \Zapabs^\half \pap\frac{1}{\Zap}}^2  +   \norm[2]{\sigma^\onebysix\Zapabs^\half\pap\frac{1}{\Zap}}^6  +  \norm[2]{\pap\frac{1}{\Zap}}^2  + \norm[2]{\frac{\sigma^\half}{\Zapabs^\half}\pap^2\frac{1}{\Zap}}^2 \\
& \Esigmaone = \norm[\Hhalf]{(\Zttbar -i)\Zap}^2 + \norm[2]{\sqrt{\Aone}\Ztapbar}^2 + \norm[2]{\frac{\sigma^\half}{\Zapabs^\half}\pap\Ztapbar}^2 \\
& \Esigmatwo = \norm[2]{\Dt\Ztapbar}^2 + \norm[\Hhalf]{\sqrt{\Aone}\frac{\Ztapbar}{\Zapabs}}^2 + \norm[\Hhalf]{\frac{\sigma^\half}{\Zapabs^\threebytwo}\pap\Ztapbar}^2\\
& \Esigmathree = \norm[2]{\Dt\Th}^2 + \norm[\Hhalf]{\sqrt{\Aone}\frac{\Th}{\Zapabs}}^2 + \norm[\Hhalf]{\frac{\sigma^\half}{\Zapabs^\threebytwo}\pap\Th}^2\\
& \Esigmafour = \norm[\Hhalf]{\Dt\Dapbar\Ztbar}^2 + \norm[2]{\sqrt{\Aone}\Dapabs\Dapbar\Ztbar}^2 + \norm[2]{\frac{\sigma^\half}{\Zapabs^\half}\pap\Dapabs\Dapbar\Ztbar}^2 \\
& \Esigma = \Esigmazero + \Esigmaone + \Esigmatwo + \Esigmathree + \Esigmafour
\end{align*}
\endgroup

\begin{thm}[\cite{Ag19}]\label{thm:aprioriEsigma}
Let $\sigma\geq 0$ and let $(\Z, \Zt)(t)$ be a smooth solution to \eqref{eq:systemone} in $[0,T]$ for $T>0$, such that for all $s\geq 3$ we have $(\Zap-1,\frac{1}{\Zap} - 1, \Zt) \in \Linfty([0,T], H^{s+\half}(\Rsp)\times H^{s+\half}(\Rsp)\times H^{s}(\Rsp))$. Then $\sup_{t\in[0,T]} \Esigma(t) < \infty$ and there exists a polynomial P with universal non-negative coefficients such that for all $t \in [0,T)$ we have
\begin{align*}
\frac{d\Esigma(t)}{dt} \leq P(\Esigma(t))
\end{align*}
\end{thm}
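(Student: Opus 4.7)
\medskip

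\noindent\textbf{Proof proposal.} The plan is to treat this as a standard (but delicate) high-order energy estimate in the quasilinear setting. I would differentiate each of $\Esigmazero,\dots,\Esigmafour$ in time and verify that each piece satisfies an inequality of the form $\frac{d}{dt}\Esigma_{\sigma,k}(t) \leq P_k(\Esigma(t))$; then summing gives the claim. The structural backbone of the argument is that equations \eqref{eq:Ztbar}, \eqref{eq:ZtZap}, \eqref{eq:Ztbarap}, \eqref{eq:DapbarZtbar}, \eqref{eq:Th} are all of the schematic form
\begin{align*}
\Big(\Dt^2 + i\frac{\Aone}{\Zapabs^2}\pap - i\sigma\Dapabs^3\Big)\, u \;=\; \text{(good forcing)} + \text{(lower order)},
\end{align*}
and this second-order operator is coercive once we exploit the Taylor sign condition $\Aone\geq 1$ and the sign of the surface-tension term. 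Specifically, testing against $\Dt\bar u$ and integrating: the middle term $i\frac{\Aone}{\Zapabs^2}\pap$ generates a $\sqrt{\Aone}\,u/\Zapabs$ in $\Hhalf$ via the identity $\papabs = i\Hil\pap$ and $\Hil^2=\Id$ (cf.\ \eqref{eq:mainoperators}); the $-i\sigma\Dapabs^3$ term generates a $\sigma^{1/2}\Dapabs u/\Zapabs^{1/2}$ in $\Hhalf$; and $\Dt^2$ gives the $\|\Dt u\|_2^2$ piece. These are precisely the three terms in $\Esigmatwo,\Esigmathree,\Esigmafour$.

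Concretely, I would close each level as follows. For $\Esigmazero$, differentiate using \eqref{form:DtoneoverZap} and \eqref{form:DtZapabs} to produce $(\bvarap-\Dap\Zt-\Dapbar\Ztbar)$ and $\Dapbar\Ztbar$ factors; rewrite the weighted pieces involving $\sigma^{1/2}\Zap^{1/2}\pap(1/\Zap)$ using \eqref{form:RealImagTh} so $\Real\Th$ and $\pap(1/\Zapabs)$ appear, then reduce to controlling the energies at higher levels. For $\Esigmaone$ pair \eqref{eq:ZtZap} with $\Ztapbar$-type test functions, rewriting $\Ztttbar\Zap$ as $\Dt((\Zttbar-i)\Zap)$ modulo controlled remainders; this is where $\|(\Zttbar-i)\Zap\|_{\Hhalf}^2$ arises naturally via \eqref{form:Zttbar}. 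For $\Esigmatwo$ and $\Esigmafour$ apply the energy identity directly to \eqref{eq:Ztbarap} and \eqref{eq:DapbarZtbar} with test functions $\Dt\Ztapbar$ and $\Dt\Dapbar\Ztbar$ respectively. For $\Esigmathree$ use \eqref{eq:Th}, which is structurally identical.

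The most delicate step, and the place where the bulk of the work sits, is controlling the forcing $\Jone$ in \eqref{form:Jone} and the forcing $\Rone,\Rtwo,\Jtwo$ appearing in the right-hand sides. $\Jone$ contains $\Dt\Aone$, which must be expanded via \eqref{form:Aonenew} to produce $\Zt\Ztapbar$-type commutators with $\Hil$; the $\sigma\pap(\Id-\Hil)$ pieces couple the surface-tension block to the others and need to be absorbed into the $\sigma^{1/2}$-weighted norms via a Cauchy--Schwarz with the right weight distribution. The payoff from using $(\Id-\Hil)$ (rather than just $\pap$) is that these terms vanish on holomorphic functions, so they reduce to commutators $[\Hil,\bvar]\pap$ and $[\Hil,\cdot]\Dt$ which are one derivative better. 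Similarly, the high-order forcing terms in $\Rtwo$, in particular $\Dapabs\!\big(\Zapabs^{-2}\pap\Aone\big)$ and $\Real(\Dapabs\Th)\Dapabs\Th$, are the ones that would at first glance lose a derivative, and they must be estimated using the Sobolev-type control $\Esigma$ provides on $\Aone$ (three derivatives on the interface by \eqref{form:Aonenew}) together with the commutator framework \eqref{eq:foneftwofthree}.

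The main obstacle, therefore, is bookkeeping: every time derivative one pushes in via \eqref{eq:commutator} generates a $\bvarap$ or $\Real(\Dap\Zt)$ factor, and every spatial derivative requires a commutator identity that is compatible with the weighted structure $\Zapabs^{-s}$. The key point is that $\Esigma$ is designed so that each of these commutator remainders is a product of factors each of which is controlled by $\Esigma$ (generally one in $\Linfty$ via Sobolev embedding from $\Hhalf$ control, and the rest in $\Ltwo$). Once one sets up the bookkeeping carefully and verifies closedness at each of the five levels, the polynomial bound $P(\Esigma)$ emerges from collecting powers of the various norms. I would not expect any new structural identities beyond those listed in \secref{sec:equations} to be needed; the novelty is entirely in the pairing of test functions and the use of the Taylor sign together with the surface-tension sign to provide coercivity at all three of the $\|\Dt u\|_2$, $\Hhalf$, and $\sigma^{1/2}\Hhalf$ levels simultaneously.
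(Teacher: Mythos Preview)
Your proposal is essentially correct and matches the approach taken in \cite{Ag19}, which is where this theorem is actually proved; the present paper only cites it and summarizes the machinery in \secref{sec:aprioriEcalsigma}. That summary confirms your outline: each $\Esigma_{,k}$ for $k=1,\dots,4$ arises from testing the corresponding quasilinear equation \eqref{eq:ZtZap}, \eqref{eq:Ztbarap}, \eqref{eq:Th}, \eqref{eq:DapbarZtbar} against $\Dt\bar u$ (or $\papabs\Dt\bar u$), the time-derivative reductions use \lemref{lem:timederiv} together with the weight-commutation estimates \eqref{est:DtsqrtAone}--\eqref{est:DapabssqrtAonetwo}, and closing requires exactly the forcings $\Jone,\Rone,\Rtwo,\Jtwo$ to be bounded, which is the content of the long list of controlled quantities (items 1)--30) in \secref{sec:aprioriEcalsigma}). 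The one ingredient you underemphasize is the systematic use of the auxiliary spaces $\Wcal$ and $\Ccal$ and their product rules (\lemref{lem:CW}), which are what make the weighted bookkeeping tractable; but this is an organizational device rather than a structural gap in your argument.
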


\begin{rem} \label{rem:aprioriEsigma}
Note that the energy $\Esigmazero$ contains a term which is the $L^\infty$ norm of a function and hence may not in general be differentiable in time even for smooth solutions. Hence for this term  the time derivative is replaced by the upper Dini derivative $  \limsup_{s \to 0^{+}} \frac{\norm[\infty]{f(t+s)}-\norm[\infty]{f(t)}}{s}$
\end{rem}

In this section when we write $\dis f \in \Ltwo$, what we mean is that there exists a universal polynomial $P$ with non-negative coefficients such that $\dis \norm*[2]{f} \leq P(\Esigma)$. Similar definitions for $\dis f\in \Hhalf$, $\dis  f \in \Linfty$ etc. This notation was used in \cite{Ag19} and we use it here as well as it simplifies the presentation. In order to prove this theorem, two new weighted spaces were also introduced to simplify the calculations. Define the spaces $\Ccal$ and $\Wcal$:
\begin{enumerate}
\item  If $\dis w \in \Linfty$ and $\dis \Dapabs w \in \Ltwo$, then we say $w\in \Wcal$. Define 
\begin{align}\label{def:Wcal}
 \norm[\Wcal]{w} = \norm[\infty]{w}+ \norm[2]{\Dapabs w}
\end{align}

\item If $\dis f \in \Hhalf$ and $\dis f\Zapabs \in \Ltwo$, then we say $f\in \mathcal{C}$. Define
\begin{align}\label{def:Ccal}
\norm[\Ccal]{f} = \norm[\Hhalf]{f} + \brac*[\bigg]{1+ \norm*[\bigg][2]{\pap\frac{1}{\Zapabs}}}\norm[2]{f\Zapabs}
\end{align}
\end{enumerate}

Also define the norm $\norm[\Wcal\cap\Ccal]{f} = \norm[\Wcal]{f} + \norm[\Ccal]{f}$. The main lemma governing the behavior of these spaces is given as follows: 

\medskip
\begin{lem} \label{lem:CW} 
The following properties hold for the spaces $\Wcal$ and $\Ccal$
\begin{enumerate}[leftmargin =*, align=left]
\item If $w_1,w_2 \in \Wcal$, then $w_1w_2 \in \Wcal$. Moreover $\norm[\Wcal]{w_1w_2} \leq \norm[\Wcal]{w_1}\norm[\Wcal]{w_2}$
\item If $f\in \Ccal$ and $w\in \Wcal$, then $fw \in \Ccal$.  Moreover $\norm[\Ccal]{fw} \lesssim \norm[\Ccal]{f}\norm[\Wcal]{w}$
\item If $f,g \in \Ccal$, then $fg\Zapabs \in \Ltwo$. Moreover $\norm[2]{fg\Zapabs} \lesssim \norm[\Ccal]{f}\norm[\Ccal]{g}$
\end{enumerate}
\end{lem}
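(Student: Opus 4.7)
For part (1), the most direct route is the product rule for the weighted derivation $\Dapabs = \frac{1}{\Zapabs}\pap$, which gives $\Dapabs(w_1 w_2) = w_1 \Dapabs w_2 + w_2 \Dapabs w_1$. H\"older's inequality (pairing $L^\infty$ with $L^2$) then yields $\norm[2]{\Dapabs(w_1 w_2)} \leq \norm[\infty]{w_1}\norm[2]{\Dapabs w_2} + \norm[\infty]{w_2}\norm[2]{\Dapabs w_1}$, and combining this with the trivial bound $\norm[\infty]{w_1 w_2} \leq \norm[\infty]{w_1}\norm[\infty]{w_2}$ yields the submultiplicative estimate exactly.

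For part (2), the prefactor $1 + \norm[2]{\pap(1/\Zapabs)}$ in the $\Ccal$-norm depends only on $\Zapabs$ and is therefore untouched by multiplication; the weighted $L^2$ piece is handled immediately by $\norm[2]{fw\Zapabs} \leq \norm[\infty]{w}\norm[2]{f\Zapabs}$. The real work is the $\Hhalf$-product estimate $\norm[\Hhalf]{fw} \lesssim \norm[\Hhalf]{f}\,\norm[\Wcal]{w} + (\text{lower order})$. I would use the Gagliardo double-integral characterization with the pointwise decomposition $(fw)(\ap)-(fw)(\bp) = w(\ap)(f(\ap)-f(\bp)) + f(\bp)(w(\ap)-w(\bp))$. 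The first piece contributes $\norm[\infty]{w}\norm[\Hhalf]{f}$ directly; for the second, I would use the identity $w(\ap) - w(\bp) = \int_\bp^\ap \Zapabs(s)\, \Dapabs w(s)\,ds$ together with Cauchy--Schwarz on this line integral and a Hardy-type manipulation to reduce that term to a product of $\norm[2]{f\Zapabs}$ and $\norm[2]{\Dapabs w}$, each of which is controlled by the respective norms.

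Part (3) is the subtlest. The natural H\"older split $\norm[2]{fg\Zapabs}^2 = \int |f|^2 |g\Zapabs|^2 \leq \norm[4]{f}^2 \norm[4]{g\Zapabs}^2$ would reduce everything to two Sobolev-type embeddings $\norm[4]{f} \lesssim \norm[\Ccal]{f}$ and $\norm[4]{g\Zapabs} \lesssim \norm[\Ccal]{g}$. In one dimension, the homogeneous $\Hhalf$ alone does not embed into $L^4$ (constants lie in $\Hhalf$), so one must use the extra information $f\Zapabs \in L^2$ together with the prefactor $1+\norm[2]{\pap(1/\Zapabs)}$ to pin the low-frequency part of $f$: morally, writing $f = (1/\Zapabs)\cdot(f\Zapabs)$, the $L^2$-bound on $\pap(1/\Zapabs)$ prevents $1/\Zapabs$ from producing uncontrolled oscillation, so that the $L^2$-data on $f\Zapabs$ upgrades (via Gagliardo--Nirenberg interpolation with $\norm[\Hhalf]{f}$) to the required $L^4$-bound on $f$. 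A symmetric argument applies to $g\Zapabs$.

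\emph{Main obstacle.} The chief difficulty throughout is that $\Zapabs$ is not bounded from above or below --- the weighted spaces $\Ccal$ and $\Wcal$ are designed precisely to accommodate angled-crested-like interfaces where $\Zapabs$ may vanish or grow. Every estimate must therefore avoid any implicit use of $\norm[\infty]{\Zapabs}$ or $\norm[\infty]{1/\Zapabs}$ and instead leverage the Hardy-type factor $1 + \norm[2]{\pap(1/\Zapabs)}$ built into $\norm[\Ccal]{\cdot}$. Tracking this factor cleanly through the bilinear estimates in (2) and (3), and pinning the low-frequency/constant ambiguity of $\Hhalf$ through the weighted $L^2$ data, is where I expect the bulk of the technical care to lie.
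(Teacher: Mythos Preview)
Your treatment of part (1) is correct and is exactly what is needed. Your sketch for part (2) is pointed in the right direction but vague at the key step; the paper does not work through the Gagliardo double integral by hand but instead invokes \propref{prop:Hhalfweight} (first estimate) with the substitution $f\mapsto f\Zapabs$, $w\mapsto \frac{1}{\Zapabs}$, $h\mapsto w$, which immediately yields
\[
\norm[\Hhalf]{fw}\lesssim \norm[\Hhalf]{f}\norm[\infty]{w}+\norm[2]{f\Zapabs}\norm[2]{\Dapabs w}+\norm[2]{f\Zapabs}\norm[\infty]{w}\norm*[\big][2]{\pap\tfrac{1}{\Zapabs}}.
\]
Your ``Cauchy--Schwarz on the line integral plus Hardy'' is effectively an attempt to reprove this proposition from scratch; it can be made to work, but the organization is more delicate than your sketch suggests, since a naive Hardy bound on $\int\frac{|w(\ap)-w(\bp)|^2}{(\ap-\bp)^2}d\ap$ produces $\norm[2]{w'}^2=\norm[2]{\Zapabs\Dapabs w}^2$, which is not controlled.

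Part (3) contains a genuine gap. Your reduction via $\norm[2]{fg\Zapabs}\leq\norm[4]{f}\,\norm[4]{g\Zapabs}$ requires the embedding $\norm[4]{g\Zapabs}\lesssim\norm[\Ccal]{g}$, and this embedding is \emph{false}. Take $\frac{1}{\Zapabs}$ smooth, equal to $1$ outside $[-1,1]$ and equal to $\epsilon$ on $[-\tfrac12,\tfrac12]$, so that $\norm[2]{\pap\tfrac{1}{\Zapabs}}\sim 1$. Let $g$ be a smooth bump of height $1$ and width $\epsilon^2$ sitting inside $[-\tfrac12,\tfrac12]$. Then $\norm[\Hhalf]{g}\sim 1$, $\norm[2]{g\Zapabs}\sim\epsilon^{-1}\cdot\epsilon=1$, hence $\norm[\Ccal]{g}\sim 1$; but $\norm[4]{g\Zapabs}\sim\epsilon^{-1}\cdot(\epsilon^2)^{1/4}=\epsilon^{-1/2}\to\infty$. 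The asymmetric placement of $\Zapabs$ is what breaks things: there is no reason $g\Zapabs$ should inherit any $L^4$ control from $\norm[\Hhalf]{g}$ and $\norm[2]{g\Zapabs}$ alone when $\Zapabs$ is large.

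The paper's route avoids this by using the \emph{second} estimate of \propref{prop:Hhalfweight}, applied with the real weight $w=\frac{1}{\Zapabs}$ and $f,g$ replaced by $f\Zapabs,\,g\Zapabs$. This gives the symmetric bound
\[
\norm[2]{fg\Zapabs}\lesssim \norm[\Hhalf]{f}\norm[2]{g\Zapabs}+\norm[\Hhalf]{g}\norm[2]{f\Zapabs}+\norm[2]{f\Zapabs}\norm[2]{g\Zapabs}\norm*[\big][2]{\pap\tfrac{1}{\Zapabs}},
\]
each term of which is manifestly $\lesssim\norm[\Ccal]{f}\norm[\Ccal]{g}$. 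The point is that the estimate is genuinely bilinear and cannot be decoupled into a product of two separate norms of $f$ and $g$ the way your $L^4$ split attempts.
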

This lemma was presented in \cite{Ag19} and its proof follows directly from \propref{prop:Hhalfweight}. We will continue to use these spaces as they are quite useful in our energy estimates. 

In the proof of \thmref{thm:aprioriEsigma}, several quantities were shown to be controlled by the energy $\Esigma$. We now list some of the quantities controlled by $\Esigma$ which we will use frequently in our energy estimates in \secref{sec:aprioriEcalhigh} and \secref{sec:aprioriEcalaux}. By Proposition 6.1 of \cite{Ag19}, the energy $\Esigma$ is equivalent to the energy $\Ecalsigma$ and so the following terms are also controlled by $\Ecalsigma$.  It is important to note that as the estimates are true for all $\sigma\geq0$, they are in particular true for $\sigma=0$. We will now list all the terms controlled in Section 5.1 in \cite{Ag19} which remain non-zero when we specialize to $\sigma=0$ case.

\medskip
\begin{enumerate}[widest = 99, leftmargin =*, align=left, label=\arabic*)]

\item $\dis \Ztapbar \in \Ltwo, \Dapabs\Dapbar\Ztbar \in \Ltwo$ 

\item $\dis \Aone \in \Linfty \cap \Hhalf$ 

\item $\dis \pap\frac{1}{\Zap} \in \Ltwo, \pap\frac{1}{\Zapabs} \in \Ltwo, \Dapabs \w \in \Ltwo$ and hence $\w \in \Wcal$

\item $\dis \Dapbar\Ztbar \in \Linfty, \Dapabs\Ztbar \in \Linfty, \Dap\Ztbar \in \Linfty$  
 
\item $\dis\Dapbar^2\Ztbar \in \Ltwo, \Dapabs^2\Ztbar \in \Ltwo, \Dap^2\Ztbar \in \Ltwo$ 

\item $\dis  \Dapbar\Ztbar \in \Wcal\cap\Ccal, \Dapabs\Ztbar \in \Wcal\cap\Ccal, \Dap\Ztbar \in \Wcal\cap\Ccal$

\item $\dis \pap \Pa\brac{\frac{\Zt}{\Zap}} \in \Linfty$

\item $\dis \Dapabs \Aone \in \Ltwo$ and hence $\dis \Aone \in \Wcal,  \sqrt{\Aone} \in \Wcal, \frac{1}{\Aone} \in \Wcal, \frac{1}{\sqrt{\Aone}} \in \Wcal$

\item $\dis \Th \in \Ltwo$, $\dis \Dt\Th \in \Ltwo$

\item $\dis \frac{\Th}{\Zapabs} \in \Ccal$

\item $\dis \Dap\frac{1}{\Zap}\in \Ccal$, $ \dis \Dapabs\frac{1}{\Zap}\in \Ccal$, $ \dis \Dapabs\frac{1}{\Zapabs}\in \Ccal$, $\dis \frac{1}{\Zapabs^2}\pap \w \in \Ccal$

\item $\dis \frac{1}{\Zapabs^2}\pap \Aone \in \Linfty\cap\Hhalf  $ and hence $\dis \frac{1}{\Zapabs^2}\pap \Aone \in \Ccal$

\item $\dis \frac{1}{\Zapabs^3}\pap^2\Aone \in \Ltwo $, $\dis \Dapabs\brac*[\bigg]{\frac{1}{\Zapabs^2}\pap\Aone} \in \Ltwo $ and hence $\dis \frac{1}{\Zapabs^2}\pap \Aone \in \Wcal$

\item $\dis \bvarap \in \Linfty\cap\Hhalf $ and $\Hil(\bvarap) \in \Linfty\cap\Hhalf $

\item $\dis \Dapabs \bvarap \in \Ltwo$ and hence $\bvarap \in \Wcal$ 

\item $\dis \pap\Dt\frac{1}{\Zap} \in \Ltwo$, $\dis \Dt\pap\frac{1}{\Zap} \in \Ltwo$

\item $\dis \Zttapbar \in \Ltwo $

\item $\dis \Dapbar \Zttbar \in \Ccal, \Dapabs \Zttbar \in \Ccal, \Dt\Dapbar\Ztbar \in \Ccal \tx{ and }\Dt\Dapabs\Ztbar \in \Ccal$

\item $\dis \Dt\Aone \in \Linfty\cap\Hhalf$

\item $\Dt(\bvarap - \Dap\Zt - \Dapbar\Ztbar) \in \Linfty\cap\Hhalf$ and hence $\dis \Dt\bvarap \in \Hhalf, \pap\Dt \bvar \in \Hhalf $

\item $\dis (\Id - \Hil)\Dt^2\Th \in \Ltwo$, $\dis (\Id - \Hil)\Dt^2\Ztapbar \in \Ltwo$, $\dis (\Id - \Hil)\Dt^2 \Dap\Ztbar \in \Hhalf$

\item $\dis \sqbrac{\Dt^2, \frac{1}{\Zap}}\Ztapbar \in \Ccal$, $\dis \sqbrac{\Dt^2, \frac{1}{\Zapbar}}\Ztapbar \in \Ccal$

\item $\dis \sqbrac{i\frac{\Aone}{\Zapabs^2}\pap, \frac{1}{\Zap}}\Ztapbar \in \Ccal$, $\dis \sqbrac{i\frac{\Aone}{\Zapabs^2}\pap, \frac{1}{\Zapbar}}\Ztapbar \in \Ccal$

\item $\dis \Rone \in \Ccal$

\item $\dis \Jone \in \Linfty\cap\Hhalf$

\item $\dis \Dapabs\Jone \in \Ltwo$ and hence $\dis \Jone \in \Wcal$

\item $\dis \Rtwo \in \Ltwo$

\item $\dis \Jtwo \in \Ltwo$

\item $\dis (\Id - \Hil)\Dt^2 \Dapbar\Ztbar \in \Hhalf$

\item $\dis \frac{1}{\Zapabs^2}\pap\Jone \in \Hhalf$ and hence $\dis \frac{1}{\Zapabs^2}\pap\Jone \in \Ccal$

\end{enumerate} 
\medskip

In addition to controlling these terms, some other estimates were also proved to prove the energy estimate \thmref{thm:aprioriEsigma}. We now give a useful lemma proved in Section 5.2 in \cite{Ag19}.

\begin{lem} \label{lem:timederiv}
Let $T>0$ and let $f,\bvar \in C^2([0,T), H^2(\Rsp))$ with $\bvar$ being real valued. Let $\Dt = \pt + \bvar\pap$. Then 
\begin{enumerate}[leftmargin =*, align=left]
\item $\dis \frac{\diff }{\diff t} \int f \diff \ap = \int \Dt f \diff\ap + \int \bvarap f \diff\ap$
\item $\dis \abs*[\Big]{ \frac{d}{dt}\int \abs{f}^2 \diff \ap - 2\Real \int \bar{f} (\Dt f) \diff \ap} \lesssim \norm[2]{f}^2 \norm[\infty]{\bvarap}$

\item $\dis \abs{ \frac{d}{dt}\int (\papabs\bar{f})f \diff\ap- 2\Real \cbrac{\int (\papabs\bar{f})\Dt f \diff \ap}} \lesssim   \norm[\Hhalf]{f}^2 \norm[\infty]{\bvarap}$
\end{enumerate}
\end{lem}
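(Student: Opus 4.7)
All three identities reduce, via the splitting $\pt = \Dt - \bvar\pap$, to controlling the transport correction by integration by parts, exploiting that $\bvar$ is real valued. For part (1) I would simply compute $\frac{d}{dt}\int f\diff\ap = \int\pt f\diff\ap = \int\Dt f\diff\ap - \int\bvar\pap f\diff\ap$, and one integration by parts converts the last integral to $\int\bvarap f\diff\ap$. The assumed $H^2$ regularity of $f$ and $\bvar$ makes every manipulation legal and kills boundary terms at infinity.

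For part (2), differentiating gives $\frac{d}{dt}\int|f|^2\diff\ap = 2\Real\int\bar f\,\pt f\diff\ap = 2\Real\int\bar f\,\Dt f\diff\ap - 2\Real\int\bvar\bar f\,\pap f\diff\ap$. Since $\bvar$ is real, $2\Real(\bar f\pap f) = \pap|f|^2$, so after one integration by parts the transport term equals $\int\bvarap|f|^2\diff\ap$, which has the claimed bound $\norm[\infty]{\bvarap}\norm[2]{f}^2$.

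The only non-routine assertion is (3). My plan is to pass to $\psi := \papabs^{1/2}f$; self-adjointness of $\papabs^{1/2}$ and the fact that $\papabs^{1/2}$ has real symbol give $\int(\papabs\bar f)f\diff\ap = \norm[2]{\psi}^2 = \norm[\Hhalf]{f}^2$ and $\int(\papabs\bar f)\Dt f\diff\ap = \int\bar\psi\,\papabs^{1/2}\Dt f\diff\ap$. Since $\papabs^{1/2}$ commutes with $\pt$, I would then write $\papabs^{1/2}(\bvar\pap f) = \bvar\pap\psi + [\papabs^{1/2},\bvar]\pap f$ and apply the computation of (2) to $\psi$ to obtain
\begin{align*}
\frac{d}{dt}\norm[2]{\psi}^2 = 2\Real\int \bar\psi\,\papabs^{1/2}\Dt f\diff\ap + \int\bvarap|\psi|^2\diff\ap - 2\Real\int\bar\psi\,[\papabs^{1/2},\bvar]\pap f\diff\ap.
\end{align*}
The first term on the right is precisely the main term $2\Real\int(\papabs\bar f)\Dt f\diff\ap$, while the second is controlled by $\norm[\infty]{\bvarap}\norm[\Hhalf]{f}^2$.

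What remains is the commutator term, which Cauchy--Schwarz bounds by $\norm[\Hhalf]{f}\norm[2]{[\papabs^{1/2},\bvar]\pap f}$. The main obstacle, and essentially the only analytic content of the lemma, is therefore the Kato--Ponce/Coifman--Meyer type commutator estimate
\begin{align*}
\norm[2]{[\papabs^{1/2},\bvar]\pap f} \lesssim \norm[\infty]{\bvarap}\norm[\Hhalf]{f},
\end{align*}
which I would either prove by a Littlewood--Paley/paraproduct decomposition (noting that constants in $\bvar$ drop out of the commutator, so only $\bvarap$ can appear on the right) or extract from the commutator toolbox recorded in Appendices~A and~B. Once this estimate is in place, assertion (3) follows immediately from the identity above.
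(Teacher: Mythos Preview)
Your argument is correct. The paper itself does not prove this lemma; it is quoted from the companion paper \cite{Ag19} (Section~5.2 there), so there is no in-paper proof to compare against. For parts~(1) and~(2) your computation is the standard one. For part~(3), the commutator estimate you isolate as the crux,
\[
\norm[2]{[\papabs^{1/2},\bvar]\pap f}\lesssim\norm[\infty]{\bvarap}\norm[\Hhalf]{f},
\]
is exactly item~(4) of Proposition~\ref{prop:commutator} in Appendix~A: writing $\pap f=-i\papabs^{1/2}(\Hil\papabs^{1/2}f)$ and applying that estimate with $g=\Hil\papabs^{1/2}f$ gives the bound $\norm[BMO]{\papabs\bvar}\norm[2]{\Hil\papabs^{1/2}f}\lesssim\norm[\infty]{\bvarap}\norm[\Hhalf]{f}$, since $\Hil$ is an $L^2$ isometry and $\norm[BMO]{\papabs\bvar}=\norm[BMO]{\Hil\bvarap}\lesssim\norm[\infty]{\bvarap}$. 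So no separate Littlewood--Paley argument is needed; the toolbox already contains precisely what you need.
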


In proving  \thmref{thm:aprioriEsigma}, several estimates were proven in Section 5.2.3 of \cite{Ag19} in order to close the energy estimate. We now list some of them below. In the following $P$ represents a universal polynomial with non-negative coefficients and $(\Z,\Zt)(t)$ is a solution to \eqref{eq:systemone} with surface tension $\sigma$. 
\begin{enumerate}[leftmargin =*, align=left]
\item We have the estimate
\begin{align}\label{est:DtsqrtAone}
\norm[\Hhalf]{\Dt\brac{\frac{\sqrt{\Aone}}{\Zapabs} \fbar} - \frac{\sqrt{\Aone}}{\Zapabs}\Dt \fbar} \lesssim P(\Esigma)\norm[\Ccal]{\frac{f}{\Zapabs}}
\end{align}

\item If $\Ph f = f$ then we have the estimate
\begin{align}\label{est:DapabssqrtAonetwo}
\norm[2]{\frac{\sqrt{\Aone}}{\Zapabs}\papabs\brac{\frac{\sqrt{\Aone}}{\Zapabs}f } - i\frac{\Aone}{\Zapabs^2}\pap f } \lesssim P(\Esigma)\cbrac{\norm[\Hhalf]{\frac{\sqrt{\Aone}}{\Zapabs}f} + \norm[\Ccal]{\frac{f}{\Zapabs}}}
\end{align}

\end{enumerate}
\medskip

Finally in addition to the above estimates it was shown in Section 6 of \cite{Ag19} that the energies $\Esigma$ and $\Ecalsigma$ are equivalent.
\begin{prop}\label{prop:equivEsigmaEcalsigma} 
There exists universal polynomials $P_1, P_2$ with non-negative coefficients so that if $(\Z,\Zt)(t)$ is a smooth solution to the water wave equation \eqref{eq:systemone} for $\sigma \geq 0$ in the time interval $[0,T]$ satisfying $(\Zap-1,\frac{1}{\Zap} - 1, \Zt) \in \Linfty([0,T], H^{s+\half}(\Rsp)\times H^{s+\half }(\Rsp)\times H^{s }(\Rsp))$ for all $s\geq 3$,  then for all $t \in [0,T]$ we have
\begin{align*}
\Esigma(t) \leq P_1(\Ecalsigma(t)) \quad \tx{ and }\quad  \Ecalsigma(t) \leq P_2(\Esigma(t))
\end{align*}
\end{prop}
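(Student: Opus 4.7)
\medskip

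\textbf{Proof proposal.}
The plan is to verify the two inequalities $\Esigma \leq P_1(\Ecalsigma)$ and $\Ecalsigma \leq P_2(\Esigma)$ term by term, using the algebraic identities of \secref{sec:equations} as the bridge between the ``spatial'' quantities that appear in $\Ecalsigma$ (weighted $\pap$ derivatives of $1/\Zap$ and $\Ztapbar$) and the ``dynamical'' quantities that appear in $\Esigma$ ($\Aone$, $\Th$, $\Dapbar\Ztbar$, and their $\Dt$ derivatives). The underlying equation \eqref{form:Zttbar}, $(\Zttbar - i)\Zap = -i\Aone + \sigma\pap\Th$, is the central identity linking the two sides at the top order, while the formula \eqref{form:Th} encodes how $\Th$ recovers $\pap(1/\Zap)$.

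For the direction $\Esigma \leq P_1(\Ecalsigma)$, I would start with $\Esigmazero$ whose summands already sit inside $\Ecalsigmaone$. Next, bound $\Aone$ from \eqref{form:Aonenew}: the right-hand side is quadratic in $(\Zt, \Ztapbar)$, both of which are controlled by $\Ecalsigma$ (after using Sobolev embedding to get $\Zt \in \Linfty$ from $\Ztapbar \in \Ltwo$ and decay at infinity). Once $\Aone \in \Linfty\cap\Hhalf$ is in hand, the $\sqrt{\Aone}$ factors in $\Esigmaone, \Esigmatwo, \Esigmathree$ are harmless multipliers, and the required norms of $\Ztapbar/\Zapabs$ and $\sigma^\half/\Zapabs^\half \cdot \pap\Ztapbar$ are all present in $\Ecalsigma$. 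The term $(\Zttbar - i)\Zap$ in $\Esigmaone$ is handled by writing it as $-i\Aone + \sigma\pap\Th$ and using \eqref{form:Th} together with $\Ecalsigmaone$ to bound $\sigma\pap\Th\in\Hhalf$. For the $\Dt$-quantities in $\Esigmatwo, \Esigmathree, \Esigmafour$, I would expand via commutator identities \eqref{eq:commutator} and the formulas \eqref{form:DtoneoverZap}, \eqref{form:DtTh}: each such expansion produces products of spatial terms already controlled by $\Ecalsigma$, multiplied by $\bvar, \bvarap$ which are controlled through \eqref{form:bvarapnew} and boundedness of the Hilbert transform.

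For the reverse direction $\Ecalsigma \leq P_2(\Esigma)$, the strategy is similar but run backwards. The identity \eqref{form:Th} inverts to give $\pap(1/\Zap)$ in terms of $\Th$ and $\Real(\Id-\Hil)[(\Zap/\Zapabs)\pap(1/\Zap)]$, the latter of which, after moving across $\Hil$, can be absorbed into the left-hand side plus lower-order terms; this recovers the base layer of $\Ecalsigmaone$. The surface-tension terms in $\Ecalsigmaone$ of the form $\sigma\pap\Th$ and $\sigma\Dap^k(1/\Zap)$ are obtained from $(\Zttbar - i)\Zap = -i\Aone + \sigma\pap\Th$ (the $\Hhalf$ norm of the left side is in $\Esigmaone$, the $\Aone$ is controlled by the spatial side of $\Esigma$, so $\sigma\pap\Th \in \Hhalf$), and then by differentiating this identity and using the equation \eqref{eq:DapbarZtbar} to trade $\Dt^2\Dapbar\Ztbar$ (controlled by $\Esigmafour$) for $\sigma\Dapabs^3\Dapbar\Ztbar$ plus lower-order pieces. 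Weighted $\pap\Ztapbar$ and $\pap^2\Ztapbar$ terms in $\Ecalsigmatwo$ come directly from $\Esigmatwo$ and $\Esigmafour$ after stripping the $\sqrt{\Aone}$ factor (which is bounded below since $\Aone\geq 1$).

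The main obstacle will be matching the highest-order surface-tension-weighted terms in $\Ecalsigma$, namely $\norm[\Hhalf]{\sigma/\Zap^2 \cdot \pap^3(1/\Zap)}$ and $\norm[2]{\sigma/\Zap \cdot \pap^3(1/\Zap)}$, against $\Esigmafour$ (which contains $\Dt\Dapbar\Ztbar, \Dapabs\Dapbar\Ztbar$ and $\sigma^\half/\Zapabs^\half \cdot \pap\Dapabs\Dapbar\Ztbar$). This requires differentiating $(\Zttbar-i)\Zap = -i\Aone + \sigma\pap\Th$ once more, using \eqref{form:Th} to convert $\pap^2\Th$ into $\pap^3(1/\Zap)$ modulo lower-order pieces, and controlling the resulting commutators between weights like $1/\Zap^k$ and $\pap$. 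The technical tool throughout is \lemref{lem:CW} together with the $\Wcal \cap \Ccal$ estimates and the Hilbert-transform commutator inequalities of \propref{prop:Hhalfweight} (invoked in \cite{Ag19}); everything else is careful bookkeeping of weights and products.
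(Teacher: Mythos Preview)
The paper does not give its own proof of this proposition: the sentence preceding the statement says it ``was shown in Section 6 of \cite{Ag19},'' and no argument is reproduced here. So there is no in-paper proof to compare against directly.

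That said, your sketch is in the right spirit and matches the template the paper uses for the analogous equivalences that \emph{are} proved here, namely \propref{prop:equivEhighEcalhigh}, \propref{prop:equivEauxEcalaux}, and \propref{prop:equivEDeltaEcalDelta}. In each of those the strategy is exactly what you describe: use \eqref{form:Zttbar} and \eqref{form:Th} as the bridge between the dynamical quantities in $\Esigma$ and the spatial weighted derivatives in $\Ecalsigma$, strip or insert the $\sqrt{\Aone}$ weights via $\Aone\geq 1$ and $\Aone\in\Wcal$, and recover $\Dt$-terms from spatial ones (or vice versa) through the commutator relations \eqref{eq:commutator} and the formulae \eqref{form:DtoneoverZap}, \eqref{form:DtTh}. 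The $\Wcal/\Ccal$ machinery of \lemref{lem:CW} together with \propref{prop:LinftyHhalf} carries the weight manipulations.

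One small correction: you do not need $\Zt\in\Linfty$ to control $\Aone$. The commutator estimate $\norm[\Linfty\cap\Hhalf]{[\Zt,\Hil]\Ztapbar}\lesssim\norm[2]{\Ztap}\norm[2]{\Ztapbar}$ from \propref{prop:commutator} gives $\Aone\in\Linfty\cap\Hhalf$ directly from $\Ztap\in\Ltwo$, which is already in $\Ecalsigma$. Otherwise your outline, including the identification of the top-order surface-tension terms as the delicate part and the plan to extract them by differentiating \eqref{form:Zttbar} and comparing with the $\Esigmafour$ pieces, is consistent with how the paper handles the parallel cases.
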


\medskip
\section{Higher order energy  $\Ecalhigh$ }\label{sec:aprioriEcalhigh}

In this section we prove an a priori energy estimate for solutions of \eqref{eq:systemone} for $\sigma=0$ for the energy $\Ecalhigh$ defined in \secref{sec:previousresult}.  We will first construct an energy $\Ehigh$, prove an a priori estimate for it in \thmref{thm:aprioriEhigh} and then show in \propref{prop:equivEhighEcalhigh} that $\Ehigh$ and $\Ecalhigh$ are equivalent energies. Note that we already have an energy estimate for $\sigma=0$ by simply taking the special case of $\sigma=0$ in the energy $\Esigma$ in \thmref{thm:aprioriEsigma}. The energy $\Ehigh$ is higher order than $\Esigma\vert_{\sigma=0}$ by half spacial derivative. We need an estimate for a higher order norm as we will need it to control $\Ecalaux$ in \secref{sec:aprioriEcalaux} which is in turn needed to prove \thmref{thm:convergence}. 

This higher order energy $\Ehigh$ is essentially equivalent to the energy used in \cite{KiWu18, Wu19},  (more precisely we do not have the dependence on the lower order term $\norm[\infty]{\frac{1}{\Zap}}(t)$ or $\abs{\frac{1}{\Zap}}(0,t)$ in the energy which they have) and in those papers an a priori estimate is already established. The main reason we are proving an a priori estimate here is because in the proof of the energy estimate of  $\Ehigh$, we prove several additional things such as control of several more terms (for example $\Dapabs\frac{1}{\Zapabs}\in\Wcal\cap\Ccal, \frac{1}{\Zapabs^2}\pap\w \in \Wcal\cap\Ccal,  \frac{1}{\Zapabs^2}\pap\Dapbar\Ztbar \in \Ccal$ etc.) and estimates regarding the time derivative of the energy, which are not available in \cite{KiWu18, Wu19}. We need these additional estimates in \secref{sec:aprioriEcalaux} to prove \thmref{thm:aprioriEaux}. An additional benefit of proving the a priori estimate is that it also allows us to remove the dependence on lower order terms such as  $\norm[\infty]{\frac{1}{\Zap}}(t)$ or $\abs{\frac{1}{\Zap}}(0,t)$. 

Let us now define the energy $\Ehigh$ and state the a priori estimate. Define
\begin{align}\label{def:Ehigh}
\Ehigh = \Esigma\vert_{\sigma=0} + \norm[2]{\Dt\Dap^2\Ztbar}^2 + \norm[\Hhalf]{\frac{\sqrt{\Aone}}{\Zapabs}\Dap^2\Ztbar}^2
\end{align}

\begin{thm}\label{thm:aprioriEhigh}
Let $T>0$ and let $(\Z, \Zt)(t)$ be a smooth solution to \eqref{eq:systemone} with $\sigma=0$ in the time interval $[0,T]$, such that for all $s\geq 2$ we have $(\Zap-1,\frac{1}{\Zap} - 1, \Zt) \in \Linfty([0,T], H^{s}(\Rsp)\times H^{s}(\Rsp)\times H^{s+\half}(\Rsp))$. Then $\sup_{t\in[0,T]} \Ehigh(t) < \infty$ and there exists a universal polynomial P with non-negative coefficients such that for all $t \in [0,T)$ we have
\begin{align*}
\frac{d\Ehigh(t)}{dt} \leq P(\Ehigh(t))
\end{align*}
\end{thm}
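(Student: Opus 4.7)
The plan is to build on the estimate of \thmref{thm:aprioriEsigma} by viewing the two new summands in $\Ehigh$ as the energy for a quasilinear wave equation in $\Dap^2\Ztbar$, whose principal part $\Dt^2 + i\frac{\Aone}{\Zapabs^2}\pap$ is the same operator that governs $\Dapbar\Ztbar$ in \eqref{eq:DapbarZtbar}. Since $\Esigma\vert_{\sigma=0}$ is already handled by \thmref{thm:aprioriEsigma}, all of the new work goes into bounding the time derivatives of $\norm[2]{\Dt\Dap^2\Ztbar}^2$ and $\norm[\Hhalf]{\frac{\sqrt{\Aone}}{\Zapabs}\Dap^2\Ztbar}^2$.

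First I would derive the analogue of \eqref{eq:DapbarZtbar} for $\Dap^2\Ztbar$. Starting from $\Zttbar = i - i\Aone/\Zap$ (the $\sigma=0$ specialization of \eqref{form:Zttbar}), applying $\Dap$ twice, and then commuting each $\Dap$ across a $\Dt$ in the expansion of $\Dt^2\Dap^2\Ztbar$ via $[\Dap,\Dt]=(\Dap\Zt)\Dap$ from \eqref{eq:commutator}, I expect to arrive at
\begin{align*}
\brac{\Dt^2 + i\frac{\Aone}{\Zapabs^2}\pap}\Dap^2\Ztbar = R_\ast,
\end{align*}
where $R_\ast$ is a sum of product-rule and commutator remainders, each built from quantities already controlled in \secref{sec:aprioriEcalsigma} together with one-derivative-lower ingredients of $\Ehigh$. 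The derivation mirrors the one for \eqref{eq:DapbarZtbar} in \cite{Ag19}, with every term tracked in the weighted spaces $\Wcal$ and $\Ccal$ via \lemref{lem:CW}.

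Next I would differentiate the two new pieces of $\Ehigh$ in time using \lemref{lem:timederiv}. Inserting the wave equation of the previous step into $\frac{d}{dt}\norm[2]{\Dt\Dap^2\Ztbar}^2 \approx 2\Real\int \bar{f}\,(\Dt^2\Dap^2\Ztbar)\,d\ap$ with $f = \Dt\Dap^2\Ztbar$ produces a principal interaction $-2\Real\int \bar{f}\, i\frac{\Aone}{\Zapabs^2}\pap\Dap^2\Ztbar\,d\ap$ together with $R_\ast$ contributions. The identities \eqref{est:DtsqrtAone} and \eqref{est:DapabssqrtAonetwo} then identify this principal interaction with $-\frac{d}{dt}\norm[\Hhalf]{\frac{\sqrt{\Aone}}{\Zapabs}\Dap^2\Ztbar}^2$ up to remainders bounded by a polynomial in $\Ehigh$, and adding the two time derivatives yields the desired cancellation of the top-order piece. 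The leftover terms, including those from $R_\ast$, are disposed of by Cauchy--Schwarz, the product rules in \lemref{lem:CW}, and the list of controlled quantities in \secref{sec:aprioriEcalsigma}.

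The main obstacle will be producing $R_\ast$ in step two with a bound by a universal polynomial in $\Ehigh$. Applying $\Dap$ twice to $\Aone/\Zap$ generates quantities right at the borderline of what $\Esigma\vert_{\sigma=0}$ controls directly, and closing the argument will require showing that new auxiliary ingredients such as $\Dapabs\frac{1}{\Zapabs}$, $\frac{1}{\Zapabs^2}\pap\w$, and $\frac{1}{\Zapabs^2}\pap\Dapbar\Ztbar$ lie in $\Wcal \cap \Ccal$ with norms controlled by $\Ehigh$. These should follow from the $\Hhalf$ piece in the definition of $\Ehigh$ together with \eqref{form:RealImagTh}, \eqref{form:Th}, and \eqref{form:DtoneoverZap}, reducing each new quantity to a combination of $\Dap^2\Ztbar$, $\Aone$, and ingredients already handled in \thmref{thm:aprioriEsigma}.
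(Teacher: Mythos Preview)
Your outline matches the paper's argument in \secref{sec:aprioriEcalhigh}: derive the quasilinear equation for $\Dap^2\Ztbar$, control the new auxiliary quantities, and close via \lemref{lem:timederiv} together with \eqref{est:DtsqrtAone}--\eqref{est:DapabssqrtAonetwo}. The one ingredient your sketch underplays is that the top-order piece of $R_\ast$, which the paper isolates as $-i\wbar^3\Dapabs\bigl(\tfrac{1}{\Zapabs^2}\pap\Jone\bigr)$ in \eqref{eq:Dap^2Ztbar}, is not a product-rule remainder but is instead recovered by applying $(\Id-\Hil)$ to the equation itself and using that $\Dap^2\Ztbar$ is holomorphic while $\Jone$ is real (item 11 in \secref{sec:quantEhigh}); this small bootstrap is what actually closes the $\Ltwo$ bound on the right-hand side.
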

\medskip

To prove this theorem, we first obtain the quasilinear equation in \secref{sec:quasiEhigh}, then control the quantities controlled by $\Ehigh$ in \secref{sec:quantEhigh} and then finish the proof in \secref{sec:closeEhigh}. Finally we show in \secref{sec:equivEhighEcalhigh} that $\Ehigh$ and $\Ecalhigh$ are equivalent energies.

\subsection{Quasilinear equation}\label{sec:quasiEhigh}

Let us now derive the quasilinear equation relevant for the energy $\Ehigh$. Plugging in $\sigma = 0$ in the equation for $\Ztbar$ from \eqref{eq:Ztbar} we obtain
\begin{align*}
\brac{\Dt^2 +i\frac{\Aone}{\Zapabs^2}\pap } \Ztbar = -i\frac{\Jone}{\Zap}
\end{align*}
with $\Jone$ defined by \eqref{form:Jone} with $\sigma=0$, i.e. $\Jone =  \Dt\Aone +  \Aone\brac{\bvarap - \Dap\Zt - \Dapbar\Ztbar}$.  Applying $\Dap^2$ to the above equation we obtain
\begin{align*}
 \brac{\Dt^2 +i\frac{\Aone}{\Zapabs^2}\pap } \Dap^2\Ztbar  = -i\Dap^2\brac{\frac{\Jone}{\Zap}} + \sqbrac{\Dt^2 +i\frac{\Aone}{\Zapabs^2}\pap , \Dap^2} \Ztbar 
\end{align*}
Let us try to simplify the terms above. We will heavily use the commutator estimates from  \eqref{eq:commutator} and also use the equation \eqref{form:Zttbar} for $\sigma=0$. 

\begin{enumerate}[leftmargin =*, align=left, label=\alph*)]

\item We see that
\begin{align*}
& \sqbrac{\Dt^2 +i\frac{\Aone}{\Zapabs^2}\pap , \Dap} \\
&= \Dt\sqbrac{\Dt,\Dap} + \sqbrac{\Dt,\Dap}\Dt + \sqbrac{(\Ztt + i)\Dap,\Dap} \\
& = -\Dt\cbrac{(\Dap\Zt)\Dap} - (\Dap\Zt)\Dap\Dt - (\Dap\Ztt)\Dap \\
& = - \cbrac{\Dt(\Dap\Zt)}\Dap - (\Dap\Zt)\Dt\Dap - (\Dap\Zt)\Dap\Dt - (\Dap\Ztt)\Dap  \\
& = \cbrac{-2(\Dap\Ztt) + 2(\Dap\Zt)^2}\Dap -2(\Dap\Zt)\Dap\Dt
\end{align*}

\item We have the relation
\begin{align*}
& \sqbrac{\Dt^2 +i\frac{\Aone}{\Zapabs^2}\pap , \Dap}\Dap \\
\quad &= \cbrac{-2(\Dap\Ztt) + 2(\Dap\Zt)^2}\Dap^2 -2(\Dap\Zt)\Dap\Dt\Dap \\
&= \cbrac{-2(\Dap\Ztt) + 2(\Dap\Zt)^2}\Dap^2 -2(\Dap\Zt)\Dap\cbrac{-(\Dap\Zt)\Dap + \Dap\Dt} \\
& = 2(\Dap\Zt)(\Dap^2\Zt)\Dap + \cbrac{-2(\Dap\Ztt) + 4(\Dap\Zt)^2}\Dap^2 - 2(\Dap\Zt)\Dap^2\Dt
\end{align*}

\item We similarly have
\begin{align*}
& \Dap\sqbrac{\Dt^2 +i\frac{\Aone}{\Zapabs^2}\pap , \Dap} \\
\quad &=  \cbrac{-2(\Dap^2\Ztt) + 4(\Dap\Zt)(\Dap^2\Zt)}\Dap  + \cbrac{-2(\Dap\Ztt) + 2(\Dap\Zt)^2}\Dap^2 \\
& \quad   -2(\Dap^2\Zt)\Dap\Dt - 2(\Dap\Zt)\Dap^2\Dt
\end{align*}

\item Hence we have 
\begin{align*}
& \sqbrac{\Dt^2 +i\frac{\Aone}{\Zapabs^2}\pap , \Dap^2}  \\
\quad &= \sqbrac{\Dt^2 +i\frac{\Aone}{\Zapabs^2}\pap , \Dap}\Dap + \Dap\sqbrac{\Dt^2 +i\frac{\Aone}{\Zapabs^2}\pap , \Dap} \\
&=  \cbrac{-2(\Dap^2\Ztt) + 6(\Dap\Zt)(\Dap^2\Zt)}\Dap  + \cbrac{-4(\Dap\Ztt) + 6(\Dap\Zt)^2}\Dap^2 \\
& \quad   -2(\Dap^2\Zt)\Dap\Dt - 4(\Dap\Zt)\Dap^2\Dt
\end{align*}

\item We see that
\begin{align*}
-i\Dap^2\brac{\frac{\Jone}{\Zap}} &= -i\Dap\cbrac{ \frac{\wbar^2}{\Zapabs^2}\pap\Jone + \Jone\brac{\Dap\frac{1}{\Zap}}}  \\
& = -i\wbar^3\Dapabs\brac{\frac{1}{\Zapabs^2}\pap\Jone} -2i\wbar(\Dap\wbar)\brac{\frac{1}{\Zapabs^2}\pap\Jone}  \\
& \quad - i(\Dap\Jone)\brac{\Dap\frac{1}{\Zap}} -i\Jone\brac{\Dap^2\frac{1}{\Zap}}
\end{align*}

\end{enumerate}
\bigskip
Combining the above identities we get the equation for $\Dap^2\Ztbar$
\begin{align}\label{eq:Dap^2Ztbar}
\brac{\Dt^2 +i\frac{\Aone}{\Zapabs^2}\pap}\Dap^2\Ztbar = -i\wbar^3\Dapabs\brac{\frac{1}{\Zapabs^2}\pap\Jone} + \Rthree
\end{align}
where
\begin{align}\label{form:Rthree}
\begin{split}
\Rthree & =  \cbrac{-2(\Dap^2\Ztt) + 6(\Dap\Zt)(\Dap^2\Zt)}(\Dap\Ztbar)  + \cbrac{-4(\Dap\Ztt) + 6(\Dap\Zt)^2}(\Dap^2\Ztbar) \\
& \quad   -2(\Dap^2\Zt)(\Dap\Zttbar) - 4(\Dap\Zt)(\Dap^2\Zttbar)  -2i\wbar(\Dap\wbar)\brac{\frac{1}{\Zapabs^2}\pap\Jone}  \\
& \quad - i(\Dap\Jone)\brac{\Dap\frac{1}{\Zap}} -i\Jone\brac{\Dap^2\frac{1}{\Zap}}
\end{split}
\end{align}
\bigskip

\subsection{Quantities controlled by the energy  $\Ehigh$}\label{sec:quantEhigh}

In this section whenever we write $\dis f \in \Ltwo$, what we mean is that there exists a universal polynomial $P$ with nonnegative coefficients such that $\dis \norm*[2]{f} \leq P(\Ehigh)$. Similar definitions for $\dis f\in \Hhalf$, $\dis  f \in \Linfty$, $\dis f \in \Ccal$ or $\dis f \in \Wcal$ where the definitions for the spaces $\Ccal$ and $\Wcal$ are as in \eqref{def:Wcal},\eqref{def:Ccal}. Note that $\Ehigh$ controls the energy $\Esigma\vert_{\sigma=0}$ and hence we already have control of a lot of quantities as listed in \secref{sec:aprioriEcalsigma}. We will freely use the quantities controlled by $\Esigma\vert_{\sigma=0}$ to prove the above theorem. In particular we will also be making use of \lemref{lem:CW}. Let us now establish the quantities controlled by $\Ehigh$ which are not controlled by $\Esigma\vert_{\sigma=0}$. 

\medskip
\begin{enumerate}[widest = 99, leftmargin =*, align=left, label=\arabic*)]

\item $\dis \Dt\Dap^2\Ztbar \in \Ltwo$, $\dis \Dap^2\Zttbar \in \Ltwo$, $\dis \Dapabs^2\Zttbar \in \Ltwo$ and $\dis \Dap^2\Ztt \in \Ltwo$, $\dis \Dapabs\Dt\Dap\Zt \in \Ltwo$
\begin{flalign*}
\lpar \tx{Proof:  We see that \qquad} \Dt\Dap^2\Ztbar &= \sqbrac{\Dt,\Dap^2}\Ztbar + \Dap^2\Zttbar &\\
& = \Dap\cbrac{-(\Dap\Zt)\Dap\Ztbar} -(\Dap\Zt)\Dap^2\Ztbar + \Dap^2\Zttbar \\
& = -(\Dap^2\Zt)\Dap\Ztbar -2(\Dap\Zt)\Dap^2\Ztbar + \Dap^2\Zttbar
\end{flalign*}
Now $\Dt\Dap^2\Ztbar \in \Ltwo$ as it part of the energy and hence we have
\[
\norm[2]{\Dap^2\Zttbar} \lesssim \norm[2]{\Dt\Dap^2\Ztbar} + \norm[2]{\Dap^2\Zt}\norm[\infty]{\Dap\Ztbar} + \norm[2]{\Dap^2\Ztbar}\norm[\infty]{\Dap\Zt}
\]
Now we observe that
\[
\Dap^2\Zttbar = \Dap\brac{\wbar\Dapabs\Zttbar} = \wbar(\Dapabs\wbar)\Dapabs\Zttbar + \wbar^2\Dapabs^2\Zttbar
\]
Hence from \lemref{lem:CW} we have
\[
\norm*[\big][2]{\Dapabs^2\Zttbar} \lesssim \norm[2]{\Dap^2\Zttbar} + \norm*[\bigg][\Ccal]{\frac{1}{\Zapabs^2}\pap\wbar}\norm[\Ccal]{\Dapabs\Zttbar}
\]
The terms $\Dap^2\Ztt \in \Ltwo$, $\dis \Dapabs\Dt\Dap\Zt \in \Ltwo$ are proven similarly. 
\Bigskip

\item $\dis \Dap\Zttbar \in \Wcal\cap\Ccal$, $\dis \Dapabs\Zttbar \in \Wcal\cap\Ccal$ and $\dis \Dapbar\Zttbar \in \Wcal\cap\Ccal$
\medskip\\
Proof: We already know that $\dis \Dap\Zttbar \in \Ccal$ and $\Dap^2\Zttbar \in \Ltwo$. Hence using $f = \Dap\Zttbar$ and $w = \frac{1}{\Zap}$ in  \propref{prop:LinftyHhalf} we obtain
\[
\norm[\infty]{\Dap\Zttbar}^2 \lesssim \norm[2]{\Zttbarap}\norm[2]{\Dap^2\Zttbar}
\]
Hence $\Dap\Zttbar \in \Wcal$. We also have from \lemref{lem:CW} that $\dis \norm[\Wcal\cap\Ccal]{\Dapabs\Zttbar} \lesssim \norm[\Wcal]{\w}\norm[\Wcal\cap\Ccal]{\Dap\Zttbar}$. The proof of  $\dis \Dapbar\Zttbar \in \Wcal\cap\Ccal$ is done similarly.

\Bigskip

\item $\dis \Dapabs\frac{1}{\Zap} \in \Linfty$, $\dis \Dapabs\frac{1}{\Zapabs} \in \Linfty$ and $\dis \frac{1}{\Zapabs^2}\pap\w \in \Linfty$
\medskip\\
Proof: We observe from \eqref{form:Zttbar} that $\dis \Zttbar -i = -i\frac{\Aone}{\Zap}$ and hence we have
\[
\Dapabs\Zttbar = -i\frac{\wbar}{\Zapabs^2}\pap\Aone -i\Aone\Dapabs\frac{1}{\Zap}
\]
As $\Aone \geq 1$ we have
\[
\norm*[\bigg][\infty]{\Dapabs\frac{1}{\Zap}} \lesssim \norm*[\bigg][\infty]{\frac{1}{\Zapabs^2}\pap\Aone} + \norm[\infty]{\Dapabs\Zttbar}
\]
Hence $\dis \Dapabs \frac{1}{\Zap} \in \Linfty$. Now using \eqref{form:RealImagTh} we see that
\[
\Real\brac{\Dapbar\frac{1}{\Zap} } = \Dapabs \frac{1}{\Zapabs} \quad \qquad \Imag\brac{\Dapbar\frac{1}{\Zap}} =  i\brac{ \frac{\wbar}{\Zapabs^2}\pap \w} 
\]
Hence we obtain $\dis \Dapabs\frac{1}{\Zapabs} \in \Linfty$ and $\dis \frac{1}{\Zapabs^2}\pap\w \in \Linfty$
\Bigskip

\item  $\dis \Dapabs^2\frac{1}{\Zap} \in \Ltwo$, $\dis \Dap^2\frac{1}{\Zap} \in \Ltwo$ and similarly   $\dis \Dapabs^2\frac{1}{\Zapabs} \in \Ltwo$, $\dis \frac{1}{\Zapabs^3}\pap^3\w \in \Ltwo$. We also have $\dis \frac{1}{\Zapabs^2}\pap^2\frac{1}{\Zap} \in \Ltwo$, $\dis \frac{1}{\Zapabs^2}\pap\Th \in \Ltwo$
\medskip\\
Proof: From \eqref{form:Zttbar} we have $\dis \Zttbar -i = -i\frac{\Aone}{\Zap}$ and hence
\begin{align*}
 & \Dapabs^2\Zttbar \\
 &= \Dapabs\brac*[\bigg]{-i\frac{\wbar}{\Zapabs^2}\pap\Aone -i\Aone\Dapabs\frac{1}{\Zap}} \\
&= -i\wbar\Dapabs\brac*[\bigg]{\frac{1}{\Zapabs^2}\pap\Aone} -i\brac{\Dapabs\wbar}\brac*[\bigg]{\frac{1}{\Zapabs^2}\pap\Aone} -i(\Dapabs\Aone)\brac{\Dapabs\frac{1}{\Zap}} \\
& \quad   -i\Aone\Dapabs^2\frac{1}{\Zap}
\end{align*}
As $\Aone \geq 1$ we see that
\begin{align*}
\lpar \norm[2]{\Dapabs^2\frac{1}{\Zap}} & \lesssim \norm*[\big][2]{\Dapabs^2\Zttbar} + \norm*[\bigg][2]{\Dapabs\brac*[\bigg]{\frac{1}{\Zapabs^2}\pap\Aone}}  + \norm[2]{\Dapabs\w}\norm*[\bigg][\infty]{\frac{1}{\Zapabs^2}\pap\Aone} \\
& \quad + \norm[2]{\Dapabs\Aone}\norm[\infty]{\Dapabs\frac{1}{\Zap}}
\end{align*}
Now we see that
\[
\Dap^2\frac{1}{\Zap} = \Dap\brac{\wbar\Dapabs\frac{1}{\Zap}} = \wbar(\Dapabs\wbar)\Dapabs\frac{1}{\Zap} + \wbar^2\Dapabs^2\frac{1}{\Zap}
\]
Hence we have
\[
\norm[2]{\Dap^2\frac{1}{\Zap}} \lesssim \norm[2]{\Dap\wbar}\norm[\infty]{\Dapabs\frac{1}{\Zap}} + \norm[2]{\Dapabs^2\frac{1}{\Zap}}
\]
Now using the formula \eqref{form:RealImagTh} we have
\[
\Real\brac{\Dapbar\frac{1}{\Zap} } = \Dapabs \frac{1}{\Zapabs} \quad \qquad \Imag\brac{\Dapbar\frac{1}{\Zap}} =  i\brac{ \frac{\wbar}{\Zapabs^2}\pap \w} 
\]
Hence we have
\begin{align*}
\norm[2]{\Dapabs^2\frac{1}{\Zapabs}} & \lesssim \norm[2]{\Dapabs\w}\norm[\infty]{\Dapabs\frac{1}{\Zap}} + \norm[2]{\Dapabs^2\frac{1}{\Zap}} \\
\norm*[\bigg][2]{\frac{1}{\Zapabs^3}\pap^2\w} & \lesssim \norm*[\big][2]{\Dapabs\w}\norm[\infty]{\Dapabs\frac{1}{\Zap}} + \norm[2]{\Dapabs^2\frac{1}{\Zap}} + \norm[2]{\Dapabs\wbar}\norm*[\bigg][\infty]{\frac{1}{\Zapabs^2}\pap\w}  \\
& \quad + \norm[\infty]{\Dapabs\frac{1}{\Zapabs}}\norm*[\big][2]{\Dapabs\w}
\end{align*}
We also see that 
\begin{align*}
\Dapabs^2\frac{1}{\Zap} = \Dapabs\brac{\frac{1}{\Zapabs}\pap \frac{1}{\Zap}} = \brac{\Dapabs \frac{1}{\Zapabs}}\pap\frac{1}{\Zap} + \frac{1}{\Zapabs^2}\pap^2\frac{1}{\Zap}
\end{align*}
Hence we have
\begin{align*}
\norm[2]{ \frac{1}{\Zapabs^2}\pap^2\frac{1}{\Zap}} \lesssim \norm[\infty]{\Dapabs\frac{1}{\Zapabs}}\norm[2]{\pap\frac{1}{\Zap}} + \norm[2]{\Dapabs^2\frac{1}{\Zap} }
\end{align*}
Now recall the formula of $\Th$ from \eqref{form:Th}
\begin{align*}
\Th = i \frac{\Zap}{\Zapabs} \pap \frac{1}{\Zap} - i \Real (\Id - \Hil) \brac{\frac{\Zap}{\Zapabs} \pap \frac{1}{\Zap}} 
\end{align*}
We have
\begin{align*}
\norm[2]{\frac{1}{\Zapabs^2}\pap\brac{\frac{\Zap}{\Zapabs}\pap\frac{1}{\Zap}}} \lesssim \norm[\infty]{\frac{1}{\Zapabs^2}\pap\w}\norm[2]{\pap\frac{1}{\Zap}} + \norm[2]{ \frac{1}{\Zapabs^2}\pap^2\frac{1}{\Zap} }
\end{align*}
and hence from \propref{prop:commutator} we obtain 
\begin{align*}
\norm[2]{\frac{1}{\Zapabs^2}\pap\Th} \lesssim \norm[\infty]{\Dapabs\frac{1}{\Zapabs}}\norm[2]{\pap\frac{1}{\Zap}} + \norm[2]{\frac{1}{\Zapabs^2}\pap\brac{\frac{\Zap}{\Zapabs}\pap\frac{1}{\Zap}}}
\end{align*}
\bigskip

\item $\dis \Dapabs\frac{1}{\Zap} \in \Wcal\cap\Ccal$, $\dis \Dapabs\frac{1}{\Zapabs} \in \Wcal\cap\Ccal$ and $\dis \frac{1}{\Zapabs^2}\pap\w \in \Wcal\cap\Ccal$
\medskip\\
Proof: The inclusion in $\Ccal$ is known as it is part of energy estimate for $\Esigma$ for $\sigma = 0$. Now we have already shown that all the quantities are in $\Linfty$ and using the above estimates like $\dis \Dapabs^2\frac{1}{\Zap} \in \Ltwo$ we are done. 
\bigskip

\item $\dis \Dt\bvarap \in \Linfty$, $\dis \pap\Dt\bvar \in \Linfty$
\medskip\\
Proof: We already know that $\Esigma\vert_{\sigma=0}$ controls $\Dt(\bvarap -\Dap\Zt -\Dapbar\Ztbar) \in \Linfty$. Now 
\[
\Dt\Dap\Zt = -(\Dap\Zt)^2 + \Dap\Ztt
\]
and hence as $\Dap\Zt \in \Linfty$, $\Dap\Ztt \in \Linfty$ we have $\Dt\Dap\Zt \in \Linfty$. Hence we have $\Dt\bvarap \in \Linfty$. We now have $\pap\Dt\bvar = \bvarap^2 + \Dt\bvarap$ and as $\bvarap \in \Linfty$ we see that $\pap\Dt\bvar \in \Linfty$. 
\Bigskip

\item $\dis \frac{1}{\Zapabs}\Dap^2\Ztbar \in \Ccal$,$\dis \frac{1}{\Zapabs^2}\pap\Dap\Ztbar \in \Ccal$,  $\dis \frac{1}{\Zapabs^3}\pap\Ztbarap \in \Ccal$ and  $\dis \frac{1}{\Zapabs^2}\pap\Dapbar\Ztbar \in \Ccal$ and similarly $\dis \Dt^2\Dapbar\Ztbar \in \Ccal$
\medskip\\
Proof: From the energy we know that  $\dis \frac{\sqrt{\Aone}}{\Zapabs}\Dap^2\Ztbar \in \Hhalf$. Hence as $\dis \sqrt{\Aone}\Dap^2\Ztbar \in \Ltwo$ we see that $\dis \frac{\sqrt{\Aone}}{\Zapabs}\Dap^2\Ztbar \in \Ccal$. Hence from \lemref{lem:CW} we see that
\[
\norm[\Ccal]{\frac{1}{\Zapabs}\Dap^2\Ztbar} \lesssim \norm[\Ccal]{\frac{\sqrt{\Aone}}{\Zapabs}\Dap^2\Ztbar}\norm[\Wcal]{\frac{1}{\sqrt{\Aone}}}
\]
As $\w \in \Wcal$ we again get from \lemref{lem:CW} that $\dis \frac{1}{\Zapabs^2}\pap\Dap\Ztbar \in \Ccal$. Now 
\[
\frac{1}{\Zapabs}\Dap^2\Ztbar = \brac{\Dap\frac{1}{\Zap}}\Dapabs\Ztbar + \frac{\wbar^2}{\Zapabs^3}\pap\Ztbarap
\]
Hence from  \lemref{lem:CW} we have
\[
\norm*[\bigg][\Ccal]{\frac{1}{\Zapabs^3}\pap\Ztbarap} \lesssim \norm[\Wcal]{\w}^2\norm[\Ccal]{\frac{1}{\Zapabs}\Dap^2\Ztbar} + \norm[\Wcal]{\w}^2\norm[\Ccal]{\Dap\frac{1}{\Zap}}\norm[\Wcal]{\Dapabs\Ztbar}
\]
Now we see that
\begin{align*}
\frac{1}{\Zapabs^2}\pap\Dapbar\Ztbar = \brac{\Dapabs\frac{1}{\Zapbar}}\Dapabs\Ztbar + \frac{\w}{\Zapabs^3}\pap\Ztapbar
\end{align*}
Hence by using  \lemref{lem:CW} we obtain
\begin{align*}
\norm[\Ccal]{\frac{1}{\Zapabs^2}\pap\Dapbar\Ztbar} \lesssim \norm[\Ccal]{\Dapabs\frac{1}{\Zapbar}}\norm[\Wcal]{\Dapabs\Ztbar} + \norm[\Wcal]{w}\norm[\Ccal]{\frac{1}{\Zapabs^3}\pap\Ztapbar}
\end{align*}
Now we recall the equation for $\Dapbar\Ztbar$ for $\sigma = 0$ from \eqref{eq:DapbarZtbar} and \eqref{form:Rone}
\begin{align*}
\brac{\Dt^2 +i\frac{\Aone}{\Zapabs^2}\pap } \Dapbar\Ztbar  =  -2(\Dapbar\Ztbar)(\Dt\Dapbar\Ztbar) -i\brac{\Dapbar\frac{1}{\Zap}} \Jone -i\frac{1}{\Zapabs^2}\pap \Jone 
\end{align*}
Hence from  \lemref{lem:CW} we have
\begin{align*}
\norm[\Ccal]{\Dt^2\Dapbar\Ztbar} & \lesssim \norm[\Wcal]{\Aone}\norm[\Ccal]{\frac{1}{\Zapabs^2}\pap\Dapbar\Ztbar} + \norm[\Wcal]{\Dapbar\Ztbar}\norm[\Ccal]{\Dt\Dapbar\Ztbar}  \\
& \quad  + \norm[\Ccal]{\Dapbar\frac{1}{\Zap}}\norm[\Wcal]{\Jone} + \norm[\Ccal]{\frac{1}{\Zapabs^2}\pap\Jone}
\end{align*}

\medskip

\item $\dis (\Id - \Hil)\Dt^2\Dap^2\Ztbar \in \Ltwo$
\medskip\\
Proof: For a function $f$ satisfying $\Pa f = 0$ we have from \propref{prop:tripleidentity}
\begin{align*}
(\Id - \Hil)\Dt^2f & = \sqbrac{\Dt,\Hil}\Dt f + \Dt\sqbrac{\Dt,\Hil}f \\
& = \sqbrac{\bvar,\Hil}\pap\Dt f + \Dt\sqbrac{\bvar,\Hil}\pap f \\
& = 2\sqbrac{\bvar,\Hil}\pap\Dt f + \sqbrac{\Dt\bvar,\Hil}\pap f - \sqbrac{\bvar, \bvar ; \pap f}
\end{align*}
As $\Pa \Dap^2\Ztbar = 0$ we obtain from \propref{prop:commutator} and \propref{prop:triple}
\[
\norm[2]{(\Id - \Hil)\Dt^2\Dap^2\Ztbar} \lesssim \norm[\infty]{\bvarap}\norm[2]{\Dt\Dap^2\Ztbar} + \norm[\Hhalf]{\pap\Dt\bvar}\norm[2]{\Dap^2\Ztbar} + \norm[\infty]{\bvarap}^2\norm[2]{\Dap^2\Ztbar}
\]
\medskip

\item $\dis (\Id - \Hil)\brac{i\frac{\Aone}{\Zapabs^2}\pap\Dap^2\Ztbar} \in \Ltwo$
\medskip\\
Proof: We see that
\[
(\Id - \Hil)\brac{i\frac{\Aone}{\Zapabs^2}\pap\Dap^2\Ztbar} = i\sqbrac{\frac{\Aone}{\Zapabs^2},\Hil}\pap\Dap^2\Ztbar
\]
and hence we have from \propref{prop:commutator}
\begin{align*}
\!\! \norm*[\Bigg][2]{(\Id - \Hil)\brac{i\frac{\Aone}{\Zapabs^2}\pap\Dap^2\Ztbar}} \lesssim \norm[2]{\Dap^2\Ztbar}\cbrac{\norm*[\bigg][\infty]{\frac{1}{\Zapabs^2}\pap\Aone} + \norm[\infty]{\Aone}\norm[\infty]{\Dapabs\frac{1}{\Zapabs}} }
\end{align*}
\Bigskip

\item $\Rthree \in \Ltwo$
\medskip\\
Proof: We recall from \eqref{form:Rthree} the formula of $\Rthree$
\begin{align*}
\Rthree & =  \cbrac{-2(\Dap^2\Ztt) + 6(\Dap\Zt)(\Dap^2\Zt)}(\Dap\Ztbar)  + \cbrac{-4(\Dap\Ztt) + 6(\Dap\Zt)^2}(\Dap^2\Ztbar) \\
& \quad   -2(\Dap^2\Zt)(\Dap\Zttbar) - 4(\Dap\Zt)(\Dap^2\Zttbar)  -2i\wbar(\Dap\wbar)\brac{\frac{1}{\Zapabs^2}\pap\Jone}  \\
& \quad - i(\Dap\Jone)\brac{\Dap\frac{1}{\Zap}} -i\Jone\brac{\Dap^2\frac{1}{\Zap}}
\end{align*}
Hence using \lemref{lem:CW} we easily have the estimate
\begin{align*}
& \norm[2]{\Rthree} \\
& \lesssim \cbrac*[\Big]{\norm[2]{\Dap^2\Ztt} + \norm[\infty]{\Dap\Zt}\norm[2]{\Dap^2\Zt}}\norm[\infty]{\Dap\Ztbar} + \cbrac{\norm[\infty]{\Dap\Ztt} + \norm[\infty]{\Dap\Zt}^2 }\norm[2]{\Dap^2\Ztbar}  \\
& \quad + \norm[2]{\Dap^2\Zt}\norm[\infty]{\Dap\Zttbar} + \norm[\infty]{\Dap\Zt}\norm[2]{\Dap^2\Zttbar} + \norm[\Ccal]{\frac{1}{\Zapabs^2}\pap\wbar}\norm*[\bigg][\Ccal]{\frac{1}{\Zapabs^2}\pap\Jone} \\
& \quad + \norm[2]{\Dap\Jone}\norm[\infty]{\Dap\frac{1}{\Zap}} + \norm[\infty]{\Jone}\norm[2]{\Dap^2\frac{1}{\Zap}}
\end{align*}

\Bigskip

\item $\dis \Dapabs\brac{\frac{1}{\Zapabs^2}\pap\Jone} \in \Ltwo$ and hence $\dis \frac{1}{\Zapabs^2}\pap\Jone \in \Linfty$
\medskip\\
Proof: As $\Jone$ is real valued we have
\begin{align*}
 & \Dapabs\brac{\frac{1}{\Zapabs^2}\pap\Jone} \\
 & = \Real(\Id - \Hil)\cbrac{\frac{1}{\Zapabs}\pap\brac{\frac{1}{\Zapabs^2}\pap\Jone}} \\
& = \Real\cbrac{\sqbrac{\frac{1}{\Zapabs},\Hil}\pap\brac{\frac{1}{\Zapabs^2}\pap\Jone} - \w^3\sqbrac{\frac{\wbar^3}{\Zapabs},\Hil}\pap\brac{\frac{1}{\Zapabs^2}\pap\Jone}} \\
& \quad + \Real\cbrac{\w^3(\Id - \Hil)\cbrac{\frac{\wbar^3}{\Zapabs}\pap\brac{\frac{1}{\Zapabs^2}\pap\Jone}} }
\end{align*}
Now applying $(\Id - \Hil)$ on the equation for $\Dap^2\Ztbar$ from \eqref{eq:Dap^2Ztbar} we obtain 
\begin{align*}
\norm[2]{(\Id - \Hil)\cbrac{\wbar^3\Dapabs\brac{\frac{1}{\Zapabs^2}\pap\Jone}}} & \lesssim \norm[2]{(\Id - \Hil)\Dt^2\Dap^2\Ztbar}  + \norm[2]{\Rthree} \\
& \quad + \norm[2]{(\Id - \Hil)\brac{i\frac{\Aone}{\Zapabs^2}\pap\Dap^2\Ztbar}} 
\end{align*}
Hence we have from \propref{prop:commutator}
\begin{align*}
\norm[2]{\Dapabs\brac{\frac{1}{\Zapabs^2}\pap\Jone}} & \lesssim \cbrac{\norm[2]{\pap\frac{1}{\Zapabs}} + \norm[2]{\Dapabs\wbar}}\norm[\Hhalf]{\frac{1}{\Zapabs^2}\pap\Jone} \\
& \quad + \norm[2]{(\Id - \Hil)\cbrac{\wbar^3\Dapabs\brac{\frac{1}{\Zapabs^2}\pap\Jone}}}
\end{align*}
Now we just use  \propref{prop:LinftyHhalf} with the functions $\dis f = \frac{1}{\Zapabs^2}\pap\Jone$ and $\dis w = \frac{1}{\Zapabs}$ and we easily get that $\dis \frac{1}{\Zapabs^2}\pap\Jone \in \Linfty$
\Bigskip

\end{enumerate}

\subsection{Closing the energy estimate for  $\Ehigh$}\label{sec:closeEhigh}

We now complete the proof of \thmref{thm:aprioriEhigh}. To simplify the calculations, we will continue to use the notation used in \secref{sec:quantEhigh} and introduce another notation: If $a(t), b(t) $ are functions of time we write $a \approx b$ if there exists a universal  polynomial $P$ with non-negative coefficients such that $\abs{a(t)-b(t)} \leq P(\Ehigh(t))$. Observe that $\approx$ is an equivalence relation. With this notation, proving \thmref{thm:aprioriEhigh} is equivalent to showing $ \frac{d\Ehigh(t)}{dt}  \approx 0$. 

Now we know from \thmref{thm:aprioriEsigma} that 
\[
\frac{d\Esigma(t)}{dt} \leq P(\Esigma(t))
\]
and hence this is true for $\sigma = 0$ with the same polynomial $P$. Hence we have
\[
\frac{d(\Esigma\vert_{\sigma=0})(t)}{dt} \leq P((\Esigma\vert_{\sigma=0})(t)) \leq P(\Ehigh(t))
\]
Hence we only need to control the time derivative of $\Ehigh - \Esigma\vert_{\sigma=0}$. Hence
\begin{align*}
\frac{d\Ehigh(t)}{dt} \approx \frac{d}{dt}\cbrac{ \int \abs{\Dt\Dap^2\Ztbar}^2 \diff\ap + \int \abs{\papabs^\half \brac{\frac{\sqrt{\Aone}}{\Zapabs}\Dap^2\Ztbar}}^2 \diff\ap}
\end{align*}
The right hand side is the time derivative of 
\begin{align*}
 \int \abs{\Dt f}^2\difff\ap +  \int \abs*[\bigg]{ \papabs^\half \brac*[\bigg]{\frac{\sqrt{\Aone}}{\Zapabs} f}}^2\difff\ap
\end{align*}
where $f = \Dap^2\Ztbar$ and we have $\Ph f = f$. Now by using \lemref{lem:timederiv} we see that
\begin{align*}
\frac{d\Ehigh(t)}{dt} \approx 2\Real\cbrac{ \int (\Dt \fbar) (\Dt^2 f)\diff \ap + \int  \cbrac{\papabs\brac*[\bigg]{\frac{\sqrt{\Aone}}{\Zapabs} f}} \Dt\brac*[\bigg]{\frac{\sqrt{\Aone}}{\Zapabs} \bar{f}} \difff\ap }
\end{align*}
Now using \eqref{est:DtsqrtAone} we obtain
\begin{align*}
\frac{d\Ehigh(t)}{dt} \approx 2\Real\cbrac{ \int (\Dt \fbar) (\Dt^2 f)\diff \ap + \int  \cbrac{\frac{\sqrt{\Aone}}{\Zapabs}\papabs\brac*[\bigg]{\frac{\sqrt{\Aone}}{\Zapabs} f}} (\Dt\bar{f}) \diff\ap  }
\end{align*}
Now using \eqref{est:DapabssqrtAonetwo} and combing terms we get
\begin{align*}
\frac{d\Ehigh(t)}{dt} \approx 2 \Real \int \brac{ \Dt^2 f +i\frac{\Aone}{\Zapabs^2}\pap f }(\Dt \bar{f}) \diff\ap
\end{align*}
As $\Dt f = \Dt\Dap^2\Ztbar \in \Ltwo$ we only need to show that the other term in in $\Ltwo$. Now the equation for $\Dap^2\Ztbar$ from \eqref{eq:Dap^2Ztbar} implies
\begin{align*}
\brac{\Dt^2 +i\frac{\Aone}{\Zapabs^2}\pap}\Dap^2\Ztbar = -i\wbar^3\Dapabs\brac{\frac{1}{\Zapabs^2}\pap\Jone} + \Rthree
\end{align*}
As we have already shown that $\dis \Dapabs\brac{\frac{1}{\Zapabs^2}\pap\Jone} \in \Ltwo$ and $\Rthree \in \Ltwo$, this implies that the right hand side of the above equation is in $\Ltwo$ and the proof of \thmref{thm:aprioriEhigh} is complete.

\medskip

\subsection{Equivalence of $\Ehigh$ and $\Ecalhigh$}\label{sec:equivEhighEcalhigh}

We now give a simpler description of the energy $\Ehigh$. Recall the definition of $\Ecalhigh$ from \secref{sec:results}
\begin{align*}
\Ecalhigh  = \norm[2]{\pap\frac{1}{\Zap}}^2  + \norm[2]{\frac{1}{\Zap^2}\pap^2\frac{1}{\Zap}}^2 + \norm[2]{\Ztapbar}^2 + \norm*[\Bigg][2]{\frac{1}{\Zap^2}\pap\Ztapbar}^2 + \norm*[\Bigg][\Hhalf]{\frac{1}{\Zap^3}\pap\Ztapbar}^2
\end{align*}
\begin{prop}\label{prop:equivEhighEcalhigh} 
There exists universal polynomials $P_1, P_2$ with non-negative coefficients so that if $(\Z,\Zt)(t)$ is a smooth solution to the water wave equation \eqref{eq:systemone} with $\sigma=0$ in the time interval $[0,T]$ satisfying $(\Zap-1,\frac{1}{\Zap} - 1, \Zt) \in \Linfty([0,T], H^{s}(\Rsp)\times H^{s}(\Rsp)\times H^{s+\half}(\Rsp))$ for all $s\geq 2$,  then for all $t \in [0,T]$ we have
\begin{align*}
\Ehigh(t) \leq P_1(\Ecalhigh(t)) \quad \tx{ and }\quad  \Ecalhigh(t) \leq P_2(\Ehigh(t))
\end{align*}
\end{prop}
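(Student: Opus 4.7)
The proof follows the same strategy used for \propref{prop:equivEsigmaEcalsigma}: each term of one energy will be bounded by a polynomial in the other, term by term, using the identities of \secref{sec:equations} and the algebra of the weighted function spaces $\Wcal,\Ccal$ encoded in \lemref{lem:CW}. Observe that $\Ehigh$ contains $\Esigma\vert_{\sigma=0}$ explicitly, while $\Ecalhigh$ differs from $\Ecalsigma\vert_{\sigma=0}$ only through the two higher-order pieces $\norm[2]{\frac{1}{\Zap^2}\pap^2\frac{1}{\Zap}}^2$ and $\norm[\Hhalf]{\frac{1}{\Zap^3}\pap\Ztapbar}^2$; the common lower-order terms match up directly or via standard Sobolev embedding exactly as in the earlier proposition. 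By \propref{prop:equivEsigmaEcalsigma} applied at $\sigma=0$ it thus suffices to match the two extra pieces of $\Ehigh$, namely $\norm[2]{\Dt\Dap^2\Ztbar}^2$ and $\norm[\Hhalf]{\frac{\sqrt{\Aone}}{\Zapabs}\Dap^2\Ztbar}^2$, with the two extra pieces of $\Ecalhigh$, modulo remainders polynomially controlled by $\Esigma\vert_{\sigma=0}$.

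For the $\Hhalf$ term, a direct expansion gives
\[
\frac{1}{\Zapabs}\Dap^2\Ztbar \;=\; \frac{\wbar^2}{\Zapabs^3}\pap\Ztapbar + \frac{\wbar\,\Ztapbar}{\Zapabs^2}\pap\frac{1}{\Zap}.
\]
Since $\w,\sqrt{\Aone},1/\sqrt{\Aone}\in\Wcal$ and $\Ztapbar\in\Wcal$, with $\pap\frac{1}{\Zapabs}\in\Ccal$ and $\pap\frac{1}{\Zap}\in\Ltwo$, and all these norms are polynomially bounded by $\Esigma\vert_{\sigma=0}$, the multiplier properties of $\Wcal$ on $\Hhalf$ (via \lemref{lem:CW} and the $\Hhalf$-estimates of Appendix A) show that $\norm[\Hhalf]{\frac{\sqrt{\Aone}}{\Zapabs}\Dap^2\Ztbar}$ is equivalent to $\norm[\Hhalf]{\frac{1}{\Zap^3}\pap\Ztapbar}$ modulo an $\Esigma\vert_{\sigma=0}$-polynomial remainder arising from the second summand and from multiplying/dividing by $\wbar^2$ and $\sqrt{\Aone}$.

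For the $\Ltwo$ term, use the commutator identity from \secref{sec:quantEhigh},
\[
\Dt\Dap^2\Ztbar \;=\; \Dap^2\Zttbar - (\Dap^2\Zt)(\Dap\Ztbar) - 2(\Dap\Zt)(\Dap^2\Ztbar),
\]
combined with $\Zttbar-i=-i\Aone/\Zap$, which yields $\Dap^2\Zttbar=-i\Dap^2(\Aone/\Zap)=-i\Aone\,\Dap^2\frac{1}{\Zap}+(\text{Leibniz terms in }\Dap\Aone,\,\Dap^2\Aone,\,\Dap\frac{1}{\Zap})$. The Leibniz remainder is controlled in $\Ltwo$ by $\Esigma\vert_{\sigma=0}$ using the quantities collected in \secref{sec:aprioriEcalsigma}. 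Since $\Aone\geq 1$ with $\Aone,1/\Aone\in\Wcal$, and $\Dap^2\frac{1}{\Zap}$ differs from $\frac{1}{\Zap^2}\pap^2\frac{1}{\Zap}$ by the $\Ltwo$-controlled term $\frac{1}{\Zap}(\pap\frac{1}{\Zap})^2$, we conclude that $\norm[2]{\Dt\Dap^2\Ztbar}$ and $\norm[2]{\frac{1}{\Zap^2}\pap^2\frac{1}{\Zap}}$ are polynomially equivalent modulo $\Esigma\vert_{\sigma=0}$.

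The main obstacle is the careful bookkeeping in the $\Hhalf$ step: multiplications by the unit-modulus factor $\wbar^2$ and by $\sqrt{\Aone}$ (and their inverses) must be carried out \emph{in} $\Hhalf$, which is exactly why we rely on the multiplier/commutator estimates of \lemref{lem:CW} and the $\Hhalf$-product propositions of Appendix A rather than a naive $\Linfty$ multiplication. Once these are available, both directions of the polynomial inequality follow by running the identities above forwards and backwards.
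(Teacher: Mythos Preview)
Your strategy matches the paper's, and the expansions you give for the two extra pieces are the right ones. There is, however, a gap in the direction $\Ehigh\leq P_1(\Ecalhigh)$. You write that the ``common lower-order terms match up directly or via standard Sobolev embedding,'' but $\Ecalhigh$ does \emph{not} contain the $\Ecalsigma\vert_{\sigma=0}$ term $\norm[\Hhalf]{\frac{1}{\Zap}\pap\frac{1}{\Zap}}^2$ --- that term is replaced by $\norm[2]{\frac{1}{\Zap^2}\pap^2\frac{1}{\Zap}}^2$. Since all your remainders are only stated as polynomially controlled by $\Esigma\vert_{\sigma=0}$, you still need $\Esigma\vert_{\sigma=0}\leq P(\Ecalhigh)$ to close, and for that you must recover $\norm[\Hhalf]{\Dap\frac{1}{\Zap}}$ from $\Ecalhigh$. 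This is not a standard embedding: the paper does it via the weighted interpolation inequality \propref{prop:LinftyHhalf} with $f=\Dap\frac{1}{\Zap}$ and $w=\frac{1}{\Zap}$, obtaining $\norm[\Linfty\cap\Hhalf]{\Dap\frac{1}{\Zap}}^2\lesssim\norm[2]{\pap\frac{1}{\Zap}}\norm[2]{\pap\bigl(\frac{1}{\Zap^2}\pap\frac{1}{\Zap}\bigr)}+\norm[2]{\pap\frac{1}{\Zap}}^4$, followed by an $\epsilon$-absorption of the $\norm[\Linfty\cap\Hhalf]{\Dap\frac{1}{\Zap}}$ hidden on the right. Without this step the forward inequality does not close.

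Two smaller slips in the $\Hhalf$ step: neither $\Ztapbar$ nor $\pap\frac{1}{\Zapabs}$ lies in $\Wcal$ (resp.\ $\Ccal$) in general, since the former need not be bounded and the latter fails the $\Zapabs f\in\Ltwo$ condition of $\Ccal$. What the argument actually needs (and what the list in \secref{sec:aprioriEcalsigma} supplies) is $\Dapabs\Ztbar\in\Wcal$ and $\Dapabs\frac{1}{\Zap}\in\Ccal$; rewriting your second summand as $\wbar\bigl(\Dapabs\frac{1}{\Zap}\bigr)\bigl(\Dapabs\Ztbar\bigr)$ then places it in $\Ccal$ by \lemref{lem:CW}, as you intend.
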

\begin{proof}
Let $\Ehigh(t) < \infty$ and recall the definition of $\Ehigh$, $\Ecalhigh$ from \eqref{def:Ehigh}, \eqref{def:Ecalhigh} respectively. As $\Ehigh$ controls $\Esigma\vert_{\sigma = 0}$, we see from \propref{prop:equivEsigmaEcalsigma} that we already have control over $\Ecalsigma\vert_{\sigma=0}$. Now   from \secref{sec:quantEhigh} we have that $\dis \norm*[\bigg][2]{\frac{1}{\Zap^2}\pap^2 \frac{1}{\Zap}} \lesssim P_2(\Ehigh(t)) $.  The last term to control is $\dis \frac{1}{\Zap^3}\pap\Ztapbar$, which can be easily controlled in $\Hhalf$ by using \lemref{lem:CW}
\begin{align*}
\norm[\Ccal]{\frac{1}{\Zap^3}\pap\Ztapbar} \lesssim \norm[\Wcal]{\wbar}^3\norm[\Ccal]{\frac{1}{\Zapabs^3}\pap\Ztapbar} \lesssim P_2(\Ehigh(t)) 
\end{align*}

Now we assume that $\Ecalhigh(t) < \infty$.  We use  \propref{prop:LinftyHhalf} with $\dis f = \Dap\frac{1}{\Zap}$ and $\dis w = \frac{1}{\Zap}$ to obtain
\begin{align*}
& \norm[\Linfty\cap\Hhalf]{\Dap\frac{1}{\Zap}}^2 \\
& \lesssim \norm[2]{\pap\frac{1}{\Zap}}\norm[2]{\pap\brac{\frac{1}{\Zap^2}\pap\frac{1}{\Zap}}} + \norm[2]{\pap\frac{1}{\Zap}}^4 \\
& \lesssim \norm[2]{\pap\frac{1}{\Zap}}^2\norm[\Linfty\cap\Hhalf]{\Dap\frac{1}{\Zap}} + \norm[2]{\pap\frac{1}{\Zap}}\norm[2]{\frac{1}{\Zap^2}\pap^2\frac{1}{\Zap}} + \norm[2]{\pap\frac{1}{\Zap}}^4 
\end{align*}
Now use the inequality $ab \leq \frac{a^2}{2\ep} + \frac{\ep b^2}{2}$ on the first term to obtain $\dis \norm[\Linfty\cap\Hhalf]{\Dap\frac{1}{\Zap}} \lesssim P_1(\Ecalhigh(t))$. Hence we see that $\Ecalsigma\vert_{\sigma=0}$ is controlled by $\Ecalhigh$ and hence by  \propref{prop:equivEsigmaEcalsigma} we know that $\Esigma\vert_{\sigma=0}$ is controlled. Hence we only need to control the last two terms of $\Ehigh$ from \eqref{def:Ehigh}.  Now following the proof of $\dis \frac{1}{\Zapabs^2}\pap^2\frac{1}{\Zap} \in \Ltwo$ in \secref{sec:quantEhigh} we see that $\norm[2]{\Dapabs^2\Zttbar} \lesssim P_1(\Ecalhigh(t))$. Following the proof of $\Dapabs^2\Zttbar \in \Ltwo$ in \secref{sec:quantEhigh}, we see that  $\norm[2]{\Dt\Dap^2\Ztbar}  \lesssim P_1(\Ecalhigh(t))$. 

We now observe from \lemref{lem:CW} that
\begin{align*}
\norm[\Ccal]{\frac{1}{\Zapabs^3}\pap\Ztapbar} \lesssim \norm[\Wcal]{\w}^3\norm[\Ccal]{\frac{1}{\Zap^3}\pap\Ztapbar} \lesssim P_1(\Ecalhigh(t))
\end{align*}
Now by following the proof of $\dis \frac{1}{\Zapabs^3}\pap\Ztapbar \in \Ccal$ in \secref{sec:quantEhigh}, we see that $\dis \norm[\Ccal]{\frac{1}{\Zapabs}\Dap^2\Ztbar} \lesssim P_1(\Ecalhigh(t))$. Hence we have from \lemref{lem:CW}
\begin{align*}
\norm[\Ccal]{\frac{\sqrt{\Aone}}{\Zapabs}\Dap^2\Ztbar} \lesssim \norm[\Wcal]{\sqrt{\Aone} }\norm[\Ccal]{\frac{1}{\Zapabs}\Dap^2\Ztbar} \lesssim P_1(\Ecalhigh(t))
\end{align*}
This proves the proposition. 
\end{proof}

\medskip

\section{Auxiliary energy  $\Ecalaux$}\label{sec:aprioriEcalaux} 
\medskip

In this section we again consider a solution to the water wave equation with zero surface tension and prove an a priori energy estimate for the energy $\Ecalaux$ defined in  \eqref{def:Ecalaux}. More precisely we show that as long as $\Ecalhigh$ is controlled, the energy $\Ecalaux$ is also controlled. To prove this, we first define an energy $\Eaux$, prove an a priori estimate for it in \thmref{thm:aprioriEaux} and then in \propref{prop:equivEauxEcalaux} show that the energies $\Ecalaux$ and $\Eaux$ are equivalent. 

The energy $\Ecalaux$ is used in the definition of the energy $\EcalDelta$ in \secref{sec:results} and is used crucially in the proof of \thmref{thm:convergence}. We refer to \secref{sec:discussion} to the discussion regarding the necessity of having the energy $\Ecalaux$ as part of the energy $\EcalDelta$ and how it is used. 

Let us now define the energy $\Eaux$ and state out main result for this section.  Define
\begin{align}\label{def:Eaux}
\begin{split}
\Eaux &= \norm[\infty]{\Zapabs^\half\pap\frac{1}{\Zap}}^2 +  \norm[2]{\frac{1}{\Zaphalf}\pap\Ztbarap}^2  +  \norm[2]{\frac{1}{\Zaphalf}\pap^2\frac{1}{\Zap}}^2 + \norm[2]{\frac{1}{\Zaphalf}\pap\Dap^2\Ztbar}^2 \\
& \quad  +   \norm[2]{\Dt\brac{\frac{1}{\Zaphalf}\pap\Dap^2\Ztbar}}^2  + \norm[\Hhalf]{\nobrac{\frac{\sqrt{\Aone}}{\Zapabs}\brac{\frac{1}{\Zaphalf}\pap\Dap^2\Ztbar} } }^2
\end{split}
\end{align}

\begin{thm}\label{thm:aprioriEaux}
Let $T,\lambda>0$ and let $(\Z, \Zt)(t)$ be a smooth solution to \eqref{eq:systemone} with $\sigma=0$ in the time interval $[0,T]$, such that for all $s\geq 2$ we have $(\Zap-1,\frac{1}{\Zap} - 1, \Zt) \in \Linfty([0,T], H^{s}(\Rsp)\times H^{s}(\Rsp)\times H^{s+\half}(\Rsp))$. Then $\sup_{t\in[0,T]} \Ecalhigh(t)<\infty$, $\sup_{t\in[0,T]} \Eaux(t) < \infty$ and there exists a universal polynomial P with non-negative coefficients such that for all $t \in [0,T)$ we have
\begin{align}\label{eq:aprioriEaux}
\frac{d}{dt} (\lamb\Eaux(t)) \leq P(\Ecalhigh(t))(\lamb\Eaux(t))
\end{align}
\end{thm}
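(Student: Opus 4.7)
My approach mirrors the proof of \thmref{thm:aprioriEhigh} step by step. The finiteness $\sup_{[0,T]}\Ecalhigh(t)<\infty$ is furnished directly by \thmref{thm:aprioriEhigh} together with \propref{prop:equivEhighEcalhigh}, and the same Sobolev bookkeeping from the regularity hypothesis $(\Zap-1,1/\Zap-1,\Zt)\in\Linfty([0,T],H^{s}\times H^{s}\times H^{s+\half})$ gives $\sup_{[0,T]}\Eaux(t)<\infty$. Since the scalar $\lambda>0$ cancels after division, I only need to prove $\frac{d\Eaux}{dt}\leq P(\Ecalhigh)\Eaux$ for a universal polynomial $P$; $\lambda$ is retained in the statement only because it is the form in which $\Eaux$ will be coupled to $\sigma$ in \secref{sec:aprioriEcalDelta}.

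The core of the argument is a quasilinear wave equation for the top-order variable $f:=\frac{1}{\Zaphalf}\pap\Dap^2\Ztbar$. Applying $\frac{1}{\Zaphalf}\pap$ to \eqref{eq:Dap^2Ztbar} and commuting with $\Dt^2+i\frac{\Aone}{\Zapabs^2}\pap$ produces
\begin{align*}
\brac{\Dt^2+i\frac{\Aone}{\Zapabs^2}\pap}f
= \frac{1}{\Zaphalf}\pap\brac{-i\wbar^3\Dapabs\brac{\frac{1}{\Zapabs^2}\pap\Jone}+\Rthree}
+ \sqbrac*[\bigg]{\Dt^2+i\frac{\Aone}{\Zapabs^2}\pap,\ \frac{1}{\Zaphalf}\pap}\Dap^2\Ztbar ,
\end{align*}
and I would prove the right-hand side lies in $\Ltwo$ with norm $\lesssim P(\Ecalhigh)\Eaux^{\half}+P(\Ecalhigh)$. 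The forcing piece requires one extra $\pap$-derivative of $\Jone$ and $\Rthree$; this is recovered by differentiating the algebraic identity $\Zttbar-i=-i\Aone/\Zap$ one further time and invoking the $\Wcal$-$\Ccal$ calculus of \lemref{lem:CW} together with the quantities already controlled in \secref{sec:quantEhigh}. The commutator is expanded using \eqref{eq:commutator} and \eqref{form:DtoneoverZap}, producing terms of the form $\bvarap f$, $\brac{\Dt(1/\Zap)}f$ and $\brac{\pap(1/\Zap)}\cdot(\text{controlled})$, each estimated by $P(\Ecalhigh)\Eaux^{\half}$ after observing that the rough factor is always paired with the weight $1/\Zaphalf$ already sitting inside $f$.

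The four lower-order terms of $\Eaux$ are then differentiated in time by \lemref{lem:timederiv}, with $\Dt$-derivatives rewritten through \eqref{form:Aonenew}, \eqref{form:bvarapnew}, \eqref{form:DtoneoverZap} and the identities of \secref{sec:equations}; the resulting products fall into the scheme $(\text{bounded by }\Ecalhigh)\times(\text{bounded by }\Eaux^{\half})$. The $\Linfty$ piece $\norm[\infty]{\Zapabs^\half\pap(1/\Zap)}$ is estimated via \propref{prop:LinftyHhalf} applied to $\Zapabs^\half\pap(1/\Zap)$, whose $\Ltwo$ norm and first derivative lie in $\Ecalhigh$; its non-differentiability in time is handled by the upper Dini derivative as in \remref{rem:aprioriEsigma}. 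For the top-order wave pair $\norm[2]{\Dt f}^2+\norm[\Hhalf]{\frac{\sqrt{\Aone}}{\Zapabs}f}^2$, \lemref{lem:timederiv} together with the commutator estimates \eqref{est:DtsqrtAone} and \eqref{est:DapabssqrtAonetwo} combine the two contributions into $2\Real\int(\Dt\bar f)\brac{\Dt^2+i\frac{\Aone}{\Zapabs^2}\pap}f\diff\ap$, and substituting the quasilinear equation above closes the estimate.

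The principal obstacle will be the fractional weight $\Zaphalf$: both $\pap\Zaphalf$ and $\Dt\Zaphalf$ carry rough factors — through \eqref{form:DtoneoverZap} the latter involves $\Dapbar\Ztbar$ which is only in $\Linfty$ — so the commutator remainders produced by exchanging this weight with $\Dt^2$ and $\pap$ are not a priori in $\Ltwo$ from $\Ecalhigh$ alone. The structural reason they nevertheless close is that in every case the rough factor multiplies a quantity of the form $\frac{1}{\Zaphalf}\cdot(\text{top-order derivative})$, exactly matching one of the four auxiliary terms of $\Eaux$. This forced matching between the rough weight and the lower-order terms placed in $\Eaux$ is precisely why those terms must be part of $\Eaux$ in the first place, and why the resulting inequality is of Gronwall type $E'\leq P\cdot E$ rather than the stronger $E'\leq P$ that would hold for an unweighted analogue.
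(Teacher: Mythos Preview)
Your overall strategy matches the paper's proof exactly: derive a quasilinear equation for $f=\frac{1}{\Zaphalf}\pap\Dap^2\Ztbar$ by applying $\frac{1}{\Zaphalf}\pap$ to \eqref{eq:Dap^2Ztbar}, control the lower-order terms of $\Eaux$ via \lemref{lem:timederiv}, and close the top-order pair using \eqref{est:DtsqrtAone}, \eqref{est:DapabssqrtAonetwo} to reduce to $2\Real\int(\Dt\bar f)(\Dt^2+i\frac{\Aone}{\Zapabs^2}\pap)f$. The paper carries this out verbatim (see \eqref{eq:lambpapDap^2Ztbar}, \eqref{form:Rfour} and \secref{sec:closeEaux}).

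Two points where your write-up slips. First, the right-hand side must be bounded by $P(\Ecalhigh)\Eaux^{\half}$ with \emph{no} additive $P(\Ecalhigh)$; otherwise you get $E'\leq PE+P$ rather than the stated $E'\leq PE$. You say both things in different places; the paper checks that every forcing term, in particular $\frac{1}{\Zaphalf}\pap\Rthree$ and $\frac{1}{\Zaphalf}\pap\Dapabs(\frac{1}{\Zapabs^2}\pap\Jone)$, lands in $\Ltwolambhalf$ (i.e.\ carries a factor $\Eaux^{\half}$), and this requires the full list of auxiliary quantities in \secref{sec:quantEaux}.

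Second, your treatment of the $\Linfty$ term is wrong as written. The quantity $\Zapabs^{\half}\pap\frac{1}{\Zap}$ is \emph{not} controlled by $\Ecalhigh$ via \propref{prop:LinftyHhalf}: its $\Ltwo$ norm carries a positive power of $\Zapabs$, which is unbounded for angled-crest data, and indeed $\norm[\infty]{\Zapabs^{\half}\pap\frac{1}{\Zap}}^2$ is a term of $\Eaux$ precisely because $\Ecalhigh$ does not see it. The correct step (which the paper does in item (1) of \secref{sec:closeEaux}) is to compute $\Dt\!\brac{\Zapabs^{\half}\pap\frac{1}{\Zap}}$ directly, show $\norm[\infty]{\Dt(\Zapabs^{\half}\pap\frac{1}{\Zap})}\lesssim P(\Ecalhigh)\Eaux^{\half}$ using the $\Linfty$ bounds on $\frac{1}{\Zapabs^{\half}}\pap\bvarap$ and $\frac{1}{\Zapabs^{\half}}\pap\Dap\Zt$ established in \secref{sec:quantEaux}, and then pass to the upper Dini derivative.
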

\begin{rem} \label{rem:aprioriElambaux}
Similar to the case of energy $\Esigma$ as mentioned in \remref{rem:aprioriEsigma}, the energy $\Eaux$ contains a term which is the $L^\infty$ norm of a function and hence for this term we replace the time derivative by the upper Dini derivative.
\end{rem}
Note that in the above result, the constant $\lambda>0$ does not actually play any role and one can simply rewrite \eqref{eq:aprioriEaux} as
\begin{align*}
\frac{d}{dt} \Eaux(t) \leq P(\Ecalhigh(t))\Eaux(t)
\end{align*}
The reason we add a $\lambda$ to the energy estimate \eqref{eq:aprioriEaux}, is that later on we will need to replace this $\lambda$ by $\sigma$ which denotes the surface tension of solution $A$ in \thmref{thm:convergence}. Another important reason is that having a $\lambda$ makes the proof of the \thmref{thm:aprioriEaux} more readable, as we will have lots of terms of both $\Ecalhigh$ and $\Eaux$ showing up in the proof of \thmref{thm:aprioriEaux}, and the constant $\lambda$ will help us distinguish whether the term corresponds to $\Ecalhigh$ or $\Eaux$. 

We employ a similar strategy to prove this theorem as we used to prove \thmref{thm:aprioriEhigh}. To prove this theorem, we first obtain the quasilinear equation in \secref{sec:quasiEaux}, then control the quantities controlled by $\lamb\Eaux$ in \secref{sec:quantEaux} and then finish the proof in \secref{sec:closeEaux}. Finally we show in \secref{sec:equivEauxEcalaux} that $\Eaux$ and $\Ecalaux$ are equivalent energies.

\subsection{Quasilinear equation}\label{sec:quasiEaux}

Let us recall the equation of $\Dap^2\Ztbar$ from \eqref{eq:Dap^2Ztbar}
\begin{align*}
\brac{\Dt^2 +i\frac{\Aone}{\Zapabs^2}\pap}\Dap^2\Ztbar = -i\wbar^3\Dapabs\brac{\frac{1}{\Zapabs^2}\pap\Jone} + \Rthree
\end{align*}
with $\Rthree$ as given in \eqref{form:Rthree} along with the identities $\Jone =  \Dt\Aone +  \Aone\brac{\bvarap - \Dap\Zt - \Dapbar\Ztbar}$ and $\dis \Zttbar - i = -i\frac{\Aone}{\Zap}$ from \eqref{form:Jone} and \eqref{form:Zttbar} respectively. Applying $\frac{\lamb^\half}{\Zaphalf}\pap$ to the above equation we obtain
\begin{align*}
\brac{\Dt^2 +i\frac{\Aone}{\Zapabs^2}\pap}\frac{\lamb^\half}{\Zaphalf}\pap\Dap^2\Ztbar =  -i\wbar^3\frac{\lamb^\half}{\Zaphalf}\pap\Dapabs\brac{\frac{1}{\Zapabs^2}\pap\Jone} + \Rfour
\end{align*}
where
\begin{align*}
\Rfour & = -3i\wbar^2\brac{\frac{\lamb^\half}{\Zaphalf}\pap\wbar}\Dapabs\brac{\frac{1}{\Zapabs^2}\pap\Jone} + \frac{\lamb^\half}{\Zaphalf}\pap\Rthree \\
& \quad + \sqbrac{\Dt^2 +i\frac{\Aone}{\Zapabs^2}\pap, \frac{\lamb^\half}{\Zaphalf}\pap}\Dap^2\Ztbar
\end{align*}
Let us try to simplify the terms above

\begin{enumerate}[label = \alph*)]

\item  $\dis \Dt\frac{1}{\Zaphalf} = \frac{-1}{2\Zap^{3/2}}\Dt\Zap = \frac{-1}{2\Zaphalf}(\Dap\Zt - \bvarap)$

\item $\dis \pap \frac{1}{\Zaphalf} = \half\Zaphalf\pap\frac{1}{\Zap} = \brac{\half\Zap\pap\frac{1}{\Zap}}\frac{1}{\Zaphalf}$

\item 
$
\begin{aligned}[t]
\sqbrac{\frac{\lamb^\half}{\Zaphalf}\pap, \Dt} = \frac{\lamb^\half}{\Zaphalf}\sqbrac{\pap,\Dt} + \sqbrac{\frac{\lamb^\half}{\Zaphalf}, \Dt}\pap = \brac{\frac{\bvarap}{2} + \frac{\Dap\Zt}{2}}\frac{\lamb^\half}{\Zaphalf}\pap
\end{aligned} 
$

\item 
$
\begin{aligned}[t]
& \sqbrac{\frac{\lamb^\half}{\Zaphalf}\pap, \Dt^2} \\
& = \Dt\sqbrac{\frac{\lamb^\half}{\Zaphalf}\pap, \Dt} + \sqbrac{\frac{\lamb^\half}{\Zaphalf}\pap, \Dt}\Dt \\
&=  \cbrac{\frac{\Dt\bvarap}{2} + \frac{\Dt\Dap\Zt}{2} - \brac{\frac{\bvarap}{2} + \frac{\Dap\Zt}{2}}^2 }\frac{\lamb^\half}{\Zaphalf}\pap +  \brac{\bvarap + \Dap\Zt}\frac{\lamb^\half}{\Zaphalf}\pap\Dt
\end{aligned}
$

\item 
$
\begin{aligned}[t]
& \sqbrac{\frac{\lamb^\half}{\Zaphalf}\pap, i\frac{\Aone}{\Zapabs^2}\pap} \\
&= \cbrac{2i\Aone\brac{\Dapabs\frac{1}{\Zapabs}} + \frac{i}{\Zapabs^2}\pap\Aone - \frac{i\Aone}{2}\brac{\Dapbar \frac{1}{\Zap}} }\frac{\lamb^\half}{\Zaphalf}\pap
\end{aligned}
$

\end{enumerate}
\smallskip
Combining the above formulae we have
\begin{align}\label{eq:lambpapDap^2Ztbar}
\brac{\Dt^2 +i\frac{\Aone}{\Zapabs^2}\pap}\frac{\lamb^\half}{\Zaphalf}\pap\Dap^2\Ztbar =  -i\wbar^3\frac{\lamb^\half}{\Zaphalf}\pap\Dapabs\brac{\frac{1}{\Zapabs^2}\pap\Jone} + \Rfour
\end{align}
where
\begin{align}\label{form:Rfour}
\begin{split}
\Rfour &=  -\cbrac{\frac{\Dt\bvarap}{2} + \frac{\Dt\Dap\Zt}{2} - \brac{\frac{\bvarap}{2} + \frac{\Dap\Zt}{2}}^2 }\frac{\lamb^\half}{\Zaphalf}\pap\Dap^2\Ztbar  + \frac{\lamb^\half}{\Zaphalf}\pap\Rthree \\
& \quad - \cbrac{2i\Aone\brac{\Dapabs\frac{1}{\Zapabs}} + \frac{i}{\Zapabs^2}\pap\Aone - \frac{i\Aone}{2}\brac{\Dapbar \frac{1}{\Zap}} }\frac{\lamb^\half}{\Zaphalf}\pap\Dap^2\Ztbar\\
& \quad -3i\wbar^2\brac{\frac{\lamb^\half}{\Zaphalf}\pap\wbar}\Dapabs\brac{\frac{1}{\Zapabs^2}\pap\Jone} - \brac{\bvarap + \Dap\Zt}\frac{\lamb^\half}{\Zaphalf}\pap\Dt\Dap^2\Ztbar
\end{split}
\end{align}
and $\Rthree$ is as defined in \eqref{form:Rthree}.
\bigskip

\subsection{Quantities controlled by the energy  $\lamb\Eaux$}\label{sec:quantEaux} 

In this section whenever we write $\dis f \in L^2_{\lamb^\alpha}$, what we mean is that there exists a universal polynomial $P$ with nonnegative coefficients such that $\dis \norm*[2]{f} \leq (\lamb\Eaux)^\alpha P(\Ecalhigh)$. Similar definitions for $\dis f\in \Hhalf_{\lamb^\alpha}$ and $\dis  f \in \Linfty_{\lamb^\alpha}$. We define the spaces $ \Ccal_{\lamb^\alpha}$ and $ \Wcal_{\lamb^\alpha}$ as follows
\begin{enumerate}
\item  If $\dis w \in \Linfty_{\lamb^\alpha}$ and $\dis \Dapabs w \in \Ltwo_{\lamb^\alpha}$, then we say $f\in \Wcal_{\lamb^\alpha}$. Define 
\[
 \norm[\Wcal_{\lamb^\alpha}]{w} = \norm[\Wcal]{w} = \norm[\infty]{w}+ \norm[2]{\Dapabs w}
 \]

\item If $\dis f \in \Hhalf_{\lamb^\alpha}$ and $\dis f\Zapabs \in \Ltwo_{\lamb^\alpha}$, then we say $f\in \mathcal{C}_{\lamb^\alpha}$. Define
\[
\norm[\Ccal_{\lamb^\alpha}]{f}  = \norm[\Ccal]{f} = \norm[\Hhalf]{f} + \brac*[\bigg]{1+ \norm*[\bigg][2]{\pap\frac{1}{\Zapabs}}}\norm[2]{f\Zapabs}
 \]
\end{enumerate}
Analogous to \lemref{lem:CW} we have the following lemma
\begin{lem} \label{lem:CWlamb} 
Let $\alpha_1, \alpha_2,\alpha_3 \geq0$ with $\alpha_1 + \alpha_2 = \alpha_3$. Then the following properties hold for the spaces $\Wcal_{\lamb^\alpha}$ and $\Ccal_{\lamb^\alpha}$
\begin{enumerate}[leftmargin =*, align=left]
\item If $w_1 \in \Wcal_{\lamb^{\alpha_1}}$, $w_2 \in \Wcal_{\lamb^{\alpha_2}}$, then $w_1w_2 \in \Wcal_{\lamb^{\alpha_3}}$. Moreover we have the estimate $\norm[\Wcal_{\lamb^{\alpha_{3}}}]{w_1w_2} \lesssim \norm[\Wcal_{\lamb^{\alpha_{1}}}]{w_1}\norm[\Wcal_{\lamb^{\alpha_{2}}}]{w_2}$
\item If $f\in \Ccal_{\lamb^{\alpha_1}}$ and $w\in \Wcal_{\lamb^{\alpha_2}}$, then $fw \in \Ccal_{\lamb^{\alpha_3}}$.  Moreover $\norm[\Ccal_{\lamb^{\alpha_3}}]{fw} \lesssim \norm[\Ccal_{\lamb^{\alpha_1}}]{f}\norm[\Wcal_{\lamb^{\alpha_2}}]{w}$
\item If $f \in \Ccal_{\lamb^{\alpha_1}}$, $g \in \Ccal_{\lamb^{\alpha_2}}$, then $fg\Zapabs \in \Ltwo_{\lamb^{\alpha_3}}$. Moreover $\norm[2]{fg\Zapabs} \lesssim \norm[\Ccal_{\lamb^{\alpha_1}}]{f}\norm[\Ccal_{\lamb^{\alpha_2}}]{g}$
\end{enumerate}
\end{lem}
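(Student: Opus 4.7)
The plan is to reduce each of the three statements of \lemref{lem:CWlamb} to the corresponding statement of \lemref{lem:CW}, and then track the $\lamb^{\alpha_i}$ weights through multiplicatively. The key observation is that, by construction, the norms $\norm[\Wcal_{\lamb^\alpha}]{\cdot}$ and $\norm[\Ccal_{\lamb^\alpha}]{\cdot}$ are numerically identical to $\norm[\Wcal]{\cdot}$ and $\norm[\Ccal]{\cdot}$ respectively. Membership in the weighted space is a statement about how the underlying (unweighted) $\Wcal$ or $\Ccal$ norm compares to $(\lamb\Eaux)^\alpha P(\Ecalhigh)$ for some universal polynomial $P$ with non-negative coefficients.

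Unwinding the definitions, $w_i \in \Wcal_{\lamb^{\alpha_i}}$ means there exist universal polynomials $P_i$ with non-negative coefficients so that $\norm[\Wcal]{w_i} \leq (\lamb\Eaux)^{\alpha_i} P_i(\Ecalhigh)$, and analogously for $f \in \Ccal_{\lamb^{\alpha_i}}$ and $g \in \Ccal_{\lamb^{\alpha_i}}$. For statement (1), I would apply the first part of \lemref{lem:CW} to obtain
\begin{align*}
\norm[\Wcal_{\lamb^{\alpha_3}}]{w_1 w_2} = \norm[\Wcal]{w_1 w_2} \leq \norm[\Wcal]{w_1}\, \norm[\Wcal]{w_2} \leq (\lamb\Eaux)^{\alpha_1+\alpha_2}\, P_1(\Ecalhigh)\, P_2(\Ecalhigh).
\end{align*}
Since $\alpha_1 + \alpha_2 = \alpha_3$ and $P_1 P_2$ is again a polynomial in $\Ecalhigh$ with non-negative coefficients, this is exactly the asserted membership together with the stated inequality. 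Statements (2) and (3) follow in the same way by invoking the second and third parts of \lemref{lem:CW} in place of the first, with the identical multiplication of weights.

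Since \lemref{lem:CW} supplies all of the analytic content (the product estimates in $\Wcal$ and $\Ccal$, and the bilinear $\Ltwo$ bound after multiplication by $\Zapabs$), the only genuine step here is the bookkeeping that the additive exponent hypothesis $\alpha_1 + \alpha_2 = \alpha_3$ in the statement is compatible with multiplying two bounds of the form $(\lamb\Eaux)^{\alpha_i}$. This is built into the definition of the weighted spaces, so there is no substantive obstacle; the proof is essentially a one-line reduction in each of the three cases.
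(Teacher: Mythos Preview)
Your proposal is correct and matches the paper's implicit approach: the paper does not write out a proof of \lemref{lem:CWlamb}, introducing it simply as ``analogous to \lemref{lem:CW}'', so the intended argument is precisely the reduction you describe --- use that $\norm[\Wcal_{\lamb^\alpha}]{\cdot}$ and $\norm[\Ccal_{\lamb^\alpha}]{\cdot}$ coincide numerically with $\norm[\Wcal]{\cdot}$ and $\norm[\Ccal]{\cdot}$, apply \lemref{lem:CW}, and multiply the two bounds $(\lamb\Eaux)^{\alpha_i}P_i(\Ecalhigh)$ using $\alpha_1+\alpha_2=\alpha_3$.
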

When we write $\dis f \in \Ltwo$ we mean $f \in \Ltwo_{\lamb^{\alpha}}$ with $\alpha =0$. Similar notation for $\Hhalf, \Linfty, \Ccal$ and $\Wcal$. This notation is now consistent with the notation used in \secref{sec:quantEhigh}. Let us now control the important terms  controlled by the energy $\lamb\Eaux$.

\medskip
\begin{enumerate}[widest = 99, leftmargin =*, align=left, label=\arabic*)]

\item  $\dis  \lamb^\half\Zapabs^\half\pap\frac{1}{\Zap}  \in \Linftylambhalf, \lamb^\half\Zapabs^\half\pap\frac{1}{\Zapabs} \in \Linftylambhalf$ and $\dis \frac{\lamb^\half}{\Zapabs^\half}\pap\w \in \Linftylambhalf$
\medskip\\
Proof: From the energy we already know that $\dis \lamb^\half\Zapabs^\half\pap\frac{1}{\Zap} \in \Linftylambhalf$. Hence we easily have $\dis \lamb^\half\pap\frac{1}{\Zaphalf} \in \Linftylambhalf$. Recall from \eqref{form:RealImagTh} that
\[
\Real\brac{\frac{\Zap}{\Zapabs}\pap\frac{1}{\Zap} } = \pap \frac{1}{\Zapabs} \quad \qquad \Imag\brac{\frac{\Zap}{\Zapabs}\pap\frac{1}{\Zap}} =  i\brac{ \wbar\Dapabs \w} 
\]
Hence we obtain $\dis \lamb^\half\Zapabs^\half\pap\frac{1}{\Zapabs} \in \Linftylambhalf$ and $\dis \frac{\lamb^\half}{\Zapabs^\half}\pap\w \in \Linftylambhalf$
\bigskip

\item $\dis \frac{\lamb^\half}{\Zapabs^\half}\pap^2\frac{1}{\Zap} \in \Ltwolambhalf$, $\dis \frac{\lamb^\half}{\Zapabs^\half}\pap^2\frac{1}{\Zapabs} \in \Ltwolambhalf$ and $\dis \frac{\lamb^\half}{\Zapabs^\threebytwo}\pap^2\w \in \Ltwolambhalf$ and hence we have that $\dis \lamb^\half\Zapabs^\half\pap\frac{1}{\Zap} \in \Wcallambhalf$, $\dis \lamb^\half\Zapabs^\half\pap\frac{1}{\Zapabs} \in \Wcallambhalf$ and $\dis \frac{\lamb^\half}{\Zapabs^\half}\pap\w \in \Wcallambhalf$
\medskip\\
Proof: Observe that $\dis \frac{\lamb^\half}{\Zapabs^\half}\pap^2\frac{1}{\Zap} \in \Ltwolambhalf$ as it is part of the energy. Recall from \eqref{form:RealImagTh} that
\[
\Real\brac{\frac{\Zap}{\Zapabs}\pap\frac{1}{\Zap} } = \pap \frac{1}{\Zapabs} \quad \qquad \Imag\brac{\frac{\Zap}{\Zapabs}\pap\frac{1}{\Zap}} =  i\brac{ \wbar\Dapabs \w} 
\]
Hence taking derivatives we obtain
\begin{align*}
\norm[2]{\frac{\lamb^\half}{\Zapabs^\half}\pap^2\frac{1}{\Zapabs}} & \lesssim \norm[\infty]{\frac{\lamb^\half}{\Zapabs^\half}\pap\w}\norm[2]{\pap\frac{1}{\Zap}} + \norm[2]{ \frac{\lamb^\half}{\Zapabs^\half}\pap^2\frac{1}{\Zap}} \\
\norm[2]{ \frac{\lamb^\half}{\Zapabs^\threebytwo}\pap^2\w} & \lesssim \norm[\infty]{\frac{\lamb^\half}{\Zapabs^\half}\pap\w}\cbrac{\norm[2]{\pap\frac{1}{\Zap}} + \norm[2]{\Dapabs\w}}+ \norm[2]{ \frac{\lamb^\half}{\Zapabs^\half}\pap^2\frac{1}{\Zap}}
\end{align*}
We also see that
\begin{align*}
\norm[2]{\Dapabs\brac{ \lamb^\half\Zapabs^\half\pap\frac{1}{\Zap} } } \lesssim \norm[\infty]{\lamb^\half\Zapabs^\half\pap\frac{1}{\Zapabs}}\norm[2]{\pap\frac{1}{\Zap}} + \norm[2]{\frac{\lamb^\half}{\Zapabs^\half}\pap^2\frac{1}{\Zap}}
\end{align*}
Hence $\dis \lamb^\half\Zapabs^\half\pap\frac{1}{\Zap} \in \Wcallambhalf$. The rest are proven similarly. 
\bigskip

\item $\dis \frac{\lamb^\half}{\Zapabs^\half}\pap\Dap^2\Ztbar \in \Ltwolambhalf$, $\dis \frac{\lamb^\half}{\Zapabs^\half}\pap\Dap^2\Zt \in \Ltwolambhalf$, $\dis \frac{\lamb^\half}{\Zapabs^\fivebytwo}\pap^2\Ztbarap \in \Ltwolambhalf$ and similarly we have $\dis \frac{\lamb^\half}{\Zapabs^\threebytwo}\pap^2\Dap\Ztbar \in \Ltwolambhalf$, $\dis \frac{\lamb^\half}{\Zapabs^\threebytwo}\pap^2\Dap\Zt \in \Ltwolambhalf$ and $\dis \frac{\lamb^\half}{\Zapabs^\half}\pap\Dapabs\Dap\Zt \in \Ltwolambhalf$
\medskip\\
Proof: We already have $\dis \frac{\lamb^\half}{\Zapabs^\half}\pap\Dap^2\Ztbar \in \Ltwolambhalf$ as it is part of the energy. We now have
\begin{align*}
\frac{\lamb^\half}{\Zapabs^\half}\pap\Dap^2\Ztbar = \brac{\lamb^\half\Zapabs^\half\pap\frac{1}{\Zap}}(\Dapabs\Dap\Ztbar )+ \frac{\lamb^\half}{\Zap\Zapabs^\half}\pap^2\Dap\Ztbar 
\end{align*}
and hence
\begin{align*}
\norm[2]{ \frac{\lamb^\half}{\Zapabs^\threebytwo}\pap^2\Dap\Ztbar } \lesssim \norm[\infty]{\lamb^\half\Zapabs^\half\pap\frac{1}{\Zap}}\norm[2]{\Dapabs\Dap\Ztbar } + \norm[2]{\frac{\lamb^\half}{\Zapabs^\half}\pap\Dap^2\Ztbar}
\end{align*}
Now we have
\begin{align*}
\frac{\lamb^\half}{\Zapabs^\threebytwo}\pap^2\Dap\Ztbar & = \brac{\frac{\lamb^\half}{\Zapabs^\half}\pap^2\frac{1}{\Zap} }\Dapabs\Ztbar + 2\brac{\lamb^\half\Zapabs^\half\pap\frac{1}{\Zap}}\brac{\frac{1}{\Zapabs^2}\pap\Ztbarap} \\
& \quad + \frac{\lamb^\half}{\Zap\Zapabs^\threebytwo}\pap^2\Ztbarap
\end{align*}
From this we obtain
\begin{align*}
 & \norm[2]{\frac{\lamb^\half}{\Zapabs^\fivebytwo}\pap^2\Ztbarap} \\
 & \lesssim \norm[2]{\frac{\lamb^\half}{\Zapabs^\half}\pap^2\frac{1}{\Zap}}\norm[\infty]{\Dapabs\Ztbar} + \norm[\infty]{\lamb^\half\Zapabs^\half\pap\frac{1}{\Zap}}\norm[2]{\frac{1}{\Zapabs^2}\pap\Ztbarap} \\
& \quad + \norm[2]{\frac{\lamb^\half}{\Zapabs^\threebytwo}\pap^2\Dap\Ztbar}
\end{align*}
Hence we have $\dis \frac{\lamb^\half}{\Zapabs^\fivebytwo}\pap^2\Ztbarap \in \Ltwolambhalf$. By taking conjugation and retracing the steps backwards we easily obtain the other estimates $\dis \frac{\lamb^\half}{\Zapabs^\half}\pap\Dap^2\Zt \in \Ltwolambhalf$, $\dis \frac{\lamb^\half}{\Zapabs^\threebytwo}\pap^2\Dap\Zt \in \Ltwolambhalf$ and $\dis \frac{\lamb^\half}{\Zapabs^\half}\pap\Dapabs\Dap\Zt \in \Ltwolambhalf$.
\bigskip

\item $\dis \frac{\lamb^\half}{\Zapabs^\threebytwo}\pap\Ztbarap \in \Wcallambhalf\cap\Ccallambhalf$, $\dis \frac{\lamb^\half}{\Zapabs^\half}\pap\Dap\Ztbar \in \Wcallambhalf\cap\Ccallambhalf$, $\dis \frac{\lamb^\half}{\Zapabs^\half}\pap\Dap\Zt \in \Wcallambhalf\cap\Ccallambhalf$ and similarly $\dis \frac{\lamb^\half}{\Zapabs^\half}\pap\Dapabs\Zt \in \Wcallambhalf\cap\Ccallambhalf$
\medskip\\
Proof: We first see that
\begin{align*}
 \norm[2]{\pap\brac{\frac{\lamb^\half}{\Zapabs^\fivebytwo}\pap\Ztbarap}} \lesssim \norm[\infty]{\lamb^\half\Zapabs^\half\pap\frac{1}{\Zapabs}}\norm[2]{\frac{1}{\Zapabs^2}\pap\Ztapbar} + \norm[2]{\frac{\lamb^\half}{\Zapabs^\fivebytwo}\pap^2\Ztbarap}
\end{align*}
Now using  \propref{prop:LinftyHhalf} with $\dis f =   \frac{\lamb^\half}{\Zapabs^\threebytwo}\pap\Ztbarap$ and $\dis w = \frac{1}{\Zapabs}$ we see that
\begin{align*}
\norm[\Linfty\cap\Hhalf]{ \frac{\lamb^\half}{\Zapabs^\threebytwo}\pap\Ztbarap}^2 & \lesssim \norm[2]{\frac{\lamb^\half}{\Zapabs^\half}\pap\Ztbarap} \norm[2]{\pap\brac{\frac{\lamb^\half}{\Zapabs^\fivebytwo}\pap\Ztbarap}} \\
& \quad +  \norm[2]{\frac{\lamb^\half}{\Zapabs^\half}\pap\Ztbarap}^2\norm[2]{\pap\frac{1}{\Zapabs}}^2
\end{align*}
As $\dis \Zapabs f \in \Ltwolambhalf$ we have  $\dis \frac{\lamb^\half}{\Zapabs^\threebytwo}\pap\Ztbarap \in \Wcallambhalf\cap\Ccallambhalf$. Now we have
\begin{align*}
\frac{\lamb^\half}{\Zapabs^\half}\pap\Dap\Ztbar = \brac{\lamb^\half\Zapabs^\half\pap\frac{1}{\Zap}}\Dapabs\Ztbar + \frac{\lamb^\half}{\Zap\Zapabs^\half}\pap\Ztbarap
\end{align*}
Hence from \lemref{lem:CWlamb} we have
\begin{align*}
\norm[\Wcallambhalf\cap\Ccallambhalf]{\frac{\lamb^\half}{\Zapabs^\half}\pap\Dap\Ztbar} & \lesssim \norm[\Wcallambhalf]{\lamb^\half\Zapabs^\half\pap\frac{1}{\Zap}}\norm[\Wcal\cap\Ccal]{\Dapabs\Ztbar}\\
& \quad + \norm[\Wcal]{\wbar}\norm[\Wcallambhalf\cap\Ccallambhalf]{\frac{\lamb^\half}{\Zapabs^\threebytwo}\pap\Ztbarap}
\end{align*}
The estimates $\dis \frac{\lamb^\half}{\Zapabs^\half}\pap\Dap\Zt \in \Wcallambhalf\cap\Ccallambhalf$, $\dis \frac{\lamb^\half}{\Zapabs^\half}\pap\Dapabs\Zt \in \Wcallambhalf\cap\Ccallambhalf$ are proven similarly.
\bigskip

\item $\dis \frac{\lamb^\half}{\Zapabs^\half}\pap\Aone \in \Wcallambhalf$ 
\medskip\\
Proof: The proofs for $\dis \frac{\lamb^\half}{\Zapabs^\half}\pap\Aone \in \Linftylambhalf$ and $\dis \frac{\lamb^\half}{\Zapabs^\threebytwo}\pap^2\Aone \in \Ltwolambhalf$ follow in exactly the same was as the proofs of $\dis \frac{\sigma^\half}{\Zapabs^\half}\pap\Aone \in \Linfty$ and $\dis \frac{\sigma^\half}{\Zapabs^\threebytwo}\pap^2\Aone \in \Ltwo$ as done in Section 5.1 in \cite{Ag19}. Hence we have the estimates
\begin{align*}
\norm[\infty]{\frac{\lamb^\half}{\Zapabs^\half}\pap\Aone} & \lesssim \norm[\infty]{\lamb^\half\Zapabs^\half\pap\frac{1}{\Zapabs}}\norm[2]{\Ztapbar}^2 + \norm[2]{\frac{\lamb^\half}{\Zapabs^\half}\pap\Ztapbar}\norm[2]{\Ztapbar} \\
\lpar \norm[2]{\frac{\lamb^\half}{\Zapabs^\threebytwo}\pap^2\Aone} & \lesssim \cbrac{\norm[2]{\frac{\lamb^\half}{\Zapabs^\half}\pap^2\frac{1}{\Zap}} + \norm[\infty]{\lamb^\half\Zapabs^\half\pap\frac{1}{\Zap}}\norm[2]{\pap\frac{1}{\Zap}}} \norm[\infty]{\Aone}  \\
& \quad  + \norm[2]{\Ztap}\cbrac{\norm[\infty]{\frac{\lamb^\half}{\Zapabs^\threebytwo}\pap\Ztapbar} + \norm[2]{\pap\frac{1}{\Zap}}\norm[2]{\frac{\lamb^\half}{\Zapabs^\half}\pap\Ztapbar } }
\end{align*}
From this we get
\begin{align*}
\norm[2]{\Dapabs\brac{\frac{\lamb^\half}{\Zapabs^\half}\pap\Aone}} \lesssim \norm[2]{\pap\frac{1}{\Zapabs}}\norm[\infty]{\frac{\lamb^\half}{\Zapabs^\half}\pap\Aone} + \norm[2]{\frac{\lamb^\half}{\Zapabs^\threebytwo}\pap^2\Aone}
\end{align*}
\bigskip

\item $\dis \frac{\lamb^\half}{\Zapabs^\sevenbytwo}\pap^3\Aone \in \Ltwolambhalf$, $\dis \frac{\lamb^\half}{\Zapabs^\threebytwo}\pap^2\brac{\frac{1}{\Zapabs^2}\pap\Aone} \in \Ltwolambhalf$ and hence $\dis \frac{\lamb^\half}{\Zapabs^\fivebytwo}\pap^2\Aone \in \Wcallambhalf\cap\Ccallambhalf$
\medskip\\
Proof: We will first show that $\dis (\Id -\Hil)\cbrac*[\Bigg]{\frac{\lamb^\half}{\Zap^\sevenbytwo}\pap^3\Aone} \in \Ltwolambhalf$. Now from \eqref{form:Aonenew} we see that $\Aone = 1 + i\Zt\Ztapbar -i(\Id + \Hil)\cbrac{\Real(\Zt\Ztapbar)}$. Hence we have
\begin{flalign*}
\lpar \quad  &(\Id -\Hil)\cbrac*[\Bigg]{\frac{\lamb^\half}{\Zap^\sevenbytwo}\pap^3\Aone} &&\\
 & =  i(\Id - \Hil)\cbrac*[\Bigg]{\brac{\frac{\Zt}{\Zap}}\frac{\lamb^\half}{\Zap^\fivebytwo}\pap^3\Ztapbar } \\
 & \quad + i(\Id - \Hil) 
 \begin{aligned}[t]
 & \Biggl\{\brac*[\Bigg]{\frac{\lamb^\half}{\Zap^\fivebytwo}\pap^2\Ztap}(\Dap\Ztbar) + 3\brac*[\bigg]{\frac{1}{\Zap^2}\pap\Ztap}\brac*[\Bigg]{\frac{\lamb^\half}{\Zap^\threebytwo}\pap\Ztapbar} \\
 &  \quad + 3(\Dap\Zt)\brac*[\bigg]{\frac{\lamb^\half}{\Zap^\fivebytwo}\pap^2\Ztapbar} \Biggr\}
 \end{aligned}
\end{flalign*}
Now we see that 
\begin{align*}
i(\Id - \Hil)\cbrac*[\Bigg]{\brac{\frac{\Zt}{\Zap}}\frac{\lamb^\half}{\Zap^\fivebytwo}\pap^3\Ztapbar } & = -\frac{5i}{2}\sqbrac{\Zt, \Hil}\cbrac*[\Bigg]{\brac{\pap\frac{1}{\Zap}}\brac*[\bigg]{\frac{\lamb^\half}{\Zap^\fivebytwo}\pap^2\Ztapbar }} \\
& \quad +  i\sqbrac{\Pa\brac{\frac{\Zt}{\Zap}}, \Hil }\pap\brac*[\Bigg]{\frac{\lamb^\half}{\Zap^\fivebytwo}\pap^2\Ztapbar }
\end{align*}
Hence using \propref{prop:commutator} we have the estimate
\begin{flalign*}
\lpar \quad & \norm[2]{(\Id -\Hil)\cbrac*[\Bigg]{\frac{\lamb^\half}{\Zap^\sevenbytwo}\pap^3\Aone}} &&\\
& \lesssim \norm[2]{\frac{\lamb^\half}{\Zapabs^\fivebytwo}\pap^2\Ztapbar}\norm[\infty]{\Dapabs\Ztbar} + \norm[2]{\frac{1}{\Zapabs^2}\pap\Ztapbar}\norm[\infty]{\frac{\lamb^\half}{\Zapabs^\threebytwo}\pap\Ztapbar } \\
& \quad + \norm[2]{\frac{\lamb^\half}{\Zapabs^\fivebytwo}\pap^2\Ztapbar}\cbrac{\norm[2]{\Ztap}\norm[2]{\pap\frac{1}{\Zap}}  + \norm[\infty]{\pap\Pa\brac{\frac{\Zt}{\Zap}}}}
\end{flalign*}
Now lets come back to proving the main estimate $\dis \frac{\lamb^\half}{\Zapabs^\threebytwo}\pap^2\brac{\frac{1}{\Zapabs^2}\pap\Aone} \in \Ltwolambhalf$. Now as $\Aone $ is real valued we see that
\begin{align*}
 \frac{\lamb^\half}{\Zapabs^\threebytwo}\pap^2\brac{\frac{1}{\Zapabs^2}\pap\Aone} = \lamb^\half \Real\cbrac{  \frac{\w^\sevenbytwo\wbar^\sevenbytwo}{\Zapabs^\threebytwo}(\Id - \Hil)\pap^2\brac{\frac{1}{\Zapabs^2}\pap\Aone}}
\end{align*}
Hence it is enough to show that we have $\dis \frac{ \lamb^\half\wbar^\sevenbytwo}{\Zapabs^\threebytwo}(\Id - \Hil)\pap^2\brac{\frac{1}{\Zapabs^2}\pap\Aone} \in \Ltwolambhalf$. Now observe that 
\begin{align*}
& \frac{ \lamb^\half\wbar^\sevenbytwo}{\Zapabs^\threebytwo}(\Id - \Hil)\pap^2\brac{\frac{1}{\Zapabs^2}\pap\Aone} \\
& = - \sqbrac{\frac{\lamb^\half\wbar^\sevenbytwo}{\Zapabs^\threebytwo}, \Hil}\pap^2\brac{\frac{1}{\Zapabs^2}\pap\Aone} + (\Id - \Hil)\cbrac{\frac{ \lamb^\half\wbar^\sevenbytwo}{\Zapabs^\threebytwo}\pap^2\brac{\frac{1}{\Zapabs^2}\pap\Aone}}
\end{align*}
Hence by expanding the second term we see that
\begin{align*}
& \frac{ \lamb^\half\wbar^\sevenbytwo}{\Zapabs^\threebytwo}(\Id - \Hil)\pap^2\brac{\frac{1}{\Zapabs^2}\pap\Aone} \\
& =  - \sqbrac{\frac{\lamb^\half\wbar^\sevenbytwo}{\Zapabs^\threebytwo}, \Hil}\pap^2\brac{\frac{1}{\Zapabs^2}\pap\Aone} + (\Id -\Hil)\cbrac*[\Bigg]{\frac{\lamb^\half}{\Zap^\sevenbytwo}\pap^3\Aone} \\
& \quad +  (\Id - \Hil) 
\begin{aligned}[t]
& \Biggl\{ \frac{2\lamb^\half\wbar^2}{\Zap^\threebytwo}\brac{\pap\frac{1}{\Zapabs}}^2\pap\Aone +  \frac{2\lamb^\half\wbar^2}{\Zap^\threebytwo}\brac{\frac{1}{\Zapabs}\pap^2\frac{1}{\Zapabs} }\pap\Aone \\
& \quad  + \frac{4\lamb^\half\wbar^2}{\Zap^\threebytwo}\brac{\Dapabs\frac{1}{\Zapabs} }\pap^2\Aone \Biggr\}
\end{aligned}
\end{align*}
Hence from \propref{prop:commutator} we have the estimate
\begin{align*}
& \norm[2]{ \frac{\lamb^\half}{\Zapabs^\threebytwo}\pap^2\brac{\frac{1}{\Zapabs^2}\pap\Aone}} \\
& \lesssim \norm[2]{ (\Id -\Hil)\cbrac*[\Bigg]{\frac{\lamb^\half}{\Zap^\sevenbytwo}\pap^3\Aone}} +  \norm[2]{\frac{\lamb^\half}{\Zapabs^\half}\pap^2\frac{1}{\Zapabs}}\norm[\infty]{\frac{1}{\Zapabs^2}\pap\Aone} \\
& \quad + \norm[\infty]{\frac{1}{\Zapabs^2}\pap\Aone}\cbrac{\norm[2]{\frac{\lamb^\half}{\Zapabs^\threebytwo}\pap^2\w} + \norm[\infty]{\frac{\lamb^\half}{\Zapabs^\half}\pap\w}\norm[2]{\Dapabs\w}  } \\
& \quad +  \norm[\infty]{\lamb^\half\Zapabs^\half\pap\frac{1}{\Zapabs}}  \Bigg\{
\begin{aligned}[t]
&  \brac{\norm[2]{\Dapabs\w} + \norm[2]{\pap\frac{1}{\Zapabs}}}\norm[\infty]{\frac{1}{\Zapabs^2}\pap\Aone} \\
& \quad + \norm[2]{\frac{1}{\Zapabs^3}\pap^2\Aone } \Bigg\}
\end{aligned}
\end{align*}
From this we easily have the estimate
\begin{align*}
 & \norm[2]{\frac{\lamb^\half}{\Zapabs^\sevenbytwo}\pap^3\Aone}   \\
&  \lesssim \norm[\infty]{\frac{1}{\Zapabs^2}\pap\Aone}\cbrac{\norm[\infty]{\lamb^\half\Zapabs^\half\pap\frac{1}{\Zapabs}}\norm[2]{\pap\frac{1}{\Zapabs}} + \norm[2]{\frac{\lamb^\half}{\Zapabs^\half}\pap^2\frac{1}{\Zapabs} } } \\
& \quad + \norm[\infty]{\lamb^\half\Zapabs^\half\pap\frac{1}{\Zapabs}}\norm[2]{\frac{1}{\Zapabs^3}\pap^2\Aone} + \norm[2]{ \frac{\lamb^\half}{\Zapabs^\threebytwo}\pap^2\brac{\frac{1}{\Zapabs^2}\pap\Aone}}
\end{align*}
and 
\begin{align*}
\norm[2]{\pap\brac{\frac{\lamb^\half}{\Zapabs^\sevenbytwo}\pap^2\Aone }} \lesssim \norm[\infty]{\lamb^\half\Zapabs^\half\pap\frac{1}{\Zapabs}}\norm[2]{\frac{1}{\Zapabs^3}\pap^2\Aone}  + \norm[2]{\frac{\lamb^\half}{\Zapabs^\sevenbytwo}\pap^3\Aone}
\end{align*}
We also get the estimate $\dis \Dapabs\brac{\frac{\lamb^\half}{\Zapabs^\fivebytwo}\pap^2\Aone} \in \Ltwolambhalf$ similarly. Hence now using  \propref{prop:LinftyHhalf} with $\dis f = \frac{\lamb^\half}{\Zapabs^\fivebytwo}\pap^2\Aone$ and $\dis w = \frac{1}{\Zapabs}$ we obtain
\begin{align*}
& \norm[\Linfty\cap\Hhalf]{\frac{\lamb^\half}{\Zapabs^\fivebytwo}\pap^2\Aone}^2 \\
& \lesssim \norm[2]{\frac{\lamb^\half}{\Zapabs^\threebytwo}\pap^2\Aone}\norm[2]{\pap\brac{\frac{\lamb^\half}{\Zapabs^\sevenbytwo}\pap^2\Aone }} + \norm[2]{\frac{\lamb^\half}{\Zapabs^\threebytwo}\pap^2\Aone}^2\norm[2]{\pap\frac{1}{\Zapabs}}^2
\end{align*}
\bigskip

\item $\dis \frac{\lamb^\half}{\Zapabs^\half}\pap\bvarap \in \Linftylambhalf$
\medskip\\
Proof: The proof of this estimate is the same as the proof of $\dis \frac{\sigma^\half}{\Zapabs^\half}\pap\bvarap \in \Linfty$ as done in Section 5.1 in \cite{Ag19}. Hence we have the estimate
\begin{align*}
\!\!\! \norm[\infty]{\frac{\lamb^\half}{\Zapabs^\half}\pap\bvarap} & \lesssim \norm[\infty]{\lamb^\half\Zapabs^\half\pap\frac{1}{\Zapabs}}\norm[2]{\pap\frac{1}{\Zap}}\norm[2]{\Ztapbar} + \norm[2]{\frac{\lamb^\half}{\Zapabs^\half}\pap^2\frac{1}{\Zap}}\norm[2]{\Ztapbar} \\
& \quad + \norm[2]{\pap\frac{1}{\Zap}}\norm[2]{\frac{\lamb^\half}{\Zapabs^\half}\pap\Ztapbar} + \norm[\infty]{\frac{\lamb^\half}{\Zapabs^\half}\pap\Dap\Zt} 
\end{align*}
\bigskip

\item $\dis \frac{\lamb^\half}{\Zapabs^\threebytwo}\pap^2\bvarap \in \Ltwolambhalf$, $\dis \frac{\lamb^\half}{\Zapabs^\half}\pap\Dapabs\bvarap \in \Ltwolambhalf$, $\dis \frac{\lamb^\half}{\Zapabs^\half}\pap\Dap\bvarap \in \Ltwolambhalf$ and hence we also have $\dis \frac{\lamb^\half}{\Zapabs^\half}\pap\bvarap \in \Wcallambhalf$
\medskip\\
Proof:  The proof of this estimate is the same as the proof of $\dis \frac{\sigma^\half}{\Zapabs^\threebytwo}\pap^2\bvarap \in \Ltwo$ as done in Section 5.1 in \cite{Ag19}. Hence we have the estimate
\begin{align*}
 \norm*[\Bigg][2]{(\Id - \Hil)\cbrac*[\Bigg]{\frac{\lamb^\half}{\Zap^\threebytwo}\pap^2\bvarap}} & \lesssim \norm[2]{\frac{\lamb^\half}{\Zapabs^\threebytwo}\pap^2\Dap\Zt} + \norm[2]{\frac{1}{\Zapabs^2}\pap\Ztap}\norm[\infty]{\lamb^\half\Zapabs^\half\pap\frac{1}{\Zap}} \\
& \quad + \norm[2]{\frac{\lamb^\half}{\Zapabs^\half}\pap^2\frac{1}{\Zap}} 
\begin{aligned}[t]
& \cbrac{\norm[\infty]{\Dap\Zt} + \norm[2]{\Ztap}\norm[2]{\pap\frac{1}{\Zap}}}
\end{aligned}
\end{align*}
and using this we have
\begin{align*}
\norm[2]{ \frac{\lamb^\half}{\Zapabs^\threebytwo}\pap^2\bvarap} & \lesssim \norm[\infty]{\bvarap}\cbrac{\norm[2]{\frac{\lamb^\half}{\Zapabs^\half}\pap^2\frac{1}{\Zap}} + \norm[\infty]{\lamb^\half\Zapabs^\half\pap\frac{1}{\Zap}}\norm[2]{\pap\frac{1}{\Zap}} } \\
& \quad + \norm*[\Bigg][2]{(\Id - \Hil)\cbrac*[\Bigg]{\frac{\lamb^\half}{\Zap^\threebytwo}\pap^2\bvarap}}
\end{align*}
The estimates for the other terms can now be shown easily.
\bigskip

\item $\dis \frac{\lamb^\half}{\Zapabs^\half}\pap\Zttapbar \in \Ltwolambhalf$
\medskip\\
Proof: From \eqref{form:Zttbar} we know that $\dis \Zttbar - i = -i\frac{\Aone}{\Zap}$. Hence we have
\begin{align*}
\norm[2]{\frac{\lamb^\half}{\Zapabs^\half}\pap\Zttapbar} & \lesssim \norm[2]{\frac{\lamb^\half}{\Zapabs^\threebytwo}\pap^2\Aone} + \norm[2]{\Dapabs\Aone}\norm[\infty]{\lamb^\half\Zapabs^\half\pap\frac{1}{\Zap}} \\
& \quad + \norm[\infty]{\Aone}\norm[2]{\frac{\lamb^\half}{\Zapabs^\half}\pap^2\frac{1}{\Zap}}
\end{align*}
\bigskip

\item $\dis \frac{\lamb^\half}{\Zapabs^\fivebytwo}\pap^2\Zttapbar \in \Ltwolambhalf$, $\dis  \frac{\lamb^\half}{\Zapabs^\half}\pap\Dap^2\Zttbar \in \Ltwolambhalf$, $\dis  \frac{\lamb^\half}{\Zapabs^\half}\pap\Dap^2\Ztt \in \Ltwolambhalf$ and similarly we also have $\dis  \frac{\lamb^\half}{\Zapabs^\half}\pap\Dt\Dap^2\Ztbar \in \Ltwolambhalf$, $\dis  \frac{\lamb^\half}{\Zapabs^\half}\pap\Dapabs\Dt\Dap\Zt \in \Ltwolambhalf$
\medskip\\
Proof: From the energy we have $\dis \Dt\brac{\frac{\lamb^\half}{\Zaphalf}\pap\Dap^2\Ztbar} \in \Ltwolambhalf$. Hence we have
\begin{align*}
 & \norm[2]{\frac{\lamb^\half}{\Zapabs^\half}\pap\Dt\Dap^2\Ztbar} \\
 & \lesssim \norm[2]{\frac{\lamb^\half}{\Zaphalf}\pap\Dap^2\Ztbar}\cbrac*[\big]{\norm[\infty]{\bvarap} + \norm[\infty]{\Dap\Zt}} + \norm[2]{\Dt\brac{\frac{\lamb^\half}{\Zaphalf}\pap\Dap^2\Ztbar}}
\end{align*}
Using the commutator $\sqbrac{\Dap,\Dt } = (\Dap\Zt)\Dap$ we see that
\begin{align*}
\Dt\Dap^2\Ztbar = -2(\Dap\Zt)\Dap^2\Ztbar -(\Dap\Ztbar)\Dap^2\Zt + \Dap^2\Zttbar
\end{align*}
Hence we have
\begin{align*}
& \norm[2]{ \frac{\lamb^\half}{\Zapabs^\half}\pap\Dap^2\Zttbar} \\
& \lesssim  \cbrac{ \norm[\infty]{\frac{\lamb^\half}{\Zapabs^\half}\pap\Dap\Zt} + \norm[\infty]{\frac{\lamb^\half}{\Zapabs^\half}\pap\Dap\Ztbar}}\brac{\norm[2]{\Dap^2\Zt} + \norm[2]{\Dap^2\Ztbar}} \\
& \quad + \norm[\infty]{\Dapabs\Ztbar}\cbrac{\norm[2]{\frac{\lamb^\half}{\Zapabs^\half}\pap\Dap^2\Ztbar} + \norm[2]{\frac{\lamb^\half}{\Zapabs^\half}\pap\Dap^2\Zt}} + \norm[2]{\frac{\lamb^\half}{\Zapabs^\half}\pap\Dt\Dap^2\Ztbar}  \\
\end{align*}
Now we have the identity
\begin{align*}
\Dap^2\Zttbar = \brac{\Dap\frac{1}{\Zap}}\Zttapbar + \frac{1}{\Zap^2}\pap\Zttapbar
\end{align*}
Hence we have 
\begin{align*}
\!\!\!\!\norm[2]{ \frac{\lamb^\half}{\Zapabs^\fivebytwo}\pap^2\Zttapbar} & \lesssim \norm[\infty]{\Dapabs\Zttbar}\cbrac{\norm[\infty]{\lamb^\half\Zapabs^\half\pap\frac{1}{\Zap}}\norm[2]{\pap\frac{1}{\Zap}} + \norm[2]{\frac{\lamb^\half}{\Zapabs^\half}\pap^2\frac{1}{\Zap}}} \\
& \quad + \norm[\infty]{\lamb^\half\Zapabs^\half\pap\frac{1}{\Zap}}\norm[2]{\frac{1}{\Zapabs^2}\pap\Zttbar} + \norm[2]{ \frac{\lamb^\half}{\Zapabs^\half}\pap\Dap^2\Zttbar}
\end{align*}
We can prove $\dis  \frac{\lamb^\half}{\Zapabs^\half}\pap\Dap^2\Ztt \in \Ltwolambhalf$ similarly. Now observe that
\begin{align*}
\Dapabs\Dt\Dap\Zt = \Dapabs\brac{\Dap\Ztt - (\Dap\Zt)^2} = \w\Dap^2\Ztt  - 2(\Dapabs\Zt)\Dap^2\Zt
\end{align*}
Hence we have
\begin{align*}
& \norm[2]{\frac{\lamb^\half}{\Zapabs^\half}\pap\Dapabs\Dt\Dap\Zt} \\
& \lesssim \norm[\infty]{\frac{\lamb^\half}{\Zapabs^\half}\pap\w}\norm[2]{\Dap^2\Ztt} + \norm[2]{\frac{\lamb^\half}{\Zapabs^\half}\pap\Dap^2\Ztt} \\
& \quad  + \norm[\infty]{\frac{\lamb^\half}{\Zapabs^\half}\pap\Dapabs\Zt}\norm[2]{\Dap^2\Zt} + \norm[\infty]{\Dapabs\Zt}\norm[2]{\frac{\lamb^\half}{\Zapabs^\half}\pap\Dap^2\Zt}
\end{align*}

\bigskip

\item $\dis \frac{\lamb^\half}{\Zapabs^\threebytwo}\pap\Zttapbar \in \Wcallambhalf\cap\Ccallambhalf$, $\dis \frac{\lamb^\half}{\Zapabs^\threebytwo}\pap\Dt\Ztapbar \in \Wcallambhalf\cap\Ccallambhalf$, $\dis \frac{\lamb^\half}{\Zapabs^\half}\pap\Dap\Zttbar \in \Wcallambhalf\cap\Ccallambhalf$ and $\dis \frac{\lamb^\half}{\Zapabs^\half}\pap\Dap\Ztt \in \Wcallambhalf\cap\Ccallambhalf$
\medskip\\
Proof: Let $\dis f = \frac{\lamb^\half}{\Zapabs^\threebytwo}\pap\Zttapbar$ and $\dis w = \frac{1}{\Zapabs}$. Then we see that $\dis \frac{f}{w} \in \Ltwolambhalf$. Now we have
\begin{align*}
\norm[2]{\pap\brac{\frac{\lamb^\half}{\Zapabs^\fivebytwo}\pap\Zttapbar}} \lesssim \norm[2]{\frac{\lamb^\half}{\Zapabs^\fivebytwo}\pap^2\Zttapbar} + \norm[\infty]{\Dapabs\frac{1}{\Zapabs}}\norm[2]{\frac{\lamb^\half}{\Zapabs^\half}\pap\Zttapbar}
\end{align*}
As $\dis w' \in \Ltwo$, using  \propref{prop:LinftyHhalf} we obtain
\begin{align*}
\norm[\Linfty\cap\Hhalf]{\frac{\lamb^\half}{\Zapabs^\threebytwo}\pap\Zttapbar}^2 & \lesssim \norm[2]{\frac{\lamb^\half}{\Zapabs^\half}\pap\Zttapbar}\norm[2]{\pap\brac{\frac{\lamb^\half}{\Zapabs^\fivebytwo}\pap\Zttapbar}} \\
& \quad + \norm[2]{\frac{\lamb^\half}{\Zapabs^\half}\pap\Zttapbar}^2\norm[2]{\pap\frac{1}{\Zapabs}}^2
\end{align*}
Hence we have $\dis \frac{\lamb^\half}{\Zapabs^\threebytwo}\pap\Zttapbar \in \Wcallambhalf\cap\Ccallambhalf$. Now 
\begin{align*}
\frac{\lamb^\half}{\Zapabs^\threebytwo}\pap\Dt\Ztapbar = -\frac{\lamb^\half}{\Zapabs^\threebytwo}\pap\brac{\bvarap\Ztapbar} + \frac{\lamb^\half}{\Zapabs^\threebytwo}\pap\Zttapbar
\end{align*}
Hence from \lemref{lem:CWlamb} we obtain
\begin{align*}
& \norm[\Wcallambhalf\cap\Ccallambhalf]{\frac{\lamb^\half}{\Zapabs^\threebytwo}\pap\Dt\Ztapbar} \\
& \lesssim \norm[\Wcallambhalf]{\frac{\lamb^\half}{\Zapabs^\half}\pap\bvarap }\norm[\Wcal\cap\Ccal]{\Dapabs\Ztbar} + \norm[\Wcallambhalf\cap\Ccallambhalf]{\frac{\lamb^\half}{\Zapabs^\threebytwo}\pap\Zttapbar} \\
& \quad + \norm[\Wcal]{\bvarap}\norm[\Wcallambhalf\cap\Ccallambhalf]{\frac{\lamb^\half}{\Zapabs^\threebytwo}\pap\Ztapbar}
\end{align*}
We also have from \lemref{lem:CWlamb}
\begin{align*}
& \norm[\Wcallambhalf\cap\Ccallambhalf]{\frac{\lamb^\half}{\Zapabs^\half}\pap\Dap\Zttbar} \\
& \lesssim \norm[\Wcallambhalf]{\lamb^\half\Zapabs^\half\pap\frac{1}{\Zap}}\norm[\Wcal\cap\Ccal]{\Dapabs\Zttbar} + \norm[\Wcallambhalf\cap\Ccallambhalf]{ \frac{\lamb^\half}{\Zapabs^\threebytwo}\pap\Zttapbar }
\end{align*}
We prove $\dis \frac{\lamb^\half}{\Zapabs^\half}\pap\Dap\Ztt \in \Wcallambhalf\cap\Ccallambhalf$ similarly. 
\bigskip

\item $\dis \frac{\lamb^\half}{\Zapabs^\fivebytwo}\pap^3\frac{1}{\Zap} \in \Ltwolambhalf$, $\dis \frac{\lamb^\half}{\Zapabs^\fivebytwo}\pap^3\frac{1}{\Zapabs} \in \Ltwolambhalf$, $\dis \frac{\lamb^\half}{\Zapabs^\sevenbytwo}\pap^3\w \in \Ltwolambhalf$ and similarly we also have $\dis \frac{\lamb^\half}{\Zapabs^\half}\pap\Dap^2\frac{1}{\Zap} \in \Ltwolambhalf$
\medskip\\
Proof: We know from \eqref{form:Zttbar} that $\dis \Zttbar - i = -i\frac{\Aone}{\Zap}$ and as $\Aone \geq 1$ we have
\begin{align*}
& \norm[2]{\frac{\lamb^\half}{\Zapabs^\fivebytwo}\pap^3\frac{1}{\Zap}} \\
& \lesssim \norm[2]{\frac{1}{\Zapabs^3}\pap^2\Aone}\norm[\infty]{\lamb^\half\Zapabs^\half\pap\frac{1}{\Zap}} + \norm[\infty]{\frac{1}{\Zapabs^2}\pap\Aone}\norm[2]{\frac{\lamb^\half}{\Zapabs^\half}\pap^2\frac{1}{\Zap}} \\
& \quad  + \norm[2]{\frac{\lamb^\half}{\Zapabs^\fivebytwo}\pap^3\Zttbar} + \norm[2]{\frac{\lamb^\half}{\Zapabs^\sevenbytwo}\pap^3\Aone}
\end{align*}
Recall from \eqref{form:RealImagTh} that
\[
\Real\brac{\frac{\Zap}{\Zapabs}\pap\frac{1}{\Zap} } = \pap \frac{1}{\Zapabs} \quad \qquad \Imag\brac{\frac{\Zap}{\Zapabs}\pap\frac{1}{\Zap}} =  i\brac{ \wbar\Dapabs \w} 
\]
Hence by taking derivatives we obtain
\begin{align*}
\norm[2]{\frac{\lamb^\half}{\Zapabs^\fivebytwo}\pap^3\frac{1}{\Zapabs}} & \lesssim \norm[2]{\frac{\lamb^\half}{\Zapabs^\threebytwo}\pap^2\w}\norm[\infty]{\Dapabs\frac{1}{\Zap}} + \norm[\infty]{\frac{1}{\Zapabs^2}\pap\w}\norm[2]{\frac{\lamb^\half}{\Zapabs^\half}\pap^2\frac{1}{\Zap}} \\
& \quad + \norm[2]{\frac{\lamb^\half}{\Zapabs^\fivebytwo}\pap^3\frac{1}{\Zap}}
\end{align*}
and also
\begin{align*}
\norm[2]{\frac{\lamb^\half}{\Zapabs^\sevenbytwo}\pap^3\w} & \lesssim \norm[2]{\frac{\lamb^\half}{\Zapabs^\threebytwo}\pap^2\w}\norm[\infty]{\Dapabs\frac{1}{\Zap}} + \norm[\infty]{\frac{1}{\Zapabs^2}\pap\w}\norm[2]{\frac{\lamb^\half}{\Zapabs^\half}\pap^2\frac{1}{\Zap}} \\
& \quad + \norm[2]{\frac{\lamb^\half}{\Zapabs^\fivebytwo}\pap^3\frac{1}{\Zap}}
\end{align*}
The estimate $\dis \frac{\lamb^\half}{\Zapabs^\half}\pap\Dap^2\frac{1}{\Zap} \in \Ltwolambhalf$ can be proved similarly. 
\bigskip

\item $\dis \frac{\lamb^\half}{\Zapabs^\threebytwo}\pap^2\frac{1}{\Zap} \in \Wcallambhalf\cap\Ccallambhalf$, $\dis \frac{\lamb^\half}{\Zapabs^\threebytwo}\pap^2\frac{1}{\Zapabs} \in \Wcallambhalf\cap\Ccallambhalf$ and $\dis \frac{\lamb^\half}{\Zapabs^\fivebytwo}\pap^2\w \in \Wcallambhalf\cap\Ccallambhalf$
\medskip\\
Proof: We first note that
\begin{align*}
\norm[2]{\pap\brac{\frac{\lamb^\half}{\Zapabs^\fivebytwo}\pap^2\frac{1}{\Zap}}} \lesssim \norm[\infty]{\Dapabs\frac{1}{\Zapabs}}\norm[2]{\frac{\lamb^\half}{\Zapabs^\half}\pap^2\frac{1}{\Zap}} + \norm[2]{\frac{\lamb^\half}{\Zapabs^\fivebytwo}\pap^3\frac{1}{\Zap}}
\end{align*}
Now taking $\dis f =  \frac{\lamb^\half}{\Zapabs^\threebytwo}\pap^2\frac{1}{\Zap}$ and $\dis w = \frac{1}{\Zapabs}$ in  \propref{prop:LinftyHhalf} we obtain
\begin{align*}
\norm[\Linfty\cap\Hhalf]{ \frac{\lamb^\half}{\Zapabs^\threebytwo}\pap^2\frac{1}{\Zap}}^2 &  \lesssim \norm[2]{\frac{\lamb^\half}{\Zapabs^\half}\pap^2\frac{1}{\Zap}}\norm[2]{\pap\brac{\frac{\lamb^\half}{\Zapabs^\fivebytwo}\pap^2\frac{1}{\Zap}}} \\
& \quad + \norm[2]{\frac{\lamb^\half}{\Zapabs^\half}\pap^2\frac{1}{\Zap}}^2\norm[2]{\pap\frac{1}{\Zapabs}}^2
\end{align*}
As $\dis \frac{\lamb^\half}{\Zapabs^\half}\pap^2\frac{1}{\Zap} \in \Ltwolambhalf$ this shows that $\dis \frac{\lamb^\half}{\Zapabs^\threebytwo}\pap^2\frac{1}{\Zap} \in \Wcallambhalf\cap\Ccallambhalf$. We can prove the other estimates similarly.

\bigskip

\item $\dis \frac{\lamb^\half}{\Zapabs\Zaphalf}\pap\Dap^2\Ztbar \in \Ccallambhalf$, $\dis \frac{\lamb^\half}{\Zapabs^\fivebytwo}\pap^2\Dap\Ztbar \in \Ccallambhalf$, $\dis \frac{\lamb^\half}{\Zapabs^\sevenbytwo}\pap^2\Ztapbar \in \Ccallambhalf$ and similarly $\dis \frac{\lamb^\half}{\Zapabs^\fivebytwo}\pap^2\Dap\Zt \in \Ccallambhalf$, $\dis \frac{\lamb^\half}{\Zapabs^\threebytwo}\pap\Dapabs\Dap\Zt \in \Ccallambhalf$
\medskip\\
Proof: We first see that $\dis \frac{\lamb^\half\sqrt{\Aone}}{\Zaphalf}\pap\Dap^2\Ztbar \in \Ltwolambhalf$ as $\Aone \in \Linfty$ and $\dis \frac{\lamb^\half}{\Zaphalf}\pap\Dap^2\Ztbar \in \Ltwolambhalf$ as it is part of the energy. Hence from the energy we see that $\dis \frac{\lamb^\half\sqrt{\Aone}}{\Zapabs\Zaphalf}\pap\Dap^2\Ztbar \in \Ccallambhalf$. Now using \lemref{lem:CWlamb} we get
\begin{align*}
\norm[\Ccallambhalf]{\frac{\lamb^\half}{\Zapabs\Zaphalf}\pap\Dap^2\Ztbar} \lesssim \norm[\Ccallambhalf]{\frac{\lamb^\half\sqrt{\Aone}}{\Zapabs\Zaphalf}\pap\Dap^2\Ztbar}\norm[\Wcal]{\frac{1}{\sqrt{\Aone}}}
\end{align*}
As $\w^\half \in \Wcal$ we see that $\dis \frac{\lamb^\half}{\Zapabs^\threebytwo}\pap\Dap^2\Ztbar \in \Ccallambhalf$. Now again using \lemref{lem:CWlamb} we have
\begin{align*}
\norm[\Ccallambhalf]{ \frac{\lamb^\half}{\Zapabs^\fivebytwo}\pap^2\Dap\Ztbar} & \lesssim \norm[\Wcal]{\w}\norm[\Wcallambhalf]{\lamb^\half\Zapabs^\half\pap\frac{1}{\Zap}}\norm[\Ccal]{\frac{1}{\Zapabs^2}\pap\Dap\Ztbar}  \\
& \quad + \norm[\Wcal]{\w}\norm[\Ccallambhalf]{\frac{\lamb^\half}{\Zapabs^\threebytwo}\pap\Dap^2\Ztbar}
\end{align*}
Now observe that
\begin{align*}
 \frac{\lamb^\half}{\Zapabs^\fivebytwo}\pap^2\Dap\Ztbar =  \frac{\lamb^\half}{\Zapabs^\fivebytwo}\pap\cbrac{\brac{\pap\frac{1}{\Zap}}\Ztapbar + \frac{1}{\Zap}\pap\Ztapbar}
\end{align*}
Hence from \lemref{lem:CWlamb} we have
\begin{align*}
 & \norm[\Ccallambhalf]{\frac{\lamb^\half}{\Zapabs^\sevenbytwo}\pap^2\Ztapbar} \\
& \lesssim \norm[\Wcal]{\w}\norm[\Wcallambhalf]{\lamb^\half\Zapabs^\half\pap\frac{1}{\Zap}}\norm[\Ccal]{\frac{1}{\Zapabs^3}\pap\Ztapbar} + \norm[\Wcal]{\w}\norm[\Ccallambhalf]{ \frac{\lamb^\half}{\Zapabs^\fivebytwo}\pap^2\Dap\Ztbar}      \\
& \quad + \norm[\Wcal]{\w}\norm[\Wcal]{\Dapabs\Ztbar}\norm[\Ccallambhalf]{\frac{\lamb^\half}{\Zapabs^\threebytwo}\pap^2\frac{1}{\Zap}} + \norm[\Wcal]{\w}\norm[\Wcal]{\Dapabs\frac{1}{\Zap}}\norm[\Ccallambhalf]{\frac{\lamb^\half}{\Zapabs^\threebytwo}\pap\Ztapbar}
\end{align*}
Now the estimates $\dis \frac{\lamb^\half}{\Zapabs^\fivebytwo}\pap^2\Dap\Zt \in \Ccallambhalf$, $\dis \frac{\lamb^\half}{\Zapabs^\threebytwo}\pap\Dapabs\Dap\Zt \in \Ccallambhalf$ are proven similarly.
\bigskip

\item $\dis \frac{\lamb^\half}{\Zapabs^\half}\pap\Th \in \Ltwolambhalf$
\medskip\\
Proof: Using formula \eqref{form:Real} we see that
\begin{align*}
\frac{\lamb^\half}{\Zapabs^\half}\pap\Th = (\Id + \Hil)\Real\cbrac{\frac{\lamb^\half}{\Zapabs^\half}\pap\Th} + i\Imag (\Id - \Hil)\cbrac{\frac{\lamb^\half}{\Zapabs^\half}\pap\Th}
\end{align*}
Now using the formula \eqref{form:RealImagTh} we see that
\begin{align*}
\Real\cbrac{\frac{\lamb^\half}{\Zapabs^\half}\pap\Th} = -i\frac{\lamb^\half}{\Zapabs^\half}\pap\brac{\Dap\w}
\end{align*}
Hence we have
\begin{align*}
\lpar \qquad  \norm[2]{\Real\cbrac{\frac{\lamb^\half}{\Zapabs^\half}\pap\Th}} \lesssim \norm[\infty]{\lamb^\half\Zapabs^\half\pap\frac{1}{\Zap}}\norm[2]{\Dapabs\w} + \norm[2]{\frac{\lamb^\half}{\Zapabs^\threebytwo}\pap^2\w}
\end{align*}
We also have
\begin{align*}
 (\Id - \Hil)\cbrac{\frac{\lamb^\half}{\Zapabs^\half}\pap\Th} = \sqbrac{\frac{\lamb^\half}{\Zapabs^\half},\Hil}\pap\Th
\end{align*}
and hence we have from \propref{prop:commutator}
\begin{align*}
\norm[2]{(\Id - \Hil)\cbrac{\frac{\lamb^\half}{\Zapabs^\half}\pap\Th}} \lesssim \norm[\infty]{\lamb^\half\Zapabs^\half\pap\frac{1}{\Zapabs}}\norm[2]{\Th}
\end{align*}

\bigskip

\item $\dis \frac{\lamb^\half}{\Zapabs^\threebytwo}\pap\Th \in \Ccallambhalf$, $\dis \frac{\lamb^\half}{\Zap^\threebytwo}\pap\Th \in \Ccallambhalf$
\medskip\\
Proof: As $ \frac{\lamb^\half}{\Zapabs^\half}\pap\Th \in \Ltwolambhalf$, we only need to prove  $\frac{\lamb^\half}{\Zapabs^\threebytwo}\pap\Th \in \Hhalflambhalf$. Now using formula \eqref{form:Real} we see that
\begin{align*}
\frac{\lamb^\half}{\Zapabs^\threebytwo}\pap\Th = (\Id + \Hil)\Real\cbrac{\frac{\lamb^\half}{\Zapabs^\threebytwo}\pap\Th} + i\Imag (\Id - \Hil)\cbrac{\frac{\lamb^\half}{\Zapabs^\threebytwo}\pap\Th}
\end{align*}
Using the formula \eqref{form:RealImagTh} we see that
\begin{align*}
\Real\cbrac{\frac{\lamb^\half}{\Zapabs^\threebytwo}\pap\Th} = -i\frac{\lamb^\half}{\Zapabs^\threebytwo}\pap\brac{\Dap\w}
\end{align*}
Hence we have from \lemref{lem:CWlamb}
\begin{align*}
& \norm[\Hhalf]{\Real\cbrac{\frac{\lamb^\half}{\Zapabs^\threebytwo}\pap\Th}} \\
& \lesssim \norm[\Wcallambhalf]{\lamb^\half\Zapabs^\half\pap\frac{1}{\Zap}}\norm[\Ccal]{\frac{1}{\Zapabs^2}\pap\w} + \norm[\Wcal]{\wbar}\norm[\Ccallambhalf]{\frac{\lamb^\half}{\Zapabs^\fivebytwo}\pap^2\w}
\end{align*}
Now observe that
\begin{align*}
(\Id - \Hil)\cbrac{\frac{\lamb^\half}{\Zapabs^\threebytwo}\pap\Th} = \lamb^\half\sqbrac{\frac{\w^\half}{\Zapabs},\Hil}\frac{1}{\Zaphalf}\pap\Th
\end{align*}
Hence we have from \propref{prop:commutator}
\begin{align*}
\norm[\Hhalf]{(\Id - \Hil)\cbrac{\frac{\lamb^\half}{\Zapabs^\threebytwo}\pap\Th} } \lesssim \brac{\norm[2]{\Dapabs\w} + \norm[2]{\pap\frac{1}{\Zapabs}}}\norm[2]{\frac{\lamb^\half}{\Zapabs^\half}\pap\Th}
\end{align*}
Hence $\dis \frac{\lamb^\half}{\Zapabs^\threebytwo}\pap\Th \in \Ccallambhalf$. Now as $\wbar^\threebytwo \in \Wcal$ we obtain the other estimate easily by multiplying and using \lemref{lem:CWlamb}. 
\bigskip

\item $\dis  \frac{\lamb^\half}{\Zapabs^\half}\pap\Dt\Th \in \Ltwolambhalf$
\medskip\\
Proof: Using formula \eqref{form:Real} we see that
\begin{align*}
\frac{\lamb^\half}{\Zapabs^\half}\pap\Dt\Th = (\Id + \Hil)\Real\cbrac{\frac{\lamb^\half}{\Zapabs^\half}\pap\Dt\Th} + i\Imag (\Id - \Hil)\cbrac{\frac{\lamb^\half}{\Zapabs^\half}\pap\Dt\Th}
\end{align*}
We control the terms individually. 
\begin{enumerate}

\item Using the formula \eqref{form:RealDtTh} we see that
\begin{align*}
\Real\cbrac{\frac{\lamb^\half}{\Zapabs^\half}\pap\Dt\Th} = -\frac{\lamb^\half}{\Zapabs^\half}\pap\Imag\cbrac{\Dapabs\Dapbar\Ztbar + i(\Real\Th)\Dapbar\Ztbar}
\end{align*}
Hence we have
\begin{align*}
\norm[2]{\Real\cbrac{\frac{\lamb^\half}{\Zapabs^\half}\pap\Dt\Th}} & \lesssim \norm[2]{\frac{\lamb^\half}{\Zapabs^\half}\pap\Dapabs\Dapbar\Ztbar} + \norm[2]{\frac{\lamb^\half}{\Zapabs^\half}\pap\Th}\norm[\infty]{\Dapbar\Ztbar} \\
& \quad + \norm[2]{\Th}\norm[\infty]{\frac{\lamb^\half}{\Zapabs^\half}\pap\Dapbar\Ztbar}
\end{align*}

\item We note that
\begin{align*}
 (\Id - \Hil)\cbrac{\frac{\lamb^\half}{\Zapabs^\half}\pap\Dt\Th} & = \sqbrac{\frac{\lamb^\half}{\Zapabs^\half},\Hil}\pap\Dt\Th - \w^\half\sqbrac{\frac{\lamb^\half\wbar^\half}{\Zapabs^\half},\Hil}\pap\Dt\Th \\
 & \quad + \w^\half(\Id - \Hil)\cbrac{\frac{\lamb^\half}{\Zaphalf}\pap\Dt\Th}
\end{align*}
Now observe that using the identities from \secref{sec:quasiEaux} we have
\begin{align*}
\Dt\brac{\frac{\lamb^\half}{\Zaphalf}\pap\Th} & = \sqbrac{\Dt, \frac{\lamb^\half}{\Zaphalf}\pap} \Th + \frac{\lamb^\half}{\Zaphalf}\pap\Dt\Th \\
& = -\brac{\frac{\bvarap}{2} + \frac{\Dap\Zt}{2}}\frac{\lamb^\half}{\Zaphalf}\pap\Th + \frac{\lamb^\half}{\Zaphalf}\pap\Dt\Th
\end{align*}
Now as $\dis \frac{\lamb^\half}{\Zaphalf}\pap\Th$ is holomorphic and $\Dt = \pt + \bvar\pap$ we have
\begin{align}\label{eq:lambpapDtTh}
\begin{split}
& (\Id - \Hil)\cbrac{\frac{\lamb^\half}{\Zaphalf}\pap\Dt\Th} \\
& = (\Id - \Hil)\cbrac{\brac{\frac{\bvarap}{2} + \frac{\Dap\Zt}{2}}\frac{\lamb^\half}{\Zaphalf}\pap\Th}  + \sqbrac{\bvar,\Hil}\pap\cbrac{\frac{\lamb^\half}{\Zaphalf}\pap\Th}
\end{split}
\end{align}
Hence using \propref{prop:commutator} we obtain
\begin{align*}
& \norm[2]{(\Id - \Hil)\cbrac{\frac{\lamb^\half}{\Zapabs^\half}\pap\Dt\Th}} \\
& \lesssim \brac{\norm[\infty]{\bvarap} + \norm[\infty]{\Dap\Zt} }\norm[2]{\frac{\lamb^\half}{\Zaphalf}\pap\Th} \\
& \quad + \brac{\norm[\infty]{\lamb^\half\Zapabs^\half\pap\frac{1}{\Zapabs}} + \norm[2]{\frac{\lamb^\half}{\Zapabs^\half}\pap\w}}\norm[2]{\Dt\Th}
\end{align*}

\end{enumerate}
\bigskip

\item $\dis  \frac{\lamb^\half}{\Zapabs^\threebytwo}\pap\Dt\Th \in \Ccallambhalf$
\medskip\\
Proof: Note that we only need to prove the $\Hhalf$ estimate. Using formula \eqref{form:Real} we see that
\begin{align*}
\frac{\lamb^\half}{\Zapabs^\threebytwo}\pap\Dt\Th = (\Id + \Hil)\Real\cbrac{\frac{\lamb^\half}{\Zapabs^\threebytwo}\pap\Dt\Th} + i\Imag (\Id - \Hil)\cbrac{\frac{\lamb^\half}{\Zapabs^\threebytwo}\pap\Dt\Th}
\end{align*}
We control the term individually. 
\begin{enumerate}

\item Using the formula \eqref{form:RealDtTh} we see that
\begin{align*}
\Real\cbrac{\frac{\lamb^\half}{\Zapabs^\threebytwo}\pap\Dt\Th} = -\frac{\lamb^\half}{\Zapabs^\threebytwo}\pap\Imag\cbrac{\Dapabs\Dapbar\Ztbar + i(\Real\Th)\Dapbar\Ztbar}
\end{align*}
Hence we have from \lemref{lem:CWlamb}
\begin{align*}
& \norm[\Ccallambhalf]{\Real\cbrac{\frac{\lamb^\half}{\Zapabs^\threebytwo}\pap\Dt\Th}} \\
& \lesssim \norm[\Ccallambhalf]{\frac{\lamb^\half}{\Zapabs^\threebytwo}\pap\Dapabs\Dapbar\Ztbar} + \norm[\Ccallambhalf]{\frac{\lamb^\half}{\Zapabs^\threebytwo}\pap\Th}\norm[\Wcal]{\Dapbar\Ztbar} \\
& \quad + \norm[\Wcal]{\frac{\Th}{\Zapabs}}\norm[\Ccallambhalf]{\frac{\lamb^\half}{\Zapabs^\half}\pap\Dapbar\Ztbar}
\end{align*}

\item We note that
\begin{align*}
 & (\Id - \Hil)\cbrac{\frac{\lamb^\half}{\Zapabs^\threebytwo}\pap\Dt\Th}  \\
 & = \sqbrac{\frac{1}{\Zapabs},\Hil}\frac{\lamb^\half}{\Zapabs^\half}\pap\Dt\Th + \w^\threebytwo\frac{\wbar^\threebytwo}{\Zapabs}(\Id - \Hil)\cbrac{\frac{\lamb^\half}{\Zapabs^\half}\pap\Dt\Th}
\end{align*}
as we have from \propref{prop:commutator}
\begin{align*}
\norm[\Hhalf]{\sqbrac{\frac{1}{\Zapabs},\Hil}\frac{\lamb^\half}{\Zapabs^\half}\pap\Dt\Th} \lesssim \norm[2]{\pap\frac{1}{\Zapabs}}\norm[2]{\frac{\lamb^\half}{\Zapabs^\half}\pap\Dt\Th}
\end{align*}
we only need to show the second term is in $\Hhalf$. Now as $\w^\threebytwo \in \Wcal$, it is enough to show that $\dis \frac{\wbar^\threebytwo}{\Zapabs}(\Id - \Hil)\cbrac{\frac{\lamb^\half}{\Zapabs^\half}\pap\Dt\Th} \in \Ccallambhalf$. As $\dis \frac{\lamb^\half}{\Zapabs^\half}\pap\Dt\Th \in \Ltwolambhalf$ we only need to show the $\Hhalf$ estimate. Now
\begin{align*}
& \frac{\wbar^\threebytwo}{\Zapabs}(\Id - \Hil)\cbrac{\frac{\lamb^\half}{\Zapabs^\half}\pap\Dt\Th}  \\
& =   -\sqbrac{\frac{\wbar^\threebytwo}{\Zapabs},\Hil}\frac{\lamb^\half}{\Zapabs^\half}\pap\Dt\Th  + (\Id - \Hil)\cbrac*[\Bigg]{\frac{\lamb^\half}{\Zap^\threebytwo}\pap\Dt\Th}
\end{align*}
Now by a similar computation as done in \eqref{eq:lambpapDtTh}, we see that
\begin{align*}
& (\Id - \Hil)\cbrac*[\Bigg]{\frac{\lamb^\half}{\Zap^\threebytwo}\pap\Dt\Th} \\
& = (\Id - \Hil)\cbrac*[\Bigg]{\brac{-\frac{\bvarap}{2} + \frac{3\Dap\Zt}{2}}\frac{\lamb^\half}{\Zap^\threebytwo}\pap\Th}  + \sqbrac{\bvar,\Hil}\pap\cbrac*[\Bigg]{\frac{\lamb^\half}{\Zap^\threebytwo}\pap\Th}
\end{align*}
From this we obtain by \propref{prop:commutator} and \lemref{lem:CWlamb}
\begin{align*}
\lpar & \norm[\Hhalf]{\frac{\wbar^\threebytwo}{\Zapabs}(\Id - \Hil)\cbrac{\frac{\lamb^\half}{\Zapabs^\half}\pap\Dt\Th}} \\
& \lesssim  \cbrac{\norm[2]{\pap\frac{1}{\Zapabs}} + \norm[2]{\Dapabs\w} }\norm*[\Bigg][2]{\frac{\lamb^\half}{\Zapabs^\half}\pap\Dt\Th}  \\
& \quad + \brac{\norm[\Wcal]{\bvarap} + \norm[\Wcal]{\Dap\Zt} }\norm*[\Bigg][\Ccallambhalf]{\frac{\lamb^\half}{\Zap^\threebytwo}\pap\Th} + \norm[\Hhalf]{\bvarap}\norm*[\Bigg][\Hhalf]{\frac{\lamb^\half}{\Zap^\threebytwo}\pap\Th}
\end{align*}

\end{enumerate}
\bigskip

\item $\dis \frac{\lamb^\half}{\Zapabs^\half}\pap\Rthree \in \Ltwolambhalf$
\medskip\\
Proof: Recall from \eqref{form:Rthree} the formula of $\Rthree$
\begin{align*}
\begin{split}
\Rthree & =  \cbrac{-2(\Dap^2\Ztt) + 6(\Dap\Zt)(\Dap^2\Zt)}(\Dap\Ztbar)  + \cbrac{-4(\Dap\Ztt) + 6(\Dap\Zt)^2}(\Dap^2\Ztbar) \\
& \quad   -2(\Dap^2\Zt)(\Dap\Zttbar) - 4(\Dap\Zt)(\Dap^2\Zttbar)  -2i\wbar(\Dap\wbar)\brac{\frac{1}{\Zapabs^2}\pap\Jone}  \\
& \quad - i(\Dap\Jone)\brac{\Dap\frac{1}{\Zap}} -i\Jone\brac{\Dap^2\frac{1}{\Zap}}
\end{split}
\end{align*}
Let us control the terms individually
\begin{enumerate}[leftmargin =*]
\item We see that
\begin{align*}
& \norm[2]{\frac{\lamb^\half}{\Zapabs^\half}\pap \brac*[\big]{\cbrac{-2(\Dap^2\Ztt) + 6(\Dap\Zt)(\Dap^2\Zt)}(\Dap\Ztbar)}} \\
& \lesssim \norm[2]{ \frac{\lamb^\half}{\Zapabs^\half}\pap\Dap^2\Ztt}\norm[\infty]{\Dap\Ztbar} + \norm[2]{\Dap^2\Ztt}\norm[\infty]{\frac{\lamb^\half}{\Zapabs^\half}\pap \Dap\Ztbar} \\
& \quad + \norm[\infty]{\frac{\lamb^\half}{\Zapabs^\half}\pap \Dap\Zt}\norm[2]{\Dap^2\Zt}\norm[\infty]{\Dap\Ztbar} + \norm[\infty]{\Dap\Zt}^2\norm[2]{\frac{\lamb^\half}{\Zapabs^\half}\pap\Dap^2\Zt} \\
& \quad + \norm[\infty]{\frac{\lamb^\half}{\Zapabs^\half}\pap \Dap\Ztbar}\norm[2]{\Dap^2\Zt}\norm[\infty]{\Dap\Zt} 
\end{align*}
\item We have
\begin{align*}
& \norm[2]{\frac{\lamb^\half}{\Zapabs^\half}\pap\brac*[\big]{\cbrac{-4(\Dap\Ztt) + 6(\Dap\Zt)^2}(\Dap^2\Ztbar)}} \\
& \lesssim \norm[\infty]{\frac{\lamb^\half}{\Zapabs^\half}\pap \Dap\Ztt}\norm[2]{\Dap^2\Ztbar} + \norm[\infty]{\Dap\Ztt}\norm[2]{\frac{\lamb^\half}{\Zapabs^\half}\pap \Dap^2\Ztbar} \\
& \quad + \norm[\infty]{\frac{\lamb^\half}{\Zapabs^\half}\pap\Dap\Zt}\norm[\infty]{\Dap\Zt}\norm[2]{\Dap^2\Ztbar} + \norm[\infty]{\Dap\Zt}^2\norm[2]{\frac{\lamb^\half}{\Zapabs^\half}\pap\Dap^2\Ztbar}
\end{align*}
\item We have
\begin{align*}
& \norm[2]{\frac{\lamb^\half}{\Zapabs^\half}\pap\brac{  -2(\Dap^2\Zt)(\Dap\Zttbar) - 4(\Dap\Zt)(\Dap^2\Zttbar) }} \\
& \lesssim \norm[2]{\frac{\lamb^\half}{\Zapabs^\half}\pap\Dap^2\Zt}\norm[\infty]{\Dap\Zttbar} + \norm[2]{\Dap^2\Zt}\norm[\infty]{\frac{\lamb^\half}{\Zapabs^\half}\pap\Dap\Zttbar} \\
& \quad + \norm[\infty]{\frac{\lamb^\half}{\Zapabs^\half}\pap\Dap\Zt}\norm[2]{\Dap^2\Zttbar} + \norm[\infty]{\Dap\Zt}\norm[2]{\frac{\lamb^\half}{\Zapabs^\half}\pap\Dap^2\Zttbar}
\end{align*}
\item We have
\begin{align*}
& \norm[2]{\frac{\lamb^\half}{\Zapabs^\half}\pap \brac{-2i\wbar(\Dap\wbar)\brac{\frac{1}{\Zapabs^2}\pap\Jone}}} \\
& \lesssim \norm[\infty]{\frac{\lamb^\half}{\Zapabs^\half}\pap\wbar}\norm[2]{\Dapabs\wbar}\norm[\infty]{\frac{1}{\Zapabs^2}\pap\Jone} + \norm[2]{\frac{\lamb^\half}{\Zapabs^\half}\pap\Dapabs\wbar}\norm[\infty]{\frac{1}{\Zapabs^2}\pap\Jone} \\
& \quad + \norm[\infty]{\frac{\lamb^\half}{\Zapabs^\half}\pap\wbar}\norm[2]{\Dapabs\brac{\frac{1}{\Zapabs^2}\pap\Jone}}
\end{align*}
\item We have
\begin{align*}
& \norm[2]{\frac{\lamb^\half}{\Zapabs^\half}\pap\brac{- i(\Dap\Jone)\brac{\Dap\frac{1}{\Zap}} -i\Jone\brac{\Dap^2\frac{1}{\Zap}}}} \\
& \lesssim \brac{\norm[\infty]{\frac{\lamb^\half}{\Zapabs^\half}\pap\wbar} + \norm[\infty]{\lamb^\half\Zapabs^\half\pap\frac{1}{\Zap}}}\norm[\infty]{\frac{1}{\Zapabs^2}\pap\Jone}\norm[2]{\pap\frac{1}{\Zap}} \\
& \quad + \norm[2]{\Dapabs\brac{\frac{1}{\Zapabs^2}\pap\Jone}}\norm[\infty]{\lamb^\half\Zapabs^\half\pap\frac{1}{\Zap}}  + \norm[\infty]{\frac{1}{\Zapabs^2}\pap\Jone}\norm[2]{\frac{\lamb^\half}{\Zapabs^\half}\pap^2\frac{1}{\Zap}} \\
& \quad + \norm[\infty]{\Jone}\norm[2]{\frac{\lamb^\half}{\Zapabs^\half}\pap\Dap^2\frac{1}{\Zap}}
\end{align*}
\end{enumerate}

\bigskip

\item $\dis \Rfour \in \Ltwolambhalf$
\medskip\\
Proof: Recall from \eqref{form:Rfour} the formula of $\Rfour$
\begin{align*}
\begin{split}
\Rfour &=  -\cbrac{\frac{\Dt\bvarap}{2} + \frac{\Dt\Dap\Zt}{2} - \brac{\frac{\bvarap}{2} + \frac{\Dap\Zt}{2}}^2 }\frac{\lamb^\half}{\Zaphalf}\pap\Dap^2\Ztbar  + \frac{\lamb^\half}{\Zaphalf}\pap\Rthree \\
& \quad - \cbrac{2i\Aone\brac{\Dapabs\frac{1}{\Zapabs}} + \frac{i}{\Zapabs^2}\pap\Aone - \frac{i\Aone}{2}\brac{\Dapbar \frac{1}{\Zap}} }\frac{\lamb^\half}{\Zaphalf}\pap\Dap^2\Ztbar\\
& \quad -3i\wbar^2\brac{\frac{\lamb^\half}{\Zaphalf}\pap\wbar}\Dapabs\brac{\frac{1}{\Zapabs^2}\pap\Jone} - \brac{\bvarap + \Dap\Zt}\frac{\lamb^\half}{\Zaphalf}\pap\Dt\Dap^2\Ztbar
\end{split}
\end{align*}
Hence we have the estimate
\begin{align*}
& \norm[2]{\Rfour} \\
& \lesssim \cbrac{\norm[\infty]{\Dt\bvarap} + \norm[\infty]{\Dt\Dap\Zt} + \brac{\norm[\infty]{\bvarap} + \norm[\infty]{\Dap\Zt}}^2  }\norm[2]{\frac{\lamb^\half}{\Zapabs^\half}\pap\Dap^2\Ztbar} \\
& \quad + \norm[2]{ \frac{\lamb^\half}{\Zapabs^\half}\pap\Rthree} \! +  \cbrac{\norm[\infty]{\Aone}\norm[\infty]{\Dapabs\frac{1}{\Zapabs}} + \norm[\infty]{\frac{1}{\Zapabs^2}\pap\Aone}}\norm[2]{\frac{\lamb^\half}{\Zapabs^\half}\pap\Dap^2\Ztbar} \\
& \quad + \norm[\infty]{\Aone}\norm[\infty]{\Dapabs\frac{1}{\Zap}} \norm[2]{\frac{\lamb^\half}{\Zapabs^\half}\pap\Dap^2\Ztbar} +  \norm[\infty]{\frac{\lamb^\half}{\Zapabs^\half}\pap\w}\norm[2]{\Dapabs\brac{\frac{1}{\Zapabs^2}\pap\Jone}} \\
& \quad  + \cbrac*[\big]{\norm[\infty]{\bvarap} + \norm[\infty]{\Dap\Zt} }\norm[2]{\frac{\lamb^\half}{\Zapabs^\half}\pap\Dt\Dap^2\Ztbar}
\end{align*}
\bigskip

\item $\dis (\Id - \Hil)\Dt^2\brac{\frac{\lamb^\half}{\Zaphalf}\pap\Dap^2\Ztbar} \in \Ltwolambhalf$
\medskip\\
Proof: For a function $f$ satisfying $\Pa f = 0$ we have from \propref{prop:tripleidentity}
\begin{align*}
(\Id - \Hil)\Dt^2f & = \sqbrac{\Dt,\Hil}\Dt f + \Dt\sqbrac{\Dt,\Hil}f \\
& = \sqbrac{\bvar,\Hil}\pap\Dt f + \Dt\sqbrac{\bvar,\Hil}\pap f \\
& = 2\sqbrac{\bvar,\Hil}\pap\Dt f + \sqbrac{\Dt\bvar,\Hil}\pap f - \sqbrac{\bvar, \bvar ; \pap f}
\end{align*}
As $\Pa \brac{\frac{\lamb^\half}{\Zaphalf}\pap\Dap^2\Ztbar} = 0$ we obtain from \propref{prop:commutator}
\begin{align*}
& \norm[2]{(\Id - \Hil)\Dt^2\brac{\frac{\lamb^\half}{\Zaphalf}\pap\Dap^2\Ztbar}} \\
& \lesssim \norm[\infty]{\bvarap}\norm[2]{\Dt\brac{\frac{\lamb^\half}{\Zaphalf}\pap\Dap^2\Ztbar}} + \norm[\Hhalf]{\pap\Dt\bvar}\norm[2]{\frac{\lamb^\half}{\Zaphalf}\pap\Dap^2\Ztbar}  \\
& \quad + \norm[\infty]{\bvarap}^2\norm[2]{\frac{\lamb^\half}{\Zaphalf}\pap\Dap^2\Ztbar}
\end{align*}
\bigskip

\item $\dis (\Id - \Hil)\cbrac{i\frac{\Aone}{\Zapabs^2}\pap\brac{\frac{\lamb^\half}{\Zaphalf}\pap\Dap^2\Ztbar}} \in \Ltwolambhalf$
\medskip\\
Proof: We see that
\[
(\Id - \Hil)\cbrac{i\frac{\Aone}{\Zapabs^2}\pap\brac{\frac{\lamb^\half}{\Zaphalf}\pap\Dap^2\Ztbar}} = i\sqbrac{\frac{\Aone}{\Zapabs^2},\Hil}\pap\brac{\frac{\lamb^\half}{\Zaphalf}\pap\Dap^2\Ztbar}
\]
and hence we have from \propref{prop:commutator}
\begin{align*}
& \norm[2]{(\Id - \Hil)\cbrac{i\frac{\Aone}{\Zapabs^2}\pap\brac{\frac{\lamb^\half}{\Zaphalf}\pap\Dap^2\Ztbar}}} \\
& \lesssim \norm[2]{\frac{\lamb^\half}{\Zaphalf}\pap\Dap^2\Ztbar}\cbrac{\norm*[\bigg][\infty]{\frac{1}{\Zapabs^2}\pap\Aone} + \norm[\infty]{\Aone}\norm[\infty]{\Dapabs\frac{1}{\Zapabs}} }
\end{align*}
\bigskip

\item $\dis \frac{\lamb^\half}{\Zapabs^\half}\pap\Dapabs\brac{\frac{1}{\Zapabs^2}\pap\Jone} \in \Ltwolambhalf$
\medskip\\
Proof: Recall the equation of $\dis \frac{\lamb^\half}{\Zaphalf}\pap\Dap^2\Ztbar$ from \eqref{eq:lambpapDap^2Ztbar} 
\begin{align*}
\brac{\Dt^2 +i\frac{\Aone}{\Zapabs^2}\pap}\frac{\lamb^\half}{\Zaphalf}\pap\Dap^2\Ztbar =  -i\wbar^3\frac{\lamb^\half}{\Zaphalf}\pap\Dapabs\brac{\frac{1}{\Zapabs^2}\pap\Jone} + \Rfour
\end{align*}
Applying $(\Id - \Hil)$ to the above equation we obtain the estimate
\begin{align*}
&\norm[2]{(\Id - \Hil)\cbrac{\frac{\lamb^\half \wbar^3}{\Zaphalf}\pap\Dapabs\brac{\frac{1}{\Zapabs^2}\pap\Jone}}} \\
& \lesssim  \norm[2]{\Rfour} + \norm[2]{(\Id - \Hil)\Dt^2\brac{\frac{\lamb^\half}{\Zaphalf}\pap\Dap^2\Ztbar}} + \norm[2]{(\Id - \Hil)\cbrac{i\frac{\Aone}{\Zapabs^2}\pap\brac{\frac{\lamb^\half}{\Zaphalf}\pap\Dap^2\Ztbar}}}
\end{align*}
Now as $\Jone$ is real valued we see that
\begin{align*}
\frac{\lamb^\half}{\Zapabs^\half}\pap\Dapabs\brac{\frac{1}{\Zapabs^2}\pap\Jone} = \Real\cbrac{\frac{\lamb^\half \w^\sevenbytwo\wbar^3}{\Zaphalf}\pap (\Id - \Hil)\Dapabs\brac{\frac{1}{\Zapabs^2}\pap\Jone}}
\end{align*}
and we see that
\begin{align*}
\frac{\lamb^\half \wbar^3}{\Zaphalf}\pap (\Id - \Hil)\Dapabs\brac{\frac{1}{\Zapabs^2}\pap\Jone} & = -\sqbrac{\frac{\lamb^\half \wbar^3}{\Zaphalf}, \Hil}\pap\Dapabs\brac{\frac{1}{\Zapabs^2}\pap\Jone} \\
& \quad + (\Id - \Hil)\cbrac{\frac{\lamb^\half \wbar^3}{\Zaphalf}\pap\Dapabs\brac{\frac{1}{\Zapabs^2}\pap\Jone}}
\end{align*}
Hence from \propref{prop:commutator} we obtain
\begin{align*}
& \norm[2]{\frac{\lamb^\half}{\Zapabs^\half}\pap\Dapabs\brac{\frac{1}{\Zapabs^2}\pap\Jone}} \\
& \lesssim \cbrac{\norm[\infty]{\lamb^\half\Zapabs^\half\pap\frac{1}{\Zap}} + \norm[\infty]{\frac{\lamb^\half}{\Zapabs^\half}\pap\w} }\norm[2]{\Dapabs\brac{\frac{1}{\Zapabs^2}\pap\Jone}} \\
& \quad + \norm[2]{(\Id - \Hil)\cbrac{\frac{\lamb^\half \wbar^3}{\Zaphalf}\pap\Dapabs\brac{\frac{1}{\Zapabs^2}\pap\Jone}}}
\end{align*}

\bigskip

\end{enumerate}

\subsection{Closing the energy estimate for  $\lamb\Eaux$}\label{sec:closeEaux} 

We now complete the proof of \thmref{thm:aprioriEaux}.  To simplify the calculations, we will continue to use the notation used in \secref{sec:quantEaux} and introduce another notation: If $a(t), b(t) $ are functions of time we write $a \approx b$ if there exists a universal polynomial $P$ with non-negative coefficients so that $\abs{a(t)-b(t)} \leq P(\Ehigh(t))(\lamb\Eaux(t))$. Observe that $\approx$ is an equivalence relation. With this notation, proving \thmref{thm:aprioriEaux} is equivalent to showing $ \frac{d}{dt} (\lamb\Eaux(t)) \approx 0$. We control the first four terms of the energy \eqref{def:Eaux} directly and for the last two terms we use the equation \eqref{eq:lambpapDap^2Ztbar}.

\begin{enumerate}[leftmargin =*]
\item Controlling the time derivative of $\dis \norm[\infty]{\lamb^\half\Zapabs^\half\pap\frac{1}{\Zap}}^2$ proceeds in exactly the same as that of controlling $\dis \norm[\infty]{\sigma^\half\Zapabs^\half\pap\frac{1}{\Zap}}^2$ for the energy $\Esigma$ which was done in Sec 5.2.1 in \cite{Ag19}. First observe that
\begin{align*}
\norm*[\bigg][\infty]{\Dt\brac*[\bigg]{\lamb^\half\Zapabs^\half \pap\frac{1}{\Zap}}} & \lesssim \brac{\norm[\infty]{\Dap\Zt} + \norm[\infty]{\bvarap}}\norm[\infty]{\lamb^\half\Zapabs^\half\pap\frac{1}{\Zap}}  \\
& \quad + \norm*[\bigg][\infty]{\frac{\lamb^\half}{\Zapabs^\half}\pap\bvarap} + \norm*[\bigg][\infty]{\frac{\lamb^\half}{\Zapabs^\half}\pap\Dap\Zt }
\end{align*}
Now using the computation from Sec 5.2.1 in \cite{Ag19} we obtain
\begin{flalign*}
\lpar \lpar \quad & \limsup_{s \to 0^+} \frac{\norm*[\Big][\infty]{ \lamb^{\half}  \Zapabs^\half \pap\frac{1}{\Zap}}^2(t+s) -  \norm*[\Big][\infty]{  \lamb^{\half} \Zapabs^\half \pap\frac{1}{\Zap}}^2(t)  }{s} && \\
& \lesssim \norm*[\Big][\infty]{  \lamb^{\half} \Zapabs^\half \pap\frac{1}{\Zap}}(t)\norm*[\bigg][\infty]{\Dt\brac*[\bigg]{\lamb^\half\Zapabs^\half \pap\frac{1}{\Zap}}}(t) \\
& \lesssim P(\Ecalhigh(t))(\lamb\Eaux(t))
\end{flalign*}

\item We first observe that
\begin{align*}
\lpar \Dt\brac{\frac{\lamb^\half}{\Zaphalf}\pap\Ztapbar} = -\brac{\frac{\bvarap}{2} + \frac{\Dap\Zt}{2} }\frac{\lamb^\half}{\Zaphalf}\pap\Ztapbar + \frac{\lamb^\half}{\Zaphalf}\pap\brac{ -\bvarap\Ztapbar + \Zttapbar}
\end{align*}
Hence
\begin{align*}
\norm[2]{\Dt\brac{\frac{\lamb^\half}{\Zaphalf}\pap\Ztapbar}} & \lesssim \cbrac{\norm[\infty]{\bvarap} + \norm[\infty]{\Dap\Zt} }\norm[2]{\frac{\lamb^\half}{\Zapabs^\half}\pap\Ztapbar}  \\
& \quad + \norm[\infty]{\frac{\lamb^\half}{\Zapabs^\half}\pap\bvarap}\norm[2]{\Ztapbar} + \norm[2]{\frac{\lamb^\half}{\Zapabs^\half}\pap\Zttapbar}
\end{align*}
Now by using \lemref{lem:timederiv} we obtain
\begin{align*}
& \frac{d}{dt} \int \abs*[\bigg]{\frac{\lamb^\half}{\Zaphalf}\pap\Ztbarap}^2 \diff\ap \\
& \lesssim \norm[\infty]{\bvarap}\norm[2]{\frac{\lamb^\half}{\Zapabs^\half}\pap\Ztapbar}^2 + \norm[2]{\frac{\lamb^\half}{\Zapabs^\half}\pap\Ztapbar}\norm[2]{\Dt\brac{\frac{\lamb^\half}{\Zaphalf}\pap\Ztapbar}} \\
& \lesssim P(\Ecalhigh)(\lamb\Eaux)
\end{align*}

\item We observe that from \eqref{form:DtoneoverZap}
\begin{align*}
& \Dt\brac{\frac{\lamb^\half}{\Zaphalf}\pap^2\frac{1}{\Zap}} \\
& = -\brac{\frac{\bvarap}{2} + \frac{\Dap\Zt}{2} }\frac{\lamb^\half}{\Zaphalf}\pap^2\frac{1}{\Zap} + \frac{\lamb^\half}{\Zaphalf}\pap\brac{\Dt\pap\frac{1}{\Zap}} \\
& =  -\brac{\frac{\bvarap}{2} + \frac{\Dap\Zt}{2} }\frac{\lamb^\half}{\Zaphalf}\pap^2\frac{1}{\Zap} + \frac{\lamb^\half}{\Zaphalf}\pap\brac{\Dap\bvarap -\Dap^2\Zt - \brac{\pap\frac{1}{\Zap}}\Dap\Zt} 
\end{align*}
Hence we have
\begin{align*}
\!\norm[2]{\Dt\brac{\frac{\lamb^\half}{\Zaphalf}\pap^2\frac{1}{\Zap}}} & \lesssim  \cbrac{\norm[\infty]{\bvarap} + \norm[\infty]{\Dap\Zt} }\norm[2]{\frac{\lamb^\half}{\Zapabs^\half}\pap^2\frac{1}{\Zap}} + \norm[2]{\frac{\lamb^\half}{\Zapabs^\half}\pap\Dap^2\Zt} \\
& \quad + \norm[2]{\pap\frac{1}{\Zap}}\norm[\infty]{\frac{\lamb^\half}{\Zapabs^\half}\pap\Dap\Zt} + \norm[2]{\frac{\lamb^\half}{\Zapabs^\half}\pap\Dap\bvarap}
\end{align*}
Now by using \lemref{lem:timederiv} we obtain
\begin{align*}
& \frac{d}{dt} \int \abs*[\bigg]{\frac{\lamb^\half}{\Zaphalf}\pap^2\frac{1}{\Zap}}^2 \diff\ap \\
& \lesssim \norm[\infty]{\bvarap}\norm[2]{\frac{\lamb^\half}{\Zapabs^\half}\pap^2\frac{1}{\Zap}}^2 + \norm[2]{\frac{\lamb^\half}{\Zapabs^\half}\pap^2\frac{1}{\Zap}}\norm[2]{\Dt\brac{\frac{\lamb^\half}{\Zaphalf}\pap^2\frac{1}{\Zap}} }\\
& \lesssim P(\Ecalhigh)(\lamb\Eaux)
\end{align*}

\item  Using \lemref{lem:timederiv} we see that
\begin{align*}
& \frac{d}{dt} \int \abs*[\bigg]{\frac{\lamb^\half}{\Zaphalf}\pap\Dap^2\Ztbar}^2 \diff\ap \\
& \lesssim \norm[\infty]{\bvarap}\norm[2]{\frac{\lamb^\half}{\Zapabs^\half}\pap\Dap^2\Ztbar}^2 + \norm[2]{\frac{\lamb^\half}{\Zapabs^\half}\pap\Dap^2\Ztbar}\norm[2]{\Dt\brac{\frac{\lamb^\half}{\Zaphalf}\pap\Dap^2\Ztbar} }\\
& \lesssim P(\Ecalhigh)(\lamb\Eaux)
\end{align*}

\item The quantity left to control is the time derivative of 
\begin{align*}
 \int \abs{\Dt f}^2\difff\ap +  \int \abs*[\bigg]{ \papabs^\half \brac*[\bigg]{\frac{\sqrt{\Aone}}{\Zapabs} f}}^2\difff\ap
\end{align*}
where $f = \frac{\lamb^\half}{\Zaphalf}\pap\Dap^2\Ztbar$ and we have $\Ph f = f$. Following the same computation as done in \secref{sec:closeEhigh} for the energy $\Ehigh$ we see that
\begin{align*}
\frac{d}{dt} (\lamb\Eaux(t)) \approx 2 \Real \int \brac{ \Dt^2 f +i\frac{\Aone}{\Zapabs^2}\pap f }(\Dt \bar{f}) \diff\ap
\end{align*}
As $\Dt\brac{\frac{\lamb^\half}{\Zaphalf}\pap\Dap^2\Ztbar} \in \Ltwolambhalf$ we only need to show that the other term in in $\Ltwolambhalf$. Now the equation for $\frac{\lamb^\half}{\Zaphalf}\pap\Dap^2\Ztbar$ from \eqref{eq:lambpapDap^2Ztbar} implies
\begin{align*}
\brac{\Dt^2 +i\frac{\Aone}{\Zapabs^2}\pap}\frac{\lamb^\half}{\Zaphalf}\pap\Dap^2\Ztbar =  -i\wbar^3\frac{\lamb^\half}{\Zaphalf}\pap\Dapabs\brac{\frac{1}{\Zapabs^2}\pap\Jone} + \Rfour
\end{align*}
As we have shown $\dis \frac{\lamb^\half}{\Zapabs^\half}\pap\Dapabs\brac{\frac{1}{\Zapabs^2}\pap\Jone} \in \Ltwolambhalf$ and $\Rfour \in \Ltwolambhalf$, this implies that  the right hand side is in $\Ltwolambhalf$ and so the proof of \thmref{thm:aprioriEaux} is complete.

\end{enumerate}

\subsection{Equivalence of $\lamb\Eaux$ and $\lamb\Ecalaux$}\label{sec:equivEauxEcalaux} 

We now give a simpler description of the energy $\Eaux$ defined by \eqref{def:Eaux}. Recall the definition of $\Ecalaux$ from \eqref{def:Ecalaux}
\begin{align*}
\Ecalaux & = \norm[\infty]{\Zap^\half\pap\frac{1}{\Zap}}^2 + \norm*[\Bigg][2]{\frac{1}{\Zap^\half}\pap^2\frac{1}{\Zap}}^2 + \norm*[\Bigg][2]{\frac{1}{\Zap^\fivebytwo}\pap^3\frac{1}{\Zap}}^2 \\
& \quad + \norm*[\Bigg][2]{\frac{1}{\Zap^\half}\pap\Ztapbar}^2 +  \norm*[\Bigg][2]{\frac{1}{\Zap^\fivebytwo}\pap^2\Ztapbar}^2 + \norm*[\Bigg][\Hhalf]{\frac{1}{\Zap^\sevenbytwo}\pap^2\Ztapbar}^2
\end{align*}
\begin{prop}\label{prop:equivEauxEcalaux}
There exists universal polynomials $P_1, P_2$ with non-negative coefficients so that if $(\Z,\Zt)(t)$ is a smooth solution to the water wave equation \eqref{eq:systemone} with $\sigma=0$ in the time interval $[0,T]$ satisfying $(\Zap-1,\frac{1}{\Zap} - 1, \Zt) \in \Linfty([0,T], H^{s}(\Rsp)\times H^{s}(\Rsp)\times H^{s+\half}(\Rsp))$ for all $s\geq 2$ and if $\lamb>0$ is some given constant,  then for all $t \in [0,T]$ we have
\begin{align*}
\lamb\Eaux \leq P_1(\Ecalhigh)(\lamb\Ecalaux) \quad \tx{ and }\quad  \lamb\Ecalaux \leq P_2(\Ecalhigh)(\lamb\Eaux)
\end{align*}
\end{prop}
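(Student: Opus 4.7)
The plan is to follow the same two-direction strategy used in \propref{prop:equivEsigmaEcalsigma} and \propref{prop:equivEhighEcalhigh}, exploiting the fact that $\abs{\Zap}=\Zapabs$ and that $\w = \Zap/\Zapabs$ together with its powers $\w^{\pm s}$ are controlled in $\Wcal$ by $\Ecalhigh$ (since $\w\in\Wcal$ by $\secref{sec:aprioriEcalsigma}$). Consequently, passing between weights like $\Zap^{\pm s}$ and $\Zapabs^{\pm s}$ only costs a polynomial factor of $\Ecalhigh$. The linearity of the two energies in $\lamb$ (recall that $\Eaux$ has no $\lamb$ factor in its definition) means it suffices to prove $\Eaux\le P_1(\Ecalhigh)\Ecalaux$ and $\Ecalaux\le P_2(\Ecalhigh)\Eaux$ and then multiply by $\lamb$.

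For the direction $\Ecalaux \le P_2(\Ecalhigh)\Eaux$, essentially every term of $\Ecalaux$ appears on the list of quantities controlled by $\lamb \Eaux$ in \secref{sec:quantEaux} (after converting $\Zapabs \leftrightarrow \Zap$ by multiplying by appropriate powers of $\w,\wbar$ using \lemref{lem:CWlamb}). Specifically, $\norm[\infty]{\Zap^\half \pap \frac{1}{\Zap}} = \norm[\infty]{\Zapabs^\half\pap\frac{1}{\Zap}}$, the terms $\frac{1}{\Zap^\half}\pap^2\frac{1}{\Zap}$ and $\frac{1}{\Zap^\half}\pap\Ztapbar$ match directly up to factors of $\wbar^\half$, and $\frac{1}{\Zap^\fivebytwo}\pap^3\frac{1}{\Zap}$, $\frac{1}{\Zap^\fivebytwo}\pap^2\Ztapbar$, $\frac{1}{\Zap^\sevenbytwo}\pap^2\Ztapbar$ are exactly the controlled quantities $\frac{\lamb^\half}{\Zapabs^\fivebytwo}\pap^3\frac{1}{\Zap}$, $\frac{\lamb^\half}{\Zapabs^\fivebytwo}\pap^2\Ztapbar\in\Ltwolambhalf$ and $\frac{\lamb^\half}{\Zapabs^\sevenbytwo}\pap^2\Ztapbar \in \Hhalflambhalf$ (the last is part of $\frac{\lamb^\half}{\Zapabs^\sevenbytwo}\pap^2\Ztapbar\in\Ccallambhalf$), after dividing by $\lamb^\half$.

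For the harder direction $\Eaux \le P_1(\Ecalhigh)\Ecalaux$, the first four terms of $\Eaux$ convert directly into their $\Ecalaux$ analogues by switching $\Zapabs^s \leftrightarrow \Zap^s$ through $\w,\wbar \in \Wcal$; in particular $\norm[2]{\frac{1}{\Zaphalf}\pap\Dap^2\Ztbar}$ is handled by expanding $\pap\Dap^2\Ztbar$ via the Leibniz rule to get a main term $\frac{1}{\Zap^\fivebytwo}\pap^2\Ztapbar\in \Ecalaux$ plus lower-order terms controlled by $\Ecalhigh$. The main obstacle will be the last two terms: $\norm[2]{\Dt(\frac{1}{\Zaphalf}\pap\Dap^2\Ztbar)}$ and $\norm[\Hhalf]{\frac{\sqrt{\Aone}}{\Zapabs}(\frac{1}{\Zaphalf}\pap\Dap^2\Ztbar)}$. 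For the material-derivative term, I would use $\Dt\frac{1}{\Zaphalf} = -\frac{1}{2\Zaphalf}(\Dap\Zt - \bvarap)$ together with the commutator $\sqbrac{\Dt,\pap}=\bvarap\pap$ and $\sqbrac{\Dt,\Dap^2}$ from the calculation in \secref{sec:quasiEhigh}, to trade the time derivative for $\pap\Dap^2\Zttbar$ and spatial derivatives of $\Dap\Zt$, $\bvarap$; then use \eqref{form:Zttbar} (with $\sigma=0$) to express $\Dap^2\Zttbar$ in terms of $\pap^3\frac{1}{\Zap}$ and $\pap^2\Aone$, the latter being handled by $\Ecalhigh$-bounds on $\Aone$. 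For the $\Hhalf$ term I would apply \lemref{lem:CW}: $\sqrt{\Aone}\in\Wcal$ (controlled by $\Ecalhigh$), so it suffices to place $\frac{1}{\Zap^\half\Zapabs}\pap\Dap^2\Ztbar$ in $\Ccal$, which reduces to $\Hhalf$-control of $\frac{1}{\Zap^\sevenbytwo}\pap^2\Ztapbar$ (a term of $\Ecalaux$) plus $\Ltwo$-control of $\frac{1}{\Zap^\fivebytwo}\pap^2\Ztapbar$.

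The main technical obstacle is therefore the $\Dt$ term, because converting the material derivative into spatial derivatives produces a term of the form $\pap\Dap^2\Zttbar$ which sits at the very top of the $\Ecalaux$ ladder; one must check that after writing $\Zttbar = i - i\Aone/\Zap$ and distributing three $\pap$'s, every resulting piece is controllable by $\Ecalaux + \Ecalhigh$, and in particular that the top-order piece $\frac{1}{\Zap^\fivebytwo}\pap^3\frac{1}{\Zap}$ (present in $\Ecalaux$) picks up no unwanted $\lamb$ dependence. Once these identifications are made, applying \lemref{lem:CWlamb} systematically finishes the bound.
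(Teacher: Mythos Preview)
Your proposal is correct and follows essentially the same approach as the paper: both directions hinge on converting between $\Zap^{s}$ and $\Zapabs^{s}$ weights via powers of $\w\in\Wcal$ (controlled by $\Ecalhigh$), and for the harder direction $\Eaux\le P_1(\Ecalhigh)\Ecalaux$ both you and the paper trade the material derivative for spatial derivatives using $\Zttbar - i = -i\Aone/\Zap$ and the commutator identities of \secref{sec:quasiEaux}, then control the $\Hhalf$ term by passing through $\Ccal$ via $\sqrt{\Aone}\in\Wcal$. One small imprecision: after expanding $\frac{1}{\Zaphalf}\pap\Dap^2\Zttbar$ you will actually need control of $\frac{1}{\Zapabs^{7/2}}\pap^3\Aone$ (not just $\pap^2\Aone$), which the paper obtains by retracing the proof of that estimate in \secref{sec:quantEaux}; this fits your plan but is worth flagging explicitly.
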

\begin{proof}
Let $\lamb\Eaux < \infty$. We have already pretty much controlled all the terms of $\lamb\Ecalaux$ and the only term not directly controlled is $\dis \frac{\lamb^\half}{\Zap^\sevenbytwo}\pap^2\Ztapbar $.  This term can be easily controlled by using \lemref{lem:CWlamb}
\begin{align*}
\norm*[\Bigg][\Ccallambhalf]{\frac{\lamb^\half}{\Zap^\sevenbytwo}\pap^2\Ztapbar } \lesssim \norm*[\big][\Wcal]{\wbar^\sevenbytwo}\norm[\Ccallambhalf]{\frac{\lamb^\half}{\Zapabs^\sevenbytwo}\pap^2\Ztapbar }
\end{align*}

Now assume that $\lamb\Ecalaux < \infty$. We see that the first three terms of $\lamb\Eaux$ are controlled and following the proofs of $\dis \lamb^\half\Zapabs^\half\pap\frac{1}{\Zapabs} \in \Linftylambhalf$ and $\dis \frac{\lamb^\half}{\Zapabs^\threebytwo}\pap^2\w \in \Ltwolambhalf$ in  \secref{sec:quantEaux}, we immediately get
\begin{align*}
& \norm[\infty]{\lamb^\half\Zapabs^\half\pap\frac{1}{\Zapabs}}  + \norm[\infty]{\frac{\lamb^\half}{\Zapabs^\half}\pap\w} + \norm[2]{\frac{\lamb^\half}{\Zapabs^\half}\pap^2\frac{1}{\Zapabs}} + \norm[2]{\frac{\lamb^\half}{\Zapabs^\threebytwo}\pap^2\w} \\
& \lesssim P_1(\Ecalhigh)(\lamb\Ecalaux)^\half
\end{align*}
Now following the proof of $\dis \frac{\lamb^\half}{\Zapabs^\half}\pap\Dap\Zt \in \Wcallambhalf\cap\Ccallambhalf$ from  \secref{sec:quantEaux}, we see that
\begin{align*}
& \norm[\Wcallambhalf\cap\Ccallambhalf]{\frac{\lamb^\half}{\Zapabs^\threebytwo}\pap\Ztapbar } + \norm[\Wcallambhalf\cap\Ccallambhalf]{ \frac{\lamb^\half}{\Zapabs^\half}\pap\Dap\Zt} + \norm[\Wcallambhalf\cap\Ccallambhalf]{\frac{\lamb^\half}{\Zapabs^\half}\pap\Dap\Ztbar} \\
& \lesssim P_1(\Ecalhigh)(\lamb\Ecalaux)^\half
\end{align*}
Similarly following the proof of $\dis \frac{\lamb^\half}{\Zapabs^\threebytwo}\pap^2\frac{1}{\Zap} \in \Wcallambhalf\cap\Ccallambhalf$ from  \secref{sec:quantEaux}, we see that
\begin{align*}
\norm[\Wcallambhalf\cap\Ccallambhalf]{ \frac{\lamb^\half}{\Zapabs^\threebytwo}\pap^2\frac{1}{\Zap} } \lesssim P_1(\Ecalhigh)(\lamb\Ecalaux)^\half
\end{align*}
Now following the proofs of $\dis \frac{\lamb^\half}{\Zapabs^\sevenbytwo}\pap^3\Aone \in \Ltwolambhalf $ and $\dis \frac{\lamb^\half}{\Zapabs^\fivebytwo}\pap^2\Ztapbar \in \Ltwolambhalf$ in  \secref{sec:quantEaux}, we see that 
\begin{align*}
\norm[2]{ \frac{\lamb^\half}{\Zapabs^\sevenbytwo}\pap^3\Aone} + \norm[2]{\frac{\lamb^\half}{\Zaphalf}\pap\Dap^2\Zt} + \norm[2]{\frac{\lamb^\half}{\Zaphalf}\pap\Dap^2\Ztbar} \lesssim P_1(\Ecalhigh)(\lamb\Ecalaux)^\half
\end{align*}
Now following the proof of $\dis \frac{\lamb^\half}{\Zapabs^\fivebytwo}\pap^3\frac{1}{\Zap} \in \Ltwolambhalf$ from  \secref{sec:quantEaux}, we obtain 
\begin{align*}
\norm[2]{\frac{\lamb^\half}{\Zapabs^\fivebytwo}\pap^3\Zttbar} \lesssim P_1(\Ecalhigh)(\lamb\Ecalaux)^\half
\end{align*}
Now we follow the proof of $\dis \frac{\lamb^\half}{\Zapabs^\fivebytwo}\pap^3\Zttbar \in \Ltwolambhalf$  from  \secref{sec:quantEaux} to obtain 
\begin{align*}
\norm[2]{\Dt\brac{\frac{\lamb^\half}{\Zaphalf}\pap\Dap^2\Ztbar}}  \lesssim P_1(\Ecalhigh)(\lamb\Ecalaux)^\half
\end{align*}
Now we use \lemref{lem:CWlamb} to obtain the estimate
\begin{align*}
\norm[\Ccallambhalf]{\frac{\lamb^\half}{\Zapabs^\sevenbytwo}\pap^2\Ztapbar} \lesssim \norm*[\big][\Wcal]{\w^\sevenbytwo}\norm[\Ccallambhalf]{\frac{\lamb^\half}{\Zap^\sevenbytwo}\pap^2\Ztapbar}  \lesssim P_1(\Ecalhigh)(\lamb\Ecalaux)^\half
\end{align*}
Hence by following the proof of $\dis \frac{\lamb^\half}{\Zapabs^\sevenbytwo}\pap^2\Ztapbar \in \Ccallambhalf$ from  \secref{sec:quantEaux},  we see that 
\begin{align*}
\norm[\Ccallambhalf]{ \frac{\sqrt{\Aone}}{\Zapabs}\brac{\frac{\lamb^\half}{\Zaphalf}\pap\Dap^2\Ztbar}}  \lesssim P_1(\Ecalhigh)(\lamb\Ecalaux)^\half
\end{align*}
Hence proved. 
\end{proof}

\Bigskip

\section{The energy  $\EcalDelta$ }\label{sec:aprioriEcalDelta}
 \bigskip

In this section we consider two solutions of the water wave equation \eqref{eq:systemone}, one with and one without surface tension and prove an a priori estimate for the difference of the two solutions. Let $(\Z,\Zt)_a$ and $(\Z,\Zt)_b$ be two solutions of the water wave equation \eqref{eq:systemone} with surface tensions $\sigma_a = \sigma$ and $\sigma_b = 0$ respectively, i.e.  $(\Z,\Zt)_a$ is the solution with surface tension $\sigma$ and $(\Z,\Zt)_b$ is the solution with zero surface tension. We denote the two solutions as $A$ and $B$ respectively for simplicity.  In this section we consider an energy $\EDelta$ defined below, which is a weighted Sobolev norm for the difference of the two solutions $A$ and $B$, and prove our main energy estimate for this energy in \thmref{thm:aprioriEDelta}. To prove this theorem, we first control the quantities controlled by $\EDelta$ in \secref{sec:quantEDelta} and then close the energy estimate in \secref{sec:closeEDelta}. Finally in \secref{sec:EDeltaEcalDelta} we show that the energy $\EDelta$ is equivalent to the energy $\EcalDelta$ defined in \eqref{def:EcalDelta}. 

  \thmref{thm:aprioriEDelta} allows different initial data for the solutions $A$ and $B$ and hence the energies $\EDelta$ and $\EcalDelta$ takes this into account. The energy $\EcalDelta$ used in \secref{sec:results} is a simplified version of the actual energy defined in \eqref{def:EcalDelta}. In this section whenever we talk about the energy $\EcalDelta$, we will mean the one given by \eqref{def:EcalDelta}.

Let us recall some of the notation used in \secref{sec:results}. We will denote the terms and operators for each solution by their subscript $a$ or $b$.  Let $h_a, h_b$ be the homeomorphisms from \eqref{eq:h} for the respective solutions and let the material derivatives by given by $(\Dt)_a = U_{h_a}^{-1}\pt U_{h_a}$ and $(\Dt)_b = U_{h_b}^{-1}\pt U_{h_b}$. We define
\begin{align}\label{def:Util}
\htil = h_b \compose h_a^{-1} \quad \tx{ and } \quad \Util = U_{\htil} = U_{h_a}^{-1}U_{h_b}
\end{align}
We also define 
\begin{align}\label{def:Delta}
\Delta (f) = f_a - \Util(f_b)
\end{align}
See \secref{sec:results} for more details about the notation $\Delta (f)$. Define the operators
\begin{align}\label{eq:Hcal}
\begin{split}
(\Hcal f)(\ap) &= \frac{1}{i\pi} p.v. \int \frac{\htilbp(\bp)}{\htil(\ap) - \htil(\bp)}f(\bp) \diff \bp \\
 (\Hcaltil f)(\ap) &=\frac{1}{i\pi} p.v. \int \frac{1}{\htil(\ap) - \htil(\bp)}f(\bp) \diff \bp
 \end{split}
\end{align}
See \propref{prop:HcalHtilest} for some properties of these operators. In this section in addition to these above operators, we will also heavily use the notations $\sqbrac{f_1, f_2;  f_3}$ and $\sqbrac{f_1, f_2;  f_3}_{g}$ defined in \eqref{eq:foneftwofthree} and \eqref{eq:foneftwofthreeg} respectively. 

We are now ready to define the energy for the difference of the solutions. Define
\begingroup
\allowdisplaybreaks
\begin{align*}
& \EDeltazero =  
\begin{aligned}[t]
&  \norm[\infty]{\brac{\sigma^{\half} \Zapabs^\half \pap\frac{1}{\Zap}}\la}^2  +   \norm[2]{\brac{\sigma^\onebysix\Zapabs^\half\pap\frac{1}{\Zap}}\la}^6  + \norm[2]{\brac{\frac{\sigma^\half}{\Zapabs^\half}\pap^2\frac{1}{\Zap}}\la}^2 \\
& + \norm[\infty]{\Delta(\w)}^2  +  \norm[2]{\Delta\brac{ \pap\frac{1}{\Zap}}}^2 + \norm*[\big][\Linfty\cap\Hhalf]{\htilap - 1}^2 + \norm[2]{\Dapabs_a(\htilap -1)}^2 \\
&  + \norm[\infty]{\Zapabs_a\Util\brac{\frac{1}{\Zapabs_b}} - 1}^2
\end{aligned}\\
& \EDeltaone = \norm[\Hhalf]{\Delta\cbrac{(\Zttbar-i)\Zap}}^2 + \norm[2]{(\sqrt{\Aone})_a\Delta(\Ztbarap)}^2 + \norm[2]{\brac{\frac{\sigma^\half}{\Zapabs^\half}\pap\Ztapbar}\la}^2\\
& \EDeltatwo = \norm[2]{\Delta(\Dt\Ztapbar)}^2 + \norm[\Hhalf]{\brac{\frac{\sqrt{\Aone}}{\Zapabs}}\la \Delta( \Ztapbar)}^2 + \norm[\Hhalf]{\brac{\frac{\sigma^\half}{\Zapabs^\threebytwo}\pap\Ztapbar}\la}^2 \\
& \EDeltathree = \norm[2]{\Delta(\Dt\Th)}^2 + \norm[\Hhalf]{\brac{\frac{\sqrt{\Aone}}{\Zapabs}}\la \Delta( \Th)}^2 + \norm[\Hhalf]{\brac{\frac{\sigma^\half}{\Zapabs^\threebytwo}\pap\Th}\la}^2 \\
& \EDeltafour = \norm[\Hhalf]{\Delta(\Dt\Dapbar\Ztbar)}^2 + \norm[2]{(\sqrt{\Aone})_a\Dapabs_a\Delta(\Dapbar\Ztbar)}^2 + \norm[2]{\brac{\frac{\sigma^\half}{\Zapabs^\half}\pap\Dapabs\Dapbar\Ztbar}\la}^2\\
& \EDelta = \sigma(\Eaux)_b + \EDeltazero + \EDeltaone + \EDeltatwo + \EDeltathree + \EDeltafour
\end{align*}
\endgroup

Note that here $(\Eaux)_b$ is the energy $\Eaux$ defined in \eqref{def:Eaux} for the solution $B$. Hence the term $\sigma(\Eaux)_b$ couples the zero surface tension solution $B$ with the value of surface tension $\sigma$ of the solution $A$. This coupling term is crucial to closing the energy estimate and this is discussed in more detail in \secref{sec:discussion}. The other terms in the energy $\EDelta$ come from taking a difference in the energy $\Esigma$ defined in \secref{sec:aprioriEcalsigma}. We can now state our main a priori energy estimate about the difference of the two solutions. 

\begin{thm}\label{thm:aprioriEDelta}
Let $T>0$ and let $(\Z,\Zt)_a(t)$, $(\Z,\Zt)_b(t)$ be two smooth solutions in $[0,T]$ to  \eqref{eq:systemone} with surface tension $\sigma$ and zero surface tension respectively, such that for all $s\geq 2$ we have $(\Zap-1,\frac{1}{\Zap} - 1, \Zt)_i \in \Linfty([0,T], H^{s}(\Rsp)\times H^{s}(\Rsp)\times H^{s+\half}(\Rsp))$ for both $i=a,b$.  Let $L_1>0$ be such that 
\begin{align*}
\sup_{t \in [0,T]}(\Ecalhigh)_b(t), \sup_{t\in [0,T]}(\Ecalsigma)_a (t), \norm[\infty]{\Zapabs_a\Util\brac{\frac{1}{\Zapabs_b}}}(0),  \norm[\infty]{\frac{1}{\Zapabs_a}\Util\brac{\Zapabs_b}}(0) \leq L_1
\end{align*}
Then there exists a constant $C(L_1)$ depending only on $L_1$ so that for all $t\in [0,T)$ we have
\begin{align*}
\frac{d}{dt}\EDelta(t) \leq C(L_1) \EDelta(t) 
\end{align*}
\end{thm}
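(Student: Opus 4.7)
The plan is to estimate each piece of $\EDelta$ separately and to use the coupling term $\sigma(\Eaux)_b$ to absorb the dangerous high-order contribution that arises from the mismatch between $\sigma_a=\sigma$ and $\sigma_b=0$. By hypothesis $(\Ecalsigma)_a(0)$ and $(\Ecalhigh)_b(0)$ are bounded by $L_1$, so \thmref{thm:aprioriEsigma} for solution $A$ and \thmref{thm:aprioriEhigh} for solution $B$ (combined with Propositions \ref{prop:equivEsigmaEcalsigma} and \ref{prop:equivEhighEcalhigh}) yield uniform pointwise bounds $(\Esigma)_a(t),(\Ehigh)_b(t)\leq C(L_1)$ on $[0,T]$. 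Applying \thmref{thm:aprioriEaux} to solution $B$ with $\lambda=\sigma$ then gives $\frac{d}{dt}\bigl(\sigma(\Eaux)_b\bigr)\leq C(L_1)\,\sigma(\Eaux)_b \leq C(L_1)\EDelta$, which handles the coupling term in $\EDelta$ at once.

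For the genuine difference quantities in $\EDeltaone,\ldots,\EDeltafour$ I would follow the template of the proof of \thmref{thm:aprioriEsigma}. Working in Lagrangian coordinates (where the material derivative of both solutions is $\pt$, avoiding a loss of derivatives), subtract the quasilinear equations \eqref{eq:ZtZap}, \eqref{eq:Ztbarap}, \eqref{eq:Th}, \eqref{eq:DapbarZtbar} for the two solutions and pull the result back to the conformal coordinates of $A$ via $\Util$. This produces an equation of the schematic form
\[
\left(\Dt^2 + i\frac{\Aone}{\Zapabs^2}\pap - i\sigma\Dapabs^3\right)_{\!\! a}\Delta(f) = \Delta(\text{main source}) + (\text{errors}),
\]
where the errors are commutators of $\Util$ with the operators on the left, together with differences of $\Rone,\Rtwo,\Jone,\Jtwo$. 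These remainders can be estimated in $\Ltwo$ or $\Hhalf$ using the spaces $\Wcal,\Ccal$ and the control of $\htilap-1$ obtained from $\EDeltazero$. The $\EDeltazero$ terms themselves split into surface-tension pieces inherited directly from $(\Esigma)_a$ and genuine difference quantities such as $\Delta(\w)$, $\htilap-1$, and $\Zapabs_a\Util(1/\Zapabs_b)-1$, each satisfying a transport-type equation whose driving terms are already controlled by $\EDelta$; a Gronwall argument closes them given the two assumptions at $t=0$.

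Applying \lemref{lem:timederiv} together with \eqref{est:DtsqrtAone} and \eqref{est:DapabssqrtAonetwo} converts each of $\EDeltaone,\ldots,\EDeltafour$ into an energy pairing of $\Dt\overline{\Delta(f)}$ with the right-hand side of the displayed equation. Most of the resulting integrals close against $C(L_1)\EDelta$ by Cauchy--Schwarz and the two-solution versions of the estimates catalogued in \secref{sec:aprioriEcalsigma}. The hard part, exactly as flagged in \secref{sec:discussion}, is the top-order pairing in $\EDeltafour$: after differencing \eqref{eq:DapbarZtbar}, the unmatched term $-i\sigma\Dapabs^3\Dapbar\Ztbar$ (present only for $A$) produces an integral of the schematic form
\[
2\Real\int \Util\!\left(\frac{\sigma^{1/2}}{\Zapabs^{1/2}}\pap\Dapabs(\Dt\Dap\Zt)\right)_{\!\! b} \overline{\left(\frac{\sigma^{1/2}}{\Zapabs^{1/2}}\pap\Dapabs\Dapbar\Ztbar\right)}_{\!\! a}\diff\ap.
\]
The $A$ factor lies in $\Ltwo$ by $\EDeltafour$, but for angled-crested $B$ the companion factor is not in $\Ltwo$ under the hypotheses on $(\Ecalhigh)_b$ alone. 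This is precisely the role of the coupling energy $\sigma(\Eaux)_b$: by \eqref{def:Eaux} it dominates $\sigma^{1/2}$ times the $\Ltwo$ norm of $\frac{1}{\Zap^{1/2}}\pap\Dap^2\Ztbar$ and of its $\Dt$-derivative for solution $B$, from which the displayed integral is bounded by $(\sigma(\Eaux)_b)^{1/2}(\EDeltafour)^{1/2}\leq \EDelta$ after Cauchy--Schwarz. Summing the estimates for $\sigma(\Eaux)_b$, $\EDeltazero,\EDeltaone,\EDeltatwo,\EDeltathree,\EDeltafour$ then yields $\frac{d}{dt}\EDelta\leq C(L_1)\EDelta$ as claimed.
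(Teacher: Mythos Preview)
Your proposal follows essentially the same strategy as the paper: handle $\sigma(\Eaux)_b$ by \thmref{thm:aprioriEaux} with $\lambda=\sigma$, control the $\EDeltazero$ transport terms directly, and for $\EDeltaone$--$\EDeltafour$ differentiate, subtract the quasilinear equations \eqref{eq:ZtZap}, \eqref{eq:Ztbarap}, \eqref{eq:Th}, \eqref{eq:DapbarZtbar}, and use $\sigma(\Eaux)_b$ to close the top-order pairing in $\EDeltafour$. Two small corrections: first, the hypothesis already gives $\sup_{[0,T]}(\Ecalsigma)_a$ and $\sup_{[0,T]}(\Ecalhigh)_b\le L_1$ directly, so you do not need (and should not invoke) \thmref{thm:aprioriEsigma} or \thmref{thm:aprioriEhigh} to propagate bounds from $t=0$; second, the coupling energy $\sigma(\Eaux)_b$ is used not only for the hard $\EDeltafour$ integral you isolate but also to control the $\Util(\cdot)_b$ pieces that appear when splitting the $\sigma$-weighted terms in $\EDeltaone$, $\EDeltatwo$, $\EDeltathree$ (e.g.\ one needs $(\sigma^{1/2}/\Zapabs^{3/2}\,\pap\Dt\Th)_b$ and $(\sigma^{1/2}/\Zapabs^{3/2}\,\pap^2\Aone)_b$ in $\LtwoDeltahalf$, which come from $\sigma(\Eaux)_b$).
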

Note that the above theorem allows the initial data of the two solutions $A$ and $B$ to be different. The theorem simplifies a little bit if we work with the same initial data for the two solutions $A$ and $B$ which was how \thmref{thm:convergence} was stated. The energy $\EDelta$ is a very strong norm which compares the two solutions $A$ and $B$. In particular observe that the energy $\EDeltazero$ controls $\norm[\infty]{\Delta(\w)}$ and hence the above theorem implies that if the initial data of $A$ and $B$ are the same, then the angle of the interface $\thvar_a \to \thvar_b$ in $\Linfty$ as $\sigma \to 0$ (as stated in \secref{sec:results}).

The rest of this section is devoted to the proof of \thmref{thm:aprioriEDelta}. We will first need some basic properties which we use throughout the section. The following two lemmas capture some basic identities regarding the operators $\Util, \Hcal, \Hcaltil$ and $\Delta$.  
\begin{lem}\label{lem:basicUtil}
Let $\Util$ be defined by \eqref{def:Util} and let $\Hcal, \Hcaltil$ be defined by \eqref{eq:Hcal}. Then
\begin{enumerate}
\item $(\Dt)_a \Util = \Util (\Dt)_b$
\item $\pap\Util = \htilap\Util\pap \quad$ $\dis \pap\Util^{-1} = \frac{1}{\htilap\compose\htil^{-1}}\Util^{-1}\pap$ and $\dis \htilap = U_{h_a}^{-1}\brac{\frac{(\hal)_b}{(\hal)_a}}$
\item $\Hcal \Util = \Util \Hil $
\item $\Util\sqbrac{f, \Hil}\pap g = \sqbrac*{(\Util f), \Htil}\pap(\Util g) $
\item $\Util [f_1, f_2;\pap f_3 ] = [(\Util f_1), (\Util f_2) ; \pap(\Util f_3)]_{\htil}$
\end{enumerate}
\end{lem}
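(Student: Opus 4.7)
The plan is to verify each of the five identities directly from the definitions; no analytical input beyond change-of-variables, the chain rule and the inverse function theorem is required. I would organize the proof as five short computations, in the order the lemma lists them, since the later parts build on the identities established earlier.

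For part (1), I would just compose definitions. Since $\Util = U_{h_a}^{-1}U_{h_b}$ and $(\Dt)_i = U_{h_i}^{-1}\pt U_{h_i}$ for $i=a,b$, we have
\begin{align*}
(\Dt)_a \Util \;=\; U_{h_a}^{-1}\pt U_{h_a}U_{h_a}^{-1}U_{h_b} \;=\; U_{h_a}^{-1}\pt U_{h_b} \;=\; U_{h_a}^{-1}U_{h_b}U_{h_b}^{-1}\pt U_{h_b} \;=\; \Util (\Dt)_b,
\end{align*}
using only the evident $U_{h_a}U_{h_a}^{-1}=\Id$. For part (2), the chain rule gives $\pap(f\compose\htil)(\ap)=\htilap(\ap)\,f'(\htil(\ap))$, i.e.\ $\pap\Util=\htilap\,\Util\pap$. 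The inverse function theorem applied to $\htil^{-1}$ yields $(\htil^{-1})'(\ap)=1/(\htilap\compose\htil^{-1})(\ap)$, which combined with the chain rule gives the second identity. Finally, the formula $\htilap = U_{h_a}^{-1}((\hal)_b/(\hal)_a)$ follows by differentiating $\htil=h_b\compose h_a^{-1}$ via $\htilap(\ap)=h_b'(h_a^{-1}(\ap))/h_a'(h_a^{-1}(\ap))$.

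For part (3), the main mechanism is a single change of variable $\bp = \htil(\gamma)$ in the principal-value integral defining $\Hcal$:
\begin{align*}
(\Hcal\Util f)(\ap) \;=\; \frac{1}{i\pi}\,\text{p.v.}\!\int \frac{\htilbp(\bp)}{\htil(\ap)-\htil(\bp)}\,f(\htil(\bp))\,\diff\bp \;=\; \frac{1}{i\pi}\,\text{p.v.}\!\int\frac{f(\gamma)}{\htil(\ap)-\gamma}\,\diff\gamma \;=\; (\Util\Hil f)(\ap),
\end{align*}
where the Jacobian $\htilbp\,\diff\bp = \diff\gamma$ cancels the numerator exactly. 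Parts (4) and (5) are essentially the same change of variables applied to the commutator and triple brackets. For (4) I would write the commutator in its integral form $[f,\Hil]\pap g(\ap) = \frac{1}{i\pi}\int\frac{f(\ap)-f(\bp)}{\ap-\bp}\,\pap g(\bp)\,\diff\bp$, apply $\Util$, substitute $\bp=\htil(\gamma)$, and use the key identity $\pap g(\htil(\gamma))\,\htilbp(\gamma)\,\diff\gamma = \pap(\Util g)(\gamma)\,\diff\gamma$ so that the Jacobian is absorbed cleanly into the derivative; the resulting kernel is $\frac{(\Util f)(\ap)-(\Util f)(\gamma)}{\htil(\ap)-\htil(\gamma)}$, which matches $[(\Util f),\Htil]\pap(\Util g)$ on recognizing that $\Htil\brac{\pap h}(\ap) = \frac{1}{i\pi}\int \frac{\pap h(\gamma)}{\htil(\ap)-\htil(\gamma)}\diff\gamma$. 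Part (5) is identical in spirit: apply the same substitution to each factor in the kernel of $[f_1,f_2;\pap f_3]$, with the Jacobian again being absorbed by the $\pap f_3$ factor.

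The only mild bookkeeping obstacle is ensuring that the Jacobian $\htilbp$ ends up in exactly the right place — being absorbed into $\pap(\Util g)$ in (4) and $\pap(\Util f_3)$ in (5), while cancelling the numerator of $\Hcal$ in (3). Since $\htilap>0$ almost everywhere for the homeomorphisms we consider, the change of variables in the principal-value sense is justified, and there are no convergence issues beyond what is already implicit in the definitions of $\Hil,\Hcal,\Htil$ and the triple bracket. No part of this lemma uses any energy control, so it is a purely algebraic verification.
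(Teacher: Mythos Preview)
Your proposal is correct and follows essentially the same approach as the paper: direct composition of definitions for (1), chain rule for (2), and change of variables for (3) and (5). The only cosmetic difference is in (4), where the paper expands $[f,\Hil]\pap g = f\Hil(\pap g) - \Hil(f\pap g)$ and applies the already-proved identities $\Util\Hil = \Hcal\Util$ and $\htilap\Util\pap = \pap\Util$ operator-algebraically, while you perform the change of variables directly in the integral kernel; these are the same computation in different notation.
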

\begin{proof}
The proofs are quite straightforward.
\begin{enumerate}[leftmargin =*]
\item We see that $(\Dt)_a \Util = (U_{h_a}^{-1}\pt U_{h_a}) (U_{h_a}^{-1}U_{h_b}) = U_{h_a}^{-1}\pt U_{h_b}$ and similarly we have $\Util (\Dt)_b = (U_{h_a}^{-1}U_{h_b}) (U_{h_b}^{-1}\pt U_{h_b}) = U_{h_a}^{-1}\pt U_{h_b}$.
\item As $\Util(f)(\ap) = f(\htil(\ap))$ we see that $\pap\Util(f)(\ap) = \htilap(\ap)f_\ap(\htil(\ap))$ and hence $\pap\Util = \htilap\Util\pap$. Similarly we have $\dis \pap\Util^{-1} = \frac{1}{\htilap\compose\htil^{-1}}\Util^{-1}\pap$. Now as $\htil = h_b \compose h_a^{-1}$ we see that 
\begin{align*}
\htilap = \frac{\nobrac{(\hal)_b\compose h_a^{-1}}}{\nobrac{(\hal)_a\compose h_a^{-1}}}
\end{align*}
\item We see that
\begin{align*}
(\Hcal\Util f)(\ap) = \frac{1}{i\pi} \int \frac{\htilbp(\bp)}{\htil(\ap) - \htil(\bp)}f(\htil(\bp)) \diff \bp = \frac{1}{i\pi}\int \frac{1}{\htil(\ap) - \bp}f(\bp) \diff \bp = (\Util \Hil f)(\ap)
\end{align*}
\item We observe that
\begin{align*}
\Util\sqbrac{f,\Hil}\pap g & = \Util\cbrac{f \Hil(\pap g) - \Hil(f\pap g)} \\
& = \Util(f)\Hcal(\Util(\pap g)) - \Hcal(\Util(f\pap g)) \\
& = \Util(f)\Hcaltil(\pap\Util(g)) - \Hcaltil(\Util(f)\pap\Util(g)) \\
& =  \sqbrac*{(\Util f), \Htil}\pap(\Util g)
\end{align*}
\item We see that
\begin{align*}
& \Util [f_1, f_2;\pap f_3 ](\ap) \\
& = \frac{1}{i\pi} \int \brac{\frac{f_1(\htil(\ap)) - f_1(\bp)}{\htil(\ap) - \bp}}\brac{\frac{f_2(\htil(\ap)) - f_2(\bp)}{\htil(\ap)-\bp}} (f_3)_\bp(\bp) \diff \bp \\
& = \frac{1}{i\pi} \int \brac{\frac{f_1(\htil(\ap)) - f_1(\htil(\bp))}{\htil(\ap) - \htil(\bp)}}\brac{\frac{f_2(\htil(\ap)) - f_2(\htil(\bp))}{\htil(\ap)-\htil(\bp)}} (f_3\compose\htil)_\bp(\bp) \diff \bp \\
& =  [(\Util f_1), (\Util f_2) ; \pap(\Util f_3)]_{\htil}(\ap)
\end{align*}
\end{enumerate}
\end{proof}

\begin{lem}\label{lem:Delta}
Let $\Delta$ be defined by \eqref{def:Delta}. Then
\begin{enumerate}[leftmargin =*, align=left]
\item $\dis \Delta (f_1f_2\cdots f_n) = \sum_{i=1}^{n} \cbrac*[\big]{\Util(f_1)_b\cdots \Util(f_{i-1})_b} \Delta (f_{i}) \cbrac*[\big]{(f_{i+1})_a \cdots (f_n)_a} $
\item $\dis \Delta \sqbrac{f,\Hil}\pap g = \sqbrac{\Delta f, \Hil}\pap(g_a) + \sqbrac*[\big]{\Util(f)_b, \Hil - \Htil}\pap(g_a) + \Util\cbrac*[\big]{\sqbrac{f_b,\Hil}\pap\brac*[\big]{\Util^{-1}(\Delta g)} }$
\item $
\begin{aligned}[t]
\Delta \sqbrac{f_1,f_2; \pap f_3} & =  \sqbrac{\Delta f_1, (f_2)_a;\pap(f_3)_a} + \sqbrac*[\big]{\Util(f_1)_b, \Delta f_2; \pap(f_3)_a}  \\
& \quad + \sqbrac*[\big]{\Util(f_1)_b, \Util(f_2)_b; \pap(\Delta f_3)} \\
& \quad + \cbrac{\sqbrac*[\big]{\Util(f_1)_b,\Util(f_2)_b; \pap\Util(f_3)_b } - \sqbrac*[\big]{\Util(f_1)_b,\Util(f_2)_b; \pap\Util(f_3)_b }_{\htil} }
\end{aligned}
$
\end{enumerate}
\end{lem}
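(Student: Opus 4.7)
The proof is purely algebraic and amounts to careful bookkeeping, using that $\Util$ is a composition operator (so $\Util(fg)_b = \Util(f)_b\,\Util(g)_b$) together with parts (4) and (5) of the preceding lemma. I expect no real obstacle; the main care is in arranging the telescoping so that the stated cancellation structure emerges.

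For part (1), I would first observe that since $\Util(f) = f\compose \htil$, one has $\Util(f_1\cdots f_n)_b = \Util(f_1)_b\cdots \Util(f_n)_b$. Then
\[
\Delta(f_1\cdots f_n) = (f_1)_a\cdots(f_n)_a - \Util(f_1)_b\cdots\Util(f_n)_b,
\]
and the claimed formula follows from the standard telescoping identity obtained by swapping $\Util(f_i)_b$ with $(f_i)_a$ one index at a time (placing $\Util$-factors to the left and $a$-factors to the right of the swap).

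For part (2), I would write, using Lemma~\ref{lem:basicUtil}(4),
\[
\Delta\sqbrac{f,\Hil}\pap g = \sqbrac{f_a,\Hil}\pap g_a - \sqbrac{\Util(f)_b,\Htil}\pap \Util(g)_b,
\]
then decompose $f_a = \Util(f)_b + \Delta f$ in the first commutator and split
\[
\sqbrac{\Util(f)_b,\Hil}\pap g_a - \sqbrac{\Util(f)_b,\Htil}\pap\Util(g)_b = \sqbrac{\Util(f)_b,\Hil-\Htil}\pap g_a + \sqbrac{\Util(f)_b,\Htil}\pap(\Delta g).
\]
Finally, applying Lemma~\ref{lem:basicUtil}(4) again in the opposite direction converts the last term into $\Util\cbrac{\sqbrac{f_b,\Hil}\pap\brac{\Util^{-1}(\Delta g)}}$, producing the stated identity.

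For part (3), I would proceed identically, replacing Lemma~\ref{lem:basicUtil}(4) by Lemma~\ref{lem:basicUtil}(5): $\Util\sqbrac{f_1,f_2;\pap f_3} = \sqbrac{\Util(f_1),\Util(f_2);\pap\Util(f_3)}_{\htil}$. Writing $A_i = \Util(f_i)_b$ and $B_i = \Delta f_i$, so that $(f_i)_a = A_i + B_i$, trilinearly expand
\[
\sqbrac{A_1+B_1,\,A_2+B_2;\,\pap(A_3+B_3)}
\]
into eight terms. Seven of them collect into the three bracket expressions $\sqbrac{\Delta f_1,(f_2)_a;\pap(f_3)_a}$, $\sqbrac{\Util(f_1)_b,\Delta f_2;\pap(f_3)_a}$, and $\sqbrac{\Util(f_1)_b,\Util(f_2)_b;\pap(\Delta f_3)}$ by reversing the same telescoping used in part (1) on each of the three arguments. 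The one remaining term is $\sqbrac{A_1,A_2;\pap A_3}$, from which the $\htil$-subtracted bracket $\sqbrac{A_1,A_2;\pap A_3}_{\htil}$ must be removed, giving exactly the fourth (difference) term in the statement. No estimates are used at this stage; all identities are pointwise and the argument closes immediately.
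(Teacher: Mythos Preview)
Your proposal is correct and matches the paper's proof essentially step for step: part (1) is the standard telescoping, part (2) uses Lemma~\ref{lem:basicUtil}(4) exactly as you describe, and part (3) uses Lemma~\ref{lem:basicUtil}(5) followed by the same telescoping decomposition (the paper telescopes one slot at a time rather than expanding into eight terms, but this is the same computation).
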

\begin{proof}
The first identity follows immediately from the definition. For the second, we see that
\begin{align*}
 \Delta \sqbrac{f,\Hil}\pap g & = \sqbrac{f_a,\Hil}\pap g_a - \Util\sqbrac{f_b,\Hil}\pap g_b \\
& = \sqbrac{f_a,\Hil}\pap g_a - \sqbrac*[\big]{(\Util f_b),\Hcaltil}\pap (\Util g_b) \\
& =  \sqbrac{\Delta(f),\Hil}\pap g_a +  \sqbrac*[\big]{\Util(f_b),\Hil}\pap g_a - \sqbrac*[\big]{(\Util f_b),\Hcaltil}\pap (\Util g_b) \\
& =  \sqbrac{\Delta f, \Hil}\pap(g_a) + \sqbrac*[\big]{\Util(f_b), \Hil - \Htil}\pap(g_a) + \sqbrac*[\big]{\Util(f_b), \Htil}\pap(\Delta(g)) \\
& = \sqbrac{\Delta f, \Hil}\pap(g_a) + \sqbrac*[\big]{\Util(f_b), \Hil - \Htil}\pap(g_a) + \Util\cbrac*[\big]{\sqbrac{f_b,\Hil}\pap\brac*[\big]{\Util^{-1}(\Delta g)} }
\end{align*}
For the third estimate we see that
\begin{align*}
\Delta \sqbrac{f_1,f_2; \pap f_3} & =  \sqbrac{(f_1)_a,(f_2)_a; \pap (f_3)_a} - \Util  \sqbrac{(f_1)_b,(f_2)_b; \pap (f_3)_b} \\
& =  \sqbrac{(f_1)_a,(f_2)_a; \pap (f_3)_a} -  \sqbrac*[\big]{\Util(f_1)_b,\Util(f_2)_b; \pap (\Util(f_3)_b)}_{\htil} \\
& = \sqbrac{\Delta f_1, (f_2)_a;\pap(f_3)_a} + \sqbrac*[\big]{\Util(f_1)_b, \Delta f_2; \pap(f_3)_a}  \\
& \quad + \sqbrac*[\big]{\Util(f_1)_b, \Util(f_2)_b; \pap(\Delta f_3)} \\
& \quad + \cbrac{\sqbrac*[\big]{\Util(f_1)_b,\Util(f_2)_b; \pap\Util(f_3)_b } - \sqbrac*[\big]{\Util(f_1)_b,\Util(f_2)_b; \pap\Util(f_3)_b }_{\htil} }
\end{align*}
\end{proof}

The following lemma gives us control of some basic quantities required for the proof of \thmref{thm:aprioriEDelta}.

\begin{lem}\label{lem:quantM}
Assume the hypothesis of \thmref{thm:aprioriEDelta}. We will suppress the dependence of $L_1$ i.e. we write $a\lesssim b$ instead of $a\leq C(L_1)b$. Let $f\in \Scalsp(\Rsp)$. With this notation we have the following estimates for all  $t \in [0,T)$
\begin{enumerate}[leftmargin =*, align=left]
\item $\dis \norm*[\Linfty]{\htilap}(t), \norm[\Linfty]{\frac{1}{\htilap}}(t) \lesssim 1$ 
\item $\dis \abs*[\bigg]{\frac{\htil(\ap,t) - \htil(\bp,t)}{\ap-\bp}}, \abs*[\bigg]{\frac{\ap-\bp}{\htil(\ap,t) - \htil(\bp,t)}} \lesssim 1$ for all $\ap \neq \bp$
\item $\norm*[2]{\Util f} \lesssim \norm[2]{f}$ and $\norm*[\Hhalf]{\Util f} \lesssim \norm[\Hhalf]{f}$. These estimates are also true for the operator $\Util^{-1}$ instead of $\Util$. 
\item $\norm[2]{\Hcal(f)} \lesssim  \norm[2]{f}$, $\norm[\Hhalf]{\Hcal(f)} \lesssim \norm[\Hhalf]{f}$ and $\norm*[2]{\Htil(f)} \lesssim  \norm[2]{f}$ 
\item $\dis \norm*[\Hhalf]{\htilap}(t), \norm[\Hhalf]{\frac{1}{\htilap}}(t) \lesssim 1$ 
\item $\dis \norm[\infty]{\Zapabs_a\Util \brac{\frac{1}{\Zapabs}}_b}(t), \norm[\infty]{\frac{1}{\Zapabs_a}\Util \brac{\Zapabs}_b}(t) \lesssim 1 $ 
\item $\dis \norm*[2]{(\Dapabs)_a\htilap}(t) \lesssim 1$ 
\item $\dis \norm[2]{(\Dapabs)_a\cbrac{\Zapabs_a\Util \brac{\frac{1}{\Zapabs}}_b}}(t), \norm[2]{(\Dapabs)_a\cbrac{\frac{1}{\Zapabs_a}\Util \brac{\Zapabs}_b}}(t) \lesssim 1 $  
\end{enumerate}
\end{lem}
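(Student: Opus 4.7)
The plan is to establish each estimate from an ODE satisfied along the Lagrangian characteristics of solution $A$, crucially exploiting the fact that both Lagrangian parametrizations start from the identity, so $\htil(\cdot, 0) = \mathrm{id}$ and $\htilap(\cdot, 0) = 1$. Throughout, I would use that $\bvarap$ is bounded in $\Linfty \cap \Hhalf$ with $\Dapabs\bvarap \in \Ltwo$ for both solutions, with norms depending only on $(\Ecalsigma)_a$ and $(\Ecalhigh)_b$ (items 14 and 15 of \secref{sec:aprioriEcalsigma}, transported from $B$ via \propref{prop:equivEhighEcalhigh}).

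First I would derive two fundamental evolution equations. Differentiating $\htil = h_b \compose h_a^{-1}$ in time via \eqref{eq:h} gives $(\Dt)_a \htil = \Util(\bvar)_b$, and applying $\pap$ while using $[\pap, \Dt] = \bvarap\pap$ from \eqref{eq:commutator} yields
\begin{align*}
(\Dt)_a \htilap = \bigl[\Util(\bvarap)_b - (\bvarap)_a\bigr]\htilap.
\end{align*}
Likewise, combining \lemref{lem:basicUtil}(a) with \eqref{form:DtZapabs} yields
\begin{align*}
(\Dt)_a\bigl[\Zapabs_a\,\Util(\tfrac{1}{\Zapabs_b})\bigr]
= \Bigl[\bigl(\Real(\Dap\Zt) - \bvarap\bigr)_a - \Util\bigl(\Real(\Dap\Zt) - \bvarap\bigr)_b\Bigr]\,\Zapabs_a\,\Util(\tfrac{1}{\Zapabs_b}),
\end{align*}
with an analogous equation for $\Zapabs_a^{-1}\Util(\Zapabs_b)$. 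In each case the multiplier is an $\Linfty$-controlled difference, so taking logs and integrating along the characteristics of $(\Dt)_a$ yields exponential-in-$t$ bounds. Combined with $\htilap(\cdot,0) = 1$ and the hypotheses on the initial values, this proves (1) and (6); (2) follows immediately by integrating $\htilap$ over intervals.

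With (1)--(2) in hand, (3) is the change of variables $\bp = \htil(\ap)$ together with the Slobodeckij double-integral characterization of $\Hhalf$. For (4), \lemref{lem:basicUtil}(c) gives $\Hcal = \Util\,\Hil\,\Util^{-1}$, reducing the $\Hcal$ bounds to (3) and boundedness of $\Hil$, while $\Htil f = \Hcal(f/\htilap)$ together with (1) handles $\Htil$. For (7) and (8), I would apply $\Dapabs_a$ to the two evolution equations above, using $[\Dapabs, \Dt] = \Real(\Dap\Zt)\Dapabs$ from \eqref{eq:commutator} and $\pap\Util = \htilap\,\Util\,\pap$ from \lemref{lem:basicUtil}(b) to produce scalar ODEs of the form
\begin{align*}
(\Dt)_a(\Dapabs_a \htilap) = F_1 + F_2 \cdot \Dapabs_a \htilap,
\end{align*}
where $F_1 \in \Ltwo$ is driven by $\Util(\Dapabs_b(\bvarap)_b)$ and $\Dapabs_a(\bvarap)_a$, and $F_2 \in \Linfty$ is controlled using (1) and (6). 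Since $\Dapabs_a \htilap(\cdot,0) = 0$, \lemref{lem:timederiv}(2) yields (7), and (8) is analogous. For (5), \lemref{lem:timederiv}(3) applied to $\htilap - 1$ (which vanishes at $t=0$) using the same evolution equation, together with a product estimate in $\Linfty \cap \Hhalf$ on its right-hand side, closes the $\Hhalf$ bound; the $\Linfty$ half is already contained in (1).

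The main technical step is the $\Hhalf$ estimate in (5), where one must bound $\norm[\Hhalf]{\bigl[\Util(\bvarap)_b - (\bvarap)_a\bigr]\htilap}$ back in terms of the very quantity being controlled, so that Gronwall closes. The remaining items are routine applications of change of variables and Gronwall's inequality in $\Linfty$ or $\Ltwo$, once the two transport equations above are established.
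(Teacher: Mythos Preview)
Your proposal is correct and follows essentially the same route as the paper: derive the transport equation $(\Dt)_a\htilap = [\Util(\bvarap)_b - (\bvarap)_a]\htilap$ and its counterpart for the ratio $\Zapabs_a\Util(1/\Zapabs_b)$, then run Gronwall in $\Linfty$, $\Hhalf$, and (after applying $\Dapabs_a$) in $\Ltwo$, with the zero or bounded initial values coming from $\htil(\cdot,0)=\mathrm{id}$ and the hypotheses of \thmref{thm:aprioriEDelta}. Your identification of (5) as the only place where the product estimate must feed $\norm[\Hhalf]{\htilap}$ back into the Gronwall loop matches the paper exactly.

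Two small departures are worth noting. For (4), the paper simply invokes \propref{prop:HcalHtilest}, which goes through Coifman--Meyer type bounds; your reduction via $\Hcal = \Util\,\Hil\,\Util^{-1}$ and $\Htil f = \Hcal(f/\htilap)$ is a cleaner, more self-contained argument once (1)--(3) are in hand. For (8), the paper does \emph{not} use an ODE: it writes out $(\Dapabs)_a\bigl[\Zapabs_a\Util(1/\Zapabs_b)\bigr]$ directly as the ratio times a combination of $(\pap\frac{1}{\Zapabs})_a$ and $\htilap\cdot\frac{\Util(\Zapabs_b)}{\Zapabs_a}\cdot\Util(\pap\frac{1}{\Zapabs})_b$, each factor of which is already controlled by (1), (3), (6) and the two energies. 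Your ODE route would also work (the initial value is controlled by the same formula at $t=0$), but the direct computation is shorter and avoids an unnecessary Gronwall step.
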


\begin{proof}
We will prove each of the estimates individually.
\begin{enumerate}[leftmargin =*, align=left]

\item From \eqref{eq:h} we know that $h_a(\ap,0) = h_b(\ap,0) = \ap$ at time $t=0$. Now
observe that 
\begin{align*}
U_{h}^{-1}\brac{\frac{h_{t \al}}{\hal}} = \bvarap \quad \tx{ and } \pt \hal = \brac{\frac{h_{t \al}}{\hal}}\hal \quad \tx{ and } \pt \frac{1}{\hal} = -\brac{\frac{h_{t \al}}{\hal}}\frac{1}{\hal}
\end{align*}
and as $\norm[\infty]{(\bvarap)_b} $ is controlled by $(\Ecalhigh)_b(t)$ and $\norm[\infty]{(\bvarap)_a} $ is controlled by $(\Ecalsigma)_a(t)$, we see that $(\hal)_i $ and $\brac{\frac{1}{\hal}}_{ i}$ remain bounded for both $i=a,b$.  Now 
\begin{align*}
(\Dt)_a \htilap = U_{h_a}^{-1}\brac{\pt \frac{(\hal)_b}{(\hal)_a}} = \htilap U_{h_a}^{-1}\cbrac{\brac{\frac{h_{t \al}}{\hal}}_b - \brac{\frac{h_{t \al}}{\hal}}_a} = \htilap\brac*{\Util(\bvarap)_b - (\bvarap)_a }
\end{align*}
Hence as $\htilap = 1$ at time $t=0$, we see that $\norm*[\infty]{\htilap}(t) \lesssim 1$. Similarly for $\dis\frac{1}{\htilap}$.  

\item This is an easy consequence of $\dis \norm*[\Linfty]{\htilap}(t), \norm[\Linfty]{\frac{1}{\htilap}}(t) \lesssim 1$ and the fact that $\htil(\cdot,t)$ is a homeomorphism. 

\item We see that
\begin{align*}
\norm*[2]{\Util f}^2 = \int \abs*[\big]{f(\htil(\ap))}^2 \diff \ap = \int \frac{\abs*{f(s)}^2}{\abs*[\big]{(\htilap\compose\htil^{-1})(s)}} \diff s \lesssim \norm[2]{f}^2
\end{align*}
Similarly we have that
\begin{align*}
\norm*[\Hhalf]{\Util f}^2 & = \frac{1}{2\pi} \int  \int \frac{\abs*{f(\htil(\ap)) - f(\htil(\bp))}^2}{(\ap - \bp)^2} \diff \ap \diff\bp \\
& = \frac{1}{2\pi} \int \int \brac{\frac{\abs{f(x) - f(y)}^2}{(\htil^{-1}(x) - \htil^{-1}(y))^2}} \frac{1}{\abs*[\big]{(\htilap\compose\htil^{-1})(x)(\htilap\compose\htil^{-1})(y)}} \diff x \diff y \\
& \lesssim   \frac{1}{2\pi} \int  \int \frac{\abs*{f(x) - f(y)}^2}{(\ap - \bp)^2} \diff x \diff y \\
& \lesssim \norm[\Hhalf]{f}^2
\end{align*}
In the same way we can prove the estimates for $\Util^{-1}$.

\item This follows directly from \propref{prop:HcalHtilest}. 

\item As $\norm[\Linfty\cap\Hhalf]{(\bvarap)_i}$ and  is controlled by $C(L_1)$ for $i=a,b$, using \lemref{lem:timederiv} and \propref{prop:Leibniz} we see that
\begin{align*}
\frac{d}{dt} \norm*[\Hhalf]{\htilap}^2 & \lesssim \norm*[\Hhalf]{\htilap}^2 + \norm*[\Hhalf]{\htilap}\norm*[\Hhalf]{(\Dt)_a \htilap} \\
&  \lesssim \norm*[\Hhalf]{\htilap}^2 + \norm*[\Hhalf]{\htilap}\norm*[\Hhalf]{\htilap\brac*{\Util(\bvarap)_b - (\bvarap)_a }} \\
&  \lesssim \norm*[\Hhalf]{\htilap}^2 + \norm*[\Hhalf]{\htilap}\brac*{\norm*[\infty]{\htilap} + \norm*[\Hhalf]{\htilap} } \\
&  \lesssim \brac*{\norm*[\Hhalf]{\htilap}^2 + 1}
\end{align*}
Now as $\norm*[\Hhalf]{\htilap}(0) = 0$, we obtain that $\norm*[\Hhalf]{\htilap}(t) \lesssim 1$ for $t \in [0,T)$.

\item Using \eqref{form:DtZapabs} we observe that
\begin{align*}
& (\Dt)_a\cbrac{\Zapabs_a\Util\brac{\frac{1}{\Zapabs}}_b } \\
& = \cbrac{\Zapabs_a\Util\brac{\frac{1}{\Zapabs}}_b }\Real\cbrac{(\Dap\Zt)_a - \Util(\Dap\Zt)_b - (\bvarap)_a + \Util(\bvarap)_b  }
\end{align*}
Hence we have that $\dis \norm[\infty]{\Zapabs_a\Util \brac{\frac{1}{\Zapabs}}_b}(t) \lesssim 1$ for all $t\in[0,T)$. The other estimate is proven similarly. 

\item We first observe that
\begin{align*}
 (\Dt)_a \Dapabs_a\htilap & = -\Real(\Dap\Zt)_a \Dapabs_a\htilap + \Dapabs_a(\Dt)_a\htilap \\
& = -\Real(\Dap\Zt)_a \Dapabs_a\htilap + \Dapabs_a\cbrac{\htilap\brac*{\Util(\bvarap)_b - (\bvarap)_a } } \\
& = -\Real(\Dap\Zt)_a \Dapabs_a\htilap + (\Dapabs_a\htilap)(\Util(\bvarap)_b - (\bvarap)_a)  \\
& \quad + \htilap\cbrac{\htilap\brac{\frac{1}{\Zapabs_a}\Util(\Zapabs)_b }\Util(\Dapabs\bvarap)_b - (\Dapabs\bvarap)_a }
\end{align*}
Hence using \lemref{lem:timederiv} we have
\begin{align*}
\frac{d}{dt}\norm[2]{\Dapabs_a\htilap}^2 & \lesssim \norm[2]{\Dapabs_a\htilap}^2 + \norm[2]{\Dapabs_a\htilap}\norm[2]{(\Dt)_a\Dapabs_a\htilap} \\
& \lesssim \cbrac{ \norm[2]{\Dapabs_a\htilap}^2 + 1}
\end{align*}
As $\Dapabs_a\htilap = 0$ at time $t=0$, we are done. 

\item Using \eqref{form:DtZapabs} we observe that 
\begin{align*}
& (\Dapabs)_a\cbrac{\Zapabs_a\Util \brac{\frac{1}{\Zapabs}}_b} \\
& = \cbrac{\Zapabs_a\Util \brac{\frac{1}{\Zapabs}}_b}\cbrac{-\brac{\pap\frac{1}{\Zapabs}}_a + \htilap\brac{\frac{1}{\Zapabs_a}\Util(\Zapabs)_b }\Util\brac{\pap\frac{1}{\Zapabs}}_b }
\end{align*}
Hence we see that $ \dis \norm[2]{(\Dapabs)_a\cbrac{\Zapabs_a\Util \brac{\frac{1}{\Zapabs}}_b}}(t) \lesssim 1$ for all $t\in [0,T)$. The other estimate is proven similarly.

\end{enumerate}
\end{proof}

\bigskip
 
\subsection{Quantities controlled by $\EDelta$}\label{sec:quantEDelta} 

In this section whenever we write $\dis f \in L^2_{\Delta^\alpha}$, what we mean is that there exists a constant $C(L_1)$ depending only on $L_1$ such that $\dis \norm*[2]{f} \leq C(L_1)(\EDelta)^\alpha $.  Similar definitions for $\dis f\in \Lone_{\Delta^\alpha}$, $\dis f\in \Hhalf_{\Delta^\alpha}$ and $\dis  f \in \Linfty_{\Delta^\alpha}$. We define the spaces $ \Ccal_{\Delta^\alpha}$ and $ \Wcal_{\Delta^\alpha}$ as follows
\begin{enumerate}
\item  If $\dis w \in \Linfty_{\Delta^\alpha}$ and $\dis \Dapabs_a w \in \Ltwo_{\Delta^\alpha}$, then we say $f\in \Wcal_{\Delta^\alpha}$. Define 
\[
 \norm[\Wcal_{\Delta^\alpha}]{w} = \norm[\Wcal]{w} = \norm[\infty]{w}+ \norm[2]{\Dapabs_a w}
 \]

\item If $\dis f \in \Hhalf_{\Delta^\alpha}$ and $\dis f\Zapabs_a \in \Ltwo_{\Delta^\alpha}$, then we say $f\in \mathcal{C}_{\Delta^\alpha}$. Define
\[
\norm[\Ccal_{\Delta^\alpha}]{f} = \norm[\Ccal]{f} = \norm[\Hhalf]{f} + \brac*[\bigg]{1+ \norm*[\bigg][2]{\brac{\pap\frac{1}{\Zapabs}}\la}}\norm[2]{f\Zapabs_a}
 \]
\end{enumerate}
Now analogous to \lemref{lem:CW} we have the following lemma
\begin{lem} \label{lem:CWDelta} 
Let $\alpha_1, \alpha_2,\alpha_3 \geq0$ with $\alpha_1 + \alpha_2 = \alpha_3$. Then the following properties hold for the spaces $\Wcal_{\Delta^\alpha}$ and $\Ccal_{\Delta^\alpha}$
\begin{enumerate}[leftmargin =*, align=left]
\item If $w_1 \in \Wcal_{\Delta^{\alpha_1}}$, $w_2 \in \Wcal_{\Delta^{\alpha_2}}$, then $w_1w_2 \in \Wcal_{\Delta^{\alpha_3}}$. Moreover we have the estimate $\norm[\Wcal_{\Delta^{\alpha_{3}}}]{w_1w_2} \lesssim \norm[\Wcal_{\Delta^{\alpha_{1}}}]{w_1}\norm[\Wcal_{\Delta^{\alpha_{2}}}]{w_2}$
\item If $f\in \Ccal_{\Delta^{\alpha_1}}$ and $w\in \Wcal_{\Delta^{\alpha_2}}$, then $fw \in \Ccal_{\Delta^{\alpha_3}}$.  Moreover $\norm[\Ccal_{\Delta^{\alpha_3}}]{fw} \lesssim \norm[\Ccal_{\Delta^{\alpha_1}}]{f}\norm[\Wcal_{\Delta^{\alpha_2}}]{w}$
\item If $f \in \Ccal_{\Delta^{\alpha_1}}$, $g \in \Ccal_{\Delta^{\alpha_2}}$, then $fg\Zapabs_a \in \Ltwo_{\Delta^{\alpha_3}}$. Moreover $\norm[2]{fg\Zapabs_a} \lesssim \norm[\Ccal_{\Delta^{\alpha_1}}]{f}\norm[\Ccal_{\Delta^{\alpha_2}}]{g}$
\end{enumerate}
\end{lem}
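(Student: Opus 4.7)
The starting observation is that the norms $\norm[\Wcal_{\Delta^\alpha}]{\cdot}$ and $\norm[\Ccal_{\Delta^\alpha}]{\cdot}$ are, by the explicit formulas in the definitions, identical to the norms $\norm[\Wcal]{\cdot}$ and $\norm[\Ccal]{\cdot}$ of \cite{Ag19} computed with respect to the geometric structure of solution $A$ (that is, using $\Dapabs_a$ in place of $\Dapabs$ and $\brac{\pap\frac{1}{\Zapabs}}\la$ in place of $\pap\frac{1}{\Zapabs}$). The decoration $\Delta^\alpha$ is only a tag encoding the size relative to $\EDelta^\alpha$; it does not alter the norm itself. Under this identification, the plan is to deduce all three claims directly from \lemref{lem:CW}, applied in the single-solution context corresponding to solution $A$.

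For this to be legal, I need the product estimates of \lemref{lem:CW} to hold with constants depending only on $L_1$. The hypothesis $\sup_{t\in[0,T]}(\Ecalsigma)_a(t)\leq L_1$ gives uniform control on every quantity of solution $A$ that appears inside the $\Wcal$ and $\Ccal$ norms and in the constants hidden in the proof of \lemref{lem:CW} via \propref{prop:Hhalfweight} (most importantly $\dis \norm[2]{\brac{\pap\frac{1}{\Zapabs}}\la}$, which is an explicit factor in $\norm[\Ccal]{\cdot}$). Therefore, applying \lemref{lem:CW} to solution $A$ yields
\begin{align*}
\norm[\Wcal]{w_1w_2}\leq \norm[\Wcal]{w_1}\norm[\Wcal]{w_2}, \q \norm[\Ccal]{fw}\lesssim \norm[\Ccal]{f}\norm[\Wcal]{w}, \q \norm[2]{fg\Zapabs_a}\lesssim \norm[\Ccal]{f}\norm[\Ccal]{g},
\end{align*}
where the hidden constants are either universal or depend only on $L_1$.

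The remaining step is purely bookkeeping of the $\alpha$-exponents. For part (1), the assumption $w_i\in\Wcal_{\Delta^{\alpha_i}}$ unpacks to $\norm[\Wcal]{w_i}\leq C(L_1)\EDelta^{\alpha_i}$, so
\begin{align*}
\norm[\Wcal]{w_1w_2} \leq \norm[\Wcal]{w_1}\norm[\Wcal]{w_2} \leq C(L_1)^2 \EDelta^{\alpha_1+\alpha_2} = C(L_1)^2\EDelta^{\alpha_3},
\end{align*}
which simultaneously says $w_1w_2\in\Wcal_{\Delta^{\alpha_3}}$ and gives the claimed product inequality $\norm[\Wcal_{\Delta^{\alpha_3}}]{w_1w_2}\lesssim \norm[\Wcal_{\Delta^{\alpha_1}}]{w_1}\norm[\Wcal_{\Delta^{\alpha_2}}]{w_2}$. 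Parts (2) and (3) are handled identically using the two remaining product estimates. There is no real obstacle: the lemma is a bilinear reformulation of \lemref{lem:CW} designed precisely so that the $\alpha$-exponents track $\EDelta^\alpha$ additively under products. The only point worth checking is that the $\Wcal_{\Delta^\alpha}$ and $\Ccal_{\Delta^\alpha}$ norms involve only solution $A$'s geometric quantities, so that no control on solution $B$ (such as $(\Ecalhigh)_b$) is required at this step — even though solution $B$ of course enters elsewhere in \secref{sec:aprioriEcalDelta}.
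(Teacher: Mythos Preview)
Your proposal is correct and matches the paper's approach: the paper does not give a separate proof of this lemma but simply states it as ``analogous to \lemref{lem:CW}'', and your argument spells out precisely this analogy --- the $\Wcal_{\Delta^\alpha}$ and $\Ccal_{\Delta^\alpha}$ norms coincide with the $\Wcal$ and $\Ccal$ norms of solution $A$, so \lemref{lem:CW} (via \propref{prop:Hhalfweight}) applies verbatim, with the $\Delta^\alpha$ decoration tracked additively under products.
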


When we write $\dis f \in \Ltwo$ we mean $f \in \Ltwo_{\Delta^{\alpha}}$ with $\alpha =0$. Similar notation for $\Hhalf, \Linfty, \Ccal$ and $\Wcal$.  It is important to note that if $f \in \Ltwo$ and $f\in \Ltwo_{\Delta^{\alpha}}$,  then we have $f\in \Ltwo_{\Delta^{\beta}}$ for all $0<\beta<\alpha$.  We say that $a \approx_{\Ltwo_{\Delta^{\alpha}}} b$ if $a-b \in \Ltwo_{\Delta^{\alpha}}$. It should be noted that $\approx_{\Ltwo_{\Delta^{\alpha}}}$ is an equivalence relation. Similar definitions for $\approx_{\Lone_{\Delta^{\alpha}}}, \approx_{\Linfty_{\Delta^{\alpha}}}$, $\dis\approx_{\Hhalf_{\Delta^{\alpha}}} $, $ \approx_{\Wcal_{\Delta^\alpha}} $ and $\approx_{\mathcal{C}_{\Delta^\alpha}}$.

In this section, we will need to commute weights and derivatives with the operators $\Util, \Delta$ quite frequently and hence the following lemma will be very frequently used.
\begin{lem}\label{lem:Deltaapprox}
Let $f,g \in \Scalsp(\Rsp)$ and let $\alpha \in \Rsp$. Then 
\begin{enumerate}[leftmargin =*, align=left]

\item If $g_a\Util\brac{\pap f}_b \in \Ltwo$ then 
\begin{enumerate}
\item $g_a\pap\Util ( f)_b \in \Ltwo$
\item $g_a\Util (\pap f)_b \approxLtwoDeltahalf g_a\pap\Util ( f)_b$
\item $g_a\Delta(\pap f) \approxLtwoDeltahalf g_a\pap\Delta(f)$
\end{enumerate}
These estimates are also true if we replace $(\Ltwo, \LtwoDeltahalf)$ with $(\Linfty, \LinftyDeltahalf)$, $(\Linfty\cap\Hhalf, \LinftyDeltahalf\cap\HhalfDeltahalf)$, $(\Wcal, \WcalDeltahalf)$ or $(\Ccal, \CcalDeltahalf)$.

\item If $g_a \Util(f)_b \in \Ltwo$ then  
\begin{enumerate}
\item $(g\Zapabs^{\alpha})_a\Util\brac{\Zapabs^{-\alpha} f}_b \in \Ltwo$
\item $g_a \Util(f)_b \approxLtwoDeltahalf (g\Zapabs^{\alpha})_a\Util\brac{\Zapabs^{-\alpha} f}_b  $
\item $g_a\Delta(f) \approxLtwoDeltahalf (g\Zapabs^{\alpha})_a\Delta(\Zapabs^{-\alpha} f)$
\end{enumerate}
These estimates are also true if we replace $(\Ltwo, \LtwoDeltahalf)$ with $(\Linfty, \LinftyDeltahalf)$, $(\Wcal, \WcalDeltahalf)$ or $(\Ccal, \CcalDeltahalf)$.

\item If $g_a \Util(f)_b \in \Ltwo$ then  
\begin{enumerate}
\item $(g\w^{\alpha})_a\Util\brac{\w^{-\alpha} f}_b \in \Ltwo$
\item $g_a \Util(f)_b \approxLtwoDeltahalf (g\w^{\alpha})_a\Util\brac{\w^{-\alpha} f}_b  $
\item $g_a\Delta(f) \approxLtwoDeltahalf (g\w^{\alpha})_a\Delta(\w^{-\alpha} f)$
\end{enumerate} 
These estimates are also true if we replace $(\Ltwo, \LtwoDeltahalf)$ with $(\Linfty, \LinftyDeltahalf)$, $(\Wcal, \WcalDeltahalf)$ or $(\Ccal, \CcalDeltahalf)$.

\end{enumerate}
\end{lem}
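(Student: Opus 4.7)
Each of the three parts reduces to an algebraic identity of the form ``(new quantity) $=$ (old quantity) $\cdot (1 + E)$'', where the multiplicative error factor $E$ must be shown to lie in $\WcalDeltahalf$. For part (1) the factor is $\htilap - 1$; for part (2) it is $P^\alpha - 1$ with $P := \Zapabs_a\Util(1/\Zapabs)_b$; for part (3) it is $R^\alpha - 1$ with $R := \w_a\Util(\wbar)_b$. Once the factor is placed in $\WcalDeltahalf$ (or merely $\LinftyDeltahalf$ for the weaker topologies), every statement of (a), (b), (c) in each of the four topologies $\Ltwo$, $\Linfty$, $\Wcal$, $\Ccal$ follows from \lemref{lem:CWDelta} together with the uniform boundedness of $\Util^{\pm 1}$ on $L^p$, $\Hhalf$, $\Wcal$, $\Ccal$ supplied by \lemref{lem:quantM}.

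\textbf{Part (1).} The identity $\pap\Util = \htilap\Util\pap$ from \lemref{lem:basicUtil}(2) gives
\begin{align*}
\pap\Util(f)_b - \Util(\pap f)_b = (\htilap - 1)\Util(\pap f)_b, \qquad g_a\pap\Delta(f) - g_a\Delta(\pap f) = -g_a(\htilap - 1)\Util(\pap f)_b.
\end{align*}
The factor $\htilap - 1$ lies in $\LinftyDeltahalf \cap \HhalfDeltahalf \cap \WcalDeltahalf$ directly from the $\EDeltazero$ terms $\norm[\Linfty\cap\Hhalf]{\htilap - 1}$ and $\norm[2]{(\Dapabs)_a(\htilap - 1)}$. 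Pairing it with $g_a\Util(\pap f)_b$, which by hypothesis lies in the target space, and invoking \lemref{lem:CWDelta} yields every sub-claim.

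\textbf{Part (2).} The algebraic identities
\begin{align*}
(g\Zapabs^\alpha)_a\Util(\Zapabs^{-\alpha}f)_b = g_a\Util(f)_b \cdot P^\alpha, \qquad (g\Zapabs^\alpha)_a\Delta(\Zapabs^{-\alpha}f) - g_a\Delta(f) = g_a\Util(f)_b(P^\alpha - 1)
\end{align*}
reduce the problem to controlling $P^\alpha - 1$. Since \lemref{lem:quantM}(6) provides uniform upper and lower $\Linfty$ bounds on $P$, the mean-value formula $P^\alpha - 1 = (P-1)\int_0^1 \alpha(1+s(P-1))^{\alpha-1}\,ds$ transfers $\LinftyDeltahalf$ and $\WcalDeltahalf$ control from $P - 1$ to $P^\alpha - 1$. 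The bound $P - 1 \in \LinftyDeltahalf$ is the last term of $\EDeltazero$. For the derivative, a direct computation yields
\begin{align*}
(\Dapabs)_a P = -\Delta\!\left(\pap\tfrac{1}{\Zapabs}\right) - (P - 1)\left(\pap\tfrac{1}{\Zapabs}\right)_{\!a} + (\htilap - 1)\Util\!\left(\pap\tfrac{1}{\Zapabs}\right)_{\!b},
\end{align*}
and the identity $\Delta(\pap(1/\Zapabs)) = \Real\bigl(\Delta(\w)(\pap(1/\Zap))_a + \Util(\w)_b\Delta(\pap(1/\Zap))\bigr)$ places each summand in $\LtwoDeltahalf$ through the $\EDeltazero$ estimates on $\Delta(\w)$, $\Delta(\pap(1/\Zap))$, and $\htilap - 1$. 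Thus $P^\alpha - 1 \in \WcalDeltahalf$, and \lemref{lem:CWDelta} closes every sub-statement.

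\textbf{Part (3): the main obstacle.} Since $|R| \equiv 1$, we have $R - 1 = \Util(\wbar)_b\,\Delta(\w) \in \LinftyDeltahalf$ from the $\EDeltazero$ term $\norm[\infty]{\Delta(\w)}$, and the mean-value formula again gives $R^\alpha - 1 \in \LinftyDeltahalf$. The hard step is to show $(\Dapabs)_a\Delta(\w) \in \LtwoDeltahalf$. Expanding with $(\Dapabs)_a\w_a = i\w_a\Real\Th_a$ and the analogous formula for $\Util(\w)_b$ produces
\begin{align*}
(\Dapabs)_a\Delta(\w) = i\,\Delta(\w\Real\Th) + i(1 - \htilap/P)\Util(\w\Real\Th)_b,
\end{align*}
which at first sight requires $\Delta(\Th)$, a quantity that the energy does \emph{not} control at the $\sqrt{\Delta}$ scale. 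The key trick is to eliminate $\Th$ altogether using \eqref{form:RealImagTh}: one obtains $\w\Real\Th = \tfrac{i}{2}\bigl(\w^2\pap(1/\Zap) - \pap(1/\Zapbar)\bigr)$, so
\begin{align*}
\Delta(\w\Real\Th) = \tfrac{i}{2}\Bigl(\Delta(\w^2)(\pap(1/\Zap))_a + \Util(\w^2)_b\,\Delta(\pap(1/\Zap)) - \overline{\Delta(\pap(1/\Zap))}\Bigr).
\end{align*}
Using $\Delta(\w^2) = \Delta(\w)(\w_a + \Util(\w)_b) \in \LinftyDeltahalf$ and $\Delta(\pap(1/\Zap)) \in \LtwoDeltahalf$ (both from $\EDeltazero$), and noting that $1 - \htilap/P \in \LinftyDeltahalf$, each summand is in $\LtwoDeltahalf$. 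Hence $\Delta(\w) \in \WcalDeltahalf$; then $R - 1 = \Util(\wbar)_b\,\Delta(\w) \in \WcalDeltahalf$ by \lemref{lem:CWDelta}, and the remainder of the argument (mean-value for $R^\alpha$, then \lemref{lem:CWDelta}) proceeds exactly as in part (2).
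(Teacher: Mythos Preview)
Your proof is correct and follows essentially the same approach as the paper: identify the multiplicative error factors $\htilap - 1$, $P^\alpha - 1$, and $R^\alpha - 1$, show they lie in $\WcalDeltahalf$, and conclude via \lemref{lem:CWDelta}. The only cosmetic differences are that the paper uses the elementary inequality $|z^\alpha - 1| \leq C(\alpha)|z-1|\max(|z|^\alpha,1)$ in place of your mean-value integral, and in Part~(3) the paper first records $\Delta(\Dapabs\w)\in\LtwoDeltahalf$ (already during Part~(2), directly from \eqref{form:RealImagTh}) and then computes $(\Dapabs)_a R$, whereas you compute $(\Dapabs)_a\Delta(\w)$ and invoke the same identity \eqref{form:RealImagTh} in the equivalent form $\w\Real\Th = \tfrac{i}{2}(\w^2\pap(1/\Zap) - \pap(1/\Zapbar))$; the content is identical.
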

\begin{proof}
We prove each of the statements individually.
\begin{enumerate}[leftmargin =*, align=left]

\item We first observe that the energy $\EDelta$ controls $(\htilap -1) \in \LinftyDeltahalf\cap\HhalfDeltahalf\cap\WcalDeltahalf$. Now notice that $\dis g_a\pap\Util ( f)_b = \htilap g_a\Util\brac{\pap f}_b$. As $\htilap \in \Linfty$ we see that $g_a\pap\Util ( f)_b \in \Ltwo$. Now we have
\begin{align*}
g_a\pap\Util ( f)_b - g_a\Util\brac{\pap f}_b = (\htilap -1)g_a\Util\brac{\pap f}_b
\end{align*}
and hence $\norm*[2]{g_a\pap\Util ( f)_b - g_a\Util\brac{\pap f}_b} \leq \norm*[\infty]{\htilap -1}\norm*[2]{g_a\Util\brac{\pap f}_b} \leq C(L_1)(\EDelta)^\half $. The other estimates are shown similarly using the fact that $\htilap \in \Linfty\cap\Hhalf\cap\Wcal$ and $(\htilap -1) \in \LinftyDeltahalf\cap\HhalfDeltahalf\cap\WcalDeltahalf$.

\item Observe that the energy $\EDelta$ controls $\Delta w \in \LinftyDeltahalf$ and $\dis \Delta\brac{\pap\frac{1}{\Zap}} \in \LtwoDeltahalf$. Hence by using \eqref{form:RealImagTh} we see that $\dis \Delta\brac{\pap\frac{1}{\Zapabs}} \in \LtwoDeltahalf$ and $\Delta(\Dapabs\w) \in \LtwoDeltahalf$.

Now observe that for $\alpha \in \Rsp$, we have $\abs{z^\alpha  -1} \leq C(\alpha)\abs{z-1}\max(\abs{z}^\alpha, 1)$ for $z \in \Csp$. Hence using the fact that $\dis \Zapabs_a\Util\brac{\frac{1}{\Zapabs}}\lb - 1 \in \LinftyDeltahalf$ we see that $\dis \brac{\Zapabs_a\Util\brac{\frac{1}{\Zapabs}}\lb}^\alpha - 1 \in \LinftyDeltahalf$. In particular we have $\dis \frac{1}{\Zapabs_a}\Util(\Zapabs)_b - 1 \in \LinftyDeltahalf$. Now we have
\begin{align*}
& (\Dapabs)_a\cbrac{\Zapabs_a\Util \brac{\frac{1}{\Zapabs}}_b} \\
& = \cbrac{\Zapabs_a\Util \brac{\frac{1}{\Zapabs}}_b}\cbrac{-\brac{\pap\frac{1}{\Zapabs}}_a + \htilap\brac{\frac{1}{\Zapabs_a}\Util(\Zapabs)_b }\Util\brac{\pap\frac{1}{\Zapabs}}_b }
\end{align*}
Hence we see that $\dis \Zapabs_a\Util\brac{\frac{1}{\Zapabs}}\lb - 1 \in \WcalDeltahalf$ or more generally $\dis \brac{\Zapabs_a\Util\brac{\frac{1}{\Zapabs}}\lb}^\alpha - 1 \in \WcalDeltahalf$. Now coming back we see that
\begin{align*}
(g\Zapabs^{\alpha})_a\Util\brac{\Zapabs^{-\alpha} f}_b = \brac{\Zapabs_a\Util\brac{\frac{1}{\Zapabs}}\lb}^\alpha g_a\Util(f)_b
\end{align*}
Now as $\dis \Zapabs_a\Util\brac{\frac{1}{\Zapabs}}\lb \in \Linfty$, we see that $(g\Zapabs^{\alpha})_a\Util\brac{\Zapabs^{-\alpha} f}_b \in \Ltwo$. Now
\begin{align*}
(g\Zapabs^{\alpha})_a\Util\brac{\Zapabs^{-\alpha} f}_b - g_a\Util(f)_b =  \cbrac{ \brac{\Zapabs_a\Util\brac{\frac{1}{\Zapabs}}\lb}^\alpha -1 }g_a\Util(f)_b
\end{align*}
Hence we have $\norm[2]{(g\Zapabs^{\alpha})_a\Util\brac{\Zapabs^{-\alpha} f}_b - g_a\Util(f)_b} \leq C(L_1)(\EDelta)^\half$. The other estimates are proven similarly using $\dis \Zapabs_a\Util\brac{\frac{1}{\Zapabs}}\lb \in \Linfty\cap\Wcal$ and $\dis \cbrac{\brac{\Zapabs_a\Util\brac{\frac{1}{\Zapabs}}\lb}^\alpha - 1} \in \LinftyDeltahalf\cap\WcalDeltahalf$. 

\item This is proved exactly the same as above. Here we use the estimate $\Delta(w) \in \LinftyDeltahalf$ and $\abs{w} = 1$ to see that $\w_a\Util(\w^{-1})_b - 1 \in \LinftyDeltahalf$ or more generally $\w_a^\alpha\Util(\w^{-\alpha})_b - 1 \in \LinftyDeltahalf$. Now we observe that
\begin{align*}
&\Dapabs_a \brac{\w_a\Util(\w^{-1})_b} \\
& = \brac{\w_a\Util(\w^{-1})_b}\cbrac{\wbar_a\Dapabs_a\w_a - \frac{\htil_\al}{\Zapabs_a}\Util(\Zapabs)_b\Util(\wbar)_b\Util(\Dapabs\w)_b}
\end{align*}
Now as the energy $\EDelta$ controls $\dis \Delta(\Dapabs\w) \in \LtwoDeltahalf$, we see that $\w_a\Util(\w^{-1})_b - 1 \in \WcalDeltahalf$ or more generally $\w_a^\alpha\Util(\w^{-\alpha})_b - 1 \in \WcalDeltahalf$. Now using 
\begin{align*}
  (g\w^{\alpha})_a\Util\brac{\w^{-\alpha} f}_b - g_a \Util(f)_b = g_a \Util(f)_b\cbrac{ \w_a^\alpha\Util(\w^{-\alpha})_b - 1}
\end{align*}
we easily reach the desired conclusion. 
\end{enumerate}
\end{proof}

One important conclusion from the proof of the previous lemma is that for any $\alpha \in \Rsp$ we have  $\dis \cbrac{\Zapabs_a\Util\brac{\frac{1}{\Zapabs}}\lb}^\alpha - 1 \in \WcalDeltahalf$ and $\w_a^\alpha\Util(\w^{-\alpha})_b - 1 \in \WcalDeltahalf$. This estimate along with the fact that $(\htilap -1) \in \WcalDeltahalf\cap\HhalfDeltahalf$ will be heavily used in the proof of the energy estimate. 
 
Let us now control the main terms controlled by $\EDelta$. We will heavily use the lemmas proven earlier namely \lemref{lem:basicUtil}, \lemref{lem:Delta}, \lemref{lem:quantM}, \lemref{lem:CWDelta} and \lemref{lem:Deltaapprox} along with \propref{prop:commutator} and \propref{prop:HilHtilcaldiff} from the appendix. As we will be using \lemref{lem:basicUtil}, \lemref{lem:Delta} and \lemref{lem:quantM} in almost every step, we will skip referencing them. The proof of the estimates in this section follow exactly analogous to the proof of the control of the quantities controlled by $\Esigma$ from  Sec 5.1 of \cite{Ag19}, and in particular we use the identities established there and then subtract the terms of the two solutions.  As the proofs are straighforward modifications of the proofs in Sec 5.1 of \cite{Ag19}, we will just control a few terms and show how it is done and the rest are proved analogously.  A few terms require a bit more work and we give more details for those terms. The numbering system employed here for the quantities controlled is the same as used in Sec 5.1 of \cite{Ag19}. 
 
\bigskip 
\begin{enumerate}[leftmargin =*, align=left]

\item $\Delta(\Ztapbar) \in \LtwoDeltahalf, \pap\Delta(\Ztbar) \in \LtwoDeltahalf$ and $\Dapabs_a\Delta(\Dapbar\Ztbar) \in \LtwoDeltahalf, \Delta(\Dapabs\Dapbar\Ztbar) \in \LtwoDeltahalf$
\medskip\\
Proof: As $(\Aone)_a \geq 1$, we see that $\EDelta $ controls $\Delta(\Ztapbar) \in \LtwoDeltahalf$ and $\Dapabs_a\Delta(\Dapbar\Ztbar) \in \LtwoDeltahalf$. We obtain the other two estimates by using \lemref{lem:Deltaapprox}. 
\bigskip

\item $\dis \Delta(\Aone) \in \LinftyDeltahalf \cap \HhalfDeltahalf$ 
\medskip \\
Proof: Recall from \eqref{eq:systemone} that $\displaystyle \Aone = 1 - \Imag[\Zt,\Hil]\Ztapbar$ and hence we have
\begin{align*}
& \Delta(\Aone)  \\
& =  -\Imag \cbrac{\Delta \sqbrac{\Zt,\Hil}\pap \Ztbar} \\
& = -\Imag\cbrac{\sqbrac{\Delta \Zt, \Hil}\pap(\Ztbar)_a + \sqbrac*[\big]{\Util(\Zt)_b, \Hil - \Htil}\pap(\Ztbar)_a + \Util\cbrac*[\Big]{\sqbrac{(\Zt)_b,\Hil}\pap\brac*[\big]{\Util^{-1}(\Delta \Ztbar)} }}
\end{align*}
Hence using \propref{prop:commutator}, \propref{prop:HilHtilcaldiff} and \lemref{lem:Deltaapprox} we get 
\begin{align*}
\norm[\Linfty\cap\Hhalf]{\Delta(\Aone)} \leq C(L_1) (\EDelta)^\half
\end{align*}
\bigskip

\item $\dis \Delta\brac{\pap\frac{1}{\Zap}} \in \LtwoDeltahalf, \Delta\brac{\pap\frac{1}{\Zapabs}} \in \LtwoDeltahalf, \Delta(\Dapabs \w) \in \LtwoDeltahalf$ and hence $\Delta(\w) \in \WcalDeltahalf$
\medskip\\
Proof: Observe that $\dis \Delta\brac{\pap\frac{1}{\Zap}} \in \LtwoDeltahalf$ as it is part of the energy $\EDeltazero$. Recall from \eqref{form:RealImagTh} that
\[
\Real\brac{\frac{\Zap}{\Zapabs}\pap\frac{1}{\Zap} } = \pap \frac{1}{\Zapabs} \quad \qquad \Imag\brac{\frac{\Zap}{\Zapabs}\pap\frac{1}{\Zap}} =  i\brac{ \wbar\Dapabs \w} 
\]
Using $\Delta(w) \in \LinftyDeltahalf$ we obtain $\dis  \Delta\brac{\pap\frac{1}{\Zapabs}} \in \LtwoDeltahalf$ and $ \Delta(\Dapabs \w) \in \LtwoDeltahalf$. Now using \lemref{lem:Deltaapprox} we obtain $\dis \Dapabs_a \Delta(w) \in \LtwoDeltahalf$ and hence  $\Delta(\w) \in \WcalDeltahalf$. 
\bigskip

\item $\dis \Delta(\Dapbar\Ztbar) \in \LinftyDeltahalf, \Delta(\Dapabs\Ztbar) \in \LinftyDeltahalf $ and $\Delta(\Dap\Ztbar) \in \LinftyDeltahalf $ 
\medskip\\
Proof: First observe that from \lemref{lem:Deltaapprox} we have $\Zapabs_a\Delta(\Dapbar\Ztbar) \approxLtwoDeltahalf \Delta(\w\Ztapbar) \in \LtwoDeltahalf$. Hence we have
\begin{align*}
\pap(\Delta(\Dapbar\Ztbar))^2 = 2 \cbrac{\Zapabs_a\Delta(\Dapbar\Ztbar)}\Dapabs_a\Delta(\Dapbar\Ztbar) \in \LoneDelta
\end{align*}
Hence $\dis \Delta(\Dapbar\Ztbar) \in \LinftyDeltahalf$. The other ones are obtained easily by using   \lemref{lem:Deltaapprox}. 
\bigskip

\item $\dis \Delta(\Dapbar^2\Ztbar) \in \LtwoDeltahalf, \Delta(\Dapabs^2\Ztbar) \in \LtwoDeltahalf, \Delta(\Dap^2\Ztbar) \in \LtwoDeltahalf$ and $\dis \Delta\brac*[\bigg]{\frac{1}{\Zap^2}\pap\Ztapbar} \in  \LtwoDeltahalf$
\medskip\\
Proof: We already know that $\Delta (\Dapabs\Dapbar\Ztbar) \in \LtwoDeltahalf$ and hence using \lemref{lem:Deltaapprox} we have $\Delta (\Dapbar^2\Ztbar) \in \LtwoDeltahalf$. Now 
\begin{align*}
\Dapbar^2\Ztbar = \Dapbar\brac{\w\Dapabs\Ztbar} = (\Dapbar\w)\Dapabs\Ztbar + \w^2\Dapabs^2\Ztbar
\end{align*}
Applying $\Delta$ to the above equation we easily get $\dis \Delta(\Dapabs^2\Ztbar) \in \LtwoDeltahalf$. The estimate $\dis \Delta(\Dap^2\Ztbar) \in \LtwoDeltahalf$ and $\dis \Delta\brac*[\bigg]{\frac{1}{\Zap^2}\pap\Ztapbar} \in  \LtwoDeltahalf$ is proven similarly.
\bigskip

\item $\dis \Delta(\Dapbar\Ztbar) \in \WcalDeltahalf\cap\CcalDeltahalf, \Delta(\Dapabs\Ztbar) \in \WcalDeltahalf\cap\CcalDeltahalf, \Delta(\Dap\Ztbar) \in \WcalDeltahalf\cap\CcalDeltahalf $
\medskip\\
Proof: As $ \Delta(\Dapbar\Ztbar) \in \LinftyDeltahalf$ and $\Dapabs_a \Delta(\Dapbar\Ztbar) \in \LtwoDeltahalf$ we see that $ \Delta(\Dapbar\Ztbar) \in \WcalDeltahalf$. Now $ \Zapabs_a\Delta(\Dapbar\Ztbar) \in \LtwoDeltahalf$ and using  \propref{prop:LinftyHhalf} with $f =  \Delta(\Dapbar\Ztbar)$ and $\dis  w = \frac{1}{\Zapabs_a}$ we see that
\begin{align*}
\norm[\Hhalf]{ \Delta(\Dapbar\Ztbar)}^2 & \lesssim \norm[2]{ \Zapabs_a\Delta(\Dapbar\Ztbar)}\norm[2]{\pap\brac{\frac{1}{\Zapabs_a}\Delta(\Dapbar\Ztbar) }} \\
& \quad + \norm[2]{ \Zapabs_a\Delta(\Dapbar\Ztbar)}^2\norm[2]{\pap\frac{1}{\Zapabs_a}}^2
\end{align*}
From this we obtain $\Delta(\Dapbar\Ztbar) \in \WcalDeltahalf\cap\CcalDeltahalf$. As $\wbar \in \Wcal$, using \lemref{lem:Deltaapprox} and \lemref{lem:CWDelta} we see that $\Delta(\Dapabs\Ztbar) \in \WcalDeltahalf\cap\CcalDeltahalf$. The other one is proven similarly. 
\bigskip

\item $\dis \Delta \cbrac{\pap \Pa\brac{\frac{\Zt}{\Zap}}} \in \LinftyDeltahalf$
\medskip\\
Proof: In Sec 5.1 of \cite{Ag19} we have shown the formula 
\begin{align*}
2\pap \Pa \brac*[\Big]{\frac{\Zt}{\Zap}}  = 2\Dap\Zt + \sqbrac{\frac{1}{\Zap},\Hil}\Ztap + \sqbrac{\Zt, \Hil}\pap\frac{1}{\Zap} 
\end{align*}
Now applying $\Delta$ to the formula above, the estimate follows easily from \propref{prop:commutator}, \propref{prop:HilHtilcaldiff} and \lemref{lem:Deltaapprox}.
\bigskip

\item $\dis \Delta(\Dapabs \Aone) \in \LtwoDeltahalf$ and hence $\dis \Delta(\Aone) \in \WcalDeltahalf, \Delta(\sqrt{\Aone}) \in \WcalDeltahalf, \Delta\brac{\frac{1}{\Aone}} \in \WcalDeltahalf$ and also $\dis  \Delta\brac{\frac{1}{\sqrt{\Aone}}} \in \WcalDeltahalf$
\medskip\\
Proof: In Sec 5.1 of \cite{Ag19} we established the formula
\begin{align*}
(\Id - \Hil)\Dap\Aone = i(\Id - \Hil)\brac{(\Dap\Zt)\Ztapbar}+ i\sqbrac{\Pa\brac{\frac{\Zt}{\Zap}},\Hil}\pap\Ztapbar
\end{align*}
Now by applying $\Delta$ to the formula above and using \propref{prop:commutator}, \propref{prop:HilHtilcaldiff} and \lemref{lem:Deltaapprox}, we see that $(\Id - \Hil)\Dap\Aone \in \LtwoDeltahalf$. Now in Sec 5.1 of \cite{Ag19} we also established the formula
\begin{align*}
\Dapabs\Aone = \Real \cbrac{\w(\Id - \Hil)\Dap\Aone} -\Real\cbrac{\w\sqbrac{\frac{1}{\Zap},\Hil}\pap\Aone} 
\end{align*}
Hence by applying $\Delta$ to the formula above and using \propref{prop:commutator}, \propref{prop:HilHtilcaldiff} and \lemref{lem:Deltaapprox}, we easily get $\dis \Delta(\Dapabs \Aone) \in \LtwoDeltahalf$. Hence using \lemref{lem:Deltaapprox} we see that $\dis \Delta(\Aone) \in \WcalDeltahalf$. Now as $(\Aone)_a \geq 1$ and $(\Aone)_a \in \Wcal$, we get from \lemref{lem:CWDelta} that $\dis \frac{1}{(\Aone)_a}\Util(\Aone)_b - 1 \in \WcalDeltahalf$.  The inequality $\abs{z^\alpha  -1} \leq C(\alpha)\abs{z-1}\max(\abs{z}^\alpha, 1)$ for $z \in \Csp$, $\alpha \in \Rsp$ implies that we have $\dis \brac{\frac{1}{(\Aone)_a}\Util(\Aone)_b}^\alpha - 1 \in \WcalDeltahalf$. Choosing suitable values of $\alpha$ imply all the other estimates. 
\bigskip

\item $\Delta(\Th) \in \LtwoDeltahalf$ and $\Delta(\Dt\Th) \in \LtwoDeltahalf$
\medskip\\
Proof: The proof of $\Delta(\Th) \in \LtwoDeltahalf$ follows easily by applying $\Delta$ to the formula \eqref{form:Th} and using \propref{prop:HilHtilcaldiff}. Also $\Delta(\Dt\Th) \in \LtwoDeltahalf$ as it part of $\EDelta$. 
\bigskip

\item $\dis \frac{1}{\Zapabs_a}\Delta(\Th) \in \CcalDeltahalf, \Delta\brac{\frac{\Th}{\Zapabs}} \in \CcalDeltahalf$
\medskip\\
Proof: The energy $\EDelta$ controls $\dis \frac{\sqrt{(\Aone)_a}}{\Zapabs_a}\Delta(\Th) \in \HhalfDeltahalf$. Now as  $\dis \sqrt{(\Aone)_a} \Delta(\Th) \in \LtwoDeltahalf$ this implies that $\dis \frac{\sqrt{(\Aone)_a}}{\Zapabs_a}\Delta(\Th) \in \CcalDeltahalf$. Hence by using \lemref{lem:CWDelta} and observing that $\dis \frac{1}{\sqrt{(\Aone)_a}} \in \Wcal$ we obtain $\dis \frac{1}{\Zapabs_a}\Delta(\Th) \in \CcalDeltahalf$. The other estimate is now obtained by using \lemref{lem:Deltaapprox}.
\Bigskip

From now on we will just state the estimates and the proof follows exactly as in Sec 5.1 of \cite{Ag19} and can be easily obtained by applying $\Delta$ to the formulae there and using \lemref{lem:CWDelta}, \lemref{lem:Deltaapprox}, \propref{prop:commutator} and \propref{prop:HilHtilcaldiff} as shown in the above examples. For estimates which do not follow this pattern we give more details. 
\Bigskip

\item $\dis \Delta\brac{\Dap\frac{1}{\Zap}} \in \CcalDeltahalf$, $\dis \Delta\brac{\Dapabs\frac{1}{\Zap}} \in \CcalDeltahalf$, $\dis \Delta\brac{\Dapabs\frac{1}{\Zapabs}} \in \CcalDeltahalf$ and similarly we have $\dis   \Delta\brac{\frac{1}{\Zapabs^2}\pap\w} \in \CcalDeltahalf$
\smallskip

\item $\dis \Delta\brac{\frac{1}{\Zapabs^2}\pap \Aone} \in \LinftyDeltahalf\cap\HhalfDeltahalf  $ and hence $\dis  \Delta\brac{\frac{1}{\Zapabs^2}\pap \Aone} \in \CcalDeltahalf$
\smallskip

\item $\dis \Delta\brac{\frac{1}{\Zapabs^3}\pap^2\Aone} \in \LtwoDeltahalf $, $\dis \Dapabs\Delta\brac*[\bigg]{\frac{1}{\Zapabs^2}\pap\Aone} \in \LtwoDeltahalf $ and $\dis \Delta\brac{\frac{1}{\Zapabs^2}\pap \Aone} \in \WcalDeltahalf$
\smallskip

\item $\dis \Delta(\bvarap) \in \LinftyDeltahalf\cap\HhalfDeltahalf $ and $ \Delta(\Hil(\bvarap)) \in \LinftyDeltahalf\cap\HhalfDeltahalf $
\smallskip

\item $\dis \Delta(\Dapabs\bvarap) \in \LtwoDeltahalf$ and hence $\dis \Delta(\bvarap) \in \WcalDeltahalf$
\smallskip

\item $\dis \Delta\cbrac{\pap\Dt\frac{1}{\Zap}} \in \LtwoDeltahalf$, $\dis \Delta\cbrac{\Dt\pap\frac{1}{\Zap}} \in \LtwoDeltahalf$
\smallskip

\item $\dis \Delta(\Zttapbar) \in \LtwoDeltahalf$
\smallskip

\item $\Delta(\Dapbar\Zttbar) \in \CcalDeltahalf$, $\Delta(\Dapabs\Zttbar) \in \CcalDeltahalf$, $\Delta(\Dt\Dapbar\Ztbar) \in \CcalDeltahalf$ and $\Delta(\Dt\Dapabs\Ztbar) \in \CcalDeltahalf$
\smallskip

\item $\dis \Delta(\Dt\Aone) \in \LinftyDeltahalf\cap\HhalfDeltahalf$
\smallskip

\item $\Delta(\Dt(\bvarap - \Dap\Zt - \Dapbar\Ztbar)) \in \LinftyDeltahalf\cap\HhalfDeltahalf$ and hence $\dis \Delta(\Dt\bvarap) \in \HhalfDeltahalf, \Delta(\pap\Dt \bvar) \in \HhalfDeltahalf $
\smallskip

Now we start controlling terms with surface tension. Note that these estimates are only for the solution A and hence the estimates have already been shown in Sec 5.1 in \cite{Ag19} and no new work has to be done at all. For most of the estimates we will have that the power of $\sigma$ will be the same as that of the power of $\Delta$. For e.g. we have $\dis \brac{\sigma^{\onebysix}  \Zapabs^\half \pap\frac{1}{\Zap}}\la \in \Ltwo_{\Delta^\onebysix}$ and both $\sigma$ and $\Delta$ are raised to the same power $1/6$. However the estimates  derived from $\sigma\pap\Th, \sigma\pap\Dap\Th$ and $\sigma\Dap^2\Th$ will not follow this pattern. For e.g. we will have $\dis (\sigma^\onebythree\Th) \in \Linfty_{\Delta^\onebysix}$ and not $\dis (\sigma^\onebythree\Th) \in \Linfty_{\Delta^\onebythree}$. The reason is that $\EDelta$ controls $\Delta\brac{(\Zttbar -i)\Zap} \in \HhalfDeltahalf$ not $\Hhalf_{\Delta}$ and hence we have $(\sigma\pap\Th) \in \HhalfDeltahalf$. Similarly for $\sigma\pap\Dap\Th$ and $\sigma\Dap^2\Th$. 

\medskip
\item $\dis \brac{\sigma^{\half}  \Zapabs^\half \pap\frac{1}{\Zap}}\la \in \LinftyDeltahalf$,  $\dis \brac{\sigma^{\half}  \Zapabs^\half \pap\frac{1}{\Zapabs}}\la \in \LinftyDeltahalf$, $\dis \brac{\frac{ \sigma^\half}{\Zapabs^\half}\pap\w}\la \in \LinftyDeltahalf$ and  $\dis \brac{\sigma^\half\Zapabs^\half\Real\Th}\la \in \LinftyDeltahalf$
\smallskip

\item $\dis \brac{\sigma^{\onebysix}  \Zapabs^\half \pap\frac{1}{\Zap}}\la \in \Ltwo_{\Delta^\onebysix}$,  $\dis \brac{\sigma^{\onebysix}  \Zapabs^\half \pap\frac{1}{\Zapabs}}\la \in \Ltwo_{\Delta^\onebysix}$, $\dis \brac{\frac{ \sigma^\onebysix}{\Zapabs^\half}\pap\w}\la \in \Ltwo_{\Delta^\onebysix}$
\smallskip

\item $\dis \brac{\sigma \pap\Th}_a \in \HhalfDeltahalf$
\medskip\\
Proof: Note carefully that we claim the estimate $\dis \brac{\sigma \pap\Th}_a \in \HhalfDeltahalf$ and not $\dis \brac{\sigma \pap\Th}_a \in \Hhalf_{\Delta}$. From the fundamental equation \eqref{form:Zttbar} we have
\[
\Delta\brac{(\Zttbar -i)\Zap} = -i \Delta(\Aone) + \brac{\sigma \pap \Th}_a
\]
and we know that $\EDelta$ controls $\dis \Delta\brac{(\Zttbar -i)\Zap} \in \HhalfDeltahalf$. As $\Delta(\Aone) \in \HhalfDeltahalf$, we obtain the above estimate. 
\bigskip

\item $\dis  \brac{\sigma^\twobythree\pap\Th}\la \in \Ltwo_{\Delta^\onebythree}$
\smallskip

\item $\dis \brac{\sigma^\twobythree \nobrac{\pap^2\frac{1}{\Zap}}}\la \in \Ltwo_{\Delta^\onebythree}$, $\dis \brac{\sigma^\twobythree \nobrac{\pap^2\frac{1}{\Zapabs}}}\la \in \Ltwo_{\Delta^\onebythree}$, $\dis \brac{\frac{\sigma^\twobythree}{\Zapabs}\pap^2\w}\la \in \Ltwo_{\Delta^\onebythree}$ and similarly we have $\dis \brac{\sigma^\twobythree\pap\Dapabs\w}\la \in \Ltwo_{\Delta^\onebythree}$
\medskip\\
Proof: Note carefully that we have $\dis \Ltwo_{\Delta^\onebythree}$ and not $\dis \Ltwo_{\Delta^\twobythree}$ in the above estimate. First see that $\dis \brac{\sigma^\half\Zapabs^\half\pap\frac{1}{\Zap}}\la \in \Ltwo$ as  $(\Esigma)_a (t) \leq C(L_1) $ and $(\Esigma)_a$ controls it. But we have already shown above that $\dis \brac{\sigma^\half\Zapabs^\half\pap\frac{1}{\Zap}}\la \in \LtwoDeltahalf$. Hence we have $\dis \brac{\sigma^\half\Zapabs^\half\pap\frac{1}{\Zap}}\la \in \Ltwo_{\Delta^{\beta}}$ for all $0\leq\beta\leq 1/2$. In a similar way we can show that $\dis \brac{\sigma^\half\Zapabs^\half\pap\frac{1}{\Zapabs}}\la \in \Ltwo_{\Delta^{\beta}}$ etc.  for all $0\leq\beta\leq 1/2$. Due to this argument, the proof of the above estimates follow in the same way as is shown in Sec 5.1 in \cite{Ag19}.
\bigskip

\item $\dis \brac*{\sigma^\onebythree \Th}_a \in \Linfty_{\Delta^\onebysix}\cap \Hhalf_{\Delta^\onebysix}$
\smallskip

\item $\dis \brac{\sigma^\onebythree \pap \frac{1}{\Zap}}\la \in \Linfty_{\Delta^\onebysix}\cap \Hhalf_{\Delta^\onebysix}$,  $\dis \brac{\sigma^\onebythree \pap \frac{1}{\Zapabs}}\la \in \Linfty_{\Delta^\onebysix}\cap \Hhalf_{\Delta^\onebysix}$, $\dis \brac{\sigma^\onebythree \Dapabs\w }\la\in \Linfty_{\Delta^\onebysix}\cap \Hhalf_{\Delta^\onebysix}$
\smallskip

\item $ \brac{\sigma \pap\Dap\Th}_a \in \LtwoDeltahalf$, $ \brac{\sigma \Dapabs\pap\Th}_a \in \LtwoDeltahalf$, $ \brac{\sigma \pap\Dapabs\Th}_a \in \LtwoDeltahalf$
\smallskip

\item $\dis \brac{\frac{\sigma}{\Zapabs}\pap^3\frac{1}{\Zap}}\la \in \LtwoDeltahalf$, $\dis \brac{\frac{\sigma}{\Zapabs}\pap^3\frac{1}{\Zapabs}}\la \in \LtwoDeltahalf$, $\dis \brac{\frac{\sigma}{\Zapabs^2}\pap^3\w}\la \in \LtwoDeltahalf$
\smallskip

\item $\dis \brac{\frac{\sigma^\half}{\Zapabs^\half}\pap^2\frac{1}{\Zap}}\la \in \LtwoDeltahalf$, $\dis \brac{\frac{\sigma^\half}{\Zapabs^\half}\pap^2\frac{1}{\Zapabs}}\la \in \LtwoDeltahalf$, $\dis \brac{\frac{\sigma^\half}{\Zapabs^\threebytwo}\pap^2\w}\la \in \LtwoDeltahalf$ and also $\dis \brac{\frac{\sigma^\half}{\Zapabs^\half}\pap\Th}\la \in \LtwoDeltahalf $
\smallskip

\item $\dis \brac{\sigma^{\half}  \Zapabs^\half \pap\frac{1}{\Zap}}\la \in \WcalDeltahalf$,  $\dis \brac{\sigma^{\half}  \Zapabs^\half \pap\frac{1}{\Zapabs}}\la \in \WcalDeltahalf$, $\dis \brac{\frac{ \sigma^\half}{\Zapabs^\half}\pap\w}\la \in \WcalDeltahalf$ and   $\dis \brac{\sigma^\half\Zapabs^\half\Real\Th}_a \in \WcalDeltahalf$
\smallskip

\item $\dis \brac{\frac{\sigma^\fivebysix}{\Zapabs^\half}\pap\Th}\la \in \Linfty_{\Delta^{\frac{5}{12}}}\cap\Hhalf_{\Delta^{\frac{5}{12}}}$
\smallskip

\item $\dis \brac{\frac{\sigma^\fivebysix}{\Zapabs^\half}\pap^2\frac{1}{\Zap}}\la \in \Linfty_{\Delta^{\frac{5}{12}}}\cap\Hhalf_{\Delta^{\frac{5}{12}}}$, $\dis \brac{\frac{\sigma^\fivebysix}{\Zapabs^\half}\pap^2\frac{1}{\Zapabs}}\la \in \Linfty_{\Delta^{\frac{5}{12}}}\cap\Hhalf_{\Delta^{\frac{5}{12}}}$ and similarly  $\dis \brac{\frac{\sigma^\fivebysix}{\Zapabs^\threebytwo}\pap^2\w}\la \in \Linfty_{\Delta^{\frac{5}{12}}}\cap\Hhalf_{\Delta^{\frac{5}{12}}}$
\smallskip

\item $\dis \brac{\frac{\sigma^\half}{\Zapabs^\threebytwo}\pap\Th}\la \in \CcalDeltahalf$
\smallskip

\item $\dis \brac{\frac{\sigma^\half}{\Zapabs^\threebytwo}\pap^2\frac{1}{\Zap}}\la \in \CcalDeltahalf$, $\dis \brac{\frac{\sigma^\half}{\Zapabs^\threebytwo}\pap^2\frac{1}{\Zapabs}}\la \in \CcalDeltahalf$, $\dis \brac{\frac{\sigma^\half}{\Zapabs^\fivebytwo}\pap^2\w}\la \in \CcalDeltahalf$
\smallskip

\item $\dis \brac{\sigma \Dapbar\Dap\Th}_a \in \CcalDeltahalf$, $\dis \brac{\sigma \Dap^2\Th}_a \in \CcalDeltahalf$, $\dis \brac{\sigma \Dapabs^2\Th}_a \in \CcalDeltahalf $, $\dis \brac{\frac{\sigma}{\Zapabs^2}\pap^2\Th}\la \in \CcalDeltahalf$
\smallskip

\item $\dis \brac{\frac{\sigma}{\Zapabs^2}\pap^3\frac{1}{\Zap}}\la \in \CcalDeltahalf$, $\dis \brac{\frac{\sigma}{\Zapabs^2}\pap^3\frac{1}{\Zapabs}}\la \in \CcalDeltahalf$, $\dis \brac{\sigma\pap^3\frac{1}{\Zapabs^3}}\la \in \CcalDeltahalf$ and similarly $\dis \brac{\frac{\sigma}{\Zapabs^3}\pap^3\w}\la \in \CcalDeltahalf$
\smallskip

\item $\dis \brac{\frac{\sigma^\half}{\Zapabs^\half}\pap\Ztapbar}\la \in \LtwoDeltahalf$, $\dis \brac{\sigma^\half\Zapabs^\half\pap\Dapabs\Ztbar}\la \in \LtwoDeltahalf$ and $\dis \brac{\sigma^\half\Zapabs^\half\pap\Dapbar\Ztbar}\la \in \LtwoDeltahalf$
\smallskip

\item $\dis \brac{\frac{\sigma^\half}{\Zapabs^\fivebytwo}\pap^2\Ztapbar}\la  \in \LtwoDeltahalf$, $\dis \brac{\frac{\sigma^\half}{\Zapabs^\threebytwo}\pap^2\Dapbar\Ztbar}\la \in \LtwoDeltahalf$, $\dis \brac{\frac{\sigma^\half}{\Zapabs^\half}\pap\Dapabs\Dapbar\Ztbar}\la \in \LtwoDeltahalf$ and also $\dis \brac{\frac{\sigma^\half}{\Zapabs^\half}\Dapabs^2\Ztapbar}\la \in \LtwoDeltahalf$, $\dis \brac{\frac{\sigma^\half}{\Zapabs^\half}\pap\Dapbar^2\Ztbar}\la \in \LtwoDeltahalf$
\smallskip

\item $\dis \brac{\frac{\sigma^\half}{\Zapabs^\threebytwo}\pap\Ztapbar}\la \in \WcalDeltahalf\cap\CcalDeltahalf$, $\dis \brac{\frac{\sigma^\half}{\Zapabs^\half}\pap\Dapabs\Ztbar}\la \in \WcalDeltahalf\cap\CcalDeltahalf$ and similarly we have  $\dis \brac{\frac{\sigma^\half}{\Zapabs^\half}\pap\Dapbar\Ztbar}\la \in \WcalDeltahalf\cap\CcalDeltahalf$, $\dis \brac{\frac{\sigma^\half}{\Zapabs^\half}\pap\Dap\Ztbar}\la \in \WcalDeltahalf\cap\CcalDeltahalf$
\smallskip

\item $\dis \brac{\frac{\sigma^\onebysix}{\Zapabs^\threebytwo}\pap\Ztapbar}\la \in \Ltwo_{\Delta^\onebysix}$, $\dis \brac{\frac{\sigma^\onebysix}{\Zapabs^\half}\pap\Dapabs\Ztbar}\la \in\Ltwo_{\Delta^\onebysix}$, $\dis \brac{\frac{\sigma^\onebysix}{\Zapabs^\half}\pap\Dapbar\Ztbar}\la \in \Ltwo_{\Delta^\onebysix}$
\smallskip

\item $\dis \brac{\frac{\sigma^\onebythree}{\Zapabs}\pap\Ztapbar}\la \in  \Ltwo_{\Delta^\onebythree}$, $\dis \brac{\sigma^\onebythree\pap\Dapabs\Ztbar}\la \in  \Ltwo_{\Delta^\onebythree}$, $\dis \brac{\sigma^\onebythree\pap\Dapbar\Ztbar}\la \in \Ltwo_{\Delta^\onebythree}$
\smallskip

\item $\dis \brac{\sigma^\onebysix \frac{\Ztapbar}{\Zapabs^\half}}\la \in \Wcal_{\Delta^\onebysix}$
\smallskip

\item $\dis \cbrac{\sigma^\onebysix\pap\Pa\brac{\frac{\Zt}{\Zaphalf}}}\la \in \Linfty_{\Delta^\onebysix} $
\smallskip

\item $\dis \brac{\frac{\sigma^\onebythree}{\Zapabs^2}\pap\Ztapbar}\la \in \Linfty_{\Delta^\onebythree}\cap\Hhalf_{\Delta^\onebythree}$
\smallskip

\item $\dis \brac*[\big]{\sigma^\onebythree\pap\bvarap}_a \in  \Ltwo_{\Delta^\onebythree}$
\smallskip

\item $\dis \brac{\frac{\sigma^\onebysix}{\Zapabs^\half}\pap\bvarap}\la \in \Ltwo_{\Delta^\onebysix}$ 
\smallskip

\item $\dis \brac{\frac{\sigma^\half}{\Zapabs^\half}\pap\bvarap}\la \in \LinftyDeltahalf$
\smallskip

\item $\dis \brac{\frac{\sigma^\half}{\Zapabs^\threebytwo}\pap^2\bvarap}\la \in \LtwoDeltahalf$, $\dis \brac{\frac{\sigma^\half}{\Zapabs^\half}\pap\Dapabs\bvarap}\la \in \LtwoDeltahalf$ and $\dis \brac{\frac{\sigma^\half}{\Zapabs^\half}\pap\Dap\bvarap}\la \in \LtwoDeltahalf$
\smallskip

\item $\dis \brac{\frac{\sigma^\half}{\Zapabs^\half}\pap\Aone}\la \in \LinftyDeltahalf$
\smallskip

\item $\dis \brac{\frac{\sigma^\half}{\Zapabs^\threebytwo}\pap^2\Aone}\la \in \LtwoDeltahalf$
\smallskip

\item $\dis \Delta\brac{(\Id - \Hil)\Dt^2\Th} \in \LtwoDeltahalf$, $\dis \Delta\brac{(\Id - \Hil)\Dt^2\Ztapbar} \in \LtwoDeltahalf$, $\dis \Delta\brac{(\Id - \Hil)\Dt^2 \Dap\Ztbar} \in \HhalfDeltahalf$
\smallskip\\
Proof: Observe that the terms are of the form $\Delta\brac{(\Id - \Hil)\Dt^2 f}$ with $f$ satisfying $\Pa f = 0$. Hence we can use \propref{prop:Dt^2hol} to get
\begin{align*}
\begin{aligned}[t]
 & (\Id - \Hil)\Dt^2 f \\
 & = 2\sqbrac{\Pa\brac{\frac{\Zt}{\Zap}},\Hil}\pap(\Dt f) - \sqbrac{\Pa\brac{\frac{\Zt}{\Zap}}, \Pa\brac{\frac{\Zt}{\Zap}}; \pap f} \\
& \quad + \frac{1}{4}(\Id - \Hil)\cbrac{\brac{\sqbrac{\frac{1}{\Zap},\Hil}\Ztap }\sqbrac{\Zt,\Hil}\Dap f } - \frac{1}{4}(\Id - \Hil)\cbrac{\brac{\sqbrac{\Zt,\Hil}\pap\frac{1}{\Zap}}^2 f} \\
& \quad + \half \sqbrac{\sqbrac{\Zt, \sqbrac{\Zt,\Hil} }\pap\frac{1}{\Zap},\Hil }\pap\brac{\frac{f}{\Zap}} + \sqbrac{\Ztt,\Hil}\Dap f
\end{aligned}
\end{align*}
Now we apply $\Delta$ to the above equation and handle each term individually. 
\begin{enumerate}
\item Using \propref{prop:commutator} and \propref{prop:HilHtilcaldiff} the term $\dis \Delta\cbrac{\sqbrac{\Pa\brac{\frac{\Zt}{\Zap}},\Hil}\pap(\Dt f)} $ is easily shown to be in $\LtwoDeltahalf$ for $f = \Th, \Ztapbar$ and in $\HhalfDeltahalf$ for $f = \Dap\Ztbar$. 

\item Using \propref{prop:triple} and \propref{prop:HilHtilcaldiff} we see that $\dis \Delta\sqbrac{\Pa\brac{\frac{\Zt}{\Zap}}, \Pa\brac{\frac{\Zt}{\Zap}}; \pap f}$ is in $\LtwoDeltahalf$ for $f = \Th, \Ztapbar$ and in $\HhalfDeltahalf$ for $f = \Dap\Ztbar$. 

\item Using \propref{prop:commutator}  and \propref{prop:HilHtilcaldiff} we see that $\dis \Delta\brac{\sqbrac{\frac{1}{\Zap},\Hil}\Ztap } \in \LinftyDeltahalf\cap\HhalfDeltahalf $

\item For $f = \Dap\Ztbar$ we see by using \propref{prop:commutator} and \propref{prop:HilHtilcaldiff} that $\dis \Delta \brac{\sqbrac{\Zt,\Hil}\Dap f} \in \LinftyDeltahalf\cap\HhalfDeltahalf $. For $f = \Th, \Ztapbar$ we observe that
\begin{align*}
\sqbrac{\Zt,\Hil}\Dap f = \sqbrac{\Pa\brac{\frac{\Zt}{\Zap}}, \Hil}\pap f
\end{align*}
Hence by using  \propref{prop:commutator} and \propref{prop:HilHtilcaldiff} we see that $\dis \Delta \brac{\sqbrac{\Zt,\Hil}\Dap f} \in \LtwoDeltahalf$ for $f = \Th, \Ztapbar$. Hence combining these estimates with the previous estimate we see that $\dis \Delta (\Id - \Hil)\cbrac{\brac{\sqbrac{\frac{1}{\Zap},\Hil}\Ztap }\sqbrac{\Zt,\Hil}\Dap f } $ is in $\LtwoDeltahalf$ for $f = \Th, \Ztapbar$ and in $\HhalfDeltahalf$ for $f = \Dap\Ztbar$. 

\item Using \propref{prop:commutator} and \propref{prop:HilHtilcaldiff} we see that $\dis \Delta\brac{\sqbrac{\Zt,\Hil}\pap\frac{1}{\Zap}} \in \LinftyDeltahalf\cap\HhalfDeltahalf$. Hence using this we see that $\dis \Delta (\Id - \Hil)\cbrac*[\bigg]{\brac{\sqbrac{\Zt,\Hil}\pap\frac{1}{\Zap}}^2 f}$ is in $\LtwoDeltahalf$ for $f = \Th, \Ztapbar$ and in $\HhalfDeltahalf$ for $f = \Dap\Ztbar$. 

\item Using \propref{prop:triple} and \propref{prop:HilHtilcaldiff} we see that $\dis \pap\Delta\brac{\sqbrac{\Zt, \sqbrac{\Zt,\Hil} }\pap\frac{1}{\Zap} } \in \LtwoDeltahalf$

\item For $f = \Dap\Ztbar$ we observe that $\dis \pap\Delta\brac{\frac{f}{\Zap}}  \in \LtwoDeltahalf$. For $f = \Th,\Ztapbar$ we see that $\dis \Delta\brac{\frac{f}{\Zap}} \in \CcalDeltahalf$.  Hence combining this with the previous estimate and using \propref{prop:commutator} and \propref{prop:HilHtilcaldiff} we see that $\dis \sqbrac{\sqbrac{\Zt, \sqbrac{\Zt,\Hil} }\pap\frac{1}{\Zap},\Hil }\pap\brac{\frac{f}{\Zap}} $  is in $\LtwoDeltahalf$ for $f = \Th, \Ztapbar$ and in $\HhalfDeltahalf$ for $f = \Dap\Ztbar$. 

\item For $f = \Dap\Ztbar$ we observe that by using \propref{prop:commutator} and \propref{prop:HilHtilcaldiff} we obtain $\dis \Delta\brac{\sqbrac{\Ztt,\Hil}\Dap f} \in \HhalfDeltahalf$. For $f = \Th,\Ztapbar$ we see that
\begin{align*}
\sqbrac{\Ztt,\Hil}\Dap f =  - \sqbrac{\Ztt,\Hil}\cbrac*[\Big]{\brac*[\Big]{\pap\frac{1}{\Zap}} f} +  \sqbrac{\Ztt,\Hil}\pap\brac{\frac{f}{\Zap}}
\end{align*}
Now observe that for $f = \Th,\Ztapbar$ we have $\dis \Delta \cbrac*[\Big]{\brac*[\Big]{\pap\frac{1}{\Zap}} f} \in \Lone_{\sqrt{\Delta}}$ and $\dis \Delta \brac{\frac{f}{\Zap}} \in \CcalDeltahalf$. Hence using \propref{prop:commutator} and \propref{prop:HilHtilcaldiff} we see that $\dis \Delta\brac{\sqbrac{\Ztt,\Hil}\Dap f} \in \LtwoDeltahalf$ for $f = \Th,\Ztapbar$. 

\end{enumerate}

Hence we have the required estimate.

\medskip

\item $\dis \brac{\sigma(\Id - \Hil)\Dapabs^3\Th}_a \in \LtwoDeltahalf, \brac{\sigma(\Id - \Hil)\Dapabs^3\Ztapbar}_a \in \LtwoDeltahalf, \dis \brac{\sigma(\Id - \Hil)\Dapabs^3\Dap\Ztbar}_a \in \HhalfDeltahalf $
\smallskip

\item $\dis \Delta\cbrac{\sqbrac{\Dt^2, \frac{1}{\Zap}}\Ztapbar} \in \CcalDeltahalf$, $\dis \Delta\cbrac{\sqbrac{\Dt^2, \frac{1}{\Zapbar}}\Ztapbar} \in \CcalDeltahalf$
\smallskip

\item $\dis \Delta\cbrac{\sqbrac{i\frac{\Aone}{\Zapabs^2}\pap, \frac{1}{\Zap}}\Ztapbar} \in \CcalDeltahalf$, $\dis \Delta\cbrac{\sqbrac{i\frac{\Aone}{\Zapabs^2}\pap, \frac{1}{\Zapbar}}\Ztapbar} \in \CcalDeltahalf$
\smallskip

\item  $\dis \cbrac{(\Id - \Hil)\sqbrac{i\sigma\Dapabs^3, \frac{1}{\Zap}}\Ztapbar}\la \in \HhalfDeltahalf$, $\dis \cbrac{(\Id - \Hil)\sqbrac{i\sigma\Dapabs^3, \frac{1}{\Zapbar}}\Ztapbar}\la \in \HhalfDeltahalf$ and we also have $\dis \cbrac{\Zapabs\sqbrac{i\sigma\Dapabs^3, \frac{1}{\Zap}}\Ztapbar}\la \in \LtwoDeltahalf$, $\dis \cbrac{\Zapabs\sqbrac{i\sigma\Dapabs^3, \frac{1}{\Zapbar}}\Ztapbar}\la \in \LtwoDeltahalf$
\smallskip

\item $\dis \Delta(\Rone) \in \CcalDeltahalf$
\smallskip

\item $\dis \Delta(\Jone) \in \LinftyDeltahalf\cap\HhalfDeltahalf$
\smallskip

\item $\dis \Delta(\Dapabs\Jone) \in \LtwoDeltahalf$ and hence $\dis \Delta(\Jone) \in \WcalDeltahalf$
\smallskip

\item $\dis \Delta(\Rtwo) \in \LtwoDeltahalf$
\smallskip

\item $\dis \Delta(\Jtwo) \in \LtwoDeltahalf$
\smallskip

\item $\dis \brac{\sigma\sqbrac{\frac{1}{\Zap},\Hil}\Dapabs^3\Ztapbar}\la \in \HhalfDeltahalf$, $\dis \brac{\sigma\sqbrac{\frac{1}{\Zapbar},\Hil}\Dapabs^3\Ztapbar}\la \in \HhalfDeltahalf$
\smallskip

\item $\dis \Delta\cbrac{(\Id - \Hil)\Dt^2 \Dapbar\Ztbar} \in \HhalfDeltahalf$
\smallskip

\item $\dis \brac{\sigma(\Id - \Hil)\Dapabs^3\Dapbar\Ztbar}_a \in \HhalfDeltahalf $
\smallskip

\item $\dis \Delta\brac{\frac{1}{\Zapabs^2}\pap\Jone} \in \HhalfDeltahalf$ and hence $\dis \Delta\brac{\frac{1}{\Zapabs^2}\pap\Jone} \in \CcalDeltahalf$
\smallskip

\end{enumerate} 
\bigskip

\subsection{Closing the energy estimate for  $\EDelta$}\label{sec:closeEDelta}

We now complete the proof of \thmref{thm:aprioriEDelta}. To simplify the calculations, we will continue to use the notation used in \secref{sec:quantEDelta} and introduce another notation: If $a(t), b(t) $ are functions of time we write $a \approx b$ if there exists a constant $C(L_1)$ depending only on $L_1$ (where $L_1$ was defined in \thmref{thm:aprioriEDelta}) with $\abs{a(t)-b(t)} \leq C(L_1) \EDelta (t)$. Observe that $\approx$ is an equivalence relation. With this notation, proving \thmref{thm:aprioriEDelta} is equivalent to showing $ \frac{d\EDelta(t)}{dt}  \approx 0$. 

First observe that by replacing $\lambda$ by $\sigma$ we get from \thmref{thm:aprioriEaux} 
\begin{align*}
\frac{d}{dt} (\sigma(\Eaux)_b(t)) \leq P((\Ecalhigh)_b(t))(\sigma(\Eaux)_b(t))
\end{align*}
Now from the assumptions of  \thmref{thm:aprioriEaux}, we have that $\sup_{t \in [0,T]}(\Ecalhigh)_b(t) \leq L_1$ and so we get
\begin{align*}
\frac{d}{dt} (\sigma(\Eaux)_b(t)) \leq C(L_1)\EDelta(t)
\end{align*}
So we now need to control the other components of $\EDelta$.
\bigskip

\subsubsection{Controlling $\EDeltazero$ \nopunct}  \hspace*{\fill} \medskip

We recall that 
\begin{align*}
\EDeltazero =  
\begin{aligned}[t]
&  \norm[\infty]{\brac{\sigma^{\half} \Zapabs^\half \pap\frac{1}{\Zap}}\la}^2  +   \norm[2]{\brac{\sigma^\onebysix\Zapabs^\half\pap\frac{1}{\Zap}}\la}^6  + \norm[2]{\brac{\frac{\sigma^\half}{\Zapabs^\half}\pap^2\frac{1}{\Zap}}\la}^2 \\
& + \norm[\infty]{\Delta(\w)}^2  +  \norm[2]{\Delta\brac{ \pap\frac{1}{\Zap}}}^2 + \norm*[\big][\Linfty\cap\Hhalf]{\htilap - 1}^2 + \norm[2]{\Dapabs_a(\htilap -1)}^2 \\
&  + \norm[\infty]{\Zapabs_a\Util\brac{\frac{1}{\Zapabs_b}} - 1}^2
\end{aligned}
\end{align*}

The control of the time derivatives of the first three quantities is the same as the one done in Sec 5.2.1 in \cite{Ag19} while controlling the time derivative of $\Esigmazero$ there. Now we control the other quantities.
\begin{enumerate}[leftmargin =*, align=left]
\item We observe from \eqref{form:Dtg} that
\begin{align*}
(\Dt)_a \Delta (\w) = \Delta(\Dt\w) = -\Delta\brac{i\w\Imag(\Dapbar\Ztbar) }  \in \LinftyDeltahalf
\end{align*}
Now using the computation from Sec 5.2.1 in \cite{Ag19} we obtain
\begin{align*}
\frac{d}{dt}\norm[\infty]{\Delta(\w)}^2 \lesssim \norm[\infty]{\Delta{\w}}\norm[\infty]{(\Dt)_a\Delta(\w)} \leq C(L_1)\EDelta
\end{align*}

\item By using \lemref{lem:timederiv} we obtain
\begin{align*}
\frac{d}{dt} \norm[2]{\Delta\brac{\pap\frac{1}{\Zap}}}^2 & \lesssim \norm[\infty]{(\bvarap)_a}\norm[2]{\Delta\brac{\pap\frac{1}{\Zap}}}^2 + \norm[2]{\Delta\brac{\pap\frac{1}{\Zap}}}\norm[2]{(\Dt)_a\Delta\brac{\pap\frac{1}{\Zap}}} \\
& \leq C(L_1)\EDelta
\end{align*}

\item By the calculation of \lemref{lem:quantM} we have
\begin{align*}
(\Dt)_a \htilap = \htilap\brac*{\Util(\bvarap)_b - (\bvarap)_a } = -\htilap\Delta(\bvarap)
\end{align*}
As $\Delta(\bvarap) \in \LinftyDeltahalf\cap\HhalfDeltahalf$ we have that $(\Dt)_a \htilap \in \LinftyDeltahalf\cap\HhalfDeltahalf$. Hence by \lemref{lem:timederiv} and by using the computation from Sec 5.2.1 in \cite{Ag19} we have
\begin{align*}
\frac{d}{dt}\norm*[\big][\Linfty\cap\Hhalf]{\htilap - 1}^2 \leq C(L_1)\norm*[\big][\Linfty\cap\Hhalf]{\htilap - 1}\norm*[\big][\Linfty\cap\Hhalf]{(\Dt)_a\htilap } \leq C(L_1)\EDelta
\end{align*}

\item By the calculation of \lemref{lem:quantM} we have
\begin{align*}
 (\Dt)_a \Dapabs_a\htilap & =   -\Real(\Dap\Zt)_a \Dapabs_a\htilap + (\Dapabs_a\htilap)(\Util(\bvarap)_b - (\bvarap)_a)  \\
& \quad + \htilap\cbrac{\htilap\brac{\frac{1}{\Zapabs_a}\Util(\Zapabs)_b }\Util(\Dapabs\bvarap)_b - (\Dapabs\bvarap)_a }
\end{align*}
Now as $\Dapabs_a\htilap \in \LtwoDeltahalf$, $\htilap - 1 \in \LinftyDeltahalf$, $\dis \frac{1}{\Zapabs_a}\Util(\Zapabs)_b -1 \in \LinftyDeltahalf$ and $\dis \Delta(\Dapabs\bvarap) \in \LtwoDeltahalf$ we see that $\dis (\Dt)_a \Dapabs_a\htilap  \in \LtwoDeltahalf$. Hence by \lemref{lem:timederiv}
\begin{align*}
\frac{d}{dt}\norm[2]{\Dapabs_a(\htilap -1)}^2 \leq C(L_1)\norm[2]{\Dapabs_a(\htilap -1)}\norm[2]{(\Dt)_a \Dapabs_a\htilap} \leq C(L_1)\EDelta
\end{align*}
\smallskip

\item By the calculation of \lemref{lem:quantM} we have
\begin{align*}
& (\Dt)_a\cbrac{\Zapabs_a\Util\brac{\frac{1}{\Zapabs}}_b } \\
&= \cbrac{\Zapabs_a\Util\brac{\frac{1}{\Zapabs}}_b }\Real\cbrac{(\Dap\Zt)_a - \Util(\Dap\Zt)_b - (\bvarap)_a + \Util(\bvarap)_b  }
\end{align*}
Now as $\Delta(\Dap\Zt) \in \LinftyDeltahalf$ and $\Delta(\bvarap) \in \LinftyDeltahalf$ we see that $\dis (\Dt)_a\cbrac{\Zapabs_a\Util\brac{\frac{1}{\Zapabs}}_b } \in \LinftyDeltahalf$. Hence by using the computation from Sec 5.2.1 in \cite{Ag19} we have
\begin{align*}
\frac{d}{dt} \norm[\infty]{\Zapabs_a\Util\brac{\frac{1}{\Zapabs_b}} - 1}^2 & \leq C(L_1) \norm[\infty]{\Zapabs_a\Util\brac{\frac{1}{\Zapabs_b}} - 1} \norm[\infty]{(\Dt)_a\cbrac{\Zapabs_a\Util\brac{\frac{1}{\Zapabs}}_b }} \\
& \leq C(L_1)\EDelta
\end{align*}

\end{enumerate}
\medskip

\subsubsection{Controlling $\EDeltaone$\nopunct}  \hspace*{\fill} \medskip

Recall that 
\begin{align*}
\EDeltaone = \norm[\Hhalf]{\Delta\cbrac{(\Zttbar-i)\Zap}}^2 + \norm[2]{(\sqrt{\Aone})_a\Delta(\Ztbarap)}^2 + \norm[2]{\brac{\frac{\sigma^\half}{\Zapabs^\half}\pap\Ztapbar}\la}^2
\end{align*}
We will first simplify the time derivative of each of the individual terms before combining them. 
\begin{enumerate}[leftmargin =*, align=left]
\item By using \lemref{lem:timederiv} we get
\begin{align*}
& \frac{d}{dt} \int \abs{\papabs^\half \Delta \cbrac{ (\Zttbar -i)\Zap }}^2\difff\ap \\
& \approx 2\Real \int \cbrac{\papabs \Delta\brac{ (\Ztt +i)\Zapbar }}(\Dt)_a\Delta\brac{(\Zttbar -i)\Zap}  \difff\ap
\end{align*}
Now $(\Dt)_a\Delta\brac{(\Zttbar -i)\Zap} = \Delta\brac{\Dt(\Zttbar -i)\Zap}$ and we have 
\begin{align*}
 \Dt\brac{(\Zttbar -i)\Zap} &= \Ztttbar\Zap + (\Dap\Zt -\bvarap)(\Zttbar-i)\Zap \\
 & = \Ztttbar\Zap + (\Dap\Zt -\bvarap)(-i\Aone + \sigma\pap\Th)
\end{align*}
Now by following the argument in Sec 5.2.2 of \cite{Ag19} we see that $\Delta \cbrac{(\Dap\Zt -\bvarap)(-i\Aone + \sigma\pap\Th)} \in \HhalfDeltahalf$. Hence we have 
\begin{align*}
\frac{d}{dt} \int \abs{\papabs^\half \Delta \cbrac{ (\Zttbar -i)\Zap }}^2\difff\ap \approx 2\Real \int \cbrac{\papabs \Delta\brac{ (\Ztt +i)\Zapbar }}\Delta\brac{\Ztttbar\Zap}  \difff\ap
\end{align*}

\item We see from \lemref{lem:timederiv} that
\begin{align*}
& \frac{d}{dt} \int (\Aone)_a \abs{\Delta( \Ztapbar) }^2\difff\ap \\
& =  \int (\bvarap\Aone + \Dt\Aone)_a\abs{\Delta(\Ztapbar)}^2 \diff\ap + 2\Real \int (\Aone)_a \Delta(\Ztapbar) \Delta\brac{-\bvarap\Ztap + \Zttap} \diff \ap \\
& \leq C(L_1)\EDelta
\end{align*}

\item By following the proof of time derivative of $\Esigmaone$ in Sec 5.2.2 in \cite{Ag19} we get
\begin{align*}
 & \sigma\frac{d}{dt}  \int \abs*[\bigg]{\brac{\frac{1}{\Zapabs^\half}\pap \Ztapbar }\la}^2\difff\ap  \\
 & \approx 2 \Real \int \cbrac{-i\sigma\pap\brac{\frac{1}{\Zapbar}\Dapabs\Ztapbar}}\la\papabs\brac{(\Ztt+i)\Zapbar  }_a \difff\ap \\
& = 2 \Real \int \cbrac{-i\sigma\pap\brac{\frac{1}{\Zapbar}\Dapabs\Ztapbar}}\la\papabs\Delta\brac{(\Ztt+i)\Zapbar  } \difff\ap  \\
& \quad + 2 \Real \int \cbrac{-i\sigma\pap\brac{\frac{1}{\Zapbar}\Dapabs\Ztapbar}}\la\papabs\Util\brac{(\Ztt+i)\Zapbar  }_b \difff\ap \\
\end{align*}
We now show that the second term is controlled. Observe that $((\Ztt+i)\Zapbar)_b = i(\Aone)_b$ and that $\dis \brac{\frac{\sigma^\half}{\Zapabs^\half}\pap\Ztapbar}\la \in \LtwoDeltahalf$. Hence we only need to show that $\dis \frac{\sigma^\half}{\Zapabs_a^\threebytwo}\pap\papabs\Util(\Aone)_b \in \LtwoDeltahalf$. Now
\begin{align*}
\frac{\sigma^\half}{\Zapabs_a^\threebytwo}\pap\papabs\Util(\Aone)_b = i\sqbrac{\frac{\sigma^\half}{\Zapabs_a^\threebytwo},\Hil}\pap^2\Util(\Aone)_b + i\Hil\cbrac{\frac{\sigma^\half}{\Zapabs_a^\threebytwo}\pap^2\Util(\Aone)_b}
\end{align*}
The first term is easily shown to be in $\LtwoDeltahalf$ as from \propref{prop:commutator} we have
\begin{align*}
\norm[2]{\sqbrac{\frac{\sigma^\half}{\Zapabs_a^\threebytwo},\Hil}\pap^2\Util(\Aone)_b} \lesssim \norm[2]{\sigma^\half\pap^2\frac{1}{\Zapabs_a^\threebytwo}}\norm[\infty]{\Util(\Aone)_b}
\end{align*}
Hence it is enough to show that $\dis \frac{\sigma^\half}{\Zapabs_a^\threebytwo}\pap^2\Util(\Aone)_b \in \LtwoDeltahalf$. We see that
\begin{align*}
\frac{\sigma^\half}{\Zapabs_a^\threebytwo}\pap^2\Util(\Aone)_b = \frac{\sigma^\half}{\Zapabs_a^\threebytwo}\pap\brac{\htilap\Util(\pap\Aone)_b}
\end{align*}
and hence we have the estimate
\begin{align*}
& \norm[2]{\frac{\sigma^\half}{\Zapabs_a^\threebytwo}\pap^2\Util(\Aone)_b} \\
& \leq C(L_1)\norm*[\big][2]{\Dapabs_a\htilap}\norm[\infty]{\brac{\frac{\sigma^\half}{\Zapabs^\half}\pap\Aone}_b} + C(L_1)\norm[2]{\brac{\frac{\sigma^\half}{\Zapabs^\threebytwo}\pap^2\Aone}_b}
\end{align*}
Now $\dis \brac{\frac{\sigma^\half}{\Zapabs^\half}\pap\Aone}_b \in \LtwoDeltahalf$ and $\dis \brac{\frac{\sigma^\half}{\Zapabs^\threebytwo}\pap^2\Aone}_b \in \LtwoDeltahalf $ as they are controlled by  $\sigma(\Eaux)_b$. Hence we have shown 
\begin{align*}
 & \sigma\frac{d}{dt}  \int \abs*[\bigg]{\brac{\frac{1}{\Zapabs^\half}\pap \Ztapbar }\la}^2\difff\ap  \\
 & \approx  2 \Real \int \cbrac{-i\sigma\pap\brac{\frac{1}{\Zapbar}\Dapabs\Ztapbar}\la}\papabs\Delta\brac{(\Ztt+i)\Zapbar  } \difff\ap 
\end{align*}

\medskip
\item Now combining the terms we have
\begin{align*}
\frac{d}{dt} \EDeltaone & \approx  2 \Real \int \cbrac{\Delta\brac{\Ztttbar\Zap} -i\sigma\pap\brac{\frac{1}{\Zapbar}\Dapabs\Ztapbar}\la}\papabs\Delta\brac{(\Ztt+i)\Zapbar  } \difff\ap 
\end{align*} 
Recall from \eqref{eq:ZtZap} that
\begin{align*} 
\begin{split}
& \Ztttbar\Zap + i\Aone\Dapbar\Ztbar -i\sigma\pap\brac{\frac{1}{\Zapbar}\Dapabs\Ztapbar  } \\
& = i\sigma\pap\cbrac{\brac{\Dapabs \frac{1}{\Zapbar}}\Ztapbar } -\sigma(\Dap\Zt)\pap\Th -\sigma\pap\cbrac{(\Real\Th)\Dapbar\Ztbar} -i\Jone
\end{split}
\end{align*}
Now we just apply $\Delta$ to the above equation and control the quantities. We see that $\Delta(\Jone) \in \HhalfDeltahalf$, $\Delta(\Aone\Dapbar\Ztbar) \in \LinftyDeltahalf\cap\HhalfDeltahalf$ and the other terms with $\sigma$ are controlled as in the proof of the time derivative of $\Esigmaone$ in Sec 5.2.2 in \cite{Ag19}. Hence
\begin{align*}
\Delta\brac{\Ztttbar\Zap} -i\sigma\pap\brac{\frac{1}{\Zapbar}\Dapabs\Ztapbar}\la \in \HhalfDeltahalf
\end{align*}
and hence we have shown that $\dis \frac{d}{dt}\EDeltaone \leq C(L_1)\EDelta$.  
\end{enumerate} 
\medskip

\subsubsection{Controlling $\EDeltatwo$ and $\EDeltathree$\nopunct}  \hspace*{\fill} \medskip 
 
Note that both $\EDeltatwo$ and $\EDeltathree$ are of the form
\begin{align*}
\E_{\Delta,i} = \norm[2]{\Delta(\Dt f)}^2 + \norm[\Hhalf]{\brac{\frac{\sqrt{\Aone}}{\Zapabs}}\la \Delta( f)}^2 + \norm[\Hhalf]{\brac{\frac{\sigma^\half}{\Zapabs^\threebytwo}\pap f}\la}^2
\end{align*}
Where $f = \Ztapbar$ for $i=2$ and $f=\Th$ for $i=3$. Also note that $\Ph f = f$ for these choices of $f$. We will simplify the time derivative of each of the terms individually before combining them.

\begin{enumerate}[leftmargin =*, align=left]

\item From \lemref{lem:timederiv} we have
\begin{align*}
\frac{d}{dt} \int \abs{\Delta(\Dt f)}^2 \diff\ap \approx 2\Real \int (\Delta(\Dt^2 f))\Delta(\Dt \bar{f}) \diff\ap
\end{align*}

\item By following the proof of time derivative of $\Esigmatwo,\Esigmathree$ in Sec 5.2.3 in \cite{Ag19} we have
\begin{align*}
& \frac{d}{dt} \int \abs*[\bigg]{ \papabs^\half \cbrac*[\bigg]{\brac{\frac{\sqrt{\Aone}}{\Zapabs}}\la \Delta( f)}}^2\difff\ap \\
& \approx 2\Real \int  \cbrac{\brac{\frac{\sqrt{\Aone}}{\Zapabs}}\la\papabs\brac*[\bigg]{\brac{\frac{\sqrt{\Aone}}{\Zapabs}}\la \Delta(f)}} \Delta(\Dt\bar{f}) \diff\ap
\end{align*}
Now using \propref{prop:commutator} and \lemref{lem:CWDelta} we see that 
\begin{align*}
 \brac{\frac{\sqrt{\Aone}}{\Zapabs}}\la\papabs\brac*[\bigg]{\brac{\frac{\sqrt{\Aone}}{\Zapabs}}\la \Delta(f)} & \approxLtwoDeltahalf i\Hil\cbrac{\brac{\frac{\sqrt{\Aone}}{\Zapabs}}\la\pap\brac{\brac{\frac{\sqrt{\Aone}}{\Zapabs}}\la\Delta(f)} } \\
& \approxLtwoDeltahalf i\Hil\cbrac{\brac{\frac{\Aone}{\Zapabs^2}}\la\pap\Delta(f) }
\end{align*}
Now using \lemref{lem:Deltaapprox} we have
\begin{align*}
i\Hil\cbrac{\brac{\frac{\Aone}{\Zapabs^2}}\la\pap\Delta(f) } & \approxLtwoDeltahalf i\Hil\cbrac{\Delta\brac{\frac{\Aone}{\Zapabs^2}\pap f}} \\
&  \approxLtwoDeltahalf i\Hil\cbrac{\frac{\Aone}{\Zapabs^2}\pap f}\la - i\Hil\Util\cbrac{\frac{\Aone}{\Zapabs^2}\pap f}\lb
\end{align*}
Now as $\dis \cbrac{\frac{\Aone}{\Zapabs^2}\pap f}\lb \in \Ltwo$ we can replace $\Hil$ in the second term with $\Hcal$. Hence we have
\begin{align*}
 \brac{\frac{\sqrt{\Aone}}{\Zapabs}}\la\papabs\brac*[\bigg]{\brac{\frac{\sqrt{\Aone}}{\Zapabs}}\la \Delta(f)}  \approxLtwoDeltahalf \Delta\cbrac{i\Hil\brac{\frac{\Aone}{\Zapabs^2}\pap f}}
\end{align*}
We can simplify the above term by using $\Hil f = f$. We see that 
\begin{align*}
i\Hil\brac{\frac{\Aone}{\Zapabs^2}\pap f} = - i\sqbrac{\frac{\Aone}{\Zapabs^2},\Hil}\pap f + i\frac{\Aone}{\Zapabs^2}\pap f
\end{align*}
Now apply $\Delta$ to the above equation. We can easily control the first term in $\LtwoDeltahalf$ by using \propref{prop:commutator}, \propref{prop:HilHtilcaldiff} and \lemref{lem:CWDelta} and hence we have
\begin{align*}
 \brac{\frac{\sqrt{\Aone}}{\Zapabs}}\la\papabs\brac*[\bigg]{\brac{\frac{\sqrt{\Aone}}{\Zapabs}}\la \Delta(f)}  \approxLtwoDeltahalf \Delta\brac{i\frac{\Aone}{\Zapabs^2}\pap f}
\end{align*}
Finally using this we obtain
\begin{align*}
\frac{d}{dt} \int \abs*[\bigg]{ \papabs^\half \cbrac*[\bigg]{\brac{\frac{\sqrt{\Aone}}{\Zapabs}}\la \Delta( f)}}^2\difff\ap \approx 2\Real \int  \Delta\brac{i\frac{\Aone}{\Zapabs^2}\pap f} \Delta(\Dt\bar{f}) \diff\ap 
\end{align*}

\item By using the argument in controlling the time derivative of $\Esigmatwo,\Esigmathree$ in Sec 5.2.3 in \cite{Ag19} we have
\begin{align*}
& \frac{d}{dt} \sigma \int  \abs*[\bigg]{\papabs^\half \brac*[\Bigg]{\frac{1}{\Zapabs^\threebytwo} \pap f}\la }^2 \difff\ap \\
 & \approx -2\sigma \Real \int  \pap\cbrac*[\Bigg]{\frac{1}{\Zapabs^\threebytwo}\papabs \brac*[\Bigg]{\frac{1}{\Zapabs^\threebytwo} \pap f}}\la(\Dt\bar{f})_a \diff\ap \\
& \approx -2\sigma \Real \int  \pap\cbrac*[\Bigg]{\frac{1}{\Zapabs^\threebytwo}\papabs \brac*[\Bigg]{\frac{1}{\Zapabs^\threebytwo} \pap f}}\la\Delta(\Dt\bar{f}) \diff\ap  \\
& \quad -2\sigma \Real \int  \pap\cbrac*[\Bigg]{\frac{1}{\Zapabs^\threebytwo}\papabs \brac*[\Bigg]{\frac{1}{\Zapabs^\threebytwo} \pap f}}\la\Util(\Dt\bar{f})_b \diff\ap 
\end{align*}
We now show that the second term is controlled. We see that
\begin{align*}
& -2\sigma \Real \int  \pap\cbrac*[\Bigg]{\frac{1}{\Zapabs^\threebytwo}\papabs \brac*[\Bigg]{\frac{1}{\Zapabs^\threebytwo} \pap f}}\la\Util(\Dt\bar{f})_b \diff\ap \\
& = 2\sigma \Real \int  \papabs \brac*[\Bigg]{\frac{1}{\Zapabs^\threebytwo} \pap f}\la \brac{\frac{1}{\Zapabs_a^\threebytwo}\pap \Util(\Dt\bar{f})_b} \diff\ap 
\end{align*}
Now we know that $\dis \brac{\frac{\sigma^\half}{\Zapabs^\threebytwo}\pap\Dt\Th}\lb \in \CcalDeltahalf$ and $\dis \brac{\frac{\sigma^\half}{\Zapabs^\threebytwo}\pap\Dt\Ztapbar}\lb \in \CcalDeltahalf$ as they are both controlled by $\sigma(\Eaux)_b$. Hence we also have that $\dis \frac{\sigma^\half}{\Zapabs_a^\threebytwo}\pap \Util(\Dt\Ztapbar)_b  \in \CcalDeltahalf$ and $\dis \frac{\sigma^\half}{\Zapabs_a^\threebytwo}\pap \Util(\Dt\Th)_b  \in \CcalDeltahalf$ by using \lemref{lem:Deltaapprox}. Therefore we now have
\begin{align*}
& \frac{d}{dt} \sigma \int  \abs*[\bigg]{\papabs^\half \brac*[\Bigg]{\frac{1}{\Zapabs^\threebytwo} \pap f}\la }^2 \difff\ap  \\
& \approx -2\sigma \Real \int  \pap\cbrac*[\Bigg]{\frac{1}{\Zapabs^\threebytwo}\papabs \brac*[\Bigg]{\frac{1}{\Zapabs^\threebytwo} \pap f}}\la\Delta(\Dt\bar{f}) \diff\ap 
\end{align*}
Now from the proof from Sec 5.2.3 in \cite{Ag19} we obtain 
\begin{align*}
\sigma \pap\cbrac*[\Bigg]{\frac{1}{\Zapabs^\threebytwo}\papabs \brac*[\Bigg]{\frac{1}{\Zapabs^\threebytwo} \pap f}}\la \approxLtwoDeltahalf  \brac{i\sigma\Dapabs^3 f}_a
\end{align*}
So we finally have
\begin{align*}
\frac{d}{dt} \sigma \int  \abs*[\bigg]{\papabs^\half \brac*[\Bigg]{\frac{1}{\Zapabs^\threebytwo} \pap f}\la }^2 \difff\ap  \approx -2 \Real \int  \brac{i\sigma\Dapabs^3 f}_a\Delta(\Dt \bar{f}) \diff\ap
\end{align*}

\medskip

\item Now combining all three terms we have for $i=2,3$
\begin{align*}
\frac{d}{dt} E_{\Delta,i} \approx 2\Real \int \cbrac{\Delta(\Dt^2 f) +  \Delta\brac{i\frac{\Aone}{\Zapabs^2}\pap f} -i\sigma(\Dapabs^3 f)_a  } \Delta(\Dt \bar{f}) \diff\ap
\end{align*}
For $f = \Ztapbar$ we obtain from  \eqref{eq:Ztbarap}
\begin{align*}
\begin{split}
& \brac{\Dt^2 +i\frac{\Aone}{\Zapabs^2}\pap  -i\sigma\Dapabs^3} \Ztbarap \\
 & =  \Rone\Zapbar -i\brac{\pap\frac{1}{\Zap}} \Jone -i\Dap \Jone - \Zapbar\sqbrac{\Dt^2 +i\frac{\Aone}{\Zapabs^2}\pap -i\sigma\Dapabs^3, \frac{1}{\Zapbar}} \Ztapbar
\end{split}
\end{align*}
Hence applying $\Delta$ on both sides, we easily see that the terms on the right hand side are in $\LtwoDeltahalf$. Similarly for  $f = \Th$ we have from \eqref{eq:Th}
\[
 \brac{\Dt^2 +i\frac{\Aone}{\Zapabs^2}\pap  -i\sigma\Dapabs^3} \Th = \Rtwo +i\Jtwo
\]
In this case also we apply $\Delta$ on both sides and see that the terms on the right are controlled. Hence we have shown that for $i=2,3$ we have
\begin{align*}
\frac{d}{dt} E_{\Delta,i}  \leq C(L_1)\EDelta
\end{align*}
\end{enumerate}

\medskip

\subsubsection{Controlling $\EDeltafour$\nopunct}\label{sec:controlEDeltafour}  \hspace*{\fill} \medskip 

Recall that 
\begin{align*}
 \EDeltafour = \norm[\Hhalf]{\Delta(\Dt\Dapbar\Ztbar)}^2 + \norm[2]{(\sqrt{\Aone})_a\Dapabs_a\Delta(\Dapbar\Ztbar)}^2 + \norm[2]{\brac{\frac{\sigma^\half}{\Zapabs^\half}\pap\Dapabs\Dapbar\Ztbar}\la}^2
\end{align*}
We again simplify the terms individually before combining them.
\begin{enumerate}[leftmargin =*, align=left]
\item By \lemref{lem:timederiv} we have
\begin{align*}
\frac{d}{dt} \int \abs*[\Big]{\papabs^\half \Delta \brac{\Dt \Dapbar \Ztbar}}^2 \difff\ap \approx 2\Real \int \Delta(\Dt^2\Dapbar\Ztbar)\papabs\Delta(\Dt\Dap\Zt) \diff \ap
\end{align*}
Now as $\Delta((\Id - \Hil)\Dt^2\Dapbar\Ztbar) \in \HhalfDeltahalf$ we see that 
\begin{align*}
\Delta(\Dt^2\Dapbar\Ztbar) \approxHhalfDeltahalf \Delta(\Hil\Dt^2\Dapbar\Ztbar)  \approxHhalfDeltahalf \Hil (\Dt^2\Dapbar\Ztbar)_a - \Hcal \Util(\Dt^2\Dapbar\Ztbar)_b
\end{align*}
But we know that $(\Dt^2\Dapbar\Ztbar)_b \in \Hhalf$ as it is controlled by $(\Ehigh)_b$. Hence we now have $(\Hil -\Hcal) \Util(\Dt^2\Dapbar\Ztbar)_b \in \HhalfDeltahalf$. From this we get 
\begin{align*}
\Delta(\Dt^2\Dapbar\Ztbar) \approxHhalfDeltahalf \Hil \Delta(\Dt^2\Dapbar\Ztbar)
\end{align*}
Now we use the fact that $\papabs = i\Hil\pap$ to obtain
\begin{align*}
\frac{d}{dt} \int \abs*[\Big]{\papabs^\half \Delta \brac{\Dt \Dapbar \Ztbar}}^2 \difff\ap \approx 2\Real \int \Delta(\Dt^2\Dapbar\Ztbar)\cbrac{-i\pap\Delta(\Dt\Dap\Zt)} \diff \ap
\end{align*} 

\item By following the proof of control of the time derivative of $\Esigmafour$ from Sec 5.2.4 in \cite{Ag19} we see that
\begin{align*}
& \frac{d}{dt} \int (\Aone)_a  \abs{\Dapabs_a \Delta (\Dapbar \Ztbar)}^2 \difff\ap \\
& \approx 2\Real \int \brac*[\bigg]{i\brac{\frac{\Aone}{\Zapabs^2}}\la\pap\Delta(\Dapbar\Ztbar)}\cbrac{-i\pap\Delta(\Dt\Dap\Zt)} \diff\ap
\end{align*}
Now we know that $\dis \brac{\frac{1}{\Zapabs^2}\pap\Dapbar\Ztbar}\lb \in \Ccal$ as it is controlled by $(\Ehigh)_b$. Hence as $(\Aone)_b \in \Wcal$, we have $\dis \brac{\frac{\Aone}{\Zapabs^2}\pap\Dapbar\Ztbar}\lb \in \Ccal$. Hence we see that 
\begin{align*}
i\brac{\frac{\Aone}{\Zapabs^2}}\la\pap\Delta(\Dapbar\Ztbar) \approxHhalfDeltahalf \Delta\brac{i\frac{\Aone}{\Zapabs^2}\pap\Dapbar\Ztbar}
\end{align*}
From this we get
\begin{align*}
\frac{d}{dt} \int (\Aone)_a  \abs{\Dapabs_a \Delta (\Dapbar \Ztbar)}^2 \difff\ap \approx 2\Real \int \Delta\brac*[\bigg]{i\frac{\Aone}{\Zapabs^2}\pap\Dapbar\Ztbar}\cbrac{-i\pap\Delta(\Dt\Dap\Zt)} \diff\ap
\end{align*}

\item By following the proof of control of the time derivative of $\Esigmafour$ from Sec 5.2.4 in \cite{Ag19} we see that
\begin{flalign*}
\lpar \quad &  \sigma\frac{d}{dt} \int \abs*[\bigg]{\brac{\frac{1}{\Zapabs^\half}\pap \Dapabs \Dapbar \Ztbar}\la }^2\difff\ap &&\\
& \approx 2\sigma \Real \int \cbrac{\frac{1}{\Zapabs^\half}\pap\Dapabs\Dapbar\Ztbar }\la\cbrac{\frac{1}{\Zapabs^\half}\pap\Dapabs\Dt\Dap\Zt }\la \\
 & \approx  2 \sigma\Real  \int \cbrac{\frac{1}{\Zapabs^\half}\pap\Dapabs\Dapbar\Ztbar }\la\cbrac{\frac{1}{\Zapabs_a^\half}\pap\Dapabs_a\Delta(\Dt\Dap\Zt) }  \\
 & \quad +  2 \sigma \Real  \int \cbrac{\frac{1}{\Zapabs^\half}\pap\Dapabs\Dapbar\Ztbar }\la\cbrac{\frac{1}{\Zapabs_a^\half}\pap\Dapabs_a\Util(\Dt\Dap\Zt)_b } 
\end{flalign*}
We now show that the second term is controlled. We first observe that 
\begin{align*}
& \frac{\sigma^\half}{\Zapabs_a^\half}\pap\Dapabs_a\Util(\Dt\Dap\Zt)_b \\
& = \frac{\sigma^\half}{\Zapabs_a^\half}\pap\brac{\frac{\htilap}{\Zapabs_a}\Util(\Zapabs)_b\Util(\Dapabs\Dt\Dap\Zt)_b } \\
& = \Zapabs_a^\half\Util\brac{\frac{1}{\Zapabs^\half}}\lb \Dapabs_a\brac{\frac{\htilap}{\Zapabs_a}\Util(\Zapabs)_b }\Util\brac{\frac{\sigma^\half}{\Zapabs^\half}\pap\Dt\Dap\Zt}\lb \\
& \quad + \frac{\htilap}{\Zapabs_a}\Util(\Zapabs)_b \brac{\frac{\sigma^\half}{\Zapabs_a^\half}\pap\Util(\Dapabs\Dt\Dap\Zt)_b}
\end{align*}
Now we know that $\dis \brac{\frac{\sigma^\half}{\Zapabs^\half}\pap\Dt\Dap\Zt}\lb \in \LinftyDeltahalf$ and $\dis \brac{\frac{\sigma^\half}{\Zapabs^\half}\pap\Dapabs\Dt\Dap\Zt }\lb \in \LtwoDeltahalf$ as they are controlled by $\sigma(\Eaux)_b$. We also know that $\htilap \in \Wcal$, $\dis \frac{1}{\Zapabs_a}\Util(\Zapabs)_b \in \Wcal$ and hence the above terms are controlled. Hence we have
\begin{flalign*}
\lpar \quad &  \sigma\frac{d}{dt} \int \abs*[\bigg]{\brac{\frac{1}{\Zapabs^\half}\pap \Dapabs \Dapbar \Ztbar}\la }^2\difff\ap &&\\
 & \approx  2 \sigma\Real  \int \cbrac{\frac{1}{\Zapabs^\half}\pap\Dapabs\Dapbar\Ztbar }\la\cbrac{\frac{1}{\Zapabs_a^\half}\pap\Dapabs_a\Delta(\Dt\Dap\Zt) } \\
 & \approx 2\Real \int \cbrac{-i\sigma\Dapabs^3\Dapbar\Ztbar }_a\cbrac{-i\pap\Delta(\Dt\Dap\Zt) } \diff\ap
\end{flalign*}

\medskip

\item Combining the three terms we obtain
\begin{align*}
& \frac{d}{dt}\EDeltafour  \approx 2\Real \int \cbrac{-i\pap\Delta(\Dt\Dap\Zt) }
\begin{aligned}[t]
& \bigg\{\Delta(\Dt^2\Dapbar\Ztbar) + \Delta\brac{i\frac{\Aone}{\Zapabs^2}\pap\Dapbar\Ztbar} \\
& \quad  -i\sigma(\Dapabs^3\Dapbar\Ztbar) _a\bigg\} \diff\ap
\end{aligned}
\end{align*}
From equation  \eqref{eq:DapbarZtbar} we see that
\[
 \brac{\Dt^2 +i\frac{\Aone}{\Zapabs^2}\pap  -i\sigma\Dapabs^3} \Dapbar\Ztbar  =  \Rone -i\brac{\Dapbar\frac{1}{\Zap}} \Jone -i\frac{1}{\Zapabs^2}\pap \Jone 
\]
Now we apply $\Delta$ to the above equation and see that the terms on the right hand side terms are controlled in $\HhalfDeltahalf$. Hence we have
\begin{align*}
 \frac{d}{dt}\EDeltafour \leq C(L_1) \EDelta
\end{align*}
This concludes the proof of \thmref{thm:aprioriEDelta}.

\end{enumerate}

\subsection{Equivalence of  $\EDelta$ and $\EcalDelta$}\label{sec:EDeltaEcalDelta}

We now give a simpler description of the energy $\EDelta$. Define
\begingroup
\allowdisplaybreaks
\begin{align}\label{def:EcalDelta}
\begin{split}
\EcalDeltazero & =   \norm[\infty]{\Delta(\w)}^2 + \norm*[\big][\Linfty\cap\Hhalf]{\htilap - 1}^2 + \norm[2]{\Dapabs_a(\htilap -1)}^2  +  \norm[\infty]{\Zapabs_a\Util\brac{\frac{1}{\Zapabs_b}} - 1}^2\\
\EcalDeltaone & = \norm[2]{\Delta\brac{\pap\frac{1}{\Zap}}}^2 + \norm*[\bigg][\Hhalf]{\Delta\brac{\frac{1}{\Zap}\pap\frac{1}{\Zap}}}^2 + \norm*[\Bigg][2]{\brac{\sigma^\onebysix\Zap^\half\pap\frac{1}{\Zap}}\la}^6  \\*
& \quad + \norm*[\Bigg][\infty]{\brac{\sigma^\half\Zap^\half\pap\frac{1}{\Zap}}\la}^2  +  \norm*[\Bigg][2]{\brac*[\Bigg]{\frac{\sigma^\half}{\Zap^\half}\pap^2\frac{1}{\Zap}}\la}^2 + \norm*[\Bigg][\Hhalf]{\brac*[\Bigg]{\frac{\sigma^\half}{\Zap^\threebytwo}\pap^2\frac{1}{\Zap}}\la}^2  \\* 
& \quad +  \norm[\Hhalf]{\brac{\sigma\pap\Th}_a}^2   + \norm[2]{\brac{\frac{\sigma}{\Zap}\pap^3\frac{1}{\Zap}}\la}^2  + \norm[\Hhalf]{\brac{\frac{\sigma}{\Zap^2}\pap^3\frac{1}{\Zap}}\la}^2\\
\EcalDeltatwo &= \norm[2]{\Delta\brac{\Ztapbar}}^2 + \norm*[\Bigg][2]{\Delta\brac{\frac{1}{\Zap^2}\pap\Ztapbar}}^2 + \norm*[\Bigg][2]{\brac*[\Bigg]{\frac{\sigma^\half}{\Zap^\half}\pap\Ztapbar}\la}^2 + \norm*[\Bigg][2]{\brac*[\Bigg]{\frac{\sigma^\half}{\Zap^\fivebytwo}\pap^2\Ztapbar}\la}^2 \\
\EcalDelta & = \sigma\brac{\Ecalaux}_b +  \EcalDeltazero + \EcalDeltaone + \EcalDeltatwo
\end{split}
\end{align}
\endgroup

Note that if the two solutions have the same initial data, then $\EcalDeltazero(0) = 0$ and hence we obtain the representation of the energy as stated in \secref{sec:results}.

\begin{prop}\label{prop:equivEDeltaEcalDelta}
Let $T>0$ and let $(\Z,\Zt)_a(t)$, $(\Z,\Zt)_b(t)$ be two smooth solutions in $[0,T]$ to  \eqref{eq:systemone} with surface tension $\sigma$ and zero surface tension respectively, such that for all $s\geq 2$ we have $(\Zap-1,\frac{1}{\Zap} - 1, \Zt)_i \in \Linfty([0,T], H^{s}(\Rsp)\times H^{s}(\Rsp)\times H^{s+\half}(\Rsp))$ for both $i=a,b$.  Let $L_1>0$ be such that 
\begin{align*}
\sup_{t \in [0,T]}(\Ecalhigh)_b(t), \sup_{t\in [0,T]}(\Ecalsigma)_a (t), \norm[\infty]{\Zapabs_a\Util\brac{\frac{1}{\Zapabs_b}}}(0),  \norm[\infty]{\frac{1}{\Zapabs_a}\Util\brac{\Zapabs_b}}(0) \leq L_1
\end{align*}
Then there exists constants $C_1(L_1), C_2(L_1) > 0$ depending only on $L_1$ so that for all $t \in [0,T]$ we have
\begin{align*}
\EDelta \leq C_1(L_1)\EcalDelta \quad \tx{ and }\quad  \EcalDelta \leq C_2(L_1)\EDelta
\end{align*}
\end{prop}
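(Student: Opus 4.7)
The argument follows the same template as \propref{prop:equivEsigmaEcalsigma} and \propref{prop:equivEhighEcalhigh}: each direction is established term-by-term, using the quasilinear evolution equations (to replace time and material derivatives with spatial expressions, and vice versa) and \lemref{lem:Deltaapprox}, \lemref{lem:CWDelta}, \propref{prop:commutator}, \propref{prop:HilHtilcaldiff} (to shuffle weights across $\Delta$ and $\Util$ at an $O((\EDelta)^{1/2})$ cost). The coupling term is handled immediately: since $(\Ecalhigh)_b \leq L_1$ by hypothesis, \propref{prop:equivEauxEcalaux} gives $\sigma(\Eaux)_b \asymp_{L_1} \sigma(\Ecalaux)_b$, so in both directions we may freely trade one for the other. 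This also frees us to use any quantity listed in \secref{sec:quantEDelta} and its analog in \secref{sec:quantEaux}.

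For $\EDelta \lesssim_{L_1} \EcalDelta$, the pure--surface-tension contributions in $\EDeltazero,\EDeltaone,\EDeltatwo,\EDeltathree,\EDeltafour$ (those carrying a power of $\sigma$ on solution $A$ alone) are controlled exactly as in the proof of \propref{prop:equivEsigmaEcalsigma}: by the hypothesis $(\Ecalsigma)_a\leq L_1$ they are already $\leq C(L_1)$, and the necessary $\Delta$-weight can be inserted by multiplying and dividing by $\EDelta$-controlled quantities (this is the source of the terms $\norm[\Hhalf]{(\sigma\pap\Th)_a}^2$ and $\norm[2]{(\frac{\sigma}{\Zap^2}\pap^3\frac{1}{\Zap})_a}^2$ in $\EcalDeltaone$, which appear there precisely to serve as differences against zero). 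For the $\Delta(\cdot)$ pieces, use the formula $(\Zttbar-i)\Zap = -i\Aone+\sigma\Zap\pap\Th$ to pass from $\Delta\{(\Zttbar-i)\Zap\}$ to $\Delta(\Aone)$ plus an $\EcalDeltaone$ piece; then invoke \eqref{form:Aonenew} and the $\Linfty\cap\Hhalf$ bounds from \secref{sec:quantEDelta} to reduce $\Delta(\Aone)$ to $\Delta(\Ztapbar)$ and $\Delta(\pap \frac{1}{\Zap})$. The $\Dt$ and $\Dapabs$-level terms of $\EDeltatwo,\EDeltathree,\EDeltafour$ are converted into spatial derivatives via \eqref{form:DtoneoverZap}, \eqref{form:Dtg}, \eqref{form:Th}, \eqref{form:RealDtTh}, and \eqref{form:bvarapnew}; the resulting expressions involve $\Delta(\frac{1}{\Zap^2}\pap\Ztapbar)$, $\Delta(\frac{1}{\Zap}\pap\frac{1}{\Zap})$ and their cousins, all of which either are in $\EcalDelta$ directly or can be produced from it after swapping $\Zapabs$-weights against $\Zap$-weights using \lemref{lem:Deltaapprox}(2)--(3). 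Finally, $\sqrt{\Aone}$ weights are harmless because $(\Aone)_a\geq 1$ and $(\Aone)_a\in\Wcal$ by \secref{sec:aprioriEcalsigma}.

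For the reverse inclusion $\EcalDelta\lesssim_{L_1}\EDelta$, the only new tasks are to recover $\norm[\Hhalf]{\Delta(\frac{1}{\Zap}\pap\frac{1}{\Zap})}$ and $\norm[2]{\Delta(\frac{1}{\Zap^2}\pap\Ztapbar)}$. The first is obtained by using \eqref{form:Th} together with $\Delta(\Th)$ (which is controlled by $\EDeltathree$ modulo $(\sqrt{\Aone}/\Zapabs)$-weights that are removed via \lemref{lem:CWDelta} and the bounds on $\sqrt{\Aone}$), plus $\Delta(\frac{1}{\Zap}\pap\frac{1}{\Zap})\approx \Delta(\Dapbar(1/\Zap))$ after shifting the $\w$-factor through \lemref{lem:Deltaapprox}(3); the $\Hhalf$ piece then follows from the $\CcalDeltahalf$ estimate on $\Delta(\Dapbar\frac{1}{\Zap})$ proved in \secref{sec:quantEDelta}. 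The second piece is obtained from $\Delta(\Dapabs^2\Ztbar) \in\LtwoDeltahalf$ (listed in \secref{sec:quantEDelta}) by expanding in spatial derivatives as in \secref{sec:quantEhigh}, then swapping $\Zapabs$ for $\Zap$. All other $\EcalDelta$-terms either appear verbatim in $\EDelta$ or are controlled by $(\Esigma)_a(t)\leq P((\Ecalsigma)_a)\leq P(L_1)$ via \propref{prop:equivEsigmaEcalsigma} and the weight-insertion trick.

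The main technical point, and the only place where care is genuinely required, is the weight bookkeeping: terms in $\EcalDelta$ carry $\Zap$-weights on solution $A$, whereas the derived quantities in $\EDelta$ naturally carry $\Zapabs_a$-weights (and quantities pulled from solution $B$ via $\Util$ carry $\Util(\Zapabs_b)$-weights). \lemref{lem:Deltaapprox}, together with the control $\Zapabs_a\Util(1/\Zapabs_b)-1\in\LinftyDeltahalf\cap\WcalDeltahalf$ and $\w_a\Util(\bar\w_b)-1\in\LinftyDeltahalf\cap\WcalDeltahalf$ established in \secref{sec:quantEDelta}, is what makes the identifications $\Delta(\cdot)\approxLtwoDeltahalf (\text{weight-swapped form})$ legal and allows all comparisons to close at the expected strength. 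Once this bookkeeping is done uniformly, both directions are routine and the resulting constants depend only on $L_1$.
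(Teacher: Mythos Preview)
Your overall strategy---term-by-term comparison, with the coupling piece handled by \propref{prop:equivEauxEcalaux} and all weight-swapping mediated by \lemref{lem:Deltaapprox}---matches the paper's approach and is correct in spirit. Two points of imprecision are worth flagging.

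First, in the direction $\EDelta \lesssim_{L_1} \EcalDelta$ you repeatedly invoke ``the bounds from \secref{sec:quantEDelta}'' and speak of inserting $\Delta$-weights via ``$\EDelta$-controlled quantities.'' Taken literally this is circular: the estimates in \secref{sec:quantEDelta} are stated with $\EDelta$ on the right-hand side, not $\EcalDelta$. The paper handles this by an explicit redefinition: one replaces $\EDelta$ by $\EcalDelta$ in the notation $\Ltwo_{\Delta^\alpha}$, $\Ccal_{\Delta^\alpha}$, $\Wcal_{\Delta^\alpha}$, and then observes that \lemref{lem:Deltaapprox} \emph{still holds verbatim} because its proof uses only the control of $(\htilap-1)$, $\Delta(\w)$, $\Delta(\pap\tfrac{1}{\Zap})$, and $\Zapabs_a\Util(\tfrac{1}{\Zapabs})_b-1$, all of which sit inside $\EcalDeltazero$. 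With this observation in hand, the derivations of \secref{sec:quantEDelta} can be re-run word-for-word with $\EcalDelta$ in place of $\EDelta$. Your final paragraph gestures at this mechanism, but the proof should make the redefinition and the ``\lemref{lem:Deltaapprox} survives'' step explicit, since that is the hinge of the entire harder direction.

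Second, your identification of which terms need nontrivial work is slightly off in both directions. For $\EcalDelta \lesssim_{L_1} \EDelta$, the two terms you single out---$\norm[\Hhalf]{\Delta(\tfrac{1}{\Zap}\pap\tfrac{1}{\Zap})}$ and $\norm[2]{\Delta(\tfrac{1}{\Zap^2}\pap\Ztapbar)}$---are in fact already covered by \secref{sec:quantEDelta}; the paper instead isolates the $\dot H^{1/2}$ norms of $(\tfrac{\sigma^{1/2}}{\Zap^{3/2}}\pap^2\tfrac{1}{\Zap})_a$ and $(\tfrac{\sigma}{\Zap^2}\pap^3\tfrac{1}{\Zap})_a$ as the only terms requiring a line of argument (namely \lemref{lem:CWDelta} to swap $\Zap$ for $\Zapabs$). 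Conversely, in the harder direction your sketch does not make clear how the $\dot H^{1/2}$ pieces of $\EDeltatwo$, $\EDeltathree$, $\EDeltafour$ carrying $\sigma^{1/2}/\Zapabs^{3/2}$ or $\sigma^{1/2}/\Zapabs^{1/2}$ weights are recovered; the paper does this via \propref{prop:LinftyHhalf} and the $\Ccal$-type arguments in Sec.~5.1 of \cite{Ag19}, after first establishing the $L^2$-level estimates. These are bookkeeping issues rather than conceptual gaps, but they are exactly the places where the paper's proof spends its length.
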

\begin{proof}
Let us first show $ \EcalDelta \leq C_2(L_1)\EDelta$. From \propref{prop:equivEauxEcalaux} we see that $\sigma(\Ecalaux)_b$ is controlled by $\sigma(\Eaux)_b$. Also observe that the energy $\EDelta$ directly controls $\EcalDeltazero$ and most of the terms of $\EcalDeltaone$ and $\EcalDeltatwo$. The terms which are not directly controlled are $\dis \brac*[\Bigg]{\frac{\sigma^\half}{\Zap^\threebytwo}\pap^2\frac{1}{\Zap}}\la$ and $\dis \brac{\frac{\sigma}{\Zap^2}\pap^3\frac{1}{\Zap}}\la$ both in $\HhalfDeltahalf$. To control these, we use \lemref{lem:CWDelta} to see that
\begin{align*}
\norm*[\Bigg][\Hhalf]{\brac*[\Bigg]{\frac{\sigma^\half}{\Zap^\threebytwo}\pap^2\frac{1}{\Zap}}\la} \lesssim \norm[\Wcal]{\wbar}^\threebytwo\norm[\CcalDeltahalf]{\brac*[\Bigg]{\frac{\sigma^\half}{\Zapabs^\threebytwo}\pap^2\frac{1}{\Zap}}\la} \leq C_2(L_1)\EDelta^\half
\end{align*}
Similarly we have
\begin{align*}
\norm*[\Bigg][\Hhalf]{\brac{\frac{\sigma}{\Zap^2}\pap^3\frac{1}{\Zap}}\la} \lesssim \norm[\Wcal]{\wbar}^2\norm[\CcalDeltahalf]{\brac{\frac{\sigma}{\Zapabs^2}\pap^3\frac{1}{\Zap}}\la} \leq C_2(L_1)\EDelta^\half
\end{align*}
This proves that $ \EcalDelta \leq C_2(L_1)\EDelta$.

Let us now show that $\EDelta \leq C_1(L_1)\EcalDelta$. We again observe from \propref{prop:equivEauxEcalaux} that $\sigma(\Eaux)_b$ is controlled by $\sigma(\Ecalaux)_b$.  It is also clear that $\EcalDelta$ controls $\EDeltazero$. Hence we now need to control $\EDeltaone, \EDeltatwo, \EDeltathree$ and $\EDeltafour$. As the proof of this is a little more involved, to simplify the presentation we will continue to use the same notation as in \secref{sec:quantEDelta} except for a few minor modifications. In the definitions, instead of using the energy $\EDelta$ we will use the energy $\EcalDelta$. So now whenever we write $\dis f \in \Ltwo_{\Delta^\alpha}$, what we mean is that there exists a constant $C_1(L_1)$ depending only on $L_1$ such that $\dis \norm*[2]{f} \leq C_1(L_1)(\EcalDelta)^\alpha $. Similar modifications for $\dis f\in \Lone_{\Delta^\alpha}$, $\dis f\in \Hhalf_{\Delta^\alpha}$ and $\dis  f \in \Linfty_{\Delta^\alpha}$. The definitions of the spaces $ \Ccal_{\Delta^\alpha}$ and $ \Wcal_{\Delta^\alpha}$ remain the same except for the fact that we have now changed the underlying definition of the spaces $\dis\Ltwo_{\Delta^\alpha}, \Hhalf_{\Delta^\alpha} $ and $\dis\Linfty_{\Delta^\alpha}$. We say that $f \in \Ltwo$ if $\dis f \in \Ltwo_{\Delta^\alpha}$ for $\alpha = 0$. Similar definitions for  $f \in \Hhalf, f \in \Linfty, f \in \Ccal$ and $f \in \Wcal$.  The definitions of $\approx_{\Ltwo_{\Delta^{\alpha}}}, \approx_{\Lone_{\Delta^{\alpha}}}, \approx_{\Linfty_{\Delta^{\alpha}}}, \dis\approx_{\Hhalf_{\Delta^{\alpha}}}, \approx_{\Wcal_{\Delta^\alpha}}  $ and $\approx_{\mathcal{C}_{\Delta^\alpha}}$ remain the same except the changes to the underlying spaces. Observe that there is no change to \lemref{lem:CWDelta}. 

We now make the important observation that \lemref{lem:Deltaapprox} still remains true with the new definitons. This is because in the proof of \lemref{lem:Deltaapprox}, the only properties of $\EDelta$ used were the control of $(\htilap -1) \in \LinftyDeltahalf\cap\HhalfDeltahalf\cap\WcalDeltahalf$, $\Delta w \in \LinftyDeltahalf$, $\dis \Delta\brac{\pap\frac{1}{\Zap}} \in \LtwoDeltahalf$ and the term $\dis \Zapabs_a\Util\brac{\frac{1}{\Zapabs}}\lb - 1 \in \LinftyDeltahalf$. All of the these quantities are also controlled by $\EcalDelta$ and hence the lemma still holds. Let us now control $\E_{\Delta,i}$ for $1\leq i\leq 4$. 

\begin{enumerate}[leftmargin =*, align=left]

\item Controlling $\EDeltaone$: From $\EcalDeltatwo$ we have $\Delta(\Ztapbar) \in \LtwoDeltahalf$. Hence we have $(\sqrt{\Aone})_a \Delta(\Ztapbar) \in \LtwoDeltahalf$. Hence now via \secref{sec:quantEDelta} we have $\Delta(\Aone) \in \LinftyDeltahalf\cap\HhalfDeltahalf$. Now we know from \eqref{form:Zttbar} that $(\Zttbar-i)\Zap = -i\Aone + \sigma\pap\Th$ and hence $\Delta\cbrac{(\Zttbar-i)\Zap} = -i\Delta(\Aone) + (\sigma\pap\Th)_a$. 
Now as $(\sigma\pap\Th)_a \in \HhalfDeltahalf$ we see that $\Delta\cbrac{(\Zttbar-i)\Zap} \in \HhalfDeltahalf$. From $\EcalDelta$ we also clearly see that $\dis \brac{\frac{\sigma^\half}{\Zapabs^\half}\pap\Ztapbar}\la \in \LtwoDeltahalf $ and hence $\EDeltaone$ is controlled.

\item Controlling $\EDeltatwo$: We prove this step by step.
\begin{enumerate}
\item As $\EcalDelta$ controls $\dis \Delta\brac{\pap\frac{1}{\Zap}} \in \LtwoDh$, from \secref{sec:quantEDelta} we easily obtain control of $\dis \Delta\brac{\pap\frac{1}{\Zapabs}} \in \LtwoDh, \Delta(\Dapabs\w) \in \LtwoDh$ and $\Delta(\w) \in \WcalDh$. 

\item As $\EcalDelta$ controls $\dis \Delta\brac{\frac{1}{\Zap^2}\pap\Ztapbar} \in \LtwoDh$, by using \lemref{lem:Deltaapprox} repeatedly we also have $\dis \frac{1}{(\Zap)_a^2}\pap\Delta(\Ztapbar) \in \LtwoDh$. Hence we  see that 

\begin{align*}
\pap\brac{\frac{1}{(\Zap)_a}\Delta(\Ztapbar)}^2 & = 2\brac{\frac{1}{(\Zap)_a}\Delta(\Ztapbar)}\pap\brac{\frac{1}{(\Zap)_a}\Delta(\Ztapbar)} \\
& = 2\brac{\frac{1}{(\Zap)_a}\Delta(\Ztapbar)}\nobrac{\brac{\pap\frac{1}{\Zap}}\la \Delta(\Ztapbar)} \\
& \quad + 2\Delta(\Ztapbar)\brac{\frac{1}{(\Zap)_a^2}\pap\Delta(\Ztapbar)}
\end{align*}
From this we obtain
\begin{align*}
\norm[\infty]{\frac{1}{(\Zap)_a}\Delta(\Ztapbar)}^2 & \leq  C_1(L_1)\norm[\infty]{\frac{1}{(\Zap)_a}\Delta(\Ztapbar)}\norm[2]{\Delta(\Ztapbar)} \\
& \quad + C_1(L_1)\norm[2]{\Delta(\Ztapbar)}\norm[2]{\frac{1}{(\Zap)_a^2}\pap\Delta(\Ztapbar)}
\end{align*}
Now using the inequality $\dis ab \leq \frac{\ep a^2}{2} + \frac{b^2}{2\ep}$, we see that $\dis \frac{1}{(\Zap)_a}\Delta(\Ztapbar) \in \LinftyDh$.  Now by using \lemref{lem:Deltaapprox} we see that $\Delta(\Dap\Ztbar) \in \LinftyDh $, $\Delta(\Dapabs\Ztbar) \in \LinftyDh $ and $\Delta(\Dapbar\Ztbar) \in \LinftyDh $

\item Observe that 
\begin{align*}
\Delta\brac{\Dap^2\Ztbar} = \Delta\cbrac{\brac{\pap\frac{1}{\Zap}}\Dap\Ztbar} + \Delta\brac{\frac{1}{\Zap^2}\pap\Ztapbar}
\end{align*}
Hence we have $\Delta\brac{\Dap^2\Ztbar} \in \LtwoDh$. Similarly we can also show $\Delta\brac{\Dapabs^2\Ztbar} \in \LtwoDh$ and $\Delta\brac{\Dapbar^2\Ztbar} \in \LtwoDh$. Now using \lemref{lem:Deltaapprox} we see that $\Delta\brac{\Dapabs\Dapbar\Ztbar} \in \LtwoDh$ and $\Dapabs_a \Delta(\Dapbar\Ztbar) \in \LtwoDh$. This in particular implies $(\sqrt{\Aone})_a \Dapabs_a \Delta(\Dapbar\Ztbar) \in \LtwoDh$ which is part of $\EDeltafour$.

\item Following the proof in \secref{sec:quantEDelta} we see that $\Delta(\Dap\Ztbar) \in \WcalDh\cap\CcalDh$, $\Delta(\Dapabs\Ztbar) \in \WcalDh\cap\CcalDh$ and $\Delta(\Dapbar\Ztbar) \in \WcalDh\cap\CcalDh$. Hence using \lemref{lem:Deltaapprox} we have $\dis \frac{1}{\Zapabs_a}\Delta(\Ztapbar) \in \CcalDh$. As $(\sqrt{\Aone})_a \in \Wcal$  we now obtain $\dis \brac{\frac{\sqrt{\Aone}}{\Zapabs}}\la \Delta(\Ztapbar) \in \CcalDh$ and hence we have controlled the second term of $\EDeltatwo$. 

\item Following the proof in \secref{sec:quantEDelta} we see that $\Delta(\Dapabs\Aone) \in \LtwoDh$ and hence we have $\Delta(\Aone) \in \WcalDh$ and $\Delta(\sqrt{\Aone}) \in \WcalDh$ 

\item From \secref{sec:quantEDelta} we see that $\Delta(\bvarap) \in \LinftyDh\cap\HhalfDh$, $\Delta(\Dapabs\bvarap) \in \LtwoDh$. Hence we also obtain $\Delta(\bvarap) \in \WcalDh$.

\item As $(\sigma\pap\Th)_a \in \HhalfDh$ and $\Th_a \in \Ltwo$, by interpolation we see that $(\sigma^\twobythree\pap\Th)_a \in \LtwoDeltaonebythree$ and $(\sigma^\onebythree\Th)_a \in \LinftyDeltaonebysix\cap\HhalfDeltaonebysix$

\item Following the proof in Sec 5.1 of \cite{Ag19} we see that $\dis \brac{\sigma^\twobythree\pap^2\frac{1}{\Zap}}\la \in \LtwoDeltaonebythree$, $\dis \brac{\sigma^\twobythree\pap^2\frac{1}{\Zapabs}}\la \in \LtwoDeltaonebythree$ and similarly  $\dis \brac{\frac{\sigma^\twobythree}{\Zapabs}\pap^2\w}\la \in \LtwoDeltaonebythree$, $\dis \brac{\sigma^\twobythree\pap\Dapabs\w}\la \in \LtwoDeltaonebythree$. In the same way we have $\dis \brac{\sigma^\onebythree\pap\frac{1}{\Zap}}\la \in \LinftyDeltaonebysix\cap\HhalfDeltaonebysix$, $\dis \brac{\sigma^\onebythree\pap\frac{1}{\Zapabs}}\la \in \LinftyDeltaonebysix\cap\HhalfDeltaonebysix$ and also $\dis \brac{\sigma^\onebythree\Dapabs\w}\la \in \LinftyDeltaonebysix\cap\HhalfDeltaonebysix$.

\item Following the proof in Sec 5.1 of \cite{Ag19} we see that $\dis \brac{\frac{\sigma}{\Zapabs}\pap^2\Th}\la \in \LtwoDh$ and from this we easily get $(\sigma\pap\Dap\Th)_a \in \LtwoDh$. Similarly we also get $\dis \brac{\frac{\sigma}{\Zapabs}\pap^3\frac{1}{\Zapabs}}\la \in \LtwoDeltahalf$ and $\dis \brac{\frac{\sigma}{\Zapabs^2}\pap^3\w}\la \in \LtwoDeltahalf$. 

\item Following the proof in Sec 5.1 of \cite{Ag19} we see that we have $\dis \brac{\frac{\sigma^\half}{\Zapabs^\half}\pap^2\frac{1}{\Zapabs}}\la \in \LtwoDeltahalf$, $\dis \brac{\frac{\sigma^\half}{\Zapabs^\threebytwo}\pap^2\w}\la \in \LtwoDeltahalf$ and $\dis \brac{\frac{\sigma^\half}{\Zapabs^\half}\pap\Th}\la \in \LtwoDeltahalf $. Similarly we also obtain control of $\dis \brac{\sigma^{\half}  \Zapabs^\half \pap\frac{1}{\Zap}}\la \in \WcalDeltahalf$,  $\dis \brac{\sigma^{\half}  \Zapabs^\half \pap\frac{1}{\Zapabs}}\la \in \WcalDeltahalf$ and  $\dis \brac{\frac{\sigma^\half}{\Zapabs^\half}\pap\w}\la \in \WcalDeltahalf$

\item We now recall from \eqref{form:Zttbar} 
\begin{align*}
\Zttbar -i = -i\frac{\Aone}{\Zap} + \sigma\Dap\Th
\end{align*}
Taking derivatives on both sides and applying $\Delta$ we get
\begin{align*}
\Delta(\Zttapbar) = -i\Delta\brac{\Aone\pap\frac{1}{\Zap}} -i\Delta(\Dap\Aone) + (\sigma\pap\Dap\Th)_a
\end{align*}
Hence we see that $\Delta(\Zttapbar) \in \LtwoDh$. As $\Dt\Ztapbar = -\bvarap\Ztapbar + \Zttapbar$ and $\Delta(\bvarap) \in \LinftyDh$ we obtain $\Delta(\Dt\Ztapbar) \in \LtwoDeltahalf$ which is the first term of $\EDeltatwo$.

\item By exactly following the proof of $\dis \frac{\sigma^\half}{\Zapabs^\fivebytwo}\pap^2\Ztapbar \in \Ltwo$ in Sec 5.1 of \cite{Ag19}, we easily get that $\dis \brac{\frac{\sigma^\half}{\Zapabs^\half}\pap\Dapabs\Dapbar\Ztbar}\la \in \LtwoDh$. Note that this is the last term of $\EDeltafour$.

\item Now we use can \propref{prop:LinftyHhalf} with $\dis \w = \frac{1}{\Zapabs_a}$ and $\dis f = \brac{\frac{\sigma^\half}{\Zapabs^\threebytwo}\pap\Ztapbar}\la $ and we obtain $\dis \brac{\frac{\sigma^\half}{\Zapabs^\threebytwo}\pap\Ztapbar}\la \in \LinftyDeltahalf\cap\HhalfDh$. Hence $\EDeltatwo $ is controlled.

\end{enumerate}

\item Controlling $\EDeltathree$: We prove this step by step.
\begin{enumerate}
\item By \eqref{form:Th} we see that $\Delta(\Th) \in \LtwoDh$. Similarly from \eqref{form:DtTh} and \eqref{eq:IdHilTh} we obtain $\Delta(\Dt\Th) \in \LtwoDh$. Hence the first term of $\EDeltathree$ is controlled. 

\item As we have $\dis \Delta\brac{\Dap\frac{1}{\Zap}} \in \CcalDh$, using \lemref{lem:Deltaapprox} and \lemref{lem:CWDelta} we see that we have $\dis \Delta\brac{\Dapbar\frac{1}{\Zap}} \in \CcalDh$. Now following the proof of $\dis \frac{1}{\Zapabs^2}\pap\Aone \in \Ccal$ in Sec 5.1 of \cite{Ag19}, we easily get $\dis \Delta\brac{\frac{1}{\Zapabs^2}\pap\Aone} \in \CcalDh$.

\item Following the proof of $\dis \Dap\frac{1}{\Zap} \in \Ccal$ in Sec 5.1 of \cite{Ag19}, we see that $\dis \Delta\brac{\frac{\Th}{\Zapabs}} \in \CcalDh$. Hence by \lemref{lem:Deltaapprox} and \lemref{lem:CWDelta} we see that $\dis \frac{1}{\Zapabs_a}\Delta(\Th) \in \CcalDh$ and $\dis \brac{\frac{\sqrt{\Aone}}{\Zapabs}}\la \Delta(\Th) \in \CcalDh$. Hence the second term of $\EDeltathree$ is controlled. 

\item As $\dis \brac*[\Bigg]{\frac{\sigma^\half}{\Zap^\threebytwo}\pap^2\frac{1}{\Zap}}\la \in \CcalDh$, by using \lemref{lem:Deltaapprox} we see that $\dis \brac{\frac{\sigma^\half}{\Zapabs^\threebytwo}\pap^2\frac{1}{\Zap}}\la \in \CcalDh$. Now by following the proof of $\dis \nobrac{\frac{\sigma^\half}{\Zapabs^\threebytwo}\pap^2\frac{1}{\Zap}} \in \Ccal$ in Sec 5.1 of \cite{Ag19}, we easily obtain $\dis \brac{\frac{\sigma^\half}{\Zapabs^\threebytwo}\pap\Th}\la \in \CcalDh$, $\dis \brac{\frac{\sigma^\half}{\Zapabs^\threebytwo}\pap^2\frac{1}{\Zapabs}}\la \in \CcalDeltahalf$ and $\dis \brac{\frac{\sigma^\half}{\Zapabs^\fivebytwo}\pap^2\w}\la \in \CcalDeltahalf$. Hence $\EDeltathree$ is controlled.

\end{enumerate}

\item Controlling $\EDeltafour$: Observe that we have already controlled the second and third term of $\EDeltafour$. Now by following the proof of $\dis \frac{\sigma}{\Zapabs^2}\pap^3\frac{1}{\Zap} \in \Ccal$ and $\dis \frac{\sigma}{\Zapabs^2}\pap^2\Th \in \Ccal$ in Sec 5.1 of \cite{Ag19}, we see that $(\sigma\Dapbar\Dap\Th)_a \in \CcalDh$. Hence by applying $\Dapbar$ in the formula \eqref{form:Zttbar} we obtain
\begin{align*}
\Delta(\Dap\Zttbar) = -i\Delta\brac{\Aone\Dapbar\frac{1}{\Zap}} -i\Delta\brac{\frac{1}{\Zapabs^2}\Aone} + (\sigma\Dapbar\Dap\Th)_a
\end{align*}
Hence $\Delta(\Dapbar\Zttbar) \in \CcalDh$. Now by applying $\Delta$ to the equation $\Dt\Dapbar\Ztbar = -(\Dapbar\Ztbar)^2 + \Dapbar\Zttbar$ we see that $\Delta(\Dt\Dapbar\Ztbar) \in \HhalfDeltahalf$ which shows that $\EDeltafour$ is controlled. Hence proved. 

\end{enumerate}
\end{proof}

\medskip
\section{Proof of \thmref{thm:convergence} and \corref{cor:examplenew}}\label{sec:proof}
 
We now prove our main results stated in \secref{sec:results}. We first prove a basic lemma.

\begin{lemma}\label{lem:equivlowerorder}
Let $(\Z,\Zt)(t)$ be a smooth solution to the water wave equation \eqref{eq:systemone} for $\sigma \geq 0$ in the time interval $[0,T]$ for $T>0$, satisfying $(\Zap-1,\frac{1}{\Zap} - 1, \Zt) \in \Linfty([0,T], H^{s+\half}(\Rsp)\times H^{s+\half }(\Rsp)\times H^{s }(\Rsp))$ for all $s\geq 3$. Let 
\begin{align*}
R_0 =  \norm[2]{\frac{1}{\Zap} - 1}(0) + \norm[2]{\Zt}(0) \qq \tx{ and } \q M_0 = \norm[\infty]{\Zap}(0) + \norm[2]{\frac{1}{\Zap} - 1}(0) + \norm[2]{\Zt}(0)
\end{align*}
Then there exists a universal increasing function $F:[0,\infty) \to [0,\infty)$ so that
\begin{enumerate}
\item If $\sigma > 0$, then
\begin{align*}
& \sup_{t\in [0,T]} \cbrac{ \norm[H^{3.5}]{\Zap - 1}(t) + \norm[H^{3.5}]{\frac{1}{\Zap} - 1}(t) + \norm[H^{3}]{\Zt}(t)} \\
& \leq F\brac{M_0 + \sup_{t\in[0,T]}\Ecalsigma(t) + T + \sigma + \frac{1}{\sigma}}
\end{align*}
\item If $\sigma = 0$, then 
\begin{align*}
& \sup_{t\in [0,T]} \cbrac{ \norm[H^3]{\Zap - 1}(t) + \norm[H^3]{\frac{1}{\Zap} - 1}(t) + \norm[H^{3.5}]{\Zt}(t)} \\
& \leq F\brac*[\bigg]{M_0 + \sup_{t\in[0,T]}\Ecalhigh(t) +  \sup_{t\in[0,T]}\Ecalaux(t) + T + 1}
\end{align*}
\item For $\sigma \geq 0$ we define
\begin{align*}
S(t) & =  \norm[2]{\frac{1}{\Zap} - 1}(t) + \norm[\Linfty\cap\Hhalf]{\frac{1}{\Zap}}(t) + \norm[2]{\Zt}(t) + \norm[\Linfty\cap\Hhalf]{\Zt}(t) + \norm[2]{\Aone - 1}(t)  \\
& \quad  + \norm[2]{\bvar}(t)  + \norm[\Linfty\cap\Hhalf]{\bvar}(t) + \norm[2]{\bvarap}(t) + \norm[2]{\Ztt}(t) + \norm[\Linfty\cap\Hhalf]{\Ztt}(t)
\end{align*}
Then we have the estimate
\begin{align*}
\sup_{t\in[0,T]} S(t) \leq F\brac*[\bigg]{R_0 + \sup_{t\in[0,T]}\Ecalsigma(t) + \sigma + T + 1}
\end{align*}
\end{enumerate}
\end{lemma}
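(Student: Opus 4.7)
The strategy is three-tier: part (3) is established first by low-regularity ODE/Gronwall arguments in Lagrangian coordinates; this provides pointwise control of $\Zap$ and $\frac{1}{\Zap}$, which then allows parts (1) and (2) to be deduced by reading off the weighted high-order estimates already provided by $\Ecalsigma$ (respectively $\Ecalhigh+\Ecalaux$) and paying the appropriate weight factors.

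For part (3), the key identities are \eqref{form:DtoneoverZap} for $\Dt\frac{1}{\Zap}$, $\Dt\Zt=\Ztt$ together with \eqref{form:Zttbar}, and \eqref{form:bvarapnew} for $\bvar$. Passing to Lagrangian coordinates via $h$ converts $\Dt$ into $\partial_t$, and the right-hand sides of the resulting ODEs are bounded in $\Ltwo$ by a polynomial in $\Ecalsigma$ plus lower-order terms of the same type, using the list of quantities in \secref{sec:aprioriEcalsigma} (in particular $\Dapbar\Ztbar\in\Linfty$ and $\bvarap-\Dap\Zt-\Dapbar\Ztbar\in\Linfty$) along with \eqref{form:Aonenew} and the $\Hhalf$-control of $\sigma\pap\Th$ coming from $\Ecalsigma$. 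A coupled Gronwall inequality applied to $\norm[2]{\frac{1}{\Zap}-1}+\norm[2]{\Zt}+\norm[2]{\bvar}$, initialised at $R_0$, yields bounds growing at most exponentially in $T$ with a rate controlled by $\sigma$ and $\Ecalsigma$; combining these with the $\Ltwo$-control of $\pap\frac{1}{\Zap}$, $\Ztap$, and $\bvarap$ from $\Ecalsigma$ upgrades everything to the $\Linfty\cap\Hhalf$ norms required by $S(t)$ via the embedding $H^1\hookrightarrow\Linfty\cap\Hhalf$. The remaining terms $\norm[2]{\Aone-1}$, $\norm[2]{\Ztt}$ and $\norm[\Linfty\cap\Hhalf]{\Ztt}$ follow directly from the algebraic formulae \eqref{form:Aonenew} and \eqref{form:Zttbar} together with \propref{prop:commutator}.

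For parts (1) and (2), the extra input is a pointwise upper bound on $\Zap$, obtained by integrating the identity $\Dt\log\Zap = \Dap\Zt-\bvarap$ (a consequence of \eqref{eq:systemone} and \eqref{form:DtZapabs}) along Lagrangian trajectories to get $\norm[\infty]{\Zap(t)}\leq \norm[\infty]{\Zap(0)}\exp\brac{\int_0^t\norm[\infty]{\Dap\Zt-\bvarap}\,ds}\leq M_0\,e^{T\,F(\Ecalsigma)}$. With $\norm[\infty]{\Zap}$ and $\norm[\infty]{\frac{1}{\Zap}}$ now both controlled, the weighted top-order estimates from $\Ecalsigma$ --- in particular $\norm[\Hhalf]{\frac{\sigma}{\Zap^2}\pap^3\frac{1}{\Zap}}$, $\norm[2]{\frac{\sigma^\half}{\Zap^\half}\pap^2\frac{1}{\Zap}}$ and $\norm[2]{\frac{\sigma^\half}{\Zap^\fivebytwo}\pap^2\Ztapbar}$ --- yield $H^{3.5}$-control of $\frac{1}{\Zap}-1$ and $H^3$-control of $\Zt$ at the cost of factors of $\sigma^{-1}$ and $\sigma^{-1/2}$, which accounts for the $1/\sigma$ dependence in part (1); $H^{3.5}$-control of $\Zap-1$ then follows from $\pap\Zap = -\Zap^2\pap\frac{1}{\Zap}$ and standard product estimates. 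Part (2) is identical but relies on the $\sigma$-free top-order terms of $\Ecalhigh+\Ecalaux$, namely $\norm[2]{\frac{1}{\Zap^{5/2}}\pap^3\frac{1}{\Zap}}$ and $\norm[\Hhalf]{\frac{1}{\Zap^3}\pap\Ztapbar}$, so no $\sigma^{-1}$ appears. The principal obstacle is the logical ordering: the energies control only weighted derivatives, so the pointwise upper bound on $\Zap$ --- and hence the entire weighted-to-unweighted conversion underlying parts (1) and (2) --- hinges on first proving part (3) using only the lowest-order content of the energy combined with an ODE bootstrap.
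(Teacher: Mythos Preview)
Your strategy matches the paper's: a Gronwall/ODE argument for the low-order quantities in part~(3), followed by propagation of $\norm[\infty]{\Zap}$ via $\Dt\log\Zap=\Dap\Zt-\bvarap$ to strip weights from the high-order energy terms in parts~(1) and~(2). Two minor corrections: first, part~(3) alone does \emph{not} give pointwise control of $\Zap$ (only of $\frac{1}{\Zap}$), so your opening sentence overstates what part~(3) provides --- though your later paragraphs handle this correctly by bringing in $M_0$ separately. Second, for part~(2) the top-order term you need for $\Zt\in H^{3.5}$ is $\norm[\Hhalf]{\frac{1}{\Zap^{7/2}}\pap^2\Ztapbar}$ from $\Ecalaux$, not $\norm[\Hhalf]{\frac{1}{\Zap^3}\pap\Ztapbar}$ from $\Ecalhigh$ (the latter only gives $H^{2.5}$); the paper uses the former together with $\Zap\in\Wcal$ and \lemref{lem:CW}.
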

\begin{proof}
The first estimate was already proved in Lemma 6.2 of \cite{Ag19}. Let us now prove the second estimate. Observe from the definition of $\Ehigh$ in \eqref{def:Ehigh}, \propref{prop:equivEhighEcalhigh} and \propref{prop:equivEsigmaEcalsigma} that $\Ecalhigh$ controls $\Ecalsigma\vert_{\sigma = 0}$. 
Hence from Lemma 6.2 of \cite{Ag19} we get
\begin{align*}
& \sup_{t\in [0,T]} \cbrac{  \norm[H^{1.5}]{\Zap - 1}(t) + \norm[H^{1.5}]{\frac{1}{\Zap} - 1}(t) + \norm[H^{2}]{\Zt}(t) } \\
& \leq F\brac{M_0 + \sup_{t\in[0,T]}\Ecalhigh(t) + T + 1}
\end{align*}
Hence we see that for each $t\in [0,T]$, we have $\Zap \in \Linfty$, $\Dapabs \Zap \in \Ltwo$ and hence $\Zap \in \Wcal$. Now using the energy $\Ecalaux(t)$ we see that 
\begin{align*}
\norm[2]{\pap^3 \frac{1}{\Zap}} \lesssim \norm[\infty]{\Zap}^\fivebytwo\norm[2]{\frac{1}{\Zap^\fivebytwo}\pap^3\frac{1}{\Zap}}
\end{align*}
and by \lemref{lem:CW}
\begin{align*}
\norm[\Ccal]{\pap^2\Ztapbar} \lesssim \norm[\Wcal]{\Zap}^\sevenbytwo \norm[\Ccal]{\frac{1}{\Zap^\sevenbytwo}\pap^2\Ztapbar}
\end{align*}
We control $\Zap - 1 \in H^{3}$ similarly. This proves the second estimate.

Let us now prove the third estimate. The proof follows in a similar fashion as the proof of Lemma 6.2 of \cite{Ag19}. To simplify the calculation we define
\begin{align*}
M = R_0 + \sup_{t\in[0,T]}\Ecalsigma(t) + \sigma + T + 1
\end{align*}
and we write $a\lesssim b$ if $a \leq C(M)b$ where $C(M)$ is a constant which depends only on $M$. Hence we need to prove that $\sup_{t\in[0,T]} S(t) \lesssim 1$. First observe that
\begin{align*}
\norm[\infty]{\frac{1}{\Zap}}(0) \lesssim 1 + \norm[2]{\frac{1}{\Zap} - 1}^\half(0) \norm[2]{\pap\frac{1}{\Zap}}^\half(0) \lesssim 1
\end{align*}
Now the evolution equation \eqref{eq:systemone} gives us
\begin{align*}
(\pt + \bvar\pap)\Zap = \Ztap -\bvarap \Zap = \brac{\Dap\Zt - \bvarap}\Zap
\end{align*}
Hence for all $0\leq t \leq T$ we have the estimate
\begin{align*}
\norm[\infty]{\frac{1}{\Zap}}(t) & \leq \norm[\infty]{\frac{1}{\Zap}}(0) \exp\cbrac{{\int_0^t \brac{\norm[\infty]{\Dap\Zt}(s) + \norm[\infty]{\bvarap}(s)} \diff s}} \lesssim 1
\end{align*}
Define
\begin{align*}
f(t) = \norm[2]{\frac{1}{\Zap} - 1}^2(t) + \norm[2]{\Zt}^2(t) + 1
\end{align*}
Observe that $f(0) \lesssim 1$. Now it was proved as part of the proof of Lemma 6.2 in \cite{Ag19} that
\begin{align*}
\norm[2]{b} + \norm[\infty]{b} + \norm[2]{\bvarap} + \norm[2]{\Aone - 1} + \norm[2]{\Ztt} \lesssim f^\half
\end{align*}
and that $\pt f \lesssim f$. Hence we have 
\begin{align*}
& \sup_{t \in [0,T]} \cbrac{\norm[2]{\frac{1}{\Zap} - 1}(t) + \norm[2]{\Zt}(t) +  \norm[2]{b}(t) + \norm[\infty]{b}(t) + \norm[2]{\bvarap}(t) + \norm[2]{\Aone - 1} (t)+ \norm[2]{\Ztt}(t) } \\
& \lesssim 1
\end{align*}
The other terms are easily controlled. Observe that
\begin{align*}
\norm[\Hhalf]{\frac{1}{\Zap}} \lesssim  \norm[2]{\frac{1}{\Zap} - 1}^\half \norm[2]{\pap\frac{1}{\Zap}}^\half \lesssim 1
\end{align*}
We also see that $\norm[\Linfty\cap\Hhalf]{\Zt} \lesssim \norm[2]{\Zt}^\half \norm[2]{\Ztap}^\half \lesssim 1$. Similarly we get $\norm[\Linfty\cap\Hhalf]{b} \lesssim 1$ and $\norm[\Linfty\cap\Hhalf]{\Ztt} \lesssim 1$. This finishes the proof of the lemma. 

\end{proof}
 
We can now prove our main theorem.  
\begin{proof}[Proof of \thmref{thm:convergence}]
In the proof we will frequently denote a constant which depends only on $L$ by $C(L)$. Let $M_0, N_0$ be defined as
\begin{align*}
M_0 = \norm[\infty]{\Zap}(0) + \norm[2]{\frac{1}{\Zap} - 1}(0) + \norm[2]{\Zt}(0) \qq \tx{ and } \q N_0 = \Ecalaux(\Z,\Zt)(0)
\end{align*}
Let $\ep\geq 0$ and consider the mollified initial data given by $(\Z^\ep,\Zt^\ep)(0) = (P_\ep\conv\Z, P_\ep\conv\Zt)(0)$ where $P_\ep$ is the Poisson kernel \eqref{eq:Poissonkernel}. Observe that there exists an $0<\ep_0 \leq 1$ small enough so that for all $0 < \ep\leq \ep_0$ we have
\begin{align*}
 \Ecalhigh(\Z^\ep,\Zt^\ep)(0), \Ecalsigma(\Z^\ep,\Zt^\ep)(0)  \leq 2L \qq \tx{ and } \quad \EcalDelta(\Z^{\ep, \sigma},\Z^{\ep})(0) \leq 2\EcalDelta(\Z^{\sigma},\Z)(0)
\end{align*}
and we also have
\begin{align*}
\norm[\infty]{\Zap^\ep}(0) + \norm[2]{\frac{1}{\Zap^\ep} - 1}(0) + \norm[2]{\Zt^\ep}(0) \leq M_0 \qq \tx{ and } \quad \Ecalaux(\Z^\ep,\Zt^\ep)(0) \leq  N_0
\end{align*}

Now let $0\leq \ep \leq \ep_0$. As $(\Zap-1,\frac{1}{\Zap} - 1, \Zt)(0) \in H^{3.5}(\Rsp)\times H^{3.5}(\Rsp)\times H^{3}(\Rsp)$, by \thmref{thm:existence} there exists a $T_1>0$ depending only on $L$ so that we have a unique solution $(\Z^{\ep,\sigma},\Zt^{\ep,\sigma})(t)$ in $[0,T_1]$ to \eqref{eq:systemone} with surface tension $\sigma$ and initial data $(\Z^\ep,\Zt^\ep)(0)$, satisfying $(\Zap^{\ep,\sigma} -1,\frac{1}{\Zap^{\ep,\sigma}} - 1, \Zt^{\ep,\sigma}) \in C^l([0,T_1], H^{3.5 - \threebytwo l}(\Rsp)\times H^{3.5 - \threebytwo l}(\Rsp)\times H^{3 - \threebytwo l}(\Rsp))$ for $l=0,1$ and we also have 
\begin{align*}
 \sup_{t\in [0,T_1]} \Ecalsigma (\Z^{\ep,\sigma},\Zt^{\ep,\sigma})(t) \leq C(L)
\end{align*}
We denote the solution for $\ep = 0$ in this case simply by $(\Z^{\sigma},\Zt^{\sigma})(t)$. Using \lemref{lem:equivlowerorder} we see that 
\begin{align*}
\sup_{t\in [0,T_1]} \cbrac{ \norm[H^{3.5}]{\Zap^{\ep,\sigma} - 1}(t) + \norm[H^{3.5}]{\frac{1}{\Zap^{\ep,\sigma}} - 1}(t) + \norm[H^{3}]{\Zt^{\ep,\sigma}}(t)} \leq C(L,M_0, T_1, \sigma)
\end{align*}
Also from Corollary 7.9 of \cite{Ag19} we obtain
\begin{align*}
 \sup_{t\in [0,T_1]}\cbrac{ \norm[H^{2}]{\Zap^{\ep,\sigma} - \Zap^{\sigma}}(t) + \norm[H^{2}]{\frac{1}{\Zap^{\ep,\sigma}} - \frac{1}{\Zap^{\sigma}}}(t) + \norm[H^{1.5}]{\Zt^{\ep,\sigma} - \Zt^{\sigma}}(t)} \to 0 \q \tx{ as } \ep \to 0
\end{align*}

Now let $0<\ep\leq \ep_0$. Using Theorem 2.3 of \cite{Wu19} we see that there exists a $T_\ep>0$ so that we have a unique smooth solution $(\Z^{\ep},\Zt^{\ep})(t)$ in $[0,T_{\ep}]$ to \eqref{eq:systemone} with zero surface tension and initial data $(\Z^\ep,\Zt^\ep)(0)$, satisfying $(\Zap^\ep-1,\frac{1}{\Zap^\ep} - 1, \Zt^\ep) \in \Linfty([0,T], H^{s}(\Rsp)\times H^{s}(\Rsp)\times H^{s + \half}(\Rsp))$ for all $s\geq 4$. Therefore by \thmref{thm:aprioriEhigh} we see that for all $t\in[0,T_\ep)$ we have
\begin{align*}
\frac{\diff \Ehigh(\Z^\ep,\Zt^\ep)(t)}{\diff t} \leq P(\Ehigh(\Z^\ep,\Zt^\ep)(t)) 
\end{align*}
Also from \thmref{thm:aprioriEaux} we have
\begin{align*}
\frac{\diff \Eaux(\Z^\ep,\Zt^\ep)(t)}{\diff t} \leq P(\Ecalhigh(\Z^\ep,\Zt^\ep)(t))\Eaux(\Z^\ep,\Zt^\ep)(t) \qq 
\end{align*}
Hence using \propref{prop:equivEhighEcalhigh}, \propref{prop:equivEauxEcalaux}, \lemref{lem:equivlowerorder} and the blow up criterion of Theorem 2.3 of \cite{Wu19}, we see that there exists a $T_2>0$ depending only on $L$ so that the solution  $(\Z^{\ep},\Zt^{\ep})(t)$ in fact exists in the time interval $[0,T_2]$ and we have the estimates 
\begin{align*}
 \sup_{t\in [0,T_2]} \Ecalhigh (\Z^{\ep},\Zt^{\ep})(t) \leq C(L) \qq \tx{ and } \q  \sup_{t\in [0,T_2]} \Ecalaux (\Z^{\ep},\Zt^{\ep})(t) \leq C(L,N_0)
\end{align*}
along with
\begin{align*}
\sup_{t\in [0,T_2]} \cbrac{ \norm[H^{3}]{\Zap^{\ep} - 1}(t) + \norm[H^{3}]{\frac{1}{\Zap^{\ep}} - 1}(t) + \norm[H^{3.5}]{\Zt^{\ep}}(t)} \leq C(L,M_0, N_0, T_2)
\end{align*}

Let $T = \min\cbrac{T_1,T_2}>0$. As the solutions $(\Z^{\ep},\Zt^{\ep})(t)$ and $(\Z^{\ep,\sigma},\Zt^{\ep,\sigma})(t)$ are smooth for $0<\ep\leq \ep_0$, we can now use \thmref{thm:aprioriEDelta} to see that for all $t \in [0,T)$ we have
\begin{align*}
\frac{d}{dt}\EDelta(Z^{\ep,\sigma}, Z^{\ep})(t) \leq C(L) \EDelta(Z^{\ep,\sigma}, Z^{\ep})(t) 
\end{align*}
Hence using \propref{prop:equivEDeltaEcalDelta} and the fact that $\EcalDelta(\Z^{\ep, \sigma},\Z^{\ep})(0) \leq 2\EcalDelta(\Z^{\sigma},\Z)(0)$ we see that there are constants $C_0,C_1$ depending only on $L$ so that
\begin{align*}
\sup_{t\in[0,T]}\EcalDelta(\Z^{\ep,\sigma},\Z^{\ep})(t) \leq C_1e^{C_0T}\EcalDelta(\Z^\sigma,\Z)(0)
\end{align*}
We now let $\ep \to 0$ and by using Theorem 3.9 of \cite{Wu19} and Lemma 5.9 of \cite{Wu97} we see that there exists a unique solution $(\Z,\Zt)(t)$ in $[0,T]$ to \eqref{eq:systemone} with zero surface tension and initial data $(\Z,\Zt)(0)$, satisfying $(\Zap -1,\frac{1}{\Zap} - 1, \Zt) \in C^l([0,T], H^{3 - \half l}(\Rsp)\times H^{3 - \half l}(\Rsp)\times H^{3.5 - \half l}(\Rsp))$ for $l=0,1$, along with the estimate
\begin{align*}
\sup_{t\in[0,T]}\EcalDelta(\Z^{\sigma},\Z)(t) \leq C_1e^{C_0T}\EcalDelta(\Z^\sigma,\Z)(0)
\end{align*}
Hence proved. 
\end{proof}

\begin{proof}[Proof of \corref{cor:examplenew}]

Without loss of generality we assume that $c=1$ and define $\dis \tau = \frac{\sigma}{\ep^{3/2}}$ which implies that $\tau\leq 1$.  To simplify the proof we will suppress the dependence of $\Mconst$ in the inequalities i.e. when we write $a \lesssim b$, what we mean is that there exists a constant $C(M)$ depending only on $\Mconst$ such that $a\leq C(M)b$.  

If $\sigma \geq 0, 0<\ep\leq 1$ and $\tau\leq 1$, then it was already shown in the proof of \corref{cor:example} that we have $\Ecalsigma(\Z^{\epsilon,\sigma}, \Zt^{\epsilon,\sigma})(0) \lesssim 1$. It is also clear from the definition of $M$ in \eqref{eq:M} that we have $\Ecalhigh(\Z^{\epsilon}, \Zt^{\epsilon})(0) \lesssim 1$. Hence by \thmref{thm:convergence} there exists $T_2, C_0>0$ depending only on $M$ so that the solutions  $(\Z^{\epsilon,\sigma}, \Zt^{\epsilon,\sigma})(t)$ exist in the time interval $[0,T_2]$, we have $\sup_{t\in[0,T_2]} \Ecalhigh(\Z^{\ep},\Zt^{\ep})(t) \lesssim 1$ and $\sup_{t\in[0,T_2]} \Ecalsigma(\Z^{\ep,\sigma},\Zt^{\ep,\sigma})(t) \lesssim 1$, and we have 
\begin{align*}
\sup_{t\in[0,T_2]}\EcalDelta(\Z^{\ep,\sigma},\Z^\ep)(t) \lesssim e^{C_0T_2}\EcalDelta(\Z^{\ep,\sigma},\Z^\ep)(0)
\end{align*}
Let us now prove each of the statements in the corollary.

\medskip

\noindent \textbf{Part 1:} From the above equation it is clear that we only need to prove $\EcalDelta(\Z^{\ep,\sigma},\Z^\ep)(0) \lesssim \tau$. As we only need to prove the estimates for $t=0$, we will suppress the time dependence of the solutions e.g. we will write $(\Z\conv P_\epsilon, \Zt\conv P_\epsilon)\vert_{t=0}$ by $(\Z,\Zt)_\ep$ for simplicity.

Recall that $\EcalDelta(\Z^{\ep,\sigma}, \Z^{\ep})(0) =  \EcalDeltaone(\Z^{\ep,\sigma}, \Z^{\ep})(0) + \EcalDeltatwo(\Z^{\ep,\sigma}, \Z^{\ep})(0)  + \sigma(\Ecalaux)_\ep(0)$ where the term $\sigma(\Ecalaux)_\ep(0)$ is given by
\begin{align*}
\sigma(\Ecalaux)_\ep(0) & = \norm[\infty]{\brac{\sigma^\half\Zap^\half\pap\frac{1}{\Zap}}_{\n \ep}}^2 + \norm*[\Bigg][2]{\brac*[\Bigg]{\frac{\sigma^\half}{\Zap^\half}\pap^2\frac{1}{\Zap}}_{\n\ep}}^2 + \norm*[\Bigg][2]{\brac*[\Bigg]{\frac{\sigma^\half}{\Zap^\fivebytwo}\pap^3\frac{1}{\Zap}}_{\n\ep}}^2 \\
& \quad + \norm*[\Bigg][2]{\brac*[\Bigg]{\frac{\sigma^\half}{\Zap^\half}\pap\Ztapbar}_{\n\ep}}^2 +  \norm*[\Bigg][2]{\brac*[\Bigg]{\frac{\sigma^\half}{\Zap^\fivebytwo}\pap^2\Ztapbar}_{\n\ep}}^2 + \norm*[\Bigg][\Hhalf]{\brac*[\Bigg]{\frac{\sigma^\half}{\Zap^\sevenbytwo}\pap^2\Ztapbar}_{\n\ep}}^2
\end{align*}
Now looking at the proof of \corref{cor:example} in \cite{Ag19}, it was shown there all the terms involving $\sigma$ in $\Ecalsigma(\Z^{\ep,\sigma}, \Zt^{\ep,\sigma})(0)$ are bounded above by $\tau$. Hence this directly implies that $ \EcalDeltaone(\Z^{\ep,\sigma}, \Z^{\ep})(0) + \EcalDeltatwo(\Z^{\ep,\sigma}, \Z^{\ep})(0) \lesssim \tau$. Hence we only need to prove that $\sigma(\Ecalaux)_\ep(0) \lesssim \tau$. Now observe that the only terms of $\sigma(\Ecalaux)_\ep(0)$  we really need to control are $\dis \brac*[\bigg]{\frac{\sigma^\half}{\Zap^\fivebytwo}\pap^3\frac{1}{\Zap}}_{\n\ep} \in \Ltwo$ and $\dis \brac*[\bigg]{\frac{\sigma^\half}{\Zap^\sevenbytwo}\pap^2\Ztapbar}_{\n\ep} \in \Hhalf$, as all the other terms are controlled as part of the terms involving $\sigma$ in $\Ecalsigma(\Z^{\ep,\sigma}, \Zt^{\ep,\sigma})(0)$. Let us now control these two terms.

\begin{enumerate}[leftmargin =*, align=left]

\item We use $\dis \sup_{y<0}\norm[\Lone(\Rsp, \diff x)]{\frac{1}{\Psi_z^3}\partial_z^3 \brac{\frac{1}{\Psi_z}}}  \lesssim 1 $ and \lemref{lem:conv} to get  
\begin{align*}
\norm*[\Bigg][2]{\sigma\pap\brac*[\Bigg]{\frac{1}{\Zap^3}\pap^3\frac{1}{\Zap}}_{\n\ep}} \lesssim \tau
\end{align*}
From this we also obtain
\begin{align*}
\norm[2]{\brac{\frac{\sigma}{\Zap^3}\pap^4\frac{1}{\Zap}}_{\n\ep}} & \lesssim \norm[\infty]{\brac{\frac{1}{\Zap}\pap\frac{1}{\Zap}}_{\n\ep}}\norm[2]{\brac{\frac{\sigma}{\Zap}\pap^3\frac{1}{\Zap}}_{\n\ep}} +  \norm*[\Bigg][2]{\sigma\pap\brac*[\Bigg]{\frac{1}{\Zap^3}\pap^3\frac{1}{\Zap}}_{\n\ep}} \\
& \lesssim \tau
\end{align*}
Now we have
\begin{align*}
& \norm[2]{\brac{\sigma\Zapabs^2\pap\brac{\frac{1}{\Zapabs^5}\pap^3\frac{1}{\Zap}} }_{\n\ep}} \\
& \lesssim \norm[2]{\brac{\frac{\sigma}{\Zap^3}\pap^4\frac{1}{\Zap}}_{\n\ep}} + \norm[\infty]{\brac{\sigma^\onebythree\pap\frac{1}{\Zapabs}}_{\n\ep}}\norm[2]{\brac{\frac{\sigma^\twobythree}{\Zapabs^2}\pap^3\frac{1}{\Zap}}_{\n\ep}} \\
&  \lesssim \tau
\end{align*}
where the last two terms in the product above were controlled in the proof of \corref{cor:example} in \cite{Ag19}. Hence by using integration by parts we see that
\begin{align*}
\norm*[\Bigg][2]{\brac*[\Bigg]{\frac{\sigma^\half}{\Zap^\fivebytwo}\pap^3\frac{1}{\Zap}}_{\n\ep}}^2 \lesssim \norm[2]{\brac{\frac{1}{\Zapabs^2}\pap^2\frac{1}{\Zap}}_{\n\ep}} \norm[2]{\brac{\sigma\Zapabs^2\pap\brac{\frac{1}{\Zapabs^5}\pap^3\frac{1}{\Zap}} }_{\n\ep}}  \lesssim \tau
\end{align*}

\item Using \propref{prop:Leibniz} we see that
\begin{align*}
& \norm*[\Bigg][\Hhalf]{\brac*[\Bigg]{\frac{\sigma^\half}{\Zap^\sevenbytwo}\pap^2\Ztapbar}_{\n\ep}} \\
& \lesssim \sigma^\half \norm[\infty]{\frac{1}{(\Zap)_\ep}}^\sevenbytwo\norm[\Hhalf]{(\pap^2\Ztapbar)_\ep}  + \sigma^\half\norm[\infty]{\frac{1}{(\Zap)_{\ep}}}^\fivebytwo\norm[2]{\brac{\pap\frac{1}{\Zap}}_{\n\ep}}\norm[2]{\brac{\pap^2\Ztapbar}_{\ep}} \\
& \lesssim \tau^\half
\end{align*}
This finishes the proof of $\EcalDelta(\Z^{\ep,\sigma},\Z^{\ep})(0) \lesssim \tau$ and hence we have proved the first part. 

\end{enumerate}

\medskip

\noindent \textbf{Part 2:} From the definition of $\Fcal_{\Delta}$ in \eqref{eq:Fcal} we observe that
\begin{align*}
\Fcal_{\Delta}(\Z^{\epsilon,\sigma}, \Z)(t) \leq  \Fcal_{\Delta}(\Z^{\epsilon,\sigma}, \Z^\ep)(t) +  \Fcal_{\Delta}(\Z^{\epsilon}, \Z)(t)
\end{align*}
Now from Theorem 3.7 of \cite{Wu19} we see that $\sup_{t\in[0,T]} \Fcal_{\Delta}(\Z^{\epsilon}, \Z)(t) \to 0 $ as $\ep \to 0$. Hence it is enough to show that $\sup_{t\in[0,T]} \Fcal_{\Delta}(\Z^{\epsilon,\sigma}, \Z^\ep)(t) \to 0 $ as $\tau \to 0$. Let $(\Z^\ep,\Zt^\ep)(t)$ be the solution A and let $(\Zt^{\ep,\sigma},\Zt^{\ep,\sigma})$ be solution B. Hence from \lemref{lem:quantM} we see that $\Util = U_{\htil} = U_{h_a}^{-1}U_{h_b}$ is bounded on $\Ltwo$ and $\Hhalf$, and the same is true for $\Util^{-1}$. Hence we see that 
\begin{align*}
\Fcal_{\Delta}(\Z^{\epsilon,\sigma}, \Z^\ep)(t) \lesssim \widetilde{\Fcal}_{\Delta}(\Z^{\epsilon,\sigma}, \Z^\ep)(t)
\end{align*}
where 
\begin{align}\label{eq:Ftilcal}
\begin{split}
\widetilde{\Fcal}_{\Delta}(\Z^{\epsilon,\sigma}, \Z^\ep)(t) & = \norm[\Hhalf]{\Delta(\Zt)} + \norm[\Hhalf]{\Delta(\Ztt)} + \norm[\Hhalf]{\Delta\brac{\frac{1}{\Zap}}} + \norm[2]{\Delta(\hal\compose\hinv)} \\
& \quad + \norm[2]{\Delta(\Dap\Zt)} + \norm[2]{\Delta(\Aone)} + \norm[2]{\Delta(\bvarap)}
\end{split}
\end{align}
We now control each of these terms. Hence using \lemref{lem:equivlowerorder}  we see that
\begin{align*}
& \norm[\Linfty\cap\Hhalf]{\Delta(\Zt)}^2  \lesssim \norm[2]{\Delta(\Zt)}\norm[2]{\pap\Delta(\Zt)} \lesssim (\norm[2]{(\Zt)_a} + \norm[2]{(\Zt)_b})\norm[2]{\pap\Delta(\Zt)} \lesssim \EcalDelta(\Z^{\ep,\sigma},\Z^{\ep})(t)^\half
\end{align*}
This implies that $ \norm[\Linfty\cap\Hhalf]{\Delta(\Zt)} \to 0$ as $\tau \to 0$. By the same argument we also see that $\norm[\Linfty\cap\Hhalf]{\Delta(\Ztt)} + \norm[\Linfty\cap\Hhalf]{\Delta\brac{\frac{1}{\Zap}}} \to 0$ as $\tau \to 0$. Now using \lemref{lem:equivlowerorder} we observe that
\begin{align*}
\norm[2]{\Delta(\Dap\Zt)} \lesssim \norm[\infty]{\Delta\brac{\frac{1}{\Zap}}} + \norm[2]{\Delta(\Ztap)}
\end{align*}
Therefore $\norm[2]{\Delta(\Dap\Zt)} \to 0$ as $\tau \to 0$. Now we recall the formula of $\Aone$ from \eqref{eq:systemone} which is $\Aone = 1 - \Imag \sqbrac{\Zt,\Hil}\Ztapbar$. Hence by using \lemref{lem:Delta}, \lemref{lem:equivlowerorder}, \propref{prop:commutator} and \propref{prop:HilHtilcaldiff} we see that
\begin{align*}
\norm[2]{\Delta(\Aone)} \lesssim \norm[\Hhalf]{\Delta(\Zt)} + \norm*[\big][\infty]{\htilap - 1} + \norm[2]{\pap\Delta(\Ztbar)}
\end{align*}
This shows that $\norm[2]{\Delta(\Aone)} \to 0$ as $\tau \to 0$. Now applying $\Real(\Id - \Hil)$ to the formula of $\bvarap$ from \eqref{form:bvarapnew} we see that
\begin{align*}
\bvarap = \Real\cbrac{\sqbrac{\frac{1}{\Zap},\Hil}\Ztap + 2\Dap\Zt + \sqbrac{\Zt,\Hil}\brac*[\Big]{\pap \frac{1}{\Zap}} }
\end{align*}
Hence by using \lemref{lem:Delta},\lemref{lem:equivlowerorder}, \propref{prop:commutator} and \propref{prop:HilHtilcaldiff} we see that
\begin{align*}
\norm[2]{\Delta(\bvarap)} &  \lesssim \norm[\Hhalf]{\Delta\brac{\frac{1}{\Zap}}} +  \norm*[\big][\infty]{\htilap - 1} +  \norm[2]{\pap\Delta(\Zt)} + \norm[2]{\Delta(\Dap\Zt)} + \norm[\Hhalf]{\Delta(\Zt)} \\
& \quad + \norm[2]{\pap\Delta\brac{\frac{1}{\Zap}}}
\end{align*}
This shows that $\norm[2]{\Delta(\bvarap)} \to 0$ as $\tau \to 0$. Finally using \lemref{lem:timederiv} we see that
\begin{align*}
\frac{\diff}{\diff t}\norm[2]{\Delta(\hal\compose\hinv)}^2 & \lesssim \norm[2]{\Delta(\hal\compose\hinv)}^2 + \norm[2]{\Delta(\hal\compose\hinv)}\norm[2]{(\Dt)_a\Delta(\hal\compose\hinv)} \\
&  \lesssim \norm[2]{\Delta(\hal\compose\hinv)}^2 + \norm[2]{\Delta(\hal\compose\hinv)}\norm[2]{\Delta(h_{t\al}\compose\hinv)} \\
&  \lesssim \norm[2]{\Delta(\hal\compose\hinv)}^2 + \norm[2]{\Delta(\hal\compose\hinv)}\norm[2]{\Delta(\bvarap)} \\
& \lesssim \norm[2]{\Delta(\hal\compose\hinv)}^2 + \norm[2]{\Delta(\bvarap)}^2
\end{align*} 
Now by integrating the above inequality and using the fact that $\norm[2]{\Delta(\bvarap)} \to 0$ as $\tau \to 0$, we see that $\norm[2]{\Delta(\hal\compose\hinv)} \to 0$ as $\tau \to 0$. Hence proved. 
\end{proof}

\medskip
\section{Appendix A}\label{sec:appendixA}

Here we collect the main identities and estimates proved in the appendix of \cite{Ag19} that we need in this paper. For the proof of the estimates we refer to the appendix of \cite{Ag19}.  Let $\Dt = \pt + \bvar\pap$ where $\bvar$ is given by \eqref{eq:systemone} and recall that $\sqbrac{f,g ; h}$ is defined as 
\begin{align*}
\sqbrac{f_1, f_2;  f_3}(\ap) = \frac{1}{i\pi} \int \brac{\frac{f_1(\ap) - f_1(\bp)}{\ap - \bp}}\brac{\frac{f_2(\ap) - f_2(\bp)}{\ap-\bp}} f_3(\bp) \diff \bp
\end{align*}

\begin{prop}\label{prop:tripleidentity}
Let $f,g,h \in \mathcal{S}(\Rsp)$. Then we have the following identities
\begin{enumerate}
\item $h\pap[f,\Hil]\pap g = \sqbrac{h\pap f,\Hil}\pap g + \sqbrac{f, \Hil}\pap\brac{h\pap g} - \sqbrac{h, f ; \pap g} $ 

\item $\Dt [f,\Hil]\pap g = \sqbrac{\Dt f, \Hil}\pap g + \sqbrac{f, \Hil}\pap(\Dt g) - \sqbrac{\bvar, f; \pap g} $  
\end{enumerate}
\end{prop}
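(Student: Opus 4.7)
\textbf{Proof plan for Proposition~\ref{prop:tripleidentity}.}

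The plan is to prove part (1) by direct manipulation of the kernel representations, and then deduce part (2) by a clean algebraic splitting $\Dt = \pt + \bvar\pap$.

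For part (1), the starting point will be the elementary identity
\begin{align*}
\pap\bigl([f,\Hil]\pap g\bigr) \;=\; [\pap f,\Hil]\pap g + [f,\Hil]\pap^2 g,
\end{align*}
which follows either from the Leibniz rule applied to $f\Hil\pap g - \Hil(f\pap g)$, or equivalently by differentiating the kernel $\frac{f(\ap)-f(\bp)}{\ap-\bp}$ and integrating by parts in $\bp$. Multiplying through by $h$, the LHS of the claimed identity becomes $h[\pap f,\Hil]\pap g + h[f,\Hil]\pap^2 g$. The first summand is almost the target: simply pulling $h$ inside the commutator gives
\begin{align*}
h[\pap f,\Hil]\pap g \;=\; [h\pap f,\Hil]\pap g \;-\; [h,\Hil](\pap f\,\pap g).
\end{align*}
So matters reduce to the single identity
\begin{align*}
[f,\Hil]\pap(h\pap g) - h[f,\Hil]\pap^2 g \;=\; [h,f;\pap g] - [h,\Hil](\pap f\,\pap g).
\end{align*}
I would prove this by first writing
\begin{align*}
[f,\Hil]\pap(h\pap g) - h[f,\Hil]\pap^2 g = [f,\Hil](\pap h\,\pap g) + \bigl[[h,\Hil],f\bigr]\pap^2 g,
\end{align*}
which is a routine expansion of the products $f\Hil(h\pap^2g)$, $\Hil(fh\pap^2g)$, etc. The remaining task, and the genuinely computational step, is to verify
\begin{align*}
\bigl[[h,\Hil],f\bigr]\pap^2 g \;=\; -[f,\Hil](\pap h\,\pap g) - [h,\Hil](\pap f\,\pap g) + [h,f;\pap g].
\end{align*}
This will follow from the kernel representation
\begin{align*}
\bigl[[h,\Hil],f\bigr]k(\ap) \;=\; -\frac{1}{i\pi}\int \frac{(h(\ap)-h(\bp))(f(\ap)-f(\bp))}{\ap-\bp}\,k(\bp)\,\diff\bp,
\end{align*}
setting $k=\pap^2 g$ and integrating by parts in $\bp$: the derivative of the numerator produces the two terms $-[f,\Hil](\pap h\pap g)$ and $-[h,\Hil](\pap f\pap g)$, while the derivative of the factor $(\ap-\bp)^{-1}$ produces exactly $[h,f;\pap g]$. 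Summing everything then collapses to the desired equality.

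For part (2), the plan is to write $\Dt[f,\Hil]\pap g = \pt[f,\Hil]\pap g + \bvar\pap[f,\Hil]\pap g$. The time derivative piece is immediate from the Leibniz rule, $\pt[f,\Hil]\pap g = [\pt f,\Hil]\pap g + [f,\Hil]\pap(\pt g)$, since $\Hil$ and $\pap$ commute with $\pt$. The spatial piece is exactly part (1) with $h=\bvar$:
\begin{align*}
\bvar\pap[f,\Hil]\pap g = [\bvar\pap f,\Hil]\pap g + [f,\Hil]\pap(\bvar\pap g) - [\bvar,f;\pap g].
\end{align*}
Adding the two and using linearity of $[\cdot,\Hil]\pap g$ and $[f,\Hil]\pap(\cdot)$ recombines the $\pt f$ with $\bvar\pap f$ to form $\Dt f$, and similarly $\pt g$ with $\bvar\pap g$ to form $\Dt g$, yielding the claimed identity.

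The main obstacle is the algebraic bookkeeping in part (1), particularly the sign tracking in the integration-by-parts computation of $[[h,\Hil],f]\pap^2 g$; the rest is formal. Part (2) is essentially free once part (1) is in hand.
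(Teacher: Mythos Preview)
Your proof is correct. The paper itself does not give a proof here but simply refers the reader to \cite{Ag19}; your approach---verifying part (1) via the kernel representation of $[[h,\Hil],f]$ and a single integration by parts in $\bp$, then deducing part (2) from part (1) via the splitting $\Dt = \pt + \bvar\pap$---is the natural one and is almost certainly what is done there as well.
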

\begin{proof}
See \cite{Ag19} for a proof. 
\end{proof}

\begin{prop}\label{prop:Coifman} 
Let $H \in C^1(\Rsp),A_i \in C^1(\Rsp) $ for $i=1,\cdots m$ and $F\in C^\infty(\Rsp)$. Define 
\begin{align*}
C_1(H,A,f)(x) & = p.v. \int F\brac{\frac{H(x)-H(y)}{x-y}}\frac{\Pi_{i=1}^{m}(A_i(x) - A_i(y))}{(x-y)^{m+1}}f(y)\diff y \\
C_2(H,A,f)(x) & = p.v. \int F\brac{\frac{H(x)-H(y)}{x-y}}\frac{\Pi_{i=1}^{m}(A_i(x) - A_i(y))}{(x-y)^{m}} \partial_y f(y)\diff y
\end{align*}
then there exists constants $c_1,c_2,c_3,c_4$ depending only on $F$ and $\norm[\infty]{H'}$ so that
\begin{enumerate}
\item $\norm[2]{C_1(H,A,f)} \leq c_1\norm[\infty]{A_1'}\cdots\norm[\infty]{A_m'}\norm[2]{f}$

\item $\norm[2]{C_1(H,A,f)} \leq c_2\norm[2]{A_1'}\norm[\infty]{A_2'}\cdots\norm[\infty]{A_m'}\norm[\infty]{f}$ 

\item $\norm[2]{C_2(H,A,f)} \leq c_3\norm[\infty]{A_1'}\cdots\norm[\infty]{A_m'}\norm[2]{f}$ 

\item $\norm[2]{C_2(H,A,f)} \leq c_4\norm[2]{A_1'}\norm[\infty]{A_2'}\cdots\norm[\infty]{A_m'}\norm[\infty]{f}$ 
\end{enumerate}
\end{prop}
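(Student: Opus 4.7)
The plan is to reduce all four estimates to the classical multilinear Coifman--McIntosh--Meyer (CMM) theory via a Fourier representation of $F$. First, since $|(H(x)-H(y))/(x-y)| \leq L := \norm[\infty]{H'}$, I may replace $F$ by a compactly supported smooth function $\tilde F \in C_c^\infty(\Rsp)$ that agrees with $F$ on $[-L,L]$; any constant depending on $F$ and $L$ is allowed in the conclusion. Using Fourier inversion $\tilde F(s) = \int \widehat{\tilde F}(\xi)\, e^{i\xi s}\, d\xi$, I write
\[
C_1(H,A,f)(x) = \int \widehat{\tilde F}(\xi)\, T_\xi(H,A,f)(x)\, d\xi,
\]
where $T_\xi$ denotes the operator from the definition of $C_1$ with $F$ replaced by $s \mapsto e^{i\xi s}$. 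Since $\widehat{\tilde F}$ is Schwartz, any at-most-polynomial bound in $|\xi|$ for $T_\xi$ will suffice after $\xi$-integration.

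Second, I would estimate $T_\xi$ on $L^2$ by expanding $e^{i\xi(H(x)-H(y))/(x-y)}$ as a power series in $(H(x)-H(y))/(x-y)$. This reduces matters to the multilinear Calder\'on commutators
\[
T^{(k)}(A,H,f)(x) = \mathrm{p.v.}\int \frac{\prod_{i=1}^m (A_i(x)-A_i(y))\,(H(x)-H(y))^k}{(x-y)^{m+k+1}}\,f(y)\,dy,
\]
weighted by $(i\xi)^k/k!$. The classical multilinear CMM theorem yields
\[
\norm[2]{T^{(k)}(A,H,f)} \leq C_0^{\, m+k}(1+m+k)^{N_0}\,\norm[\infty]{A_1'}\cdots\norm[\infty]{A_m'}\,L^k\,\norm[2]{f}
\]
for universal $C_0, N_0$. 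Summing in $k$ gives a bound on $T_\xi$ polynomial in $|\xi|$; integrating against $\widehat{\tilde F}$ then yields estimate (1). For estimate (2), I would dualize: for $g\in L^2$, use $A_1(x)-A_1(y) = (x-y)\int_0^1 A_1'(y+t(x-y))\,dt$ to rewrite the bilinear pairing $\langle C_1(H,A,f), g\rangle$ as an average in $t\in[0,1]$ of forms trilinear in $(A_1',f,g)$ coupled through a CMM-type kernel in the remaining variables. The resulting trilinear estimate $\norm[2]{A_1'}\norm[\infty]{f}\norm[2]{g}$ then follows from Cauchy--Schwarz, Minkowski in $t$, and $L^2$-boundedness of the remaining CMM kernel applied uniformly in $y + t(x-y)$.

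Third, for $C_2$, I would integrate by parts in $y$, writing
\[
C_2(H,A,f)(x) = -\int \partial_y\cbrac{F\brac{\tfrac{H(x)-H(y)}{x-y}}\frac{\prod_i(A_i(x)-A_i(y))}{(x-y)^m}}\,f(y)\,dy,
\]
which produces a sum of $C_1$-type operators with the derivative falling either on one of the factors $A_i$ (producing an $A_i'$), on $H$ via $F'$, or on the kernel $(x-y)^{-m}$. Each resulting term has the same $C_1$ structure with $F$ replaced by $F$ or $F'$, reducing estimates (3) and (4) to (1) and (2) respectively.

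The main obstacle is controlling the polynomial-in-$k$ constants in the multilinear CMM estimates so that the Fourier-side integration in $\xi$ converges; the rapid decay of the Schwartz function $\widehat{\tilde F}$ dominates any polynomial growth in $|\xi|$, rendering this step manageable but requiring the sharp (polynomial rather than exponential) dependence on the degree of multilinearity in the CMM theorem.
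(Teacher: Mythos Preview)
The paper itself gives no argument here and simply cites \cite{Ag19}, so there is no in-paper proof to compare against. Your overall architecture---localizing $F$ to $[-L,L]$, Fourier-decomposing, and reducing $C_2$ to $C_1$-type terms by integration by parts---is standard and the reduction of (3),(4) to (1),(2) is correct. The central step, however, has a real gap.

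You assert that summing the CMM bounds $\|T^{(k)}\|\le C_0^{\,m+k}(1+m+k)^{N_0}L^{k}\prod_i\|A_i'\|_\infty$ against $|\xi|^k/k!$ yields a bound on $\|T_\xi\|$ that is \emph{polynomial} in $|\xi|$. It does not: whether the CMM constant is $C_0^{\,m+k}(1+m+k)^{N_0}$ or merely $(1+m+k)^{N_0}$, one has
\[
\sum_{k\ge 0}\frac{(L|\xi|)^k}{k!}(1+m+k)^{N_0}\ \asymp\ e^{L|\xi|}(1+L|\xi|)^{N_0},
\]
which is exponential in $|\xi|$. Schwartz decay of $\widehat{\tilde F}$ beats only polynomial growth, so the $\xi$-integration diverges. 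As written, your argument establishes the proposition only when $F$ is real-analytic in a complex neighborhood of $[-L,L]$ (equivalently, when $\widehat{\tilde F}$ has exponential decay), not for general $F\in C^\infty$ as stated.

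The fix is to bound $T_\xi$ directly rather than via the Taylor expansion of $e^{i\xi s}$. The kernel of $T_\xi$ is a standard Calder\'on--Zygmund kernel whose size and smoothness constants grow at worst linearly in $|\xi|$ (the factor $e^{i\xi(H(x)-H(y))/(x-y)}$ has modulus one; each derivative produces one power of $\xi$). The David--Journ\'e $T(1)$ theorem then gives $\|T_\xi\|_{L^2\to L^2}\lesssim (1+|\xi|)^{N}\prod_i\|A_i'\|_\infty$ for some finite $N$, after which your $\xi$-integration goes through. More directly, one can apply $T(1)$ to $C_1$ itself and skip the Fourier decomposition entirely, since only boundedness of $F$ and $F'$ on $[-L,L]$ is needed for the kernel estimates; this is how the result is usually obtained in the literature. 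Your dualization sketch for (2) is also rather thin---the substitution $A_1(x)-A_1(y)=(x-y)\int_0^1 A_1'(y+t(x-y))\,dt$ does not produce a kernel of standard CMM type in the remaining variables, and an honest argument requires either the adjoint form of (1) or a genuine trilinear estimate---but this is secondary to the exponential-versus-polynomial issue above.
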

\begin{proof}
See \cite{Ag19} for a proof.
\end{proof}

\begin{prop}\label{prop:Lemarie}
Let $T:\Dcalsp(\Rsp) \to \Dcalsp'(\Rsp)$ be a linear operator with kernel $K(x,y)$ such that on the open set $\{(x,y):x\neq y\} \subset \Rsp\times\Rsp$, $K(x,y)$ is a function satisfying
\begin{align*}
\abs{K(x,y)} \leq \frac{C_0}{\abs{x-y}} \quad \tx{ and } \quad \abs{\grad_x K(x,y)} \leq \frac{C_0}{\abs{x-y}^2}
\end{align*}
where $C_0$ is a constant. If $T$ is continuous on $\Ltwo(\Rsp)$ with $\norm[\Ltwo\to\Ltwo]{T} \leq C_0$  and if $T(1) =0$, then $T$ is bounded on $\dot{H}^s$ for $0<s<1$ with $\norm[\dot{H}^s\to\dot{H}^s]{T} \lesssim C_0$ 
\end{prop}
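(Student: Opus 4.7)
The plan is to reduce to a concrete integral estimate via the Gagliardo characterization of $\dot{H}^s(\Rsp)$ for $0<s<1$:
\begin{align*}
\norm[\dot{H}^s]{f}^2 \simeq \iint \frac{\abs{f(x) - f(y)}^2}{\abs{x-y}^{1+2s}} \, dx \, dy.
\end{align*}
It thus suffices to prove $\iint \abs{Tf(x) - Tf(y)}^2 / \abs{x-y}^{1+2s} \, dx \, dy \lesssim C_0^2 \norm[\dot{H}^s]{f}^2$. I would work first with $f \in \Scalsp(\Rsp)$ and then extend by density; note that since $\dot{H}^s$ identifies functions differing by constants and $T(1)=0$, one may freely subtract local averages of $f$ before applying $T$.

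Fix $x \neq y$ and set $r = \abs{x-y}$ and $B = B(x,3r)$. Let $c = f_B$ be the average, replace $f$ by $g = f - c$, and split $g = g_{\tx{near}} + g_{\tx{far}}$ with $g_{\tx{near}} = g \chi_B$. For the far piece, use the kernel representation together with the mean-value consequence of the gradient bound,
\begin{align*}
\abs{K(x,z) - K(y,z)} \lesssim \frac{C_0 \, r}{\abs{x-z}^2} \qquad \text{for } \abs{x-z} \geq 3r,
\end{align*}
to obtain $\abs{T g_{\tx{far}}(x) - T g_{\tx{far}}(y)} \lesssim C_0 r \int_{\abs{x-z}\geq 3r} \abs{g(z)}/\abs{x-z}^2 \, dz$. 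For the near piece, bound $\abs{T g_{\tx{near}}(x)}$ and $\abs{T g_{\tx{near}}(y)}$ separately by an averaged $L^2$ argument: integrate $\abs{T g_{\tx{near}}}^2$ over a suitable ball (e.g.\ $B(x,r)$ or $B(y,r)$), use the $L^2 \to L^2$ boundedness of $T$, and divide by the measure of the ball. This reduces matters to bounding $\fint_B \abs{T g_{\tx{near}}}^2 \lesssim C_0^2 r^{-1} \norm[2]{g_{\tx{near}}}^2$.

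The main obstacle will be assembling these pointwise-in-$(x,y)$ estimates into the global Gagliardo bound. For the far term, a Schur-type argument with the kernel $r^{1-s}/\abs{x-z}^2$, combined with the pointwise Poincaré-type bound $\abs{g(z)} \lesssim \fint_B \abs{f(z) - f(w)} \, dw$, should control the far contribution by $\iint \abs{f(z) - f(w)}^2 / \abs{z-w}^{1+2s} \, dz \, dw$ after careful accounting of powers of $r$. For the near term, a local Poincaré inequality bounds $\int_B \abs{f - f_B}^2$ by $r \iint_{B \times B} \abs{f(z) - f(w)}^2/\abs{z-w}^2 \, dz\,dw$, and integrating against $dx \, dy / \abs{x-y}^{1+2s}$ recovers $\norm[\dot{H}^s]{f}^2$. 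A delicate point throughout is making precise the distributional meaning of ``$T(1)=0$'' to justify subtracting constants from $f$; this is handled by approximating $T$ by truncated operators $T_\varepsilon$ with the same kernel bounds, proving the estimate uniformly in $\varepsilon$, and passing to the limit.
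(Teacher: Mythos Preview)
The paper does not give a proof here; it simply refers to \cite{Ag19}, and the label suggests the result goes back to Lemari\'e's work on Calder\'on--Zygmund operators on Sobolev/Besov spaces. So there is no in-paper argument to compare against directly.

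Your overall strategy---Gagliardo seminorm, near/far split, kernel regularity for the far piece, $L^2$ boundedness for the near piece---is a standard route, and the far-piece handling is correct. The gap is in the near piece: you propose to ``bound $|Tg_{\mathrm{near}}(x)|$'' by averaging $|Tg_{\mathrm{near}}|^2$ over a ball and dividing by the measure, but this yields an \emph{average} $|B|^{-1}\int_B|Tg_{\mathrm{near}}|^2$, not a pointwise bound on $|Tg_{\mathrm{near}}(x)|^2$; there is no mechanism to pass from one to the other without extra regularity you do not have. A related subtlety is that $g_{\mathrm{near}}=g\,\chi_{B(x,3r)}$ depends on both $x$ and $y$ through $r=|x-y|$, so the global $L^2\!\to\! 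L^2$ bound of $T$ cannot be applied naively inside the double Gagliardo integral. The standard repair is to first dyadically decompose in $r\sim 2^{-j}$ so that the cut depends only on $(x,j)$, and then for the near contribution at scale $j$ integrate in the $y$-variable \emph{before} invoking the $L^2$ bound of $T$; the residual pointwise term at $x$ is handled either by a further average in $x$ or by comparing $Tf(x)$ with a local average of $Tf$ rather than with $Tf(y)$. With these adjustments your Poincar\'e and Schur bookkeeping does close.
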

\begin{proof}
See \cite{Ag19} for a proof.
\end{proof}

\begin{prop}\label{prop:Hardy}
Let $f \in \Scalsp(\Rsp)$. Then we have
\begin{enumerate}

\item $\norm[\infty]{f} \lesssim \norm[H^s]{f}$ if $s>\frac{1}{2}$ and for $s=\half$ we have $\norm[BMO]{f} \lesssim \norm[\Hhalf]{f}$

\item $
\begin{aligned}[t]
\int \abs{\frac{f(\ap) - f(\bp)}{\ap - \bp}}^2 \diff \bp \lesssim \norm[2]{f'}^2 
\end{aligned}
$

\item $
\begin{aligned}[t]
\norm[\Ltwo(\Rsp, \diff \ap)]{\sup_{\bp}\abs{\frac{f(\ap) - f(\bp)}{\ap - \bp}}} \lesssim \norm[2]{f'}
\end{aligned}
$

\item $
\begin{aligned}[t]
\norm[\Hhalf]{f}^2 = \frac{1}{2\pi}\int\!\! \!\int \abs{\frac{f(\ap) - f(\bp)}{\ap - \bp}}^2 \diff \bp \diff\ap
\end{aligned}
$

\item $
\begin{aligned}[t]
\norm[\Ltwo(\Rsp^2, \diff\ap\diff\bp)]{\pbp\brac{\frac{f(\ap) - f(\bp)}{\ap - \bp}} } \lesssim \norm[\Hhalf]{f'}
\end{aligned}
$
\end{enumerate}
\end{prop}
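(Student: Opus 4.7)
The plan is to handle the five claims largely independently, since all are classical. For (1) I would apply Fourier inversion $f(\ap) = \frac{1}{\sqrt{2\pi}}\int \hat f(\xi) e^{i\ap\xi}\diff\xi$ together with Cauchy--Schwarz to obtain $\norm[\infty]{f}^2 \lesssim \brac{\int (1+\abs{\xi}^2)^{-s}\diff\xi}\norm[H^s]{f}^2$, which is finite exactly when $s>1/2$. At the endpoint $s=1/2$ the embedding $\Hhalf \hookrightarrow \mathrm{BMO}$ is classical and I would simply cite it (standard Littlewood--Paley decomposition combined with John--Nirenberg).

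For (2), I would fix $\ap$ and set $g(\bp) = f(\ap) - f(\bp)$, so $g(\ap)=0$ and $g' = -f'$; after a translation moving $\ap$ to the origin, the classical one-dimensional Hardy inequality $\int \abs{g(\bp)/\bp}^2 \diff\bp \lesssim \int \abs{g'(\bp)}^2 \diff\bp$ gives the bound with a universal constant, independent of $\ap$. For (3), writing $(f(\ap)-f(\bp))/(\ap-\bp) = \frac{1}{\ap-\bp}\int_\bp^\ap f'(s)\diff s$ shows $\sup_\bp \abs{(f(\ap)-f(\bp))/(\ap-\bp)} \leq Mf'(\ap)$, where $M$ is the Hardy--Littlewood maximal operator; the conclusion then follows from the $L^2$ boundedness of $M$.

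For (4) I would insert the Fourier representation and change variables to $(\bp, \eta)$ with $\eta = \ap-\bp$. Plancherel in $\bp$ at fixed $\eta$ collapses the $d\bp$ integral to $\int \abs{\hat f(\xi)}^2 \abs{(e^{i\eta\xi}-1)/\eta}^2 \diff\xi$; integrating in $\eta$ and using the substitution $u=\eta\xi$ together with the standard identity $\int \abs{(e^{iu}-1)/u}^2\diff u = 2\pi$ pins down the normalizing constant $1/(2\pi)$ exactly.

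The only delicate point is (5), where the bound must deposit exactly half a derivative on $f'$ rather than a full one. My plan is again Fourier in $\bp$: setting $F(\ap,\bp) = (f(\ap)-f(\bp))/(\ap-\bp)$ and computing
\begin{align*}
\pbp F(\bp+\eta, \bp) = \frac{-f'(\bp)}{\eta} + \frac{f(\bp+\eta) - f(\bp)}{\eta^2} = \frac{1}{\sqrt{2\pi}}\int \hat f(\xi)\, e^{i\bp\xi}\, \frac{e^{i\eta\xi} - 1 - i\eta\xi}{\eta^2}\diff\xi,
\end{align*}
Plancherel in $\bp$ followed by integration in $\eta$ reduces the estimate to the scaling identity $\int \abs{(e^{i\eta\xi}-1-i\eta\xi)/\eta^2}^2 \diff\eta = C\abs{\xi}^3$, which I would establish by the substitution $u = \eta\xi$; the resulting single integral in $u$ converges thanks to the Taylor bounds $\abs{e^{iu}-1-iu}=O(u^2)$ near the origin and $O(\abs u)$ at infinity. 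This gives $\norm[2]{\pbp F}^2 \lesssim \int \abs{\xi}^3 \abs{\hat f(\xi)}^2 \diff\xi = \norm[\Hhalf]{f'}^2$. The key structural observation making the estimate work, and the part that would be the main obstacle if approached naively, is that subtracting the full Taylor polynomial $1+i\eta\xi$ in the numerator (rather than only $1$, as in (4)) is precisely what promotes the $\abs{\xi}$ scaling to $\abs{\xi}^3$, matching the half-derivative gain on $f'$.
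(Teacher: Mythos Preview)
Your proposal is correct; each of the five arguments is the standard one and goes through without issue. The paper itself does not give a proof of this proposition but simply refers the reader to \cite{Ag19}, so there is no detailed argument in the present paper to compare against.
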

\begin{proof}
See \cite{Ag19} for a proof. 
\end{proof}

\begin{prop}\label{prop:commutator}
Let $f,g \in \mathcal{S}(\Rsp)$ with $s,a\in \Rsp$ and $m,n \in \Zsp$. Then we have the following estimates
\begin{enumerate}
\item $\norm*[\big][2]{\papabs^s\sqbrac{f,\Hil}(\papabs^{a} g )} \lesssim  \norm*[\big][BMO]{\papabs^{s+a}f}\norm[2]{g}$ \quad  for $s,a \geq 0$

\item $\norm*[\big][2]{\papabs^s\sqbrac{f,\Hil}(\papabs^{a} g )} \lesssim  \norm*[\big][2]{\papabs^{s+a}f}\norm[BMO]{g}$ \quad  for $s\geq 0$ and $a>0$

\item $\norm*[\big][2]{\sqbrac*[\big]{f,\papabs^\half}g } \lesssim \norm*[\big][BMO]{\papabs^\half f}\norm[2]{g}$

\item $\norm*[\big][2]{\sqbrac*[\big]{f,\papabs^\half}(\papabs^\half g) } \lesssim \norm*[\big][BMO]{\papabs f}\norm[2]{g}$

\item $\norm[\Linfty\cap\Hhalf]{\pap^m\sqbrac{f,\Hil}\pap^n g} \lesssim \norm*[\big][2]{\pap^{(m+n+1)}f}\norm[2]{g}$ \quad  for $m,n \geq 0$

\item $\norm[2]{\partial_{\ap}^m\sqbrac{f,\Hil}\partial_{\ap}^n g} \lesssim \norm*[\infty]{\partial_\ap^{(m+n)} f}\norm[2]{g}$ \quad  for $m,n \geq 0$

\item $\norm[2]{\partial_{\ap}^m\sqbrac{f,\Hil}\partial_{\ap}^n g} \lesssim \norm*[2]{\partial_\ap^{(m+n)} f}\norm[\infty]{g}$ \quad for $m\geq 0$ and $n\geq 1$

\item $\norm[2]{\sqbrac{f,\Hil}g} \lesssim \norm[2]{f'}\norm[1]{g}$
\end{enumerate}
\end{prop}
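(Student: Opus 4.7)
The plan is to split the eight inequalities into three tiers organized by the tools needed. All proofs rest on the observation that the commutator kernel $K_f(\alpha',\beta') = \frac{1}{i\pi}\cdot\frac{f(\alpha')-f(\beta')}{\alpha'-\beta'}$ is a Calder\'on commutator kernel, one derivative smoother than the Hilbert kernel. First, the elementary estimate (8) follows by representing $\frac{f(\alpha')-f(\beta')}{\alpha'-\beta'} = \int_0^1 f'(t\alpha'+(1-t)\beta')\,dt$, applying Minkowski's integral inequality in the $t$ variable and the change of variables $z = t\alpha'+(1-t)\beta'$ for fixed $t,\beta'$; this gives $\norm[2]{\sqbrac{f,\Hil}g} \lesssim \int_0^1 t^{-1/2}\,dt \cdot \norm[2]{f'}\norm[1]{g} \lesssim \norm[2]{f'}\norm[1]{g}$.

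For the integer-derivative bounds (5)--(7) I would expand $\pap^m\sqbrac{f,\Hil}\pap^n g$ using the Leibniz rule together with integration by parts on $\beta'$ against the kernel $(\alpha'-\beta')^{-1}$. Each resulting term has the shape $\int F(\alpha',\beta')(\alpha'-\beta')^{-k-1}\prod_i(\pap^{j_i}f(\alpha')-\pap^{j_i}f(\beta'))\,\pap^l g(\beta')\,d\beta'$ with $\sum_i j_i + l = m+n$, which falls directly under one of the four bounds of Proposition~\ref{prop:Coifman}; summing yields (6) and (7). For (5), the $\Linfty$ half follows by Cauchy--Schwarz inside the $\beta'$ integral and Proposition~\ref{prop:Hardy}(3), and the $\Hhalf$ half follows from the double-integral characterization (Proposition~\ref{prop:Hardy}(4)) after one more integration by parts; alternatively, the one-dimensional Gagliardo--Nirenberg inequality $\norm[\Linfty\cap\Hhalf]{h}^2 \lesssim \norm[2]{h}\norm[2]{\pap h}$ applied to $h = \pap^m\sqbrac{f,\Hil}\pap^n g$ reduces the bound to (6) at orders $(m,n)$ and $(m+1,n)$, plus a short Cauchy--Schwarz argument.

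The technical heart is Tier 3, estimates (1)--(4), which require BMO control of $f$ in place of $\Linfty$. The case $s = a = 0$ of (1) is the Coifman--Rochberg--Weiss theorem $\norm[2]{\sqbrac{f,\Hil}g} \lesssim \norm[BMO]{f}\norm[2]{g}$, proved by verifying the hypotheses of the $T(1)$ theorem (in the form of Proposition~\ref{prop:Lemarie}) for the Calder\'on kernel above. For general $s,a \geq 0$ I would use a Littlewood--Paley paraproduct decomposition, splitting $\papabs^s\sqbrac{f,\Hil}\papabs^a g$ into high--high, high--low, and low--high pieces, each of which reduces modulo acceptable errors to a commutator of $\papabs^{s+a}f$ against a paraproduct in $g$; the Coifman--Rochberg--Weiss bound then closes each piece. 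Estimate (2) follows from (1) by duality (the hypothesis $a>0$ is needed so that the dual side has genuine smoothing), and (3), (4) are the $(s,a) = (1/2,0)$ and $(1/2,1/2)$ instances once the identities $\papabs = i\Hil\pap$ and $[\Hil,\papabs^{1/2}] = 0$ are used to trade $\papabs^{1/2}$-commutators for $\Hil$-commutators.

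The main obstacle will be locating every one of the $s+a$ derivatives on $f$ rather than on $g$ in the paraproduct decomposition for (1)--(4): BMO is precisely the threshold regularity at which paraproducts in $f$ are $L^2$-bounded on $g$, and any leakage of derivatives onto $g$ would force a stronger Sobolev norm on $g$ and destroy the bound. Once the paraproduct splitting is arranged correctly, the remainder of (1)--(4) and all of (5)--(8) amount to bookkeeping, the nontrivial input in each case being Proposition~\ref{prop:Coifman} (for the smooth-symbol singular integrals), Proposition~\ref{prop:Lemarie} (for $T(1)$), or Proposition~\ref{prop:Hardy} (for the direct pointwise and Cauchy--Schwarz bounds).
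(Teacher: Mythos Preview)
The paper does not prove this proposition here; it simply writes ``See \cite{Ag19} for a proof,'' since these commutator estimates are carried over verbatim from the companion paper. Your sketch therefore supplies considerably more detail than the present paper does, and the overall architecture you describe---Coifman--Meyer / \propref{prop:Coifman} for the integer-order bounds (6),(7), Hardy's inequality and Cauchy--Schwarz for (5) and (8), and Coifman--Rochberg--Weiss plus paraproducts for the fractional/BMO bounds (1)--(4)---is a correct and standard roadmap.

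Two places deserve a closer look. First, the claim that (2) follows from (1) ``by duality'' is not as immediate as stated: the adjoint of $g\mapsto \papabs^s\sqbrac{f,\Hil}\papabs^a g$ merely swaps $s$ and $a$ and reproduces (1), so to reach (2) you would need an $H^1$ endpoint bound on the operator $\phi\mapsto \papabs^a\sqbrac{f,\Hil}\papabs^s\phi$ (with the $L^2$ norm of $\papabs^{s+a}f$ as coefficient) and then invoke $H^1$--$BMO$ duality. This is true but is a separate estimate, not a formal consequence of (1). Second, the reduction of (3),(4) to (1) does work, but not by the one-line identity you suggest; one needs the algebraic relation $\sqbrac{f,\papabs^{1/2}}(\Hil g) = \sqbrac{f,\Hil}\papabs^{1/2}g - \papabs^{1/2}\sqbrac{f,\Hil}g$ (obtained from $[\Hil,\papabs^{1/2}]=0$), after which two applications of (1) with $(s,a)=(0,\tfrac12)$ and $(\tfrac12,0)$ close the bound. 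With these two clarifications the proposal is complete.
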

\begin{proof}
See \cite{Ag19} for a proof.
\end{proof}

\begin{prop}\label{prop:Leibniz}
Let $f,g,h \in \mathcal{S}(\Rsp)$ with $s,a\in \Rsp$ and $m,n \in \Zsp$. Then we have the following estimates
\begin{enumerate}
\item $\norm[2]{\papabs^s (fg)} \lesssim  \norm[2]{\papabs^s f}\norm[\infty]{g} + \norm[\infty]{f}\norm[2]{\papabs^s g}$ \quad for $s > 0$
\item $\norm[\Hhalf]{fg} \lesssim \norm[\Hhalf]{f}\norm[\infty]{g} + \norm[\infty]{f}\norm[\Hhalf]{g}$
\item $\norm[\Hhalf]{fg} \lesssim \norm[2]{f'}\norm[2]{g} + \norm[\infty]{f}\norm[\Hhalf]{g}$
\end{enumerate}
\end{prop}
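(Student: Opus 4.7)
These estimates are versions of the fractional Leibniz / Kato--Ponce inequality. My plan is to derive items 2 and 3 directly from the double-integral characterization of $\Hhalf$ provided by \propref{prop:Hardy}(iv), together with its standard generalization $\norm[\dot{H}^s]{h}^2 \sim \iint \abs{h(\ap)-h(\bp)}^2/\abs{\ap-\bp}^{1+2s}\,\diff\ap\,\diff\bp$ for $0<s<1$, and the pointwise Hardy bound of \propref{prop:Hardy}(ii). This avoids any Littlewood--Paley machinery for items 2 and 3, and also for item 1 in the range $0<s<1$; only item 1 with $s \geq 1$ genuinely requires more.

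For item 2, my plan is to start from the symmetric splitting $f(\ap)g(\ap)-f(\bp)g(\bp) = \tfrac{1}{2}(f(\ap)+f(\bp))(g(\ap)-g(\bp)) + \tfrac{1}{2}(f(\ap)-f(\bp))(g(\ap)+g(\bp))$, which immediately yields the pointwise bound $\abs{f(\ap)g(\ap)-f(\bp)g(\bp)}^2 \leq \norm[\infty]{f}^2 \abs{g(\ap)-g(\bp)}^2 + \norm[\infty]{g}^2 \abs{f(\ap)-f(\bp)}^2$. Dividing by $(\ap-\bp)^2$, integrating in $(\ap,\bp)$, and applying \propref{prop:Hardy}(iv) to each term then gives item 2 in a single line. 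The identical argument with kernel weight $\abs{\ap-\bp}^{-(1+2s)}$ in place of $(\ap-\bp)^{-2}$ proves item 1 for $0<s<1$. For item 1 with $s \geq 1$ the plan is a standard Littlewood--Paley paraproduct decomposition: write $f=\sum_j P_j f$, $g=\sum_k P_k g$, split $fg$ into high-low, low-high, and high-high paraproducts, and control each via Bernstein's inequality together with the $\ell^2$ square-function characterization of $L^2$.

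For item 3 the right-hand side is asymmetric in $f$ and $g$, so the symmetric splitting is wasteful. Instead my plan is to use $f(\ap)g(\ap)-f(\bp)g(\bp) = (f(\ap)-f(\bp))g(\bp) + f(\ap)(g(\ap)-g(\bp))$. The second term contributes $\norm[\infty]{f}^2 \norm[\Hhalf]{g}^2$ exactly as in item 2. For the first, I would switch the order of integration to integrate in $\ap$ first and invoke the pointwise-in-$\bp$ Hardy estimate from \propref{prop:Hardy}(ii), namely $\int \abs{(f(\ap)-f(\bp))/(\ap-\bp)}^2\,\diff\ap \lesssim \norm[2]{f'}^2$, which on integration in $\bp$ against $\abs{g(\bp)}^2$ yields the desired contribution $\norm[2]{f'}^2\norm[2]{g}^2$. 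The one technical observation is that \propref{prop:Hardy}(ii) really is uniform in the base point $\bp$, which is a one-line consequence of Minkowski's inequality applied to the averaged representation $(f(\ap)-f(\bp))/(\ap-\bp) = \int_0^1 f'(\bp+t(\ap-\bp))\,\diff t$. The only genuinely hard step in the whole proposition is the high-high paraproduct needed in item 1 for non-integer $s \geq 1$; every other case reduces cleanly to the elementary double-integral identities.
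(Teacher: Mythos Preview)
Your proposal is correct. The paper itself does not give a proof of this proposition; it simply defers to the companion paper \cite{Ag19}. Your argument for items 2 and 3 via the double-integral characterization of $\Hhalf$ in \propref{prop:Hardy}(iv), together with the pointwise Hardy bound \propref{prop:Hardy}(ii), is the standard elementary route and is exactly what one expects the cited reference to contain. The only remark is that for item 3 you do not need the Minkowski justification you mention: \propref{prop:Hardy}(ii) as stated in this paper already gives a bound uniform in the base point (the integrand is symmetric in $\ap,\bp$, so integrating in either variable yields the same uniform estimate $\lesssim \norm[2]{f'}^2$). For item 1 with $s\geq 1$ your paraproduct sketch is the classical Kato--Ponce argument; since the paper only cites \cite{Ag19} there is nothing further to compare.
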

\begin{proof}
See \cite{Ag19} for a proof.
\end{proof}

\begin{prop}\label{prop:triple}
Let $f,g,h \in \mathcal{S}(\Rsp)$ . Then we have the following estimates
\begin{enumerate}

\item $\norm[2]{\sqbrac{f,g;h}} \lesssim \norm[2]{f'}\norm[2]{g'}\norm[2]{h}$

\item $\norm[2]{\pap\sqbrac{f,\sqbrac{g,\Hil}}h} \lesssim \norm[2]{f'}\norm[2]{g'}\norm[2]{h}$

\item $\norm[2]{\sqbrac{f,g; h'}} \lesssim \norm[\infty]{f'}\norm[\infty]{g'}\norm[2]{h}$

\item $\norm[\Hhalf]{\sqbrac{f,g; h'}} \lesssim \norm[\infty]{f'}\norm[\infty]{g'}\norm[\Hhalf]{h}$

\item $\norm[\Linfty\cap\Hhalf]{\sqbrac{f,g;h}} \lesssim \norm[\infty]{f'}\norm[2]{g'}\norm[2]{h}$

\end{enumerate}
\end{prop}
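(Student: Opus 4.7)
My plan is to prove the five estimates in the order (3), (1), (2), (4), (5); the first three are rather direct, whereas the $\Hhalf$ parts of (4) and (5) are the delicate ones.

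Estimate (3) is immediate from \propref{prop:Coifman}~(3) applied with $m=2$, $A_1=f$, $A_2=g$, $F\equiv 1$, $H=\mathrm{id}$, since the integrand defining $[f,g;h']$ is precisely $C_2(H,A,h)$. For estimate (1), I would expand both divided differences by Taylor's formula
\[
 \frac{f(\ap)-f(\bp)}{\ap-\bp}=\int_0^1 f'(\ap+s(\bp-\ap))\,ds,
\]
to write $[f,g;h](\ap)=\tfrac{1}{i\pi}\int_0^1\!\!\int_0^1\!\!\int f'(\ap+su)\,g'(\ap+tu)\,h(\ap+u)\,du\,ds\,dt$ with $u=\bp-\ap$. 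For fixed $s\neq t$, Cauchy--Schwarz in $u$ followed by the linear change of variables $v=\ap+su$ in the $\Ltwo(d\ap)$ norm yields the pointwise-in-$(s,t)$ bound $\lesssim \|f'\|_2\|g'\|_2\|h\|_2/\sqrt{|s-t|}$; since $\int_0^1\!\int_0^1|s-t|^{-1/2}\,ds\,dt<\infty$, (1) follows. Estimate (2) is then reduced to (1) via the algebraic identity
\[
 [f,g;h]=f'\,[g,\Hil]h+g'\,[f,\Hil]h-\pap[f,[g,\Hil]]h,
\]
obtained by directly differentiating the kernel representation of $[f,[g,\Hil]]h$ in $\ap$, together with the pointwise Cauchy--Schwarz/Hardy bound $\|[g,\Hil]h\|_\infty\lesssim \|g'\|_2\|h\|_2$ from \propref{prop:Hardy}~(2), and the analogous bound with $f,g$ swapped.

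For estimate (4) I would integrate by parts in $\bp$ so that $\pbp$ moves off $h'$, expressing $[f,g;h']$ as a sum of three operators acting on $h$ with kernels
\[
 \frac{f'(\bp)(g(\ap)-g(\bp))}{(\ap-\bp)^2},\q \frac{(f(\ap)-f(\bp))g'(\bp)}{(\ap-\bp)^2},\q \frac{(f(\ap)-f(\bp))(g(\ap)-g(\bp))}{(\ap-\bp)^3}.
\]
Each such kernel $K$ satisfies $|K|\lesssim \|f'\|_\infty\|g'\|_\infty/|\ap-\bp|$ and $|\pap K|\lesssim \|f'\|_\infty\|g'\|_\infty/|\ap-\bp|^2$. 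The $\Ltwo\to\Ltwo$ bound with these norms follows from \propref{prop:Coifman}, and after verifying $T(1)=0$ for each (a direct symmetry/cancellation check), \propref{prop:Lemarie} delivers $\dot H^{1/2}$ boundedness with norm $\|f'\|_\infty\|g'\|_\infty$, which is~(4).

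The principal obstacle is estimate~(5) in the $\Hhalf$ norm: the weaker assumption $g'\in\Ltwo$ rules out the Lemarie-type argument used in~(4), and ordinary Leibniz rules on $\Hhalf$ products would require more regularity on $f'$ than $\Linfty$. My plan for the $\Hhalf$ part is to work directly from the double-integral characterization $\|[f,g;h]\|_{\Hhalf}^2=\tfrac{1}{2\pi}\iint |[f,g;h](\ap)-[f,g;h](\gamma)|^2/(\ap-\gamma)^2\,d\ap\,d\gamma$ of \propref{prop:Hardy}~(4), and to expand $K(\ap,\bp)-K(\gamma,\bp)$ for $K(\ap,\bp)=(f(\ap)-f(\bp))(g(\ap)-g(\bp))/(\ap-\bp)^2$ into three pieces that isolate respectively $f(\ap)-f(\gamma)$, $g(\ap)-g(\gamma)$, and the denominator difference $(\ap-\bp)^{-2}-(\gamma-\bp)^{-2}$. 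The $f$--piece pulls out the factor $\|f'\|_\infty|\ap-\gamma|$ pointwise, after which Hardy in the remaining $g,h$ part yields $\|g'\|_2\|h\|_2$; the other two pieces are handled after a dyadic split $|\ap-\bp|\lesssim|\ap-\gamma|$ versus $|\ap-\bp|\gg|\ap-\gamma|$, using the mean value theorem in the close region and direct Hardy in the far region. The $\Linfty$ half of~(5) is easy: pointwise Cauchy--Schwarz combined with \propref{prop:Hardy}~(2) gives $|[f,g;h](\ap)|\leq \pi^{-1}\|f'\|_\infty\|g'\|_2\|h\|_2$. The careful bookkeeping of the $\Hhalf$ part---extracting the single Lipschitz factor from $f$ while respecting the $\Ltwo$ regularity of $g'$---is the main technical difficulty of the proposition.
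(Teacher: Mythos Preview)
The paper does not supply its own proof of \propref{prop:triple}; it simply refers to \cite{Ag19}. So there is no in-paper argument to compare against, and your proposal must be judged on its own. Items~(1)--(3) are fine: the Taylor--Minkowski computation for (1) is correct (the $|s-t|^{-1/2}$ singularity is integrable), the identity reducing (2) to (1) plus the pointwise Cauchy--Schwarz/Hardy bound is exactly right, and (3) is a direct instance of \propref{prop:Coifman}. In~(4) there is a small inaccuracy: the three individual pieces obtained from integration by parts do \emph{not} each satisfy $T(1)=0$ (for instance $\int f'(\bp)(g(\ap)-g(\bp))/(\ap-\bp)^2\,d\bp$ is generically nonzero). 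This is harmless, however, since the kernel bounds you verify piecewise carry over to the sum, the $\Ltwo$ bound for the sum is already (3), and the sum manifestly annihilates constants; apply \propref{prop:Lemarie} to the full operator $h\mapsto[f,g;h']$ rather than to each piece.

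The genuine gap is in the $\Hhalf$ half of~(5). Your ``$f$--piece'' $\tfrac{(f(\ap)-f(\gamma))(g(\ap)-g(\bp))}{(\ap-\bp)^2}h(\bp)$, after extracting $\|f'\|_\infty|\ap-\gamma|$, leaves a $\gamma$--independent function of $\ap$, so its $L^2(d\ap\,d\gamma)$ norm is infinite; the claimed Hardy step does not produce $\|g'\|_2\|h\|_2$ either. The decomposition into ``$f$--piece, $g$--piece, denominator--piece'' is too crude here. A route that does work, and that this paper itself uses for the closely analogous estimate~(10) of \propref{prop:HilHtilcaldiff}, is to invoke the divided--difference identity
\[
\frac{G(\ap,\bp)-G(\gamma,\bp)}{\ap-\gamma}=\frac{G(\ap,\gamma)-G(\gamma,\bp)}{\ap-\bp},\qquad G(a,b)=\frac{g(a)-g(b)}{a-b},
\]
(and the same for $F$) to rewrite $\frac{[f,g;h](\ap)-[f,g;h](\gamma)}{\ap-\gamma}$ as a sum of four terms, each of which is either $G(\ap,\gamma)$ times a Coifman/Hilbert operator applied to $h$, or a Coifman/Hilbert operator in $\ap$ applied to $G(\gamma,\cdot)h$ or $F(\gamma,\cdot)G(\gamma,\cdot)h$. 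One then bounds $|F|\le\|f'\|_\infty$ pointwise and handles the $L^2(d\gamma)$ integration via the Hardy maximal estimate $\|\sup_{\bp}|G(\gamma,\bp)|\|_{L^2(d\gamma)}\lesssim\|g'\|_2$ from \propref{prop:Hardy}~(3). This is the missing idea: the single $L^2$ factor from $g'$ must be extracted through the Hardy maximal function applied to the divided difference of $g$, not through the naive splitting you propose.
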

\begin{proof}
See \cite{Ag19} for a proof.
\end{proof}

\begin{prop} \label{prop:LinftyHhalf}
Let $f \in \mathcal{S}(\Rsp)$ and let $w$ be a smooth non-zero weight with $w,\frac{1}{w} \in \Linfty(\Rsp) $ and $w' \in \Ltwo(\Rsp)$. Then 
\begin{enumerate}
\item $\norm[\infty]{f}^2 \lesssim \norm[2]{\frac{f}{w}}\norm[2]{wf'}$
\item $\norm[\Linfty\cap\Hhalf]{f}^2 \lesssim \norm[2]{\frac{f}{w}}\norm[2]{(wf)'} +  \norm[2]{\frac{f}{w}}^2\norm[2]{w'}^2$
\end{enumerate}
\end{prop}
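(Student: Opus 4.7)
The plan is to treat the two estimates sequentially, using (1) to feed into (2), and to split (2) into its $L^\infty$ and $\dot H^{1/2}$ pieces.

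For (1), I would start from the one-dimensional telescoping identity $f(x)^2 = 2\int_{-\infty}^x f(y) f'(y)\,dy$, valid for $f\in\Scalsp(\Rsp)$, and immediately insert a factor of $w(y)/w(y)$ to rewrite the integrand as $(f/w)(y)\cdot (wf')(y)$. A single application of Cauchy--Schwarz then yields $|f(x)|^2 \le 2\norm[2]{f/w}\norm[2]{wf'}$; taking the supremum in $x$ finishes the proof.

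For the $L^\infty$ part of (2), the idea is to convert $\norm[2]{wf'}$ appearing in (1) into $\norm[2]{(wf)'}$ plus a cross term. Writing $wf' = (wf)' - w'f$ gives $\norm[2]{wf'} \le \norm[2]{(wf)'} + \norm[2]{w'}\norm[\infty]{f}$, and plugging this into (1) produces an inequality of the form $\norm[\infty]{f}^2 \lesssim \norm[2]{f/w}\norm[2]{(wf)'} + \norm[2]{f/w}\norm[2]{w'}\norm[\infty]{f}$. A standard Young-type splitting $ab \le \tfrac12 a^2 + \tfrac12 b^2$ applied to the cross term lets me absorb $\tfrac12\norm[\infty]{f}^2$ into the left-hand side, yielding the required bound. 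This step is purely algebraic and presents no obstacle.

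The main work is the $\dot H^{1/2}$ part of (2), which is where the weight interacts with a non-local operator. I would start from the duality-style identity
\[
\norm[\Hhalf]{f}^2 = \int f \, \papabs f \, d\ap = \int \frac{f}{w}\cdot (w\,\papabs f)\, d\ap \le \norm[2]{f/w}\, \norm[2]{w\,\papabs f}
\]
and then rewrite $w\,\papabs f = \papabs(wf) - [\papabs, w] f$. Since $\papabs = i\Hil\pap$, the commutator expands as $[\papabs, w]f = i\Hil(w'f) - i[w,\Hil]f'$, and Proposition~\ref{prop:commutator} (item 7, with $m=0,n=1$) gives $\norm[2]{[w,\Hil]f'} \lesssim \norm[2]{w'}\norm[\infty]{f}$; combined with the trivial estimate $\norm[2]{w'f} \le \norm[2]{w'}\norm[\infty]{f}$, this controls the commutator by $\norm[2]{w'}\norm[\infty]{f}$. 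The net output is
\[
\norm[\Hhalf]{f}^2 \lesssim \norm[2]{f/w}\,\norm[2]{(wf)'} + \norm[2]{f/w}\,\norm[2]{w'}\,\norm[\infty]{f}.
\]
Feeding the bound from the $L^\infty$ part of (2) into this second term and applying Young's inequality $\norm[2]{f/w}^{3/2}\norm[2]{(wf)'}^{1/2}\norm[2]{w'} \le \tfrac12 \norm[2]{f/w}\norm[2]{(wf)'} + \tfrac12 \norm[2]{f/w}^2 \norm[2]{w'}^2$ eliminates the residual $\norm[\infty]{f}$ and closes the estimate.

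The only real subtlety is verifying that the commutator bound $\norm[2]{[w,\Hil]f'} \lesssim \norm[2]{w'}\norm[\infty]{f}$ is the right version from Proposition~\ref{prop:commutator}, since the natural BMO-type estimates would give a bound involving $\norm[BMO]{w'}$ rather than $\norm[2]{w'}$; the $L^2\!\times\! L^\infty$ form in item 7 is exactly what makes the scheme work. Everything else is straightforward bookkeeping with Cauchy--Schwarz and Young's inequality.
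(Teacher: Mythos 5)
Your argument is correct: part (1) via the fundamental theorem of calculus with the weight inserted and Cauchy--Schwarz, the $\Linfty$ part of (2) by writing $wf'=(wf)'-w'f$ and absorbing the cross term, and the $\Hhalf$ part via the pairing $\int \bar f\,\papabs f\,\diff\ap$, the decomposition $w\papabs f=\papabs(wf)-i\Hil(w'f)+i[w,\Hil]f'$ with the $\Ltwo\times\Linfty$ commutator bound from \propref{prop:commutator}(7), and a final Young/absorption step -- this is the natural route, and since the paper itself only refers to \cite{Ag19} for the proof there is nothing materially different to compare against. The only cosmetic points are that the pairing should be written with $\bar f$ for complex-valued $f$, and that one uses $\norm[2]{\papabs(wf)}=\norm[2]{(wf)'}$; neither affects the argument.
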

\begin{proof}
See \cite{Ag19} for a proof.
\end{proof}

\begin{prop}\label{prop:Hhalfweight}
Let $f,g \in \mathcal{S}(\Rsp)$ and let $w,h \in \Linfty(\Rsp)$ be smooth functions with $w',h' \in \Ltwo(\Rsp)$. Then 
\begin{align*}
\norm[\Hhalf]{fwh} \lesssim \norm[\Hhalf]{fw}\norm[\infty]{h} + \norm[2]{f}\norm[2]{(wh)'} + \norm[2]{f}\norm[2]{w'}\norm[\infty]{h}
\end{align*}
If in addition we assume that $w$ is real valued then
\begin{align*}
\norm[2]{fgw} \lesssim \norm[\Hhalf]{fw}\norm[2]{g} + \norm[\Hhalf]{gw}\norm[2]{f} + \norm[2]{f}\norm[2]{g}\norm[2]{w'} 
\end{align*}
\end{prop}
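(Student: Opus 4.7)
The plan is to prove both estimates from the double-integral representation of the $\Hhalf$-norm (Proposition~\ref{prop:Hardy}(4)) together with the Hardy-type inequality (Proposition~\ref{prop:Hardy}(2)), closely following the pattern of the other estimates in the appendix.

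For the first estimate, I would begin from the telescoping identity
\[
(fwh)(\ap) - (fwh)(\bp) = \bigl[(fw)(\ap) - (fw)(\bp)\bigr]h(\ap) + (fw)(\bp)\bigl[h(\ap) - h(\bp)\bigr]
\]
and further split the last factor using the algebraic identity
\[
w(\bp)\bigl[h(\ap) - h(\bp)\bigr] = \bigl[(wh)(\ap) - (wh)(\bp)\bigr] - h(\ap)\bigl[w(\ap) - w(\bp)\bigr].
\]
Dividing by $(\ap - \bp)$, squaring, and integrating in $(\ap,\bp)$ produces three contributions. The first is bounded directly by $\norm[\infty]{h}^2 \norm[\Hhalf]{fw}^2$. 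For the remaining two, I would apply Fubini in $\bp$ first, reducing each inner integral to one of the form $\int |u(\ap) - u(\bp)|^2 / (\ap - \bp)^2 \, d\ap$ with $u = wh$ or $u = w$, which Proposition~\ref{prop:Hardy}(2) bounds by $\norm[2]{u'}^2$ uniformly in $\bp$; pulling out $\int |f(\bp)|^2 \, d\bp = \norm[2]{f}^2$ yields the contributions $\norm[2]{f}^2 \norm[2]{(wh)'}^2$ and $\norm[\infty]{h}^2 \norm[2]{f}^2 \norm[2]{w'}^2$. Summing and taking square roots gives the first inequality.

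For the second estimate, the reality of $w$ gives the symmetric representation
\[
\norm[2]{fgw}^2 = \int |fw|^2 |g|^2 \,d\ap = \int |f|^2 |gw|^2 \,d\ap.
\]
I would combine Cauchy--Schwarz with the one-dimensional Gagliardo--Nirenberg inequality $\norm[4]{u}^2 \lesssim \norm[2]{u}\norm[\Hhalf]{u}$: from the first representation $\norm[2]{fgw} \leq \norm[4]{fw}\norm[4]{g}$, and from the second $\norm[2]{fgw} \leq \norm[4]{f}\norm[4]{gw}$. The bare $\norm[\Hhalf]{f}$ and $\norm[\Hhalf]{g}$ factors produced by GN must then be transferred to the weighted norms $\norm[\Hhalf]{fw}$ and $\norm[\Hhalf]{gw}$ appearing on the right-hand side. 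The key tool is the product-rule identity
\[
(fgw)(\ap) - (fgw)(\bp) = f(\ap)\bigl[(gw)(\ap) - (gw)(\bp)\bigr] + (gw)(\bp)\bigl[f(\ap) - f(\bp)\bigr]
\]
(and its analog with $f \leftrightarrow g$), which lets one commute $w$ through the difference quotients at the cost of a term bounded by $\norm[2]{f}\norm[2]{g}\norm[2]{w'}$. Averaging the two GN bounds via AM--GM then closes the estimate.

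The main obstacle is this weight-transfer step in Part 2: a naive Gagliardo--Nirenberg bound produces bare $\Hhalf$ norms of $f$ and $g$ that the right-hand side does not control. Exploiting both representations of $\norm[2]{fgw}^2$ (available precisely because $w$ is real-valued), together with the product-rule identity above, is what allows one to absorb these factors into $\norm[\Hhalf]{fw}$ and $\norm[\Hhalf]{gw}$ with only the $\norm[2]{f}\norm[2]{g}\norm[2]{w'}$ correction, and I expect this symmetrization/commutator step to be where the bulk of the bookkeeping lies.
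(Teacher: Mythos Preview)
Your argument for Part 1 is clean and correct; the telescoping decomposition together with Proposition~\ref{prop:Hardy}(2) gives exactly the three terms. (The paper itself defers the whole proof to \cite{Ag19}, so there is no in-paper argument to compare against, but your Part~1 is a standard and valid route.)

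Part 2, however, does not close. After Cauchy--Schwarz and Gagliardo--Nirenberg you obtain
\[
\|fgw\|_2^2 \lesssim \|fw\|_2\,\|fw\|_{\Hhalf}\,\|g\|_2\,\|g\|_{\Hhalf}
\quad\text{and}\quad
\|fgw\|_2^2 \lesssim \|f\|_2\,\|f\|_{\Hhalf}\,\|gw\|_2\,\|gw\|_{\Hhalf}.
\]
Even after taking the geometric mean, you are left with the factors $\|fw\|_2$, $\|gw\|_2$, $\|f\|_{\Hhalf}$, $\|g\|_{\Hhalf}$, none of which is dominated by the right-hand side of the target inequality (note the estimate is homogeneous of degree one in $w$, so no hidden $\|w\|_\infty$ can rescue you). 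The product-rule identity you invoke does \emph{not} transfer the bare $\Hhalf$ norms to weighted ones: from
\[
w(\ap)\bigl[g(\ap)-g(\bp)\bigr]=\bigl[(gw)(\ap)-(gw)(\bp)\bigr]-g(\bp)\bigl[w(\ap)-w(\bp)\bigr]
\]
one only controls the \emph{weighted} double integral
\[
\iint \frac{w(\ap)^2\,|g(\ap)-g(\bp)|^2}{(\ap-\bp)^2}\,d\ap\,d\bp \;\lesssim\; \|gw\|_{\Hhalf}^2 + \|g\|_2^2\|w'\|_2^2,
\]
not $\|g\|_{\Hhalf}^2$ itself. Since $w$ may vanish, there is no way to recover $\|g\|_{\Hhalf}$ from $\|gw\|_{\Hhalf}$ and lower-order corrections, so the ``commute $w$ through the difference quotients'' step cannot do what you need. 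The identity you wrote for $(fgw)(\ap)-(fgw)(\bp)$ concerns $\|fgw\|_{\Hhalf}$, which is irrelevant here.

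A workable route stays entirely inside the double-integral picture: one expands the polarized quantity $\langle fw,\,gw\rangle_{\Hhalf}$ via the product rule on \emph{both} factors and isolates the symmetric piece
\[
\iint \frac{w(\ap)w(\bp)\,(f(\ap)-f(\bp))(\bar g(\ap)-\bar g(\bp))}{(\ap-\bp)^2}\,d\ap\,d\bp,
\]
whose cross terms are bounded by $\|f\|_2\|w'\|_2\|gw\|_{\Hhalf}$, $\|g\|_2\|w'\|_2\|fw\|_{\Hhalf}$, and $\|f\|_2\|g\|_2\|w'\|_2^2$ exactly via Proposition~\ref{prop:Hardy}(2). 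The point is that the weighted difference-quotient bound displayed above (and its analogue with $f$) is what replaces the bare GN step, and the argument never produces unweighted $\Hhalf$ norms of $f$ or $g$. You should rebuild Part~2 along those lines rather than through $L^4$--GN.
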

\begin{proof}
See \cite{Ag19} for a proof. 
\end{proof}

\begin{lemma}\label{lem:conv}
Let $K_\ep$ be the Poisson kernel from \eqref{eq:Poissonkernel}. If $\f \in L^q(\Rsp)$, then for $s\geq 0$ an integer we have
\begin{align*}
\norm[p]{(\pap^sf)\conv P_\ep} \lesssim \norm[q]{f}\ep^{-s-\brac{\frac{1}{q} - \frac{1}{p}}} \quad \tx{ for } 1\leq q\leq p \leq \infty
\end{align*}
Similarly for $s \in\Rsp, s\geq 0$ we have
\begin{align*}
\norm[p]{(\papabs^s f)\conv P_\ep} \lesssim \norm[q]{f}\ep^{-s-\brac{\frac{1}{q} - \frac{1}{p}}} \quad \tx{ for } 1\leq q\leq p \leq \infty
\end{align*}
\end{lemma}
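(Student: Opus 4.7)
The proof will rest on two elementary ingredients: the scaling of the Poisson kernel and Young's convolution inequality. I will first record the scaling identity $K_\ep(x) = \ep^{-1} K_1(x/\ep)$, whence
\begin{align*}
\pap^s K_\ep(x) = \ep^{-1-s}(\pap^s K_1)(x/\ep), \qquad \papabs^s K_\ep(x) = \ep^{-1-s}(\papabs^s K_1)(x/\ep),
\end{align*}
the second coming from $\widehat{K_\ep}(\xi) = (2\pi)^{-1/2} e^{-\ep|\xi|}$ together with the definition of $\papabs^s$ as a Fourier multiplier. These identities immediately give $\norm[r]{\pap^s K_\ep} = \ep^{-s-(1-1/r)}\norm[r]{\pap^s K_1}$, and analogously for $\papabs^s$.

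Next I will commute the derivative across the convolution, writing $(\pap^s f) \conv K_\ep = f \conv (\pap^s K_\ep)$ (and similarly with $\papabs^s$, justified on the Fourier side), and invoke Young's inequality with the exponent relation $1/p = 1/q + 1/r - 1$, equivalently $1/r = 1 + 1/p - 1/q$. The hypothesis $1 \leq q \leq p \leq \infty$ forces $1/r \in [0,1]$, so $r \in [1,\infty]$ is an admissible Young exponent, and the resulting estimate is
\begin{align*}
\norm[p]{(\pap^s f) \conv K_\ep} \leq \norm[q]{f}\, \norm[r]{\pap^s K_\ep} = C(s,r)\, \ep^{-s - (1/q - 1/p)}\, \norm[q]{f},
\end{align*}
which is exactly the asserted bound (and analogously for $\papabs^s$).

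The only substantive point left is $\norm[r]{\pap^s K_1} < \infty$ and $\norm[r]{\papabs^s K_1} < \infty$ for every $r \in [1,\infty]$. For integer $s$ the first is elementary: differentiating $K_1(x) = 1/(\pi(1+x^2))$ yields a rational function with denominator $(1+x^2)^{s+1}$ and numerator a polynomial of degree at most $s$, so $\pap^s K_1$ is bounded and decays like $|x|^{-s-2}$ at infinity, hence belongs to every $L^r$. For $\papabs^s K_1$ with $s \geq 0$ real, I split $s = n + t$ with $n \in \Zsp_{\geq 0}$ and $t \in [0,1)$, use $\papabs^s = \pm \papabs^t \Hil^n \pap^n$, and reduce to controlling $\papabs^t g$ for a Schwartz function $g$. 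The Fourier transform $|\xi|^t \hat g(\xi)$ is integrable and smooth away from the origin; the regularity at $\xi = 0$ is governed by the homogeneous factor $|\xi|^t$, whose inverse Fourier transform contributes decay of the form $|x|^{-1-t}$ at infinity, while away from the origin the rapid decay of $\hat g$ makes everything Schwartz. Thus $\papabs^t g$ is bounded and decays at least like $|x|^{-1-t}$, which is in $L^r$ for every $r \geq 1$. The main (though still routine) obstacle is this last decay analysis for the fractional piece; once it is in hand, the rest of the argument is pure scaling plus Young.
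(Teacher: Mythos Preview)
The paper does not give a proof of this lemma; it simply cites \cite{Ag19}. Your approach --- scaling plus Young's inequality, reducing everything to the finiteness of $\norm[r]{\pap^s K_1}$ and $\norm[r]{\papabs^s K_1}$ for all $r\in[1,\infty]$ --- is the standard one and is correct in outline.

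There is, however, a small imprecision in your treatment of the fractional case. The function $g = \Hil^n \pap^n K_1$ is \emph{not} Schwartz: on the Fourier side $\hat g(\xi)$ is proportional to $|\xi|^n e^{-|\xi|}$, and $e^{-|\xi|}$ is not smooth at the origin, so $g$ has only polynomial (not rapid) decay. Moreover, your stated decay $|x|^{-1-t}$ for $\papabs^t g$ is too pessimistic and would fail to give $L^1$ when $t=0$. The cleanest fix is to bypass the splitting entirely and observe that $\widehat{\papabs^s K_1}(\xi)$ is a constant multiple of $|\xi|^s e^{-|\xi|}$, whose inverse Fourier transform can be computed explicitly as a constant times $\Real\bigl[(1-ix)^{-(s+1)}\bigr]$; this is bounded and decays like $|x|^{-(s+1)}$ at infinity, hence lies in every $L^r$ for $s\ge 0$. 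With this correction your argument is complete.
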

\begin{proof}
See \cite{Ag19} for a proof.
\end{proof}

\medskip
\section{Appendix B}\label{sec:appendixB}

We now prove some estimates which are new. We first need some identities for holomorphic functions. 

\begin{prop}\label{prop:Dt^2hol}
Let $f \in \Scalsp(\Rsp)$ with $\Pa f = 0$. Then we have the following identities 
\begin{enumerate}[leftmargin =*, align=left]
\item $(\Id - \Hil)\Dt f = (\Id - \Hil)\brac{\Zt\Dap f}$ 

\item  
$\begin{aligned}[t]
 & (\Id - \Hil)\Dt^2 f \\
 & = 2\sqbrac{\Pa\brac{\frac{\Zt}{\Zap}},\Hil}\pap(\Dt f) - \sqbrac{\Pa\brac{\frac{\Zt}{\Zap}}, \Pa\brac{\frac{\Zt}{\Zap}}; \pap f} \\
& \quad + \frac{1}{4}(\Id - \Hil)\cbrac{\brac{\sqbrac{\frac{1}{\Zap},\Hil}\Ztap }\sqbrac{\Zt,\Hil}\Dap f } - \frac{1}{4}(\Id - \Hil)\cbrac{\brac{\sqbrac{\Zt,\Hil}\pap\frac{1}{\Zap}}^2 f} \\
& \quad + \half \sqbrac{\sqbrac{\Zt, \sqbrac{\Zt,\Hil} }\pap\frac{1}{\Zap},\Hil }\pap\brac{\frac{f}{\Zap}} + \sqbrac{\Ztt,\Hil}\Dap f
\end{aligned}
$
\end{enumerate}
\end{prop}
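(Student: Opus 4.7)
The plan is to prove part (1) via a short commutator manipulation, then derive part (2) from an intermediate identity of the same flavor, with the bulk of the work being algebraic simplification.

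For part (1), since $\Pa f = 0$ gives $(\Id - \Hil)f = 0$, the identity $(\Id - \Hil)\Dt = \Dt(\Id - \Hil) + [\Dt, \Hil]$ collapses to $(\Id - \Hil)\Dt f = [\Dt, \Hil]f = \sqbrac{\bvar, \Hil}\pap f$ (using $[\pt, \Hil] = 0$). I would then decompose $\bvar = \frac{\Zt}{\Zap} - (\Id + \Hil)\brac{i\Imag\frac{\Zt}{\Zap}}$ via \eqref{form:Imag}. The second piece is the boundary value of a function holomorphic in $\Pminus$, and $\pap f$ is also such a boundary value (since $f$ is); the Hilbert commutator of a holomorphic function against a holomorphic input vanishes, so that piece contributes nothing. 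Hence $\sqbrac{\bvar, \Hil}\pap f = \sqbrac{\frac{\Zt}{\Zap}, \Hil}\pap f$, and using $\Hil\pap f = \pap f$, this simplifies to $\frac{\Zt}{\Zap}\pap f - \Hil\brac{\frac{\Zt}{\Zap}\pap f} = (\Id - \Hil)(\Zt\Dap f)$.

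For part (2), the natural starting point is the intermediate identity
\begin{align*}
(\Id - \Hil)\Dt^2 f = 2\sqbrac{\bvar, \Hil}\pap\Dt f + \sqbrac{\Dt\bvar, \Hil}\pap f - \sqbrac{\bvar, \bvar; \pap f},
\end{align*}
obtained by applying $(\Id - \Hil)\Dt = \Dt(\Id - \Hil) + [\Dt, \Hil]$ twice, substituting $(\Id - \Hil)\Dt f = \sqbrac{\bvar, \Hil}\pap f$, and invoking \propref{prop:tripleidentity}(2) to handle $\Dt\brac{\sqbrac{\bvar, \Hil}\pap f}$. This is the same identity exploited in \secref{sec:quantEhigh}. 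The task is to rewrite each of the three terms on the right in the stated form. The key decomposition is $\bvar = 2\Real\Pa\brac{\frac{\Zt}{\Zap}} = \Pa\brac{\frac{\Zt}{\Zap}} + \overline{\Pa\brac{\frac{\Zt}{\Zap}}}$, where the conjugate piece is the boundary value of a function holomorphic in $\Pminus$. Applied to $2\sqbrac{\bvar, \Hil}\pap\Dt f$, the $\Pa$ piece yields the main term $2\sqbrac{\Pa(\Zt/\Zap), \Hil}\pap\Dt f$; for the conjugate piece, commutators of a $\Pminus$-holomorphic function against the $\Ph$-part of $\pap\Dt f$ vanish, and the $\Pa$-part satisfies $\Pa(\pap\Dt f) = \frac{1}{2}\pap(\Id - \Hil)\Dt f = \frac{1}{2}\pap\sqbrac{\bvar, \Hil}\pap f$ by part (1); expanding this and using \propref{prop:tripleidentity}(1) to reorganize factors of $\pap$ and $1/\Zap$ produces the $\frac{1}{4}(\Id - \Hil)\cbrac{\cdots}$ correction terms involving $\sqbrac{\frac{1}{\Zap}, \Hil}\Ztap$ and $\sqbrac{\Zt, \Hil}\Dap f$. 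The triple bracket $\sqbrac{\bvar, \bvar; \pap f}$ splits analogously, the pure-$\Pa$ part contributing $\sqbrac{\Pa(\Zt/\Zap), \Pa(\Zt/\Zap); \pap f}$. Finally, $\Dt\bvar$ is computed from $\Dt\brac{\frac{\Zt}{\Zap}} = \frac{\Ztt}{\Zap} + \frac{\Zt}{\Zap}(\bvarap - \Dap\Zt)$ together with \eqref{form:DtoneoverZap}: the $\Ztt/\Zap$ contribution produces $\sqbrac{\Ztt, \Hil}\Dap f$, and the remaining lower-order pieces combine into the nested-commutator term $\frac{1}{2}\sqbrac{\sqbrac{\Zt, \sqbrac{\Zt,\Hil}}\pap\frac{1}{\Zap}, \Hil}\pap\brac{\frac{f}{\Zap}}$.

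The main obstacle will be the algebraic bookkeeping. No single step is conceptually deep, but tracking every cross-term arising from the $\bvar = \Pa + \bar\Pa$ decomposition on one side of the commutator, the $\Ph + \Pa$ decomposition of $\pap\Dt f$ on the other, and the repeated applications of \propref{prop:tripleidentity} is labor intensive; one must carefully verify that every piece lands in exactly one of the six groups on the right-hand side, with the $\frac{1}{4}$ and $\frac{1}{2}$ factors correctly matched.
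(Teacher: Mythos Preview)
The paper does not prove this proposition; it simply cites \cite{Wu15} (Appendix B for part (1), Section 4 for part (2)). Your outline is therefore not directly comparable to the paper's ``proof,'' but it is consistent with the standard derivation and is almost certainly close in spirit to what appears in the cited reference. Your argument for part (1) is complete and correct: the reduction to $[\bvar,\Hil]\pap f$ and the observation that $[\Ph g,\Hil]\Ph h = 0$ (since products of $\Pminus$-holomorphic boundary values remain holomorphic) are exactly the right moves.

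For part (2), your starting identity and decomposition strategy are sound, and you have correctly identified where each of the six target terms should originate. The only caveat is that your final paragraph is honest about what is missing: the sentence ``the remaining lower-order pieces combine into the nested-commutator term $\tfrac12[\ldots]$'' is an assertion, not an argument, and this is precisely where the nontrivial algebra lives. Since the paper itself outsources this computation, your sketch is at the same level of detail as what is required here; but if you were writing a self-contained proof you would need to actually carry out that cancellation.
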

\begin{proof}
These identities essentially says that the material derivative of a holomorphic function remain essentially holomorphic. These identities are proved in \cite{Wu15}, see Appendix B for the first identity and section 4 for the second identity.
\end{proof}

\begin{cor}\label{cor:Coifman}
Let $H \in C^1(\Rsp),A_i \in C^1(\Rsp) $ for $i=1,\cdots m$ and let $\delta>0$ be such that 
\begin{align*}
\delta \leq \abs{\frac{H(x)-H(y)}{x-y} } \leq \frac{1}{\delta} \quad \tx{ for all } x\neq y
\end{align*}
Let $0\leq k \leq m+1$ and define
\begin{align*}
T(A,f)(x) & = p.v. \int \frac{\Pi_{i=1}^{m}(A_i(x) - A_i(y))}{(x-y)^{m+1-k}(H(x)-H(y))^k }f(y)\diff y
\end{align*}
then we have the estimates
\begin{enumerate}
\item $\norm[2]{T(A,f)} \leq C(\norm[\infty]{H'},\delta) \norm[\infty]{A_1'}\cdots\norm[\infty]{A_m'}\norm[2]{f}$ 
\item $\norm[2]{T(A,f)} \leq C(\norm[\infty]{H'},\delta) \norm[2]{A_1'}\norm[\infty]{A_2'}\cdots\norm[\infty]{A_m'}\norm[\infty]{f}$
\end{enumerate}

\end{cor}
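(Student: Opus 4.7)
\medskip

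The plan is to reduce Corollary \ref{cor:Coifman} directly to Proposition \ref{prop:Coifman} by a simple algebraic rewriting of the kernel together with a smooth extension of a one-variable function. First I would rewrite
\[
\frac{1}{(x-y)^{m+1-k}(H(x)-H(y))^k} = \frac{1}{(x-y)^{m+1}} \, F\!\left(\frac{H(x)-H(y)}{x-y}\right), \qquad F(u) = u^{-k},
\]
so that
\[
T(A,f)(x) = p.v.\int F\!\left(\frac{H(x)-H(y)}{x-y}\right)\frac{\prod_{i=1}^{m}(A_i(x)-A_i(y))}{(x-y)^{m+1}}\, f(y)\diff y.
\]
This now has exactly the shape of the operator $C_1(H,A,f)$ in Proposition \ref{prop:Coifman}, provided we can view $F$ as a $C^\infty(\Rsp)$ function.

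Next I would observe that, because $(H(x)-H(y))/(x-y)$ is continuous on $\{x\neq y\}$, extends continuously to $H'(x)$ on the diagonal, and by hypothesis never lies in $(-\delta,\delta)$, its sign must be constant on $\Rsp\times\Rsp$. Replacing $H$ by $-H$ if necessary (which only multiplies the kernel by $(-1)^k$), we may assume $(H(x)-H(y))/(x-y) \in [\delta, 1/\delta]$ for all $x\neq y$. The function $u\mapsto u^{-k}$ is smooth on the compact interval $[\delta, 1/\delta]$, so by choosing a cutoff $\chi \in C_c^\infty(\Rsp)$ with $\chi \equiv 1$ on $[\delta, 1/\delta]$ and supported in $[\delta/2, 2/\delta]$, I can define $\widetilde F(u) := \chi(u) u^{-k} \in C^\infty(\Rsp)$, and $\widetilde F$ agrees with $F$ on the range of $(H(x)-H(y))/(x-y)$. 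Thus $T(A,f)$ is unchanged if $F$ is replaced by $\widetilde F$, and $\widetilde F$ depends only on $\delta$ and $k$.

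With these reductions in hand, both estimates now follow immediately from parts (1) and (2) of Proposition \ref{prop:Coifman} applied with $\widetilde F$ in place of $F$: the first gives
\[
\norm[2]{T(A,f)} \leq c_1(\widetilde F, \norm[\infty]{H'})\,\norm[\infty]{A_1'}\cdots\norm[\infty]{A_m'}\,\norm[2]{f},
\]
and the second gives
\[
\norm[2]{T(A,f)} \leq c_2(\widetilde F, \norm[\infty]{H'})\,\norm[2]{A_1'}\norm[\infty]{A_2'}\cdots\norm[\infty]{A_m'}\,\norm[\infty]{f}.
\]
Since $\widetilde F$ depends only on $\delta$ (and on $k\leq m+1$), the constants $c_1, c_2$ can be written as $C(\norm[\infty]{H'}, \delta)$, completing the proof.

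There is no substantive obstacle: the only subtlety is ensuring that $F(u) = u^{-k}$ may legitimately be replaced by a globally smooth function, which is trivial once one notices that the sign of $(H(x)-H(y))/(x-y)$ is constant and its modulus is bounded in the compact interval $[\delta, 1/\delta]$. The whole corollary is essentially a packaging of Proposition \ref{prop:Coifman} into a form convenient for applications where $H(x)-H(y)$ appears in the denominator rather than $x-y$.
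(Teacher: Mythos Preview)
Your proposal is correct and follows essentially the same approach as the paper: rewrite the kernel in the form $F\!\left(\frac{H(x)-H(y)}{x-y}\right)\frac{\prod(A_i(x)-A_i(y))}{(x-y)^{m+1}}$ with $F(u)=u^{-k}$, replace $F$ by a smooth compactly supported function agreeing with $u^{-k}$ on the range of the difference quotient, and apply Proposition~\ref{prop:Coifman}. The only cosmetic difference is that the paper defines $F$ to equal $x^{-k}$ on the full annulus $\delta \le |x| \le 1/\delta$ and thus handles both signs at once, whereas you first argue that the difference quotient has constant sign; your extra reduction is correct but unnecessary.
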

\begin{proof}
If $k=0$, then the result follows directly from \propref{prop:Coifman}. If $k\geq 1$, we choose a smooth function $F$ with compact support such that $F(x) = 0$ if $\abs{x}\leq \frac{\delta}{2}$, $F(x) = 0$ if $\abs{x}\geq \frac{2}{\delta}$ and $F(x) = x^{-k} $ if $\delta\leq \abs{x} \leq \frac{1}{\delta}$. The result now follows from  \propref{prop:Coifman}.
\end{proof}

We now prove some basic estimates for the operators defined in \secref{sec:aprioriEcalDelta}. Recall from \eqref{eq:Hcal}
\begin{align*}
(\Hcal f)(\ap) &= \frac{1}{i\pi} p.v. \int \frac{\htilbp(\bp)}{\htil(\ap) - \htil(\bp)}f(\bp) \diff \bp \\
 (\Hcaltil f)(\ap) &=\frac{1}{i\pi} p.v. \int \frac{1}{\htil(\ap) - \htil(\bp)}f(\bp) \diff \bp
\end{align*}

\begin{prop}\label{prop:HcalHtilest}
Let $\Hcal, \Htil$ be defined as in \eqref{eq:Hcal} and let $f  \in \Scalsp(\Rsp)$. Assume that there is a constant $L>0$ so that
\begin{align*}
\frac{1}{L} \leq \abs{\frac{\htil(x) - \htil(y)}{x-y}} \leq L \quad \tx{ for all } x\neq y
\end{align*}
We will suppress the dependence of $L$ i.e. we write $a\lesssim b$ instead of $a\leq C(L)b$. With this notation we have the following estimates
\begin{enumerate}
\item $\norm[2]{\Hcal(f)} \lesssim \norm[2]{f}$ and $\norm*[2]{\Htil(f)} \lesssim \norm[2]{f}$. 
\item  $\norm[\Hhalf]{\Hcal(f)} \lesssim  \norm[\Hhalf]{f}$
\end{enumerate}
\end{prop}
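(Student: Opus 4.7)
The plan is to identify $\Hcal$ with the usual Hilbert transform conjugated by the change-of-variables operator $U_{\htil}$, and to check that $U_{\htil}$ is bounded on both $\Ltwo$ and $\Hhalf$ using only the bi-Lipschitz hypothesis on $\htil$. Concretely, I would first establish the key identity $\Hcal = U_{\htil}\,\Hil\,U_{\htil}^{-1}$. This follows from a change of variables $y = \htil(\bp)$ in the integral defining $\Hcal$: one computes
\begin{align*}
(\Hil\, U_{\htil}^{-1} f)(\htil(\ap)) = \frac{1}{i\pi}\int \frac{f(\htil^{-1}(y))}{\htil(\ap)-y}\,dy = \frac{1}{i\pi}\int \frac{\htilbp(\bp) f(\bp)}{\htil(\ap)-\htil(\bp)}\,d\bp = \Hcal(f)(\ap),
\end{align*}
and then $\Hcal = U_{\htil}\,\Hil\,U_{\htil}^{-1}$ follows by applying $U_{\htil}$.

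Next I would prove that $U_{\htil}$ and $U_{\htil}^{-1}$ are bounded on $\Ltwo$ and on $\Hhalf$ with operator norm depending only on $L$. For $\Ltwo$ this is the direct change of variables $\norm[2]{U_{\htil}(g)}^2 = \int |g(\htil(\ap))|^2 d\ap = \int |g(x)|^2\,|(\htil^{-1})'(x)|\,dx \leq L\,\norm[2]{g}^2$, using $|(\htil^{-1})'|\leq L$ a.e. from the bi-Lipschitz condition. For $\Hhalf$ I would use the double integral representation from \propref{prop:Hardy}, change variables $x=\htil(\ap),\,y=\htil(\bp)$, and apply the two-sided bound $\tfrac{1}{L}|x-y|\leq |\htil(\ap)-\htil(\bp)|\cdot |\ap-\bp|/|\htil(\ap)-\htil(\bp)| \cdot \ldots$; more cleanly, one has $|\ap-\bp|^{-2}\leq L^2|\htil(\ap)-\htil(\bp)|^{-2}$ in the integrand, so
\begin{align*}
\norm[\Hhalf]{U_{\htil}(g)}^2 \leq L^2\,\frac{1}{2\pi}\int\!\!\int \frac{|g(x)-g(y)|^2}{(x-y)^2}\,|(\htil^{-1})'(x)|\,|(\htil^{-1})'(y)|\,dx\,dy \leq L^4\norm[\Hhalf]{g}^2.
\end{align*}
The identical estimates hold for $U_{\htil}^{-1}$ since $\htil^{-1}$ is also bi-Lipschitz with constant $L$.

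Assembling these pieces gives both parts of the proposition. Since $\Hil$ is a Fourier multiplier of modulus $1$, it is bounded on both $\Ltwo$ and $\Hhalf$. Therefore
\begin{align*}
\norm[2]{\Hcal f} \lesssim \norm[2]{\Hil U_{\htil}^{-1} f}\lesssim \norm[2]{U_{\htil}^{-1} f}\lesssim \norm[2]{f}, \qquad \norm[\Hhalf]{\Hcal f}\lesssim \norm[\Hhalf]{f},
\end{align*}
which yields the $\Hcal$ estimates in (1) and all of (2). For $\Htil$, I would write
\begin{align*}
\Htil(f)(\ap) = \frac{1}{i\pi}\int \frac{1}{\htil(\ap)-\htil(\bp)}\,\frac{f(\bp)}{\htilbp(\bp)}\,\htilbp(\bp)\,d\bp = \Hcal\!\brac{\frac{f}{\htilbp}}(\ap),
\end{align*}
so that $\norm[2]{\Htil f}\lesssim \norm[2]{f/\htilbp}\lesssim L\norm[2]{f}$, using the bi-Lipschitz lower bound on $\htil'$.

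There is essentially no hard step here: the proof is a routine conjugation argument, and the only mild technical point is justifying the change of variables when $\htil$ is merely bi-Lipschitz (so $\htil^{-1}$ is differentiable only almost everywhere), which is standard. If one preferred not to invoke $U_{\htil}$-boundedness on $\Hhalf$ directly, an alternative for part (2) is to apply \propref{prop:Lemarie} to $\Hcal$: the kernel $K(\ap,\bp)=\tfrac{1}{i\pi}\tfrac{\htilbp(\bp)}{\htil(\ap)-\htil(\bp)}$ satisfies $|K|\lesssim |\ap-\bp|^{-1}$ and $|\nabla_\ap K|\lesssim |\ap-\bp|^{-2}$ from the bi-Lipschitz bound plus $\norm[\infty]{\htilap}\leq L$, and $\Hcal(1)=U_{\htil}\Hil(1)=0$, so \propref{prop:Lemarie} gives boundedness on $\dot H^s$ for $0<s<1$.
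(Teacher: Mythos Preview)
Your proof is correct and takes a genuinely different route from the paper. The paper handles $\Htil$ on $\Ltwo$ by invoking the Coifman-type \corref{cor:Coifman} directly, then obtains $\Hcal$ on $\Ltwo$ via the relation $\Hcal(f)=\Htil(\htilap f)$; for part~(2) it applies the $T(1)$-style \propref{prop:Lemarie} after checking kernel bounds and $\Hcal(1)=0$. You instead exploit the structural identity $\Hcal = U_{\htil}\,\Hil\,U_{\htil}^{-1}$ (this is precisely \lemref{lem:basicUtil}(3) in the paper) and reduce everything to elementary change-of-variables bounds for $U_{\htil}$ on $\Ltwo$ and $\Hhalf$, together with the trivial Fourier-multiplier boundedness of $\Hil$; your relation $\Htil(f)=\Hcal(f/\htilap)$ is the inverse of the paper's relation. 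Your argument is more self-contained (it avoids both the Coifman machinery and \propref{prop:Lemarie}) and more transparently explains why the estimates hold, while the paper's approach has the virtue of being uniform with the many other singular-integral bounds proved in the appendix. You also note the \propref{prop:Lemarie} alternative at the end, which is exactly the paper's method for part~(2).
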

\begin{proof}
The proofs are quite straighforward
\begin{enumerate}[leftmargin =*, align=left]
\item The estimate $\norm*[2]{\Htil(f)} \lesssim \norm[2]{f}$ follows directly from \corref{cor:Coifman}. Hence we see that $\norm*[2]{\Hcal(f)} \lesssim \norm*[2]{\htilap f}  \lesssim \norm[2]{f}$. 
\item We see that $\Hcal(1) = 0$ and $\norm[\Ltwo \to \Ltwo]{\Hcal} \lesssim 1$. The kernel of $\Hcal$ is
\begin{align*}
K(\ap,\bp) = \frac{\htilbp(\bp)}{\htil(\ap) - \htil(\bp)}
\end{align*}
From this we see that 
\begin{align*}
\abs{K(\ap,\bp)} \lesssim \frac{1}{\abs{\ap - \bp}} \qq \tx{ and } \quad \abs{\grad_{\ap}K(\ap,\bp)} \lesssim \frac{1}{\abs{\ap - \bp}^2}
\end{align*}
Hence from \propref{prop:Lemarie} we obtain $\norm[\Hhalf \to \Hhalf]{\Hcal} \lesssim 1$. 
\end{enumerate}
\end{proof}

We now prove the main estimate used to handle the difference of terms in the energy estimate of $\EcalDelta$. Recall that $\sqbrac{f_1,f_2 ; f_3}_{\htil}$ is defined as 
\begin{align*}
\sqbrac{f_1, f_2;  f_3}_{\htil}(\ap) = \frac{1}{i\pi} \int \brac{\frac{f_1(\ap) - f_1(\bp)}{\htil(\ap) - \htil(\bp)}}\brac{\frac{f_2(\ap) - f_2(\bp)}{\htil(\ap) - \htil(\bp)}} f_3(\bp) \diff \bp
\end{align*}

\begin{prop}\label{prop:HilHtilcaldiff}
Let $\Hil$ be the Hilbert transform and let $\Hcal, \Htil$ be defined as in \eqref{eq:Hcal} and let $f,f_1,f_2,f_3,g \in \Scalsp(\Rsp)$. Assume that there is a constant $L>0$ so that
\begin{align*}
\frac{1}{L} \leq \abs{\frac{\htil(x) - \htil(y)}{x-y}} \leq L \quad \tx{ for all } x\neq y
\end{align*}
We will suppress the dependence of $L$ i.e. we write $a\lesssim b$ instead of $a\leq C(L)b$. With this notation we have the following estimates
\begin{enumerate}
\item $\norm[2]{(\Hil - \Hcal) f} \lesssim \norm*[\infty]{\htilap -1}\norm[2]{f}$ 
\item $\norm[\Hhalf]{(\Hil - \Hcal) f} \lesssim \norm*[\infty]{\htilap -1}\norm[\Hhalf]{f}$ 
\item $\norm*[2]{\sqbrac*{f,\Hil - \Htil}g} \lesssim \norm*[\infty]{\htilap -1}\norm[2]{f'}\norm[1]{g}$
\item $\norm*[2]{\sqbrac*{f,\Hil - \Htil}g} \lesssim \norm*[\infty]{\htilap -1}\norm[\Hhalf]{f}\norm[2]{g}$
\item $\norm*[2]{\sqbrac*{f,\Hil - \Htil}\pap g} \lesssim \norm*[\infty]{\htilap -1}\norm[\infty]{f'}\norm[2]{g}$
\item $\norm*[2]{\sqbrac*{f,\Hil - \Htil}\pap g} \lesssim \norm*[\infty]{\htilap -1}\norm[2]{f'}\norm[\infty]{g}$
\item $\norm*[2]{\sqbrac*{f,\Hil - \Htil}\pap g} \lesssim \norm*[\infty]{\htilap -1}\norm[2]{f'}\norm[\Hhalf]{g}$
\item $\norm*[2]{\sqbrac*{f,\Hil - \Htil}\pap g} \lesssim \norm*[\infty]{\htilap -1}\cbrac*[\big]{\norm[\Hhalf]{f'}\norm[2]{g} + \norm[2]{f'g}}$
\item  $\norm*[2]{\pap\sqbrac*{f_1,\sqbrac*{f_2,\Hil - \Htil}}\pap f_3} \lesssim \norm*[\infty]{\htilap - 1}\norm[2]{f_1'}\norm[2]{f_2'}\norm[2]{f_3'}$ 
\item $\norm*[\Linfty\cap\Hhalf]{\sqbrac*{f,\Hil - \Htil}g} \lesssim \norm*[\infty]{\htilap -1}\norm[2]{f'}\norm[2]{g}$
\item $\norm*[\Hhalf]{\sqbrac*{f,\Hil - \Htil}\pap g} \lesssim \norm*[\infty]{\htilap -1}\norm[\infty]{f'}\norm[\Hhalf]{g}$
\item $\norm[\Linfty\cap\Hhalf]{\sqbrac*{f_1,f_2; f_3} - \sqbrac*{f_1,f_2; f_3}_{\htil} } \lesssim \norm*[\infty]{\htilap -1}\norm[\infty]{f_1'}\norm[2]{f_2'}\norm[2]{f_3}$
\item $\norm[2]{\sqbrac*{f_1,f_2; \pap f_3} - \sqbrac*{f_1,f_2; \pap f_3}_{\htil} } \lesssim \norm*[\infty]{\htilap -1}\norm[\infty]{f_1'}\norm[\infty]{f_2'}\norm[2]{f_3}$
\item $\norm[\Hhalf]{\sqbrac*{f_1,f_2; \pap f_3} - \sqbrac*{f_1,f_2; \pap f_3}_{\htil} } \lesssim \norm*[\infty]{\htilap -1}\norm[\infty]{f_1'}\norm[\infty]{f_2'}\norm[\Hhalf]{f_3}$
\end{enumerate}
\end{prop}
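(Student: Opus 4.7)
The plan is to reduce all fourteen estimates to applications of the Coifman--McIntosh--Meyer theorem (\propref{prop:Coifman} and \corref{cor:Coifman}) and Lemarie's theorem \propref{prop:Lemarie}, after extracting the factor $\htilap - 1$ from each kernel. Set $B = \htil - \mathrm{Id}$, so that $B_\ap = \htilap - 1$. The identity driving everything is
\begin{align*}
\frac{1}{\ap - \bp} - \frac{1}{\htil(\ap) - \htil(\bp)} = \frac{B(\ap) - B(\bp)}{(\ap - \bp)(\htil(\ap) - \htil(\bp))},
\end{align*}
which exhibits a Coifman-type kernel with one extra factor $B(\ap) - B(\bp)$ contributing $\norm*[\infty]{\htilap - 1}$ to the operator norm.

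For (1) and (2), I would split
\begin{align*}
\frac{1}{\ap - \bp} - \frac{\htilbp(\bp)}{\htil(\ap) - \htil(\bp)} = \frac{B(\ap) - B(\bp)}{(\ap - \bp)(\htil(\ap) - \htil(\bp))} - \frac{\htilbp(\bp) - 1}{\htil(\ap) - \htil(\bp)}.
\end{align*}
The first piece matches \corref{cor:Coifman} with $m=1$, $k=1$, giving the desired $L^2$ bound with constant $\norm*[\infty]{\htilap - 1}$. The second piece equals $i\pi\,\Htil((\htilbp - 1)f)(\ap)$ and is controlled by combining the $L^2$-boundedness of $\Htil$ from \propref{prop:HcalHtilest} with the pointwise bound $|\htilbp - 1| \leq \norm*[\infty]{\htilap - 1}$. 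For (2), apply Lemarie \propref{prop:Lemarie}: the kernel of $\Hil - \Hcal$ satisfies standard Calderón--Zygmund estimates with constant $\norm*[\infty]{\htilap - 1}$, and the required cancellation $(\Hil - \Hcal)(1) = 0$ follows from $\Hil(1) = 0$ together with the change of variables $\bp' = \htil(\bp)$ in $\Hcal(1)$.

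For (3)--(11), I would use the commutator identity
\begin{align*}
[f, \Hil - \Htil]g(\ap) = \frac{1}{i\pi} \int \frac{(f(\ap) - f(\bp))(B(\ap) - B(\bp))}{(\ap - \bp)(\htil(\ap) - \htil(\bp))} g(\bp)\, d\bp,
\end{align*}
which again fits \corref{cor:Coifman}, this time with $m=2$, $k=1$, $A_1 = f$, $A_2 = B$. Estimates (3)--(7) are immediate, with different choices of which of $f'$, $B_\ap$ carries the $L^\infty$ (respectively $L^2$) norm and which norm absorbs $g$; (3) and (6) can alternatively be obtained from the pointwise bound $|K(\ap,\bp)| \lesssim \norm*[\infty]{\htilap - 1}\,|f(\ap)-f(\bp)|/|\ap-\bp|$ and Minkowski. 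For (8), I would split $g$ into real and imaginary parts and use \propref{prop:Leibniz} combined with (4) and a standard commutator estimate from \propref{prop:commutator}. The nested estimate (9) follows the same pattern via the Coifman kernel with $m=3$ and $A_1 = B$, $A_2 = f_1$, $A_3 = f_2$. For the $L^\infty \cap \Hhalf$ bound (10), I would combine (3) with \propref{prop:LinftyHhalf}; for (11), apply Lemarie once more to the linear operator $g \mapsto [f, \Hil - \Htil]\pap g$, whose kernel is $CZ$-regular with constant $\norm*[\infty]{\htilap - 1}\norm[\infty]{f'}$ and which annihilates constants.

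For the triple-bracket estimates (12)--(14), the corresponding factorization is
\begin{align*}
\frac{1}{(\ap-\bp)^2} - \frac{1}{(\htil(\ap)-\htil(\bp))^2} = \frac{(B(\ap) - B(\bp))\bigl(2(\ap-\bp) + B(\ap)-B(\bp)\bigr)}{(\ap-\bp)^2 (\htil(\ap)-\htil(\bp))^2},
\end{align*}
which again produces a Coifman-type kernel with one extra $B(\ap) - B(\bp)$ factor; \corref{cor:Coifman} with $m = 3$ handles (12) and (13) directly, while for (14) Lemarie plus verification of the Calderón--Zygmund kernel bounds is the cleanest route. The main obstacle throughout will be bookkeeping: tracking in each subcase which factor carries $L^\infty$, $L^2$, or $\Hhalf$, and, for the three $\Hhalf$ estimates (2), (11), (14), either directly verifying the kernel regularity demanded by Lemarie's theorem with the correct constants or finding an appropriate $T(1) = 0$ cancellation. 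None of the cases is individually hard once the correct kernel decomposition is in hand; the entire proposition is a systematic unfolding of the Coifman--McIntosh--Meyer theorem and \propref{prop:Lemarie} against the single algebraic identity for $1/(\ap - \bp) - 1/(\htil(\ap) - \htil(\bp))$.
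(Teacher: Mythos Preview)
Your overall strategy is sound and matches the paper for the items that genuinely reduce to Coifman--McIntosh--Meyer plus Lemarie, namely (1), (2), (5), (6), (11), (13), (14). The gaps lie in the items whose right-hand sides carry $\Hhalf$ norms or more than one $L^2$-derivative factor: \corref{cor:Coifman} (via \propref{prop:Coifman}) only ever outputs $L^\infty$ or $L^2$ norms of the $A_i'$, with at most one $L^2$ among them, together with an $L^2$ or $L^\infty$ norm on the remaining function. It therefore cannot yield $\|f\|_{\Hhalf}$ in (4), $\|g\|_{\Hhalf}$ in (7), or the triple product $\|f_1'\|_2\|f_2'\|_2\|f_3'\|_2$ in (9). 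For these the paper does not invoke CMM; it bounds the integrand pointwise by products of difference quotients and applies Cauchy--Schwarz together with the Hardy-type identities of \propref{prop:Hardy} (for (9), after first expanding the outer $\pap$ explicitly).

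Your routes to (8), (10) and (12) also do not close. For (10), \propref{prop:LinftyHhalf} would require $L^2$ control of $\pap\sqbrac{f,\Hil-\Htil}g$ in terms of only $\|f'\|_2\|g\|_2$, which is not available; instead the paper writes the $\Hhalf$ seminorm via the double integral of \propref{prop:Hardy}(4), decomposes the resulting second-order difference quotient of the commutator into four explicit pieces, and estimates each by \corref{cor:Coifman} and Hardy. Item (12), being an $L^\infty\cap\Hhalf$ bound rather than $L^2$, is handled the same way and cannot come from \corref{cor:Coifman}. For (8), the Leibniz-plus-(4) argument you sketch does not obviously produce $\|f'\|_{\Hhalf}\|g\|_2 + \|f'g\|_2$; the paper instead integrates by parts so as to isolate $\pbp F(\ap,\bp)$ and then applies \propref{prop:Hardy}(5) and \corref{cor:Coifman}. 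The recurring missing ingredient is that once $\Hhalf$ norms or several $L^2$ factors appear on the right, you must work directly with the integral kernel via Cauchy--Schwarz and Hardy rather than invoke CMM as a black box.
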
 
\begin{proof}
We first observe that from \propref{prop:HcalHtilest} we get $\norm[2]{\Hcal(f)} \lesssim \norm[2]{f}$, $\norm[\Hhalf]{\Hcal(f)} \lesssim  \norm[\Hhalf]{f}$ and $\norm*[2]{\Htil(f)} \lesssim \norm[2]{f}$. To simplify the calculations we define
\[ 
\begin{array}{*2{>{\displaystyle}c}}
\lpar \qquad \dis F(a,b) = \frac{f(a)-f(b)}{a-b}  & \quad  F_{h}(a,b) = \frac{f(a)-f(b)}{\htil(a)-\htil(b)} \\
G(a,b) = \frac{g(a)-g(b)}{a-b} & \quad  G_{h}(a,b) = \frac{g(a)-g(b)}{\htil(a)-\htil(b)} \\
F_i(a,b) = \frac{f_i(a)-f_i(b)}{a-b}  & \quad  F_{ih}(a,b) = \frac{f_i(a)-f_i(b)}{\htil(a)-\htil(b)} \\
H(a,b) = \frac{(\htil(a)-a)-(\htil(b)-b)}{a-b} & \quad  H_h(a,b) = \frac{(\htil(a)-a)-(\htil(b)-b)}{\htil(a)-\htil(b)} 
\end{array}
\]
We have the identities
\begin{align*}
\frac{F(\ap,s)-F(\bp,s)}{\ap-\bp} & = \frac{F(\ap,\bp)-F(\bp,s)}{\ap-s} \\
\frac{H_h(\ap,s)-H_h(\bp,s)}{\ap-\bp} & = \frac{1}{\htil(\ap) - \htil(s)}\cbrac{H(\ap,\bp) - H_h(\bp,s)\brac{\frac{\htil(\ap)-\htil(\bp)}{\ap-\bp}} }
\end{align*}

\begin{enumerate}[leftmargin =*, align=left]

\item We write $(\Hil - \Hcal) = (\Hil - \Htil) + (\Htil - \Hcal)$. Observe that
\begin{align*}
(\Htil - \Hcal) f = \Htil ((1-\htilap)f)
\end{align*}
Hence as $\Htil $ is bounded on $\Ltwo$ we have $\norm*[2]{(\Htil-\Hcal)f} \lesssim \norm*[\infty]{\htilap-1}\norm[2]{f} $. Now we have
\begin{align*}
((\Hil - \Htil)f)(\ap) = \frac{1}{i\pi} \int \brac{\frac{1}{\ap-\bp} - \frac{1}{\htil(\ap)-\htil(\bp)} }f(\bp) \diff\bp = \frac{1}{i\pi} \int \frac{H_h(\ap,\bp)}{\ap-\bp} f(\bp) \diff \bp
\end{align*} 
Now using  \corref{cor:Coifman} we see that $\norm*[2]{(\Hil-\Htil)f} \lesssim \norm*[\infty]{\htil'-1}\norm[2]{f} $. Hence the required estimate follows.

\item Observe that $(\Hil - \Hcal)(1) = 0$ and that the kernel of this operator is
\begin{align*}
K(\ap,\bp) = \frac{1}{\ap-\bp} - \frac{\htilbp(\bp)}{\htil(\ap) - \htil(\bp)}
\end{align*}
Hence this kernel satisfies 
\begin{align*}
\abs{K(\ap.\bp)} \lesssim \frac{\norm*[\infty]{\htilap -1}}{\abs{\ap-\bp}} \qquad \abs{\nabla_\ap K(\ap.\bp)} \lesssim \frac{\norm*[\infty]{\htilap -1}}{\abs{\ap-\bp}^2}
\end{align*}
and by the first estimate of this proposition we also have  $\norm[\Ltwo \to \Ltwo]{\Hil - \Hcal}\lesssim \norm*[\infty]{\htilap -1}$. Hence by  \propref{prop:Lemarie} we have boundedness on $\Hhalf$ with $\norm[\Hhalf \to \Hhalf]{\Hil - \Hcal}\lesssim \norm*[\infty]{\htilap -1}$.

\item Note that
\begin{align*}
(\sqbrac*{f,\Hil - \Htil}g)(\ap) = \frac{1}{i\pi} \int F(\ap,\bp)H_h(\ap,\bp) g(\bp) \diff \bp
\end{align*}
and hence by Cauchy Schwarz we have
\begin{align*}
\abs*{\sqbrac*{f,\Hil - \Htil}g}(\ap) \lesssim \norm*[\infty]{\htilap -1}\norm[1]{g}^\half\brac{\int \abs{F(\ap,\bp)}^2\abs{g(\bp)} \diff \bp}^\half
\end{align*}
The estimate now follows from Hardy's inequality in \propref{prop:Hardy}.

\item From the earlier computation we see that
\begin{align*}
(\abs*{\sqbrac*{f,\Hil - \Htil}g})(\ap) \lesssim \norm*[\infty]{\htilap -1}\norm[2]{g}\brac{\int \abs{F(\ap,\bp)}^2 \diff \bp}^\half
\end{align*}
We now obtain the estimate easily as $\dis \int \!\!\! \int \abs{F(\ap,\bp)}^2 \diff\bp\diff\ap \lesssim \norm[\Hhalf]{f}^2$ from \propref{prop:Hardy}. 

\item We observe
\begin{align*}
& (\sqbrac*{f,\Hil - \Htil}\pap g)(\ap) \\
& = \frac{1}{i\pi} \int F(\ap,\bp)H_h(\ap,\bp) g_\bp(\bp) \diff \bp \\
& = \frac{1}{i\pi} \int \frac{H_h(\ap,\bp)}{\ap-\bp} f_\bp(\bp)g(\bp) \diff \bp + \frac{1}{i\pi} \int \frac{F(\ap,\bp)}{\htil(\ap)-\htil(\bp)} (\htil_\bp(\bp) - 1)g(\bp) \diff \bp \\
& \quad - \frac{1}{i\pi} \int \frac{F(\ap,\bp)}{\ap - \bp}H_h(\ap,\bp) g(\bp) \diff \bp - \frac{1}{i\pi} \int \frac{F(\ap,\bp)H_h(\ap,\bp)}{\htil(\ap)-\htil(\bp)} \htil_\bp(\bp)g(\bp) \diff \bp
\end{align*}
The estimate now follows from  \corref{cor:Coifman}.

\item This also follows from the computation above and  \corref{cor:Coifman}.

\item We see that 
\begin{align*}
(\sqbrac*{f,\Hil - \Htil}\pap g)(\ap) & = \frac{1}{i\pi} \int F(\ap,\bp)H_h(\ap,\bp) g_\bp(\bp) \diff \bp \\
& =  \frac{1}{i\pi} \int \pbp\brac{F(\ap,\bp)H_h(\ap,\bp)} (g(\ap)-g(\bp)) \diff \bp \\
\end{align*}
Now as $\dis F(\ap,\bp) =  \frac{f(\ap)-f(\bp)}{\ap-\bp} $, if the derivative falls on $f$ then we can use estimate 4 of this proposition. All other terms are bounded point-wise by
\begin{align*}
 \norm*[\infty]{\htilap -1} \int \abs{\frac{f(\ap)-f(\bp)}{\ap-\bp} }\abs{\frac{g(\ap)-g(\bp)}{\ap-\bp}} \diff \bp
\end{align*}
Now use Cauchy Schwarz and Hardy's inequality from \propref{prop:Hardy}.

\item We see that
\begin{align*}
&\sqbrac*{f,\Hil - \Htil}\pap g  \\
& = \frac{1}{i\pi} \int F(\ap,\bp)H_h(\ap,\bp) g_\bp(\bp) \diff \bp \\
& = -\frac{1}{i\pi} \int (\pbp F(\ap,\bp))H_h(\ap,\bp) g(\bp) \diff \bp -\frac{1}{i\pi} \int \cbrac{\pbp H_h(\ap,\bp)} f_\bp(\bp)g(\bp) \diff\bp \\
& \quad   - \frac{1}{i\pi} \int \brac{\frac{F(\ap,\bp) -f_\bp(\bp) }{\ap-\bp}} \cbrac{(\ap-\bp)\pbp H_h(\ap,\bp)}  g(\bp) \diff \bp
\end{align*}
For the first term we use Cauchy Schwarz inequality along with \propref{prop:Hardy}. The second term is easily handled by \corref{cor:Coifman}. For the last term we observe that $\pbp F(\ap,\bp) = \frac{F(\ap,\bp) -f_\bp(\bp) }{\ap-\bp}$ and then use Cauchy Schwarz inequality along with \propref{prop:Hardy}. 

\item We have
\begin{align*}
&\pap\sqbrac*{f_1,\sqbrac*{f_2,\Hil - \Htil}}\pap f_3 \\
& = \frac{1}{i\pi}\pap\int (f_1(\ap)-f_1(\bp))F_2(\ap,\bp)H_h(\ap,\bp) f_{3 \bp}(\bp) \diff\bp \\
& = f_1'(\ap) \brac{ \frac{1}{i\pi}\int F_2(\ap,\bp)H_h(\ap,\bp) f_{3 \bp}(\bp) \diff\bp} \\
& \quad + f_2'(\ap) \brac{ \frac{1}{i\pi}\int F_1(\ap,\bp)H_h(\ap,\bp) f_{3 \bp}(\bp) \diff\bp} \\
& \quad +  \frac{1}{i\pi}\int F_1(\ap,\bp)F_2(\ap,\bp) \cbrac*[\big]{(\ap-\bp)\pap H_h(\ap,\bp) - H_h(\ap,\bp)} f_{3 \bp}(\bp) \diff\bp
\end{align*}
Each of the terms are now easily controlled by using Cauchy Schwarz inequality and using \propref{prop:Hardy} and \propref{prop:triple}.

\item Note that
\begin{align*}
\sqbrac*{f,\Hil - \Htil}g = \frac{1}{i\pi} \int F(\ap,s)H_h(\ap,s) g(s) \diff s
\end{align*}
Hence the $\Linfty$ estimate follows immediately from Cauchy Schwarz inequality and \propref{prop:Hardy}. We now show the $\Hhalf$ estimate by using identity 4 in \propref{prop:Hardy}. We see that
\begin{align*}
& \frac{(\sqbrac*{f,\Hil - \Htil}g)(\ap) - (\sqbrac*{f,\Hil - \Htil}g)(\bp)}{\ap-\bp} & \\
& = \frac{1}{i\pi}\int \frac{F(\ap,s)H_h(\ap,s) - F(\bp,s)H_h(\bp,s) }{\ap-\bp}g(s) \diff s \\
& =  \frac{1}{i\pi}\int \frac{F(\ap,s) - F(\bp,s) }{\ap-\bp}H_h(\ap,s)g(s) \diff s +  \frac{1}{i\pi}\int \frac{H_h(\ap,s) - H_h(\bp,s) }{\ap-\bp}F(\bp,s)g(s) \diff s \\
\end{align*}
Now using the identities mentioned at the start of the proof of this proposition we get
\begin{align*}
& \frac{(\sqbrac*{f,\Hil - \Htil}g)(\ap) - (\sqbrac*{f,\Hil - \Htil}g)(\bp)}{\ap-\bp} & \\
& = \frac{F(\ap,\bp)}{i\pi} \int \frac{H_h(\ap,s)}{\ap-s}g(s) \diff s  - \frac{1}{i\pi}\int \frac{H_h(\ap,s)}{\ap-s}F(\bp,s)g(s)\diff s \\
& \quad + \frac{H(\ap,\bp)}{i\pi}\int \frac{F(\bp,s)}{\htil(\ap)-\htil(s)}g(s) \diff s \\
& \quad -\frac{1}{i\pi}\brac{\frac{\htil(\ap)-\htil(\bp)}{\ap-\bp}} \int \frac{F(\bp,s)}{\htil(\ap)-\htil(s)}H_h(\bp,s)g(s)\diff s \\
& = {\rm I + II + III + IV}
\end{align*}
We can control each of the terms. The first term is controlled as
\begin{align*}
\lpar \qq& \norm[\Ltwo(\Rsp\times\Rsp, \diff\ap \diff\bp)]{\frac{F(\ap,\bp)}{i\pi} \int \frac{H_h(\ap,s)}{\ap-s}g(s) \diff s} &&\\
& \lesssim \norm[\Ltwo(\diff \bp)]{\norm[\Linfty(\diff \ap)]{F(\ap,\bp)}\norm[\Ltwo(\diff \ap)]{\int \frac{H_h(\ap,s)}{\ap -s}g(s)\diff s}} \\
& \lesssim \norm*[\infty]{\htilap -1}\norm[2]{f'}\norm[2]{g}
\end{align*}
where we used \corref{cor:Coifman} and \propref{prop:Hardy}. For the second term we have
\begin{align*}
\lpar \qq & \norm[\Ltwo(\Rsp\times\Rsp, \diff\ap \diff\bp)]{\frac{1}{i\pi}\int \frac{H_h(\ap,s)}{\ap-s}F(\bp,s)g(s)\diff s} &&\\
& \lesssim \norm[\Ltwo(\diff \bp)]{\norm[\Ltwo(\diff \ap)]{\int \frac{H_h(\ap,s)}{\ap -s} F(\bp,s) g(s)\diff s}} \\
&  \lesssim \norm*[\infty]{\htilap -1}\norm[2]{g}\norm*[\Big][\Ltwo(\diff \bp)]{\norm[\Linfty]{F(\bp,\cdot)}} \\
& \lesssim \norm*[\infty]{\htilap -1}\norm[2]{f'}\norm[2]{g}
\end{align*}
Similarly the third term is controlled as
\begin{align*}
\lpar \qq & \norm[\Ltwo(\Rsp\times\Rsp, \diff\ap \diff\bp)]{\frac{H(\ap,\bp)}{i\pi}\int \frac{F(\bp,s)}{\htil(\ap)-\htil(s)}g(s) \diff s} && \\
& \lesssim \norm*[\infty]{\htilap -1}\norm[\Ltwo(\diff \bp)]{\norm[\Ltwo(\diff \ap)]{\int \frac{F(\bp,s)}{\htil(\ap) - \htil(s)}  g(s)\diff s}} \\
&  \lesssim \norm*[\infty]{\htilap -1}\norm[2]{g}\norm*[\Big][\Ltwo(\diff \bp)]{\norm[\Linfty]{F(\bp,\cdot)}} \\
& \lesssim \norm*[\infty]{\htilap -1}\norm[2]{f'}\norm[2]{g}
\end{align*}
and the last term is controlled similarly to the second term. Hence we have the required estimate.

\item From estimate 5 of this proposition we see that the operator $T: g \mapsto \sqbrac*{f,\Hil - \Htil}\pap(g)$ is bounded on $\Ltwo$ with $\norm[\Ltwo \to \Ltwo]{T} \lesssim  \norm*[\infty]{\htilap -1}\norm[\infty]{f'}$ and that $T(1)=0$. It is also easy to see that its kernel satisfies the conditions of  \propref{prop:Lemarie} and hence the estimate follows.   

\item This is proved in exactly the same was as we proved estimate 10 of this proposition. We leave the details to the reader. 

\item Consider the operator $T: f_3 \mapsto \sqbrac*{f_1,f_2; \pap f_3} - \sqbrac*{f_1,f_2; \pap f_3}_{\htil} $. We observe that
\begin{align*}
& T(f_3)(\ap) \\
& = - \frac{1}{i\pi}\int \pbp \cbrac{\frac{f_1(\ap) - f_1(\bp)}{\ap - \bp}\frac{f_2(\ap) - f_2(\bp)}{\ap - \bp} - \frac{f_1(\ap) - f_1(\bp)}{\htil(\ap) - \htil(\bp)}\frac{f_2(\ap) - f_2(\bp)}{\htil(\ap) - \htil(\bp)} }f_3(\bp) \diff \bp
\end{align*}
Now by repeated use of \corref{cor:Coifman} we obtain $\norm[\Ltwo \to \Ltwo]{T} \lesssim  \norm*[\infty]{\htilap -1}\norm[\infty]{f_1'}\norm[\infty]{f_2'}$. 

\item We again consider the operator $T: f_3 \mapsto \sqbrac*{f_1,f_2; \pap f_3} - \sqbrac*{f_1,f_2; \pap f_3}_{\htil} $ and from the previous estimate we have $\norm[\Ltwo \to \Ltwo]{T} \lesssim  \norm*[\infty]{\htilap -1}\norm[\infty]{f_1'}\norm[\infty]{f_2'}$. We observe that $T(1) = 0 $ and it is also easy to see that its kernel satisfies the conditions of \propref{prop:Lemarie}. Hence the required estimate follows.

\end{enumerate}
\end{proof}


\bibliographystyle{amsplain}
\bibliography{Main.bib}

\end{document}